\newtheorem{thm}{Theorem}[section]
\newtheorem{prop}[thm]{Proposition}
\newtheorem*{definition*}         {Definition}
\newtheorem{lemma}[thm]{Lemma}
\newtheorem{cor}[thm]{Corollary}
\DeclareMathOperator{\tr}{tr}
\newcommand*{\disc}{\text{disc}}
\newcommand*{\Oo}{\mathcal{O}}
\newcommand*{\Ooi}{\mathcal{O}_{\infty}}
\newcommand*{\F}{\mathbb{F}}
\newcommand*{\Tt}{\widetilde{\mathbb{H}}}
\newcommand{\setdef}[2]{\ensuremath{\left\{\left.{#1}\;\right|\;{#2}\right\}}}
\begin{document}

\title[Metaplectic Ramanujan conjecture over function fields]{Metaplectic Ramanujan conjecture over function fields with applications to quadratic forms}

\author{S. Al\.i Altu\u{g} and Jacob Tsimerman}

%\abbrevauthor{A. Altu\u{g}, and J. Tsimerman}
%\headabbrevauthor{Ali Altu\u{g}, and Jacob Tsimerman}
%\email{saltug@math.princeton.edu}
%\address{
%\affilnum{1,2}Mathematics department, Princeton University, Washington Road, Princeton, NJ 08544, USA}
%\author{ Jacob Tsimerman}
%\email{jtsimerm@math.princeton.edu}
%\address{Mathematics department\\ Princeton University\\Washington Road\\ Princeton, NJ \\ 08544-1000 USA}
%\correspdetails{\affilnum{1}saltug@math.princeton.edu}
%\correspdetails{\affilnum{2}jtsimerm@math.princeton.edu}
%\classification{11M38}

% Enter received/revised/accepted dates as necessary
%\received{7 April 2011}
%\revised{9 November 2012}

%\accepted{}
%\communicated{Editor}
\begin{abstract}
We formulate and prove the analogue of the Ramanujan Conjectures for modular forms of half-integral weight subject to some ramification restriction
in the setting of a polynomial ring over a finite field. This is applied to give an effective solution to the problem of representations of elements of the ring by ternary quadratic forms. Our proof develops the theory of half-integral weight forms and Siegel's theta functions in this context as well as the analogue of an explicit Waldspurger formula. As in the case over the rationals, the half-integral weight Ramanujan Conjecture is in this way converted into a question of estimating special values of members of a special family of L-functions. These polynomial functions have a growing number of roots (all on the unit circle thanks to Drinfeld and Deligne's work) which are shown to become equidistributed. This eventually leads to the key estimate for Fourier coefficients of half-integral weight cusp forms.
\end{abstract}

\maketitle

\section{Introduction}

\subsection{Representations of numbers by quadratic forms: History} % We need references here

The general question that motivates this paper is the following one:

\emph{Given a quadratic form Q(X) over some ring $R$, which elements of $R$ are represented by Q?}

This question has historically received a lot of attention, especially over number fields and their maximal orders. Over
$\mathbb{Q}$ and other number fields $K$, the question turns out to be a completely local one. Specifically, we have the following

\begin{thm}[Hasse-Minkowski \cite{O}]
 Let $Q(X)$ be a quadratic form over $K$, where $K$ is a number field. Then $Q$ represents an element $\alpha$ over $K$ iff $Q$ represents $\alpha$
 over the completion $K_v$ for every valuation $v$ of $K$.
\end{thm}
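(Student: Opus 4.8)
The plan is to reduce the statement to a question about \emph{isotropy} and then induct on the number of variables. One direction is immediate: a representation of $\alpha$ by $Q$ with coordinates in $K$ is a fortiori a representation over each completion $K_v$. For the converse, after discarding the radical of $Q$ (which contributes nothing) we may diagonalize $Q$ over $K$ as $\langle a_1,\dots,a_n\rangle$ with all $a_i\in K^\times$; the assertion ``$Q$ represents $\alpha$'' then becomes, for $\alpha\neq 0$, the isotropy of the $(n{+}1)$-ary form $\langle a_1,\dots,a_n,-\alpha\rangle$, and for $\alpha=0$ it is the isotropy of $Q$ itself. So it suffices to prove: a quadratic form over $K$ that is isotropic over every completion $K_v$ is isotropic over $K$.

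I would prove this by induction on $n=\dim Q$. The cases $n=1$ (no nonzero rank-one form is isotropic over any field) and $n=2$ are handled directly: $\langle a,b\rangle$ is isotropic over a field precisely when $-ab$ is a square there, and an element of $K^\times$ that is a square in every $K_v$ is a square in $K$ — otherwise $K(\sqrt{-ab})/K$ is a nontrivial quadratic extension, hence non-split at a positive density of primes by Chebotarev, contradicting local isotropy at those primes.

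The case $n=3$ is the crux. Writing $Q\simeq\langle a,b,c\rangle$, isotropy over $K_v$ is equivalent to the triviality of a Hilbert symbol such as $(-ab,-ac)_v$. Granting this at every $v$, one invokes Hilbert's reciprocity law $\prod_v(x,y)_v=1$ together with Dirichlet's theorem on primes in arithmetic progressions (in its ray-class form over $K$) to produce a global element $t$ that is a local square at the finitely many ``bad'' places and is essentially a single prime elsewhere; tracing $t$ through the Hilbert-symbol identities reduces the ternary problem to the already-settled binary case. The case $n=4$ follows by splitting $\langle a,b,c,d\rangle$ and applying the ternary case twice to locate a value represented locally, hence globally, by both halves; and for $n\ge 5$ one peels off a hyperbolic plane using weak approximation and the archimedean local hypotheses (this generalizes Meyer's theorem that indefinite rational forms in $\ge 5$ variables are isotropic), completing the induction.

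The main obstacle is the ternary case $n=3$: unlike the other cases it cannot be disposed of by formal manipulation of local data, but genuinely requires global arithmetic input — Hilbert reciprocity and Dirichlet's theorem on primes in progressions. Since the result is classical we simply quote it from \cite{O}; I record the sketch because the analogous statement in the function-field setting developed below is proved along exactly these lines, with the place at infinity playing the role of the archimedean place.
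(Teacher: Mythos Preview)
The paper does not prove this theorem: it is stated as a classical result with a citation to O'Meara \cite{O} and no argument is given. Your sketch of the standard proof (reduction to isotropy, induction on the number of variables, with the ternary case handled via Hilbert reciprocity and Dirichlet's theorem) is the orthodox one and is essentially what one finds in O'Meara, so there is nothing to compare against beyond noting that you and the paper agree the result should simply be quoted. Your closing remark that the function-field Hasse--Minkowski ``is proved along exactly these lines'' in the paper is not accurate, however: the paper also merely cites \cite{O} for the function-field version (Theorem~1.5) and does not reprove it.
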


Over orders in number fields, however, the question becomes more delicate, as the Theorem of Hasse-Minkowski is no longer true. To
remedy this situation, one must introduce the notion of \emph{genus}. From now on we restrict ourselves to $\mathbb{Z}$.

\begin{definition*} Two quadratic forms $Q_1,Q_2$ over $\mathbb{Z}$ belong to the same genus if they are isomorphic over $\mathbb{Z}_p$
 for all prime numbers $p$ and over $\mathbb{R}$.
\end{definition*}

With this definition, we have the following analogue of the Hasse-Minkowski Theorem:

\begin{thm}[Minkowski-Siegel \cite{K}]
Let $Q_1,\dots,Q_g$ constitute a genus $G$ of quadratic forms over $\mathbb{Z}$. Then an integer $n$ is representable by some form
in $G$ iff it is representable by some form (and hence all forms) in $G$ over $\mathbb{Z}_p$ for each prime number $p$, as well as
over $\mathbb{R}$.
\end{thm}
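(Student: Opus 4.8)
The easy direction is immediate: since any two of $Q_1,\dots,Q_g$ are isomorphic over each $\mathbb{Z}_p$ and over $\mathbb{R}$, a representation of $n$ over $\mathbb{Z}$ by one of them yields a representation over every $\mathbb{Z}_p$ and over $\mathbb{R}$ by all of them. The content is the converse, which I would prove by a gluing argument that reduces matters to Hasse--Minkowski.

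So suppose $n$ is represented over $\mathbb{R}$ and over $\mathbb{Z}_p$ for every prime $p$; we may assume $n\neq 0$ and that $Q:=Q_1$ is nondegenerate. Let $V$ denote $\mathbb{Q}^m$ equipped with $Q$, and let $L_1\subset V$ be the lattice attached to $Q_1$, so that $G$ is the genus of $L_1$ and $Q_1,\dots,Q_g$ represent its isometry classes. The hypothesis gives representations of $n$ over every $\mathbb{Q}_p$ and over $\mathbb{R}$, so by Hasse--Minkowski there is $x_0\in V$ with $Q(x_0)=n$.

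The next step is to move $x_0$ into a lattice of the genus. For all but finitely many primes $p$ one already has $x_0\in (L_1)_p$; for each of the remaining $p$, the hypothesis provides $y_p\in (L_1)_p$ with $Q(y_p)=n$, and since $Q(x_0)=n=Q(y_p)\neq 0$, Witt's extension theorem over the field $\mathbb{Q}_p$ yields $g_p\in\mathrm{O}(V)(\mathbb{Q}_p)$ with $g_p x_0 = y_p$, so the lattice $g_p^{-1}(L_1)_p$ is isometric to $(L_1)_p$ and contains $x_0$. I then glue: let $M\subset V$ be the $\mathbb{Z}$-lattice with $M_p=g_p^{-1}(L_1)_p$ at those finitely many primes and $M_p=(L_1)_p$ elsewhere. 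This is a genuine lattice because $M_p=(L_1)_p$ for almost all $p$, it is locally isometric to $L_1$ at every $p$ (and at $\infty$, being inside $V$), hence $M$ lies in the genus $G$, and $x_0\in M$ satisfies $Q(x_0)=n$. Thus the form attached to $M$ --- which is isomorphic to some $Q_i$ --- represents $n$ over $\mathbb{Z}$.

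The step requiring the most care is this last gluing: one must check that the local orthogonal transformations $g_p$ can be taken trivial outside a finite set of primes (so that $M$ is well-defined), and that local isometry at every finite place is precisely the condition defining the genus, so that $M$ is genuinely isomorphic to one of the listed forms. As an alternative, more quantitative route one can invoke Siegel's mass formula, which expresses the automorphism-weighted representation number $\sum_i r(Q_i,n)/|\mathrm{Aut}(Q_i)|$ as a positive multiple of the product of local densities $\prod_v\beta_v(n)$; local solubility at every place makes each $\beta_v(n)$ positive and the product convergent and positive, forcing $r(Q_i,n)>0$ for some $i$. This trades the elementary gluing for the heavier inputs of the mass formula (equivalently the Siegel--Weil formula) and the local computation showing $\beta_p(n)>0$ whenever $n$ is represented over $\mathbb{Z}_p$.
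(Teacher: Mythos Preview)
The paper does not prove this theorem; it is stated as classical background with a citation to Kitaoka's book \cite{K}, so there is no ``paper's own proof'' to compare against. Your argument is a correct and standard one: Hasse--Minkowski produces a rational vector $x_0$ representing $n$, Witt's extension theorem over each $\mathbb{Q}_p$ lets you rotate the local lattice so that $x_0$ lies in it, and the local-to-global principle for lattices glues these into a global lattice in the same genus containing $x_0$. The points you flag as delicate (finiteness of the bad set of primes, and that local isometry at every place is exactly the genus condition) are the right ones, and they go through. Your alternative via the mass formula is also valid and is in fact closer in spirit to what this paper does later in the function-field setting (cf.\ Theorem~\ref{Siegelmassformula} and Corollary~\ref{largegenusrepresentations}).
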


Even though the Theorem of Hasse-Minkowski can fail for individual quadratic forms in $3$ or more variables, it turns out that the set of exceptions is small. In particular the set of square-free exceptions is finite. The techniques leading to such results are analytic ones. By developing the circle method, Hardy and Littlewood managed to prove such a theorem for non-degenerate quadratic forms in at least $5$ variables. Kloosterman refined this method, and extended it to non-degenerate diagonal quadratic forms in $4$ variables.
However, the extension to quadratic forms in $3$ variables had to wait for the theory of automorphic forms and L-functions to be developed, and was finally proved by Duke and Schulze Pillot in 1988. More precisely, the theorem is the following:

\begin{thm}[Hardy-Littlewood $n\geq 5$, Kloosterman $n=4$, Duke-S.P. $n=3$ \cite{D-SP}]
Let $Q(x)$ be a non-degenerate integral quadratic form in $n\geq 3$ variables. Then if $m$ is a large enough square-free number, $Q$ represents $m$ over $\mathbb{Z}$ iff it represents $m$ integrally over all completions of $\mathbb{Z}$.
\end{thm}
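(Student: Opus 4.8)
The plan is to convert the existence question into the positivity of the representation number $r_Q(m) = \#\{x \in \mathbb{Z}^n : Q(x) = m\}$ and to show $r_Q(m) > 0$ for all sufficiently large square-free $m$ that are represented over every completion. The local hypotheses enter only through the singular series $\mathfrak{S}_Q(m) = \prod_p \beta_p(m)$, where $\beta_p(m) = \lim_{k} p^{-k(n-1)}\#\{x \bmod p^k : Q(x) \equiv m \bmod p^k\}$ is the $p$-adic density: one has $\mathfrak{S}_Q(m) > 0$, together with positivity of the real density, exactly when $m$ is locally represented everywhere, and in that case --- since $\beta_p(m) = 1 + O(p^{1-n/2})$ for all $p \nmid 2\det Q$, and since for square-free $m$ the finitely many remaining densities are bounded away from $0$ whenever they are nonzero --- one gets $\mathfrak{S}_Q(m) \gg_{Q,\varepsilon} m^{-\varepsilon}$.

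For $n \ge 5$ I would run the Hardy--Littlewood circle method directly: write $r_Q(m) = \int_0^1 S(\alpha)\, e(-m\alpha)\, d\alpha$ with $S(\alpha) = \sum_{x} w(x/\sqrt m)\, e(Q(x)\alpha)$ for a suitable smooth weight $w$, dissect the circle into major arcs about rationals with small denominator and the complementary minor arcs, extract from the major arcs the main term $\mathfrak{S}_Q(m)\, J_Q(m)$ with singular integral $J_Q(m) \asymp m^{n/2 - 1}$, and dispose of the minor arcs by Weyl-type bounds for $S(\alpha)$, which are amply strong once $n \ge 5$. For $n = 4$ those Weyl bounds fall just short, and I would substitute Kloosterman's refinement: completing the relevant exponential sums produces sums of Kloosterman type, whose nontrivial cancellation (ultimately the Weil bound) restores a power saving and beats the error against the main term $\asymp \mathfrak{S}_Q(m)\, m$. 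Either way one obtains $r_Q(m) = \mathfrak{S}_Q(m) J_Q(m) + o(m^{n/2 - 1})$, which is positive for large square-free $m$.

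For $n = 3$ the circle method is inconclusive, since the anticipated main term $\asymp \mathfrak{S}_Q(m)\, m^{1/2}$ has the same size as the error, and one must pass to modular forms of half-integral weight. The steps would be: (i) form the theta series $\theta_Q(z) = \sum_{x \in \mathbb{Z}^3} e(Q(x) z)$, a holomorphic form of weight $3/2$ on a congruence subgroup of the metaplectic cover; (ii) decompose $\theta_Q = E_Q + g_Q$ into its Eisenstein projection and a cusp form; (iii) identify, by Siegel's mass formula, the $m$-th coefficient of $E_Q$ with the real density times $\mathfrak{S}_Q(m)$, hence $\gg_{Q,\varepsilon} m^{1/2-\varepsilon}$ for square-free locally-represented $m$; (iv) split $g_Q$ further as a combination of unary theta series plus a form $g_Q^{\flat}$ orthogonal to them, observe that the unary theta series contribute only $O_\varepsilon(m^{\varepsilon})$ to the $m$-th coefficient when $m$ is square-free (their coefficients being supported on $t \cdot (\text{squares})$), and prove the central estimate $a_{g_Q^{\flat}}(m) \ll_{Q,\varepsilon} m^{1/2 - \delta + \varepsilon}$ for some fixed $\delta > 0$. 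Combining these, $r_Q(m) = a_{E_Q}(m) + a_{g_Q}(m) \gg m^{1/2-\varepsilon} - m^{1/2 - \delta + \varepsilon} > 0$ for all large square-free $m$ represented everywhere locally.

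The decisive obstacle is step (iv) for $n = 3$: by an explicit Waldspurger formula $|a_{g_Q^{\flat}}(m)|^2$ is, up to harmless factors, the central $L$-value $L(\tfrac12, \pi_Q \otimes \chi_m)$ for the Shimura lift $\pi_Q$ of $g_Q^{\flat}$, so the required bound is a subconvexity estimate in the twist $m$; Iwaniec's spectral method --- Poincaré series, the Petersson/Kuznetsov formula, and cancellation in the ensuing Salié and Kloosterman sums --- supplies such a bound (with, e.g., $\delta = \tfrac1{28}$), and it is precisely this input, transposed to the function-field setting, that the body of the paper is devoted to establishing. Two smaller points require attention: at anisotropic $p$-adic places when $n = 4$ the density $\beta_p(m)$ can vanish and must be tracked honestly in the singular-series lower bound; and the unary theta series inside $g_Q$ are exactly what encodes spinor-exceptional integers, so their trivial treatment on square-free $m$ is what bridges the gap between representation by the genus and representation by $Q$ itself.
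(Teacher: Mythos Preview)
Your outline is correct and matches the approach the paper describes; note, however, that the paper does not itself prove this theorem---it is quoted as a known result from the literature (Hardy--Littlewood, Kloosterman, Duke--Schulze-Pillot), and the paragraphs following the statement only sketch the $n=3$ strategy (theta series, Siegel's lemma that $\Theta_G-\Theta_Q$ is cuspidal, separating unary theta series, and bounding the remaining Fourier coefficients via either Iwaniec--Duke or Waldspurger plus subconvexity) as motivation for the function-field analogue. Your proposal faithfully reproduces exactly that sketch, with the additional detail of the circle-method treatment for $n\ge 4$.
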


It is worth noting that all the analytic proofs of the Theorem show that the number of times a number is represented by a quadratic form in fact is asymptotically ``what it should be''. It is also worth noting that the Theorem is ineffective due to a possible Siegel zero.

We now focus on the case of 3 variables and discuss the theory of an ternary integral quadratic form $Q(\vec{x})$. The story for positive definite and indefinite forms is
slightly different, so we assume for now that $Q(\vec{x})$ is positive definite. The theta function attached to $Q(\vec{x})$ is then defined by
\begin{equation}\label{thetaQ}
 \Theta_Q(z) = \displaystyle\sum_{\vec{x}\in\mathbb{Z}^3}e(Q(\vec{x})z)
 \end{equation}
$\Theta_Q(z)$ is absolutely convergent and defines a holomorphic function in the upper half-plane. Moreover, if we let $r_Q(m)$ be
the number of solutions to $Q(\vec{x})=m $, then we have the equality:
$$\Theta_Q(z) = 1+\displaystyle\sum_{m>0} r_Q(m)e(mz)$$
We can therefore read off the number of representations of an integer $m$ by $Q$ from the
Fourier coefficients of $\Theta_Q$! Moreover, using Poisson summation, we can see that                      %Expand here!!!!!!
$\Theta_Q(z)$ is a modular form of weight $ {3/2}$ for a congruence group $\Gamma_0(N)$ of $SL_2(\mathbb{Z})$ for $N=4\disc(Q)$. These observations are the foundation of this approach to studying $r_Q(m)$. %What we really want is to be dealing with
%\emph{cusp forms}, modular forms that vanish at all the cusps of $\Gamma_0(N)\backslash SL_2(\mathbb{R})$.
Although the theta function $\Theta_Q$ contains all the information about the number of representations, it is hard to analyze directly. The way to proceed is to define another theta function for the genus
$G$ of $Q$. For each form $Q'\in G,$ define $n_{Q'}$ to be the order of the group of automorphisms of $Q'$ over $\mathbb{Z}$, so that $n(Q')=|SO_{Q'}(\mathbb{Z})|.$ We note that since $Q'$ is definite, $n_{Q'}$ is finite. Define $n_G=\sum_{Q'\in G}\frac{1}{n_{Q'}}$ and
\begin{equation}\label{thetaG}
\Theta_G(z)=\frac{1}{n_G}\sum_{Q'\in G} \frac{\Theta_{Q'}(z)}{n_{Q'}}
\end{equation}
First, notice that $\Theta_G$ is also a modular form of weight {3/2} for $\Gamma_0(N)$. More importantly, the coefficients $r_G(m)$
of $\Theta_G$ correspond to the number of representations of numbers by the whole genus $G$, and are therefore very well understood
by Siegel's mass formula to be products of local densities. The idea is to show that the coefficients of $\Theta_Q$ and
$\Theta_G$ aren't very different. The key identity is the following:

\begin{lemma}[Siegel]
$\Theta_G(z)-\Theta_Q(z)$ is a cusp form.
\end{lemma}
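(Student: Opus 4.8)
The plan is to use that a holomorphic modular form of weight $3/2$ on $\Gamma_0(N)$ is a cusp form exactly when its constant term vanishes at every cusp of $\Gamma_0(N)$. Since $\Theta_G-\Theta_Q$ is already known to be such a modular form, it is enough to check that $\Theta_G$ and $\Theta_Q$ have the \emph{same} constant term at each cusp $\mathfrak{a}$. At $\mathfrak{a}=\infty$ this is immediate: $\Theta_Q$ has constant term $1$ (the $\vec x=0$ term), while $\Theta_G$ has constant term $\frac{1}{n_G}\sum_{Q'\in G}\frac{1}{n_{Q'}}=1$ by the very definition of $n_G$. So the entire content of the lemma is concentrated at the non-infinite cusps.

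The key step is an explicit evaluation of the constant term of each $\Theta_{Q'}$ at an arbitrary cusp. Writing $\mathfrak{a}=\sigma(\infty)$ with $\sigma\in SL_2(\mathbb{Z})$, one applies Poisson summation to the lattice $\mathbb{Z}^3$ with quadratic form $Q'$ — exactly the tool already used above to prove modularity — to obtain the transformation law of $\Theta_{Q'}$ under $\sigma$. Reading off the zeroth Fourier coefficient of $\Theta_{Q'}\big|_{3/2}\sigma$ expresses it as a product $\prod_v \mu_v(Q',\mathfrak{a})$ of local factors, each being a normalized local representation density of $0$ (a quadratic Gauss sum attached to $Q'$ at the finite places, together with a Weil-index-type factor at the real place). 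The crucial observation is that each $\mu_v(Q',\mathfrak{a})$ depends only on the isomorphism class of $Q'$ over $\mathbb{Z}_v$, hence only on the genus $G$. Therefore the constant term of $\Theta_{Q'}\big|_{3/2}\sigma$ at $\mathfrak{a}$ is independent of $Q'\in G$; call its common value $c_{\mathfrak{a}}$. By linearity of the slash operator and the mass-weighted definition of $\Theta_G$, the constant term of $\Theta_G\big|_{3/2}\sigma$ at $\mathfrak{a}$ equals $\frac{1}{n_G}\sum_{Q'\in G}\frac{c_{\mathfrak{a}}}{n_{Q'}}=c_{\mathfrak{a}}$, which is exactly the constant term of $\Theta_Q\big|_{3/2}\sigma$ there. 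Since $\mathfrak{a}$ was arbitrary, $\Theta_G-\Theta_Q$ vanishes at every cusp and is a cusp form.

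I expect the main obstacle to be the bookkeeping in the explicit theta transformation formula under $SL_2(\mathbb{Z})$ and the identification of the resulting cusp constant terms with genuinely local quantities; granting this, the genus-invariance is essentially formal, since a genus is by definition an equivalence class for local isomorphism at all places. An alternative route would invoke the Siegel–Weil (mass) formula to identify $\Theta_G$ with an Eisenstein series of weight $3/2$ on $\Gamma_0(N)$ and then argue that $\Theta_Q-\Theta_G$ is orthogonal to all Eisenstein series, but this ultimately rests on the same constant-term computation.
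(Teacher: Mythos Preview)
Your proposal is correct and follows essentially the same approach as the paper: both arguments compute the constant term of each $\Theta_{Q'}$ at an arbitrary cusp, express it as a product of purely local quantities (Gauss sums/local densities), and observe that these depend only on the local isomorphism class of the quadratic lattice, hence only on the genus. The paper carries this out in adelic language over the function field---computing $\int_{k\backslash\mathbb{A}_k}\Theta_{Q_i}(N(x)g)\,dx$ via the Weil representation, using anisotropy to reduce to the $v=0$ term, and reading off the local factorization directly---whereas you work classically over $\mathbb{Z}$ with the slash operator and Poisson summation at each cusp separately; the adelic formulation handles all cusps uniformly in a single stroke, but the underlying idea is the same.
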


We give a proof of this in the function field case when $Q$ is anisotropic in section \ref{sectiongauss} (c.f. \S \ref{SiegelsTheorem}).

All that is left to prove the Theorem is to show that cusp forms have relatively small Fourier coefficients compared to the mass in Siegel's formula. It turns out there
are 2 types of cusp forms of weight ${3/2}$. The first kind are theta series associated to one dimensional lattices; these are easy
to deal with in our case since their coefficients are supported on finitely many square-classes, and so contain only finitely many
non-zero square-free coefficients. The second kind are the cusp forms orthogonal to these, and these are all obtained by the Shimura
correspondence from weight 2 cusp forms on the upper half plane. The main task is to bound the Fourier coefficients of
these.

To get a sense for the kind of bound we need, observe that by Siegel's mass formula and his lower bound\footnote{Note that this bound is ineffective over $\mathbb{Q}$.} on quadratic $L$-functions\footnote{See Theorem \ref{largegenusrepresentations}.}  either there are no representations of a squarefree
integer $m$ by a form in the genus $G$, or else $m^{{1/2}-\epsilon}\ll r_G(m)\ll m^{{1/2}+\epsilon}$. The kind of bound we are
looking for on the Fourier coefficients of cusp forms is $a(m)\ll m^{{1/2}-\delta}$ for some $\delta >0$. As it happens, the bound
$a(m)\ll m^{{1/2}+\epsilon}$ is easy to prove and any kind of improvement would be sufficient for
our purposes. There are two possible ways to proceed in this case:

The first (and this is the method developed by Iwaniec and Duke) is to develop a Kuznetzov formula, and then proceed to bound
sums of Sali\'e sums. This approach is arguably quicker and more direct, since it never goes through Waldspurger's formula.

The second approach is to use Waldspurger's formula. This is a formula which relates the Fourier coefficients of a half-integral weight
modular form to the central value of a certain L-function associated with its Shimura lift. In this way, the problem becomes a
subconvexity problem. There is by now a great deal of machinery to deal with subconvexity problems, and so one can get better
bounds this way. This also has the advantage that the optimal bound for the central value of an L-function follows from the Riemann
hypothesis for that L-function combined with the knowledge of the distribution of its zeroes, and in the function field case the Riemann hypothesis is known. This is mainly why we follow this
approach. We would also like to emphasize that the metaplectic Ramanujan conjecture is not a spectral statement, in that it is not a consequence of the underlying local representations being tempered. As such, although the Riemann hypothesis is essential for bounding the central value of the L-function, the statement of purity alone is insufficient for our purposes.

We now focus on the state of affairs for function fields. Once and for all, we fix a function field $k=\mathbb{F}_p(T)$, where $p$ is a prime such that $p\equiv 1\pmod{4}$ for convenience. In this setting, the story for rational forms, that is quadratic forms defined over $k$, is the same as for number fields
in that the Hasse-Minkowski Theorem still holds.

\begin{thm}[Hasse-Minkowski \cite{O}]
 Let $Q(X)$ be a quadratic form over $k$. Then $Q$ represents an element $\alpha$ over $k$ iff $Q$ represents $\alpha$
 over the completion $k_v$ for every valuation $v$ of $k$.
\end{thm}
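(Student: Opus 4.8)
The plan is to recognise this as the classical Hasse--Minkowski local--global principle, now for the global function field $k=\mathbb{F}_p(T)$, and to run the proof over $\mathbb{Q}$ essentially verbatim; since $p$ is odd, the whole theory of quadratic forms over a field of characteristic $\neq 2$ --- diagonalisation, Witt cancellation, Hilbert symbols --- is available without change, so the only real work is to supply function-field analogues of the global arithmetic inputs. First I would reduce to a statement about isotropy: for $\alpha\neq 0$ and any field $F$ of characteristic $\neq 2$, $Q$ represents $\alpha$ over $F$ if and only if $Q\perp\langle-\alpha\rangle$ is isotropic over $F$, and the case $\alpha=0$ is the isotropy statement itself. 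So it suffices to prove that a form over $k$ that is isotropic over every completion $k_v$ is isotropic over $k$. After diagonalising $Q\cong\langle a_1,\dots,a_n\rangle$ and clearing square factors (so $a_i\in\mathbb{F}_p[T]$) I would induct on $n$.

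For $n\geq 5$ there is nothing to do: $\mathbb{F}_p$ is a $C_1$-field by Chevalley--Warning, hence $k$ is a $C_2$-field by Tsen--Lang, and every quadratic form in $\geq 5$ variables is already isotropic over $k$. The case $n=1$ is vacuous. For $n=2$, isotropy of $\langle a,b\rangle$ over $F$ means $-ab\in(F^\times)^2$, so I need that an element of $k^\times$ which is a square in every $k_v$ is a global square; this drops out of the divisor map on $\mathbb{P}^1_{\mathbb{F}_p}$, since a non-square $c$ either has a place of odd valuation (non-square there) or equals $uh^2$ with $u\in\mathbb{F}_p^\times$ a non-square, in which case $c$ is a non-square at any degree-one place off $\mathrm{div}(h)$ (using Hensel's lemma and that $2$ is a unit). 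For $n=3$ I would reduce to $\langle 1,-a,-b\rangle$, equivalently to the Hilbert symbol $(a,b)_k$, and run Serre's descent on $\deg a+\deg b$: local triviality $(a,b)_v=1$ forces, via the tame symbol formula, $a$ to be a square modulo each prime of $\mathbb{F}_p[T]$ dividing $b$; the Chinese Remainder Theorem gives $a\equiv t^2\pmod b$ with $\deg t<\deg b$, hence $t^2-a=bb'$ with $\deg b'<\deg b$; since $\langle 1,-a,-bb'\rangle$ is visibly isotropic over $k$ we get $(a,b')_v=(a,b)_v=1$ everywhere and induction closes the loop (the degenerate case is a ternary form over the $C_1$-field $\mathbb{F}_p$). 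Finally, $n=4$ reduces to $n=3$: local isotropy of $\langle a_1,a_2\rangle\perp\langle-a_3,-a_4\rangle$ says that $\langle a_1,a_2\rangle$ and $\langle a_3,a_4\rangle$ share a common represented value over each $k_v$, and I would produce a single $e\in k^\times$ that is such a common value at all places simultaneously by invoking the existence theorem for elements of $k^\times$ with prescribed Hilbert symbols; then $\langle a_1,a_2,-e\rangle$ and $\langle a_3,a_4,-e\rangle$ are isotropic over every $k_v$, hence over $k$ by the $n=3$ case, so $Q$ is isotropic over $k$.

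The hard part is not any single step but assembling the global inputs over $k$: Hilbert reciprocity $\prod_v(a,b)_v=1$ --- which for $k=\mathbb{F}_p(T)$ follows from the residue theorem for differentials on $\mathbb{P}^1$, or from class field theory --- and the function-field analogue of Dirichlet's theorem on primes in arithmetic progressions, both needed for the prescribed-Hilbert-symbol existence statement used when $n=3,4$. These are classical, and characteristic $2$ never intervenes because $p\equiv 1\pmod 4$, so the proposal really amounts to checking that each of these analogues is in place. Alternatively one could bypass the descent entirely and package $n=2,3,4$ through the classification of quadratic forms over $k_v$ by dimension, discriminant and Hasse invariant, together with the Hasse--Brauer--Noether exact sequence $0\to\mathrm{Br}(k)\to\bigoplus_v\mathrm{Br}(k_v)\to\mathbb{Q}/\mathbb{Z}\to 0$ for the function field $k$; I would expect the class-field-theoretic input there to be the genuine obstacle to a fully self-contained account.
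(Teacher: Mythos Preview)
The paper does not prove this theorem; it is stated with a citation to O'Meara \cite{O} and used only as background for the representability problem, so there is no ``paper's own proof'' to compare against. Your sketch is a correct outline of the standard argument (diagonalise, reduce representation to isotropy via $Q\perp\langle -\alpha\rangle$, then induct on the number of variables using Tsen--Lang for $n\geq 5$, the global-square theorem for $n=2$, a Serre-type descent on degrees for $n=3$, and the common-value argument for $n=4$), and the global inputs you flag --- Hilbert reciprocity over $\mathbb{F}_p(T)$ and the function-field Dirichlet theorem --- are exactly the ones needed; note that the paper itself records Hilbert reciprocity in Lemma~\ref{quadrec} and the surrounding discussion.
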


The completions of $k$ have a slightly different nature than that of $\mathbb{Q}$ in that there are no archimedean places. Indeed, there is one completion  $k_P$
for each monic irreducible polynomial $P,$ as well as the completion ``at infinity", $k_{\infty}=\mathbb{F}_p((T^{-1})).$ It should be emphasized that while we
single out $k_{\infty}$ as a fixed place over which to work, it is really no different than any other completion of $k,$ and everything that follows would
work equally well at any completion. The analogue of an integral quadratic form in the function field case is a quadratic form $Q$ over an $\mathbb{F}_p[T]$-lattice $L$ and the question of interest is which polynomials $Q$ represents. It is important here to distinguish 2 cases. We define $Q$ to be \emph{isotropic} over a field if it represents 0
over that field non-trivially, and \emph{anisotropic} otherwise. If $Q$ is isotropic than it often represents a polynomial infinitely many times, so it is natural to
count in ``boxes", that is, we fix a coordinate system $x_1,x_2,\dots,x_n$ for $L$ and bound the degrees of each coordinate. A  coordinate-free way of doing this is to fix an $\mathbb{F}_p[[T^{-1}]]$-lattice $\mathcal{O}_V$ in $V=L\otimes_{\mathbb{F}_p[T]}k_{\infty}$ and define the restricted
representation numbers $$r(D;L,\mathcal{O}_V,m) = \#(v\in L\cap T^m\mathcal{O}_V\mid Q(v)=D).$$ Note that these numbers are always finite since $L\cap\mathcal{O}_V$ is finite.
In the anisotropic case, however, the representation numbers $r(L,D)=\#(v\in L\mid Q(v)=D)$ are always finite, and no restriction is necessary. The reason is
that in this case the orthogonal group $SO_Q(k_{\infty})$ is compact, while the lattice $L$ is discrete in $V.$ Another immediate consequence of this compactness
in the anisotropic case is the finiteness of the integral stabilizer group, $SO_Q(\F_p[T]).$

Merrill-Walling \cite{MW} and later Hoffstein-Merrill-Walling \cite{HMW} studied the case of representations of polynomials by sums of $n$-squares, which in our
notation corresponds to $L=\mathbb{F}_q[T]^n,\: Q(x_1,\dots,x_n)=x_1^2+\cdots+x_n^2,\:\mathcal{O}_V=\left\{(x_1,\dots,x_n)\in k_{\infty}^n\mid \deg(x_i)\leq 0\right\}$
and $D$ and $m$ vary. They get an asymptotic for $r(D,m)$, the number of representatios of $D$ as a sum of $n$ squares of degree $\leq m$ as long as $n\geq 3,$ and show that these numbers are non-zero for sufficiently large $D$
subject to the necessary condition $m\geq \deg(D)/2.$ Their method develops the spectral theory on the space
$SL_2(\F_q[T])\backslash SL_2(k_{\infty})/SL_2(\F_q[[T^{-1}]])$
and then expresses an appropriate theta function in terms of the Eisenstein spectrum. They note that this is only possible because on this space there are no cusp forms, a phenomenon which does not persist when one adds level and in particular when one considers more general forms.

In \cite{Car}, Car developed the circle method and the Kloosterman refinement of the circle method for diagonal forms
$Q(x_1,x_2,\dots,x_n)= A_1x_1^2+\cdots+A_n x_n^2$ in at least 4 variables where for every $j,$ $A_j\in\mathbb{F}_p[T]$. As a result, she proves an asymptotic for the number of representations
of $D$ even under the strictest degree conditions, namely $$\deg(x_i)\leq \frac12(\deg(D)-\deg(A_i) +1)$$ It turns out that for anisotropic diagonal forms
every representation of a polynomial satisfies the strictest degree conditions, so an immediate consequence of Car's work is an asymptotic for representation
numbers of diagonal anisotropic forms in $\geq 4$ variables.

We now describe our results, focusing primarily on the anisotropic case. Let $Q$ be a quadratic form on a 3-dimensional $\mathbb{F}_q[T]$-lattice $L$, such that $Q$ is
anisotropic over $k_{\infty},$ that is, $Q$ does not represent $0$ non-trivially over $V=L\otimes_{\F_p[T]}k_{\infty}.$ The theta function for $Q$
is then defined by
\[\Theta_Q\left(\left(\left(\begin{smallmatrix} \sqrt{v} & u/\sqrt{v}\\ 0 & 1/\sqrt{v}\end{smallmatrix}\right),1\right)\right)=|v|^{3/4}(\sqrt{v},\varpi_{\infty})_{\infty}^{\frac{v_{\infty}(v)}{2}}\sum_{l\in L}e(Q(l)T^2u)\chi_{\mathcal{O}_\infty}(Q(l)v) \]
\[= |v|^{3/4}(\sqrt{v},\varpi_{\infty})_{\infty}^{\frac{v_{\infty}(v)}{2}}\sum_{D\in \F_p[T]}r_Q(D)e(DT^2u)\chi_{\Ooi}(Dv)\]

Where $\left(\left(\begin{smallmatrix}\sqrt{v}&u/\sqrt{v}\\0&1/\sqrt{v}\end{smallmatrix}\right),1\right)\in \widetilde{SL}_2(k_\infty)$, $e(x)$ is an additive character, $(\cdot\,,\cdot)_{\infty}$ denotes the Hilbert symbol at the completion $k_{\infty}$ (for a precise definition of these see \S\ref{not11} and \S\ref{metgp}), $|v|=|v|_{\infty}$, $\Ooi = \F_p[[T^{-1}]]$ and $\chi_{\mathcal{O}_{\infty}}$ is the characteristic function of $\mathcal{O}_{\infty}$. We form the genus theta function $\Theta_G$ as in the number field case and prove that $\Theta_G-\Theta_Q$ is a cusp form. This reduces the question of representation
numbers to understanding the Fourier coefficients of $\Theta_G$ and bounding the Fourier coefficients of cusp forms. The Fourier coefficients of $\Theta_G$ are
readily read off from the Siegel mass formula. To bound the Fourier coefficients of cusp forms we follow \cite{KS} and \cite{B} to prove a Waldspurger type formula.
A nice feature of the function field case is that since the Riemann hypothesis is known, we can both prove what is essentially the optimal bound for Fourier coefficients of cusp
forms, and give an \emph{effective} lower bound on the Fourier coefficients of $\Theta_G$ coming from the main term. This is primarily because the Riemann hypothesis
rules out Siegel zeroes. This being said we also remark that unlike in the number field case, the necessary estimates \emph{do not} immediately follow from the Riemann hypothesis, cf. \S\ref{sectiondrinfeld}.

Before describing our main results we need to introduce some notation and terminology; this will be explained in detail in \S\ref{sectionfunctionfields}. Let $\widetilde{SL}_2(k_{\infty})$ be the metaplectic cover of $SL_2(k_{\infty})$ given by the explicit cocycle described in \S\ref{metgp}. We write elements of $\widetilde{SL}_2(k_{\infty})$ as $(g;\delta)$ where $g\in SL_2(k_{\infty})$ and $\delta\in\{\pm1\}$. Let $\varpi_{\infty}=T^{-1}$ be a generator for the maximal ideal of the ring of integers $\mathcal{O}_{\infty}$ of $k_{\infty}$. For $N\in\mathbb{F}_p[T]$ and $n\in\mathbb{N}$, let $\widetilde{\Gamma}_0(N)$ and $\iota(\widetilde{K}_{0}(\varpi_{\infty}^n))$ denote the following subgroups of $\widetilde{SL}_2(k_{\infty})$:
\[\widetilde{\Gamma}_0(N)=\setdef{\eta\left(\left(\begin{smallmatrix}a&b\\c&d\end{smallmatrix}\right)\right)}{\left(\begin{smallmatrix}a&b\\ c&d\end{smallmatrix}\right)\in SL_2(\mathbb{F}_p[T]),\,\,c\equiv0\;\text{mod}\,{N}}\subset\widetilde{SL}_2(k_{\infty})\]
\[\iota(\widetilde{K}_0(\varpi_{\infty}^n))=\setdef{\iota\left(\left(\begin{smallmatrix}a&b\\c&d\end{smallmatrix}\right)\right)}{\left(\begin{smallmatrix}a&b\\ c&d\end{smallmatrix}\right)\in SL_2(\mathcal{O}_{\infty}),\,\,c\equiv0\; \text{mod}\, \varpi_{\infty}^n}\subset \widetilde{SL}_2(k_{\infty})\]
Where $\eta:SL_2(\mathbb{F}_p[T])\rightarrow \widetilde{SL}_2(k_{\infty})$ and $\iota: SL_2(\mathcal{O}_{\infty})\rightarrow \widetilde{SL}_2(k_{\infty})$ are as defined in \S\ref{metgp}. We call functions, $F$, on the space $\widetilde{\Gamma}_0(N)\backslash\widetilde{SL}_2(k_{\infty})\slash \iota(\widetilde{K}_0(\varpi_{\infty}^n))$ that satisfy $F((I_2,-1))=-F((I_2,1))$, where $I_2$ stands for the $2\times2$ identity matrix, metaplectic functions of level $N$, depth $n$. We note that the latter condition is to ensure that the functions $F$ does not factor through $SL_2$. We denote the space of level $N$ depth $n$ metaplectic functions by $\widetilde{M}_n(\widetilde{\Gamma}_0(N))$.

At this point we want to take a step back momentarily and explain some of the technical and philosophical points of the paper. First of all we will (as much as possible) be working with the singled out place $k_{\infty}$ and local group $\widetilde{SL}_2(k_{\infty})$. The reason (for us choosing such an approach) is mainly the application to quadratic forms we described above. We want to keep the analogy with the corresponding representability problem over number fields as close as possible and the classical language serves this purpose better.

We should, however, note the appearance of the subgroups $\iota(\widetilde{K}_0(\varpi_{\infty}^n))$. As is well known in the number field case, the weight of a modular form is dictated by the action of the maximal compact subgroup of $SL_2(\mathbb{R})$ on the associated representation. In the case of a function field a similar phenomenon occurs. The difference, however, is that the maximal compact subgroup (which is the circle group $S^1$ in the classical picture) is now $\iota(SL_2(\mathcal{O}_{\infty}))\times (I_2,\pm1)$. In particular it is nonabelian and hence has a much richer representation theory\footnote{This feature is already present in the number field case when one works over an imaginary quadratic field and the group $SL_2(\mathbb{C})$ where the maximal compact subgroup is $SU(2)$.}. The theta functions, which are functions on the group $\widetilde{SL}_2(k_{\infty})$, transform under a representation $\rho$ of the maximal compact. This representation being smooth implies that there exists a number $n\in \mathbb{N}$ such that $\rho$ has a fixed vector under the congruence subgroup $\iota(\widetilde{K}_0(\varpi_{\infty}^n))$. In accordance with Moy-Prasad \cite{MP} we call the minimal such $n$ the \emph{depth}. In this paper we will be considering metaplectic forms of depth $0$ or $1$ and arbitrary level. Theta functions associated to anisotropic ternary quadratic forms turn out to be of depth $1$ and hence this depth restriction is enough for the application we have in mind. We also note that all of the above can be treated in a completely uniform manner (as is done in \S \ref{sectiongauss}) by the Weil representation on $\widetilde{SL}_2(\mathbb{A}_k)$ where $\mathbb{A}_{k}$ is the adele ring of $k$. We should further mention that the half-integral weight Ramanujan conjecture can also be expressed in a uniform adelic language and the methods of this paper are capable of establishing this in the most general case without any depth restrictions. One can also replace $\mathbb{F}_p(T)$ with an arbitrary function field $k$ over a finite field. This is the subject of the upcoming paper \cite{AJ}.

With the remarks of the preceding paragraph we now restrict to the depth $n\leq1$ case. A metaplectic function, $F\in \widetilde{M}_n(\widetilde{\Gamma}_0(N))$ is called \emph{cuspidal} if the constant Fourier coefficient of $F$ vanishes at each cusp (see equation (\ref{cuspidal})). The space of cuspidal metaplectic functions of level $N$ and depth $n$ are denoted by $\widetilde{S}_n(\widetilde{\Gamma}_0(N))$. We will be considering cuspidal metaplectic functions that are eigenfunctions of certain operators that fill the role of the Laplacian in the archimedean picture. These are explained in detail in \S\ref{halfintegralwhittakerun} and \S\ref{halfintegralwhittakerram}. Let $\widetilde{\Delta}$ denote right convolution with the characteristic function of the double coset $\iota(SL_2(\mathcal{O}_{\infty}))\backslash \left(\left(\begin{smallmatrix}\varpi_{\infty}&0\\0&\varpi_{\infty}^{-1}\end{smallmatrix}\right),1\right)/\iota(SL_2(\mathcal{O}_{\infty}))$. For any polynomial $N$, $\widetilde{\Delta}$ acts on the space, $\widetilde{S}_0(\widetilde{\Gamma}_0(N))$, of cuspidal metaplectic functions of level $N$, depth $0$.  An $F\in \widetilde{S}_0(\widetilde{\Gamma}_0(N))$ that is an eigenfunction of $\widetilde{\Delta}$ will be called \emph{a cuspidal metaplectic form} of level $N$, depth $0$. For cuspidal metaplectic functions of depth $1$ the operator $\widetilde{\Delta}$ gets replaced by $\widetilde{W}_{\infty}$, which is defined to be right convolution by the characteristic function of the double coset $\iota(\widetilde{K}_0(\varpi_{\infty}))\backslash\left(\left(\begin{smallmatrix}0&-\varpi_{\infty}^{-1}\\ \varpi_{\infty}&0\end{smallmatrix}\right),1\right)/\iota(\widetilde{K}_0(\varpi_{\infty}))$. For any polynomial $N$, $\widetilde{W}_{\infty}$ acts on the space ($\widetilde{S}_1(\widetilde{\Gamma}_0(N))$) of cuspidal metaplectic functions of level $N$, depth $1$. We will call  an $F\in \widetilde{S}_1(\widetilde{\Gamma}_0(N))$ that is an eigenfunction of $\widetilde{W}_{\infty}$,  \emph{a cuspidal metaplectic form} of level $N$, depth $1$.

On the other hand, at each finite place $P\nmid N$ we have the usual Hecke operators $\widetilde{T}_{P^2}$ acting on the metaplectic functions of level $N$, depth $n$, $n\leq1$. We will call a cuspidal metaplectic form $F$ of level $N$, depth $n$, that is an eigenfunction of all Hecke operators $\widetilde{T}_{P^2}$ for $P\nmid N$, \emph{a cuspidal metaplectic Hecke eigenform} of level $N$, depth $n$.

A cuspidal metaplectic form $F\in \widetilde{S}_0(\widetilde{\Gamma}_0(N))$ has a Fourier-Whittaker expansion at the standard cusp as
\[F(w)=\sum_{D\in\mathbb{F}_p[T]}\lambda_{F}(D)(D,\sqrt{v})_{\infty}e(T^2Du)\widetilde{W}_{n,i\theta}(Dv)\]
where  $w=\left(\left(\begin{smallmatrix}\sqrt{v}&u/\sqrt{v}\\0&1/\sqrt{v}\end{smallmatrix}\right),1\right)$, $(\cdot\,,\cdot)_{\infty}$ denotes the Hilbert symbol at the place $\infty$ as usual, and the Whittaker functions $\widetilde{W}_{n,i\theta}$ are as defined in \S\ref{halfintegralwhittakerun} and \S\ref{halfintegralwhittakerram} respectively.  With these definitions in hand we can now state the main result of the paper, which gives an optimally sharp estimation for the Fourier-Whittaker coefficients of metaplectic forms.\\

 \begin{thm}[Metaplectic Ramanujan conjecture]\label{ramanujan}
Let $F(w)$ be a cuspidal metaplectic form of level $N$ and depth $0$ or $1$. If $D$ is a square-free polynomial of even degree that is relatively prime to $N$, the Fourier-Whittaker coefficients
$\lambda_F(D)$ of $F$
satisfy the bound $$|\lambda_F(D)|\leq C_{F,\epsilon}\;|D|^{-\frac12+\epsilon}$$
where $|D|=p^{deg(D)}$ and $C_{F,\epsilon}$ is an effective constant depending only on $F$ and $\epsilon.$

\end{thm}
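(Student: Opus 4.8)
The plan is to reduce the bound on $\lambda_F(D)$ to a subconvexity estimate for a central $L$-value via an explicit Waldspurger formula, and then to extract that subconvexity estimate from the Riemann hypothesis (Drinfeld--Deligne) together with an equidistribution statement for the relevant zeroes, as advertised in the introduction. First I would handle the easy ``theta'' part of the spectrum: a cuspidal metaplectic form of depth $0$ or $1$ decomposes into a piece spanned by unary theta series, whose Fourier--Whittaker coefficients are supported on finitely many square-classes and hence vanish on square-free $D$ of large degree, plus a piece in the orthogonal complement. So it suffices to treat an $F$ in the orthogonal complement that is a Hecke eigenform. For such $F$, let $g=\mathrm{Sh}(F)$ be its Shimura lift, a weight-$2$ (i.e.\ depth-appropriate) cuspidal Hecke eigenform on $SL_2$ of level dividing a power of $N$; this is a form on the (now well-understood) space of automorphic forms for $PGL_2$ over $k$, corresponding by Drinfeld to a rank-$2$ $\ell$-adic Galois representation, hence its Hecke eigenvalues $a_g(P)$ satisfy $|a_g(P)|\le 2|P|^{1/2}$ (Ramanujan, known here).

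Next I would set up the explicit Waldspurger formula in this setting, following \cite{KS} and \cite{B}: for $D$ square-free of even degree coprime to $N$, one has an identity of the shape
\[
|\lambda_F(D)|^2 = c_F \cdot |D|^{-1/2+o(1)} \cdot \frac{L\!\left(\tfrac12, g\otimes\chi_D\right)}{\langle g, g\rangle},
\]
where $\chi_D$ is the quadratic character attached to $D$ and the archimedean-type and ramified local factors at $\infty$ and at $P\mid N$ are computed explicitly (these local computations are exactly where the depth $\le 1$ hypothesis and the ``even degree'' hypothesis on $D$ are used, to guarantee nonvanishing and the right normalization of the local integrals). The content here is (a) a seesaw/theta-correspondence argument expressing the Waldspurger period as a toral period of $g$ against $D$, and (b) the explicit evaluation of the local zeta integrals at the bad places. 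Given this, the theorem becomes the subconvexity bound $L(\tfrac12, g\otimes\chi_D) \ll_{g,\epsilon} |D|^{\epsilon}$ (the convexity bound being $|D|^{1/2+\epsilon}$, since the conductor of $g\otimes\chi_D$ is $\asymp |D|^{2}$ and we are over a function field where the analytic conductor behaves polynomially).

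Finally, the heart of the matter is to prove this (in fact essentially any subconvex, and here nearly Lindel\"of) bound for $L(\tfrac12, g\otimes\chi_D)$. Because $g$ is fixed and $D$ varies over square-free polynomials, $L(s, g\otimes\chi_D)$ is (up to the bad Euler factors) a polynomial in $p^{-s}$ of degree $\asymp \deg D$ whose inverse roots all lie on the circle $|{\cdot}| = p^{1/2}$ by the Riemann hypothesis for function fields. Purity alone only gives the convexity bound $|D|^{1/2+\epsilon}$; to beat it I would study the family $\{L(s,g\otimes\chi_D)\}_D$ and show that, as $\deg D\to\infty$, the normalized roots become equidistributed (with respect to some measure, e.g.\ arising from a monodromy computation in the style of Katz, or directly from estimating power-sums of the roots via the Riemann hypothesis applied to the symmetric-power or Rankin--Selberg $L$-functions $L(s, g\otimes g\otimes\chi_D\otimes\cdots)$). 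Equidistribution of the $\asymp\deg D$ roots on the critical circle forces cancellation in $\prod(1 - \alpha_i p^{-1/2})$ at $s=\tfrac12$, yielding $\log|L(\tfrac12,g\otimes\chi_D)| = o(\deg D)$, i.e.\ the claimed $|D|^{\epsilon}$ bound; effectivity is automatic because RH rules out the Siegel-type zero that makes the archimedean argument ineffective.

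\medskip

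I expect the main obstacle to be this last step: deriving a \emph{quantitative} equidistribution of the roots strong enough to give $|D|^\epsilon$ (rather than merely $o(\log|D|)$ in the exponent with no rate), since, as the introduction stresses, this does \emph{not} follow formally from purity and requires genuine input about the second (and higher) moments of the family --- equivalently, control of $L$-functions of tensor powers twisted by $\chi_D$ --- which must themselves be shown to be honest $L$-functions satisfying RH. A secondary, more bookkeeping-heavy obstacle is the explicit local computation in the Waldspurger formula at the place $\infty$ for depth $1$, where the nonabelian maximal compact $\iota(SL_2(\mathcal{O}_\infty))$ makes the relevant Whittaker/Bessel integrals more involved than in the classical weight-$3/2$ case, and one must verify the local integral is nonzero precisely under the even-degree hypothesis on $D$.
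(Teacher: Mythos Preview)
Your outline is essentially the paper's: strip off unary theta series, pass to the Shimura lift $\phi$, use an explicit Waldspurger formula to relate $|\lambda_F(D)|^2$ to $L(\tfrac12,\phi\times\chi_D)$, and then prove Lindel\"of for that central value from Drinfeld--Deligne. Two points are worth correcting.

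First, the exponent in your Waldspurger identity is off: the paper's formula (Theorems~5.1--5.2) gives $\sum_{\mathrm{Shim}(F_j)=\phi}|\lambda_{F_j}(D)|^2 \asymp_\phi |D|^{-1}\,L(\tfrac12,\phi\times\chi_D)$, not $|D|^{-1/2+o(1)}$. With your exponent, Lindel\"of would only yield $|\lambda_F(D)|\ll|D|^{-1/4+\epsilon}$, too weak. (The $|D|^{-1}$ comes from $G_1(D)|D|^{-3/2}$ with $|G_1(D)|=|D|^{1/2}$.)

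Second, your anticipated main obstacle dissolves and no Rankin--Selberg or symmetric-power input is needed. The point (\S3) is that the Dirichlet coefficients of $-\tfrac{L'}{L}(s,\phi\times\chi_D)$ satisfy $|\beta_{\phi,D}(n)|\le 2np^n\log p$ \emph{uniformly in $D$}, simply because $|\chi_D|\le 1$ and Ramanujan for the fixed $\phi$ (Drinfeld--Deligne) bounds its Satake parameters. These coefficients \emph{are} the power sums of the roots, so each moment is $O_n(1)$ while the number of roots is $\asymp\deg D$; equidistribution and hence Lindel\"of are immediate, and a Soundararajan-type argument (Theorem~3.5, mimicking Theorem~3.4) makes it quantitative and effective. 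The only deep input beyond RH for $L(s,\phi\times\chi_D)$ is Ramanujan for $\phi$ itself.
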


The main application of Theorem \ref{ramanujan} to the representation of polynomials by quadratic forms is the following assertion:\\

\begin{thm}\label{representationTheorem}
Let $Q$ be a ternary quadratic form over $\mathbb{F}_p[T]$ that is anisotropic over $k_{\infty}$. For $D\in \mathbb{F}_p[T]$ such that $\deg(D)$ has the same parity as $\deg(\disc(Q))$ and $(D,\disc(Q))=1$, let $r_Q(D)$ denote the number of times $Q$ represents the polynomial $D$ and $r_G(D)$ the number of times $D$ is represented by the genus of $Q$. Then
$$r_Q(D)=r_{G}(D)+O_{Q,\epsilon}(|D|^{1/4 +\epsilon}).$$
where the implied constant is effective and depends only on $Q$ and $\epsilon$.
\end{thm}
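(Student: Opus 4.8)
The plan is to reduce the counting problem to the Fourier-Whittaker coefficients of the theta functions $\Theta_Q$ and $\Theta_G$, exactly as in the number field case. First I would recall the Siegel-type decomposition discussed in the introduction: since $Q$ is anisotropic over $k_\infty$, the difference $\Theta_G - \Theta_Q$ is a cusp form of level $N = \operatorname{disc}(Q)$ (or $4\operatorname{disc}(Q)$, whatever normalization is fixed in \S\ref{sectiongauss}) and depth $1$, a fact whose proof is promised in \S\ref{SiegelsTheorem}. Comparing the $D$-th Fourier-Whittaker coefficients of both sides, and using that the $D$-th coefficient of $\Theta_Q$ is $r_Q(D)$ and that of $\Theta_G$ is $r_G(D)$ (both up to the same explicit normalizing factor coming from $|v|^{3/4}$ and the Hilbert-symbol twist, which cancels in the comparison), we obtain
\[
r_Q(D) - r_G(D) = \lambda_{\Theta_G - \Theta_Q}(D)\cdot(\text{normalization}).
\]
So it suffices to bound the right-hand side when $D$ is square-free, of the prescribed parity, and coprime to $N$.

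Next I would invoke the structure theory of cusp forms of weight $3/2$ in this setting, as sketched in the introduction: the space $\widetilde{S}_1(\widetilde{\Gamma}_0(N))$ decomposes into the span of one-dimensional theta series and their orthogonal complement. The one-dimensional theta series contribute coefficients supported on finitely many square-classes of $\mathbb{F}_p[T]$; since a square-free $D$ lies in the square-class of $D$ itself, only finitely many square-free $D$ can have a nonzero contribution from this part, and for those we absorb the contribution into the error term (or exclude finitely many $D$, noting the implied constant is effective and depends on $Q$). For the orthogonal complement, we decompose $\Theta_G - \Theta_Q$ into a finite sum of cuspidal metaplectic Hecke eigenforms $F_j$ (using that $\widetilde{W}_\infty$ and the $\widetilde{T}_{P^2}$ commute and are normal for a suitable inner product, so the space is spanned by simultaneous eigenforms). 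To each $F_j$ we apply Theorem \ref{ramanujan}, which gives $|\lambda_{F_j}(D)| \leq C_{F_j,\epsilon}|D|^{-1/2+\epsilon}$ for square-free $D$ of even degree coprime to $N$. Summing over the finitely many $j$, and tracking the normalization factor — which for the $D$-th coefficient has absolute value comparable to $|D|^{3/4}$ (the weight $3/2$ scaling, i.e.\ $|D|^{(3/2-1)/2}=|D|^{1/4}$ after the Whittaker normalization; here one has to be careful about exactly which power appears, but it is the power that turns the coefficient bound $|D|^{-1/2+\epsilon}$ into $|D|^{1/4+\epsilon}$) — we arrive at $r_Q(D) - r_G(D) = O_{Q,\epsilon}(|D|^{1/4+\epsilon})$ with an effective implied constant, since all constants entering ($C_{F_j,\epsilon}$, the finite number of eigenforms, the finite list of bad square-classes) are effective.

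The main obstacle is bookkeeping the normalization of the Fourier-Whittaker expansion against the classical representation numbers: one must verify precisely that the $D$-th coefficient of $\Theta_Q$ in the basis $\{(D,\sqrt v)_\infty e(T^2Du)\widetilde{W}_{n,i\theta}(Dv)\}$ equals $r_Q(D)$ up to the universal factor $|D|^{3/4}$ (and similarly for $\Theta_G$), so that the weight-$3/2$ scaling converts the $-1/2+\epsilon$ exponent in Theorem \ref{ramanujan} into the claimed $1/4+\epsilon$. This is the analogue of the classical fact that a weight-$3/2$ cusp form has $n$-th coefficient $O(n^{3/4})$ trivially and $O(n^{1/2+\delta})$ for the part coming from the Shimura lift, the difference $3/4 - (1/2) = 1/4$ being exactly our savings; here it requires matching the explicit Whittaker functions of \S\ref{halfintegralwhittakerun}–\S\ref{halfintegralwhittakerram} against the $\chi_{\Ooi}(Dv)$-supported terms in the definition of $\Theta_Q$. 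A secondary point to handle carefully is that Theorem \ref{ramanujan} is stated for eigenforms of $\widetilde{W}_\infty$ (depth $1$) and the $\widetilde{T}_{P^2}$, so one must genuinely decompose $\Theta_G - \Theta_Q$ into such eigenforms rather than applying the bound to it directly; this uses finite-dimensionality of $\widetilde{S}_1(\widetilde{\Gamma}_0(N))$ and self-adjointness of the relevant operators, both of which should be established in the body of the paper.
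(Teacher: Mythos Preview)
Your proposal is correct and matches the paper's proof in \S\ref{sectionquadratic} essentially line for line: Siegel's theorem gives $\Theta_Q = \Theta_G + \sum_j F_j$ with the $F_j$ cuspidal Hecke eigenforms, comparing $D$-th Fourier coefficients yields $|v|^{3/4}r_Q(D)\chi_{\Ooi}(Dv) = |v|^{3/4}r_G(D)\chi_{\Ooi}(Dv) + \sum_j \lambda_{F_j}(D)\widetilde{W}_{F_j}(Dv)$, and evaluating at $|v|\sim|D|^{-1}$ converts the bound $|\lambda_{F_j}(D)|\ll|D|^{-1/2+\epsilon}$ from Theorem~\ref{ramanujan} into the claimed $O(|D|^{1/4+\epsilon})$. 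The one-variable theta series you flag separately are in the paper absorbed into the statement of Theorem~\ref{metramcon} itself (their coefficients are supported on a single square-class, so the bound holds tautologically for square-free $D$), and the normalization you are cautious about is made explicit in the paper precisely by writing out the $|v|^{3/4}$ factor on both sides.
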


The following local to global principle now follows immediately from Theorem \ref{representationTheorem}.\\

\begin{cor}\label{representationCor} For $Q$ as above there exists an effective constant $A_{Q}$ such that every square-free $D\in\mathbb{F}_p[T]$ with $\deg(D)$ of the same parity as $\deg(\disc(Q))$, satisfying $(D,\disc(Q))=1$ and $|D|>A_Q$ is represented by $Q$ as long as there are no local obstructions.
\end{cor}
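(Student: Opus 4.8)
The plan is to derive the corollary directly from Theorem \ref{representationTheorem} together with a lower bound on the genus representation number $r_G(D)$. First I would invoke the Minkowski–Siegel local-to-global principle for genera (the function-field analogue of the Minkowski–Siegel theorem cited in the introduction): if there are no local obstructions to $Q$ representing $D$, then some form in the genus $G$ of $Q$ represents $D$, and hence $r_G(D) > 0$. The point, however, is that $r_G(D)$ is not merely positive but is given by a product of local densities via Siegel's mass formula, which has been established in the anisotropic ternary case in \S\ref{sectiongauss} (the reference \S\ref{SiegelsTheorem}). So the next step is to extract from the mass formula an \emph{effective lower bound} $r_G(D) \gg_{Q,\epsilon} |D|^{1/2 - \epsilon}$ valid for square-free $D$ coprime to $\disc(Q)$ with $\deg(D)$ of the correct parity and no local obstruction. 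The product of the non-archimedean local densities at primes dividing $\disc(Q)$ contributes a bounded factor (depending only on $Q$), the densities at good primes are essentially $1 + O(|P|^{-1})$ and contribute a factor comparable to a special value of a quadratic $L$-function $L(1,\chi_D)$ (or $L(1/2,\cdot)$ after the Shimura/Waldspurger normalization), and the archimedean factor at $\infty$ is controlled by the compactness in the anisotropic case. Crucially, as emphasized in the introduction, the Riemann hypothesis for these quadratic $L$-functions over $\mathbb{F}_p(T)$ — known by Weil — rules out Siegel zeroes and gives an \emph{effective} lower bound $L(1,\chi_D) \gg_\epsilon |D|^{-\epsilon}$ (indeed Theorem \ref{largegenusrepresentations} is cited precisely for this), so the constant $A_Q$ is effective, in contrast with the number-field situation.

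With this lower bound in hand, compare it against the error term of Theorem \ref{representationTheorem}: for square-free $D$ coprime to $\disc(Q)$ of the correct parity,
\[
r_Q(D) = r_G(D) + O_{Q,\epsilon}\!\left(|D|^{1/4 + \epsilon}\right) \geq c_{Q,\epsilon}\,|D|^{1/2 - \epsilon} - C_{Q,\epsilon}\,|D|^{1/4 + \epsilon}.
\]
Since the main term grows like $|D|^{1/2-\epsilon}$ while the error grows only like $|D|^{1/4+\epsilon}$, for $\epsilon$ small enough (say $\epsilon < 1/8$) the right-hand side is strictly positive once $|D|$ exceeds some threshold $A_Q$, and this threshold is effective because both $c_{Q,\epsilon}$ and $C_{Q,\epsilon}$ are effective. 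Hence $r_Q(D) > 0$, i.e. $Q$ represents $D$ over $\mathbb{F}_p[T]$. Conversely, if $Q$ represents $D$ then obviously there are no local obstructions, so the two conditions are equivalent for $|D| > A_Q$, which is the assertion of the corollary.

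The main obstacle is the effective lower bound on $r_G(D)$, or equivalently on the relevant quadratic $L$-value. While purity (the Riemann hypothesis) is available, as the authors stress in the introduction it does not by itself immediately yield the needed estimate; one must combine it with information about the location and number of zeroes of the $L$-function near the edge of the critical strip — an equidistribution input of the kind discussed in \S\ref{sectiondrinfeld}. So the real content lurking behind this corollary is Theorem \ref{largegenusrepresentations}, and once that is granted the deduction above is routine. A secondary point to be careful about is the bookkeeping of the bad local factors at primes dividing $\disc(Q)$ and the place $\infty$: one must check that, given the parity hypothesis on $\deg(D)$ and the coprimality hypothesis, none of these vanishes (this is exactly where the "no local obstruction" hypothesis enters quantitatively), and that their product is bounded below by a constant depending only on $Q$.
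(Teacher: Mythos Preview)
Your proposal is correct and follows essentially the same route as the paper: the corollary is stated to follow ``immediately'' from Theorem~\ref{representationTheorem}, and the implicit argument is exactly the comparison you give between the main term $r_G(D)$ (bounded below effectively via Siegel's mass formula and Corollary~\ref{largegenusrepresentations}, which in turn rests on Lemma~\ref{classnumber}) and the error term $O_{Q,\epsilon}(|D|^{1/4+\epsilon})$. One small correction: the effective lower bound on $L(1,\chi_D)$ in Lemma~\ref{classnumber} comes directly from the Riemann hypothesis for hyperelliptic curves together with the gonality-$2$ bound on point counts, not from the equidistribution machinery of \S\ref{sectiondrinfeld}---that section is used for the Lindel\"of bound at $s=1/2$ entering the cusp-form estimate, not for the $s=1$ lower bound.
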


The paper is structured as follows: In \S\ref{sectionfunctionfields} we introduce and develop the basic notions over function fields including quadratic
forms, cuspidal automorphic and metaplectic forms, their Fourier expansions, Hecke operators, Whittaker functions and L-functions attached to cuspidal automorphic functions. For each of these notions we first give a treatment for depth $0$ functions and then describe the relevant changes for depth $1$ functions.
In \S\ref{sectiondrinfeld} we identify precise conditions under which the Riemann Hypothesis implies the Lindel\"of Hypothesis for a family of zeta functions in the function field setting. In \S\ref{sectionshimura} we study the Shimura and Maass-Shintani lifts and derive explicit normalisations. To clarify the technical proofs we first give detailed proofs for the full level and depth $0$ case and then point out the differences in the case of arbitrary level and depth $1$. \S\ref{sectionwaldspurger} applies
the results of \S\ref{sectionshimura} to prove a Waldspurger type formula and the metaplectic Ramanujan conjecture. \S\ref{sectionquadratic} gives the promised application to
representing polynomials by quadratic forms, and in \S\ref{Example} we provide a concrete example to the type of representability question we answer in the paper. Finally \S{A} develops the Weil representation, uses it to define theta functions, and proves their transformation
properties.Though it is fundamental and referred to throughout the paper, \S{A} is quite technical and can be skipped on a first reading.
\section{Automorphic Forms Over Function Fields}\label{sectionfunctionfields}

\subsection{Notation}\label{not11}

We will fix some notation which will be used throughout the paper. Let $\mathbb{F}_p$ denote the finite field with $p$ elements. For convenience, throughout  the
paper  we will be assuming that $p\equiv1\pmod{4}$. Let $R$ denote the ring $\mathbb{F}_p[T]$, and $k$ the field $\mathbb{F}_p(T)$. We also fix the prime at ``infinity'' which corresponds to the
valuation $v_{\infty}$ such that $v_{\infty}(S_1(T)/S_2(T))=\deg{S_2}-\deg{S_1}$, for $S_1,S_2\in\mathbb{F}_p[T]$. The completion of $k$ with respect to this valuation is isomorphic to $\mathbb{F}_p((T^{-1}))=k_{\infty}$, and the ring of integers of $k_{\infty}$ is $\mathcal{O}_{\infty}$. We will denote the uniformizer $T^{-1}$ of $\mathcal{O}_{\infty}$ by $\varpi_{\infty}$ and use them interchangably. We denote the norm induced by $v_{\infty}$ by $|\cdot|_{\infty}$, i.e. for $x\in k_{\infty}$, $|x|_{\infty}=p^{-v_{\infty}(x)}$. For any set $A$ we will use the notation, $\chi_{A}$, to denote the characteristic function of $A$. Once and for all we fix an additive character, $e(x)$, on $k_{\infty}$ defined as follows. Let $x=\sum_{j=-\infty}^{k}a_jT^j\in k_{\infty}$ where $a_j\in\mathbb{F}_p$. Choose a lift $a_1^*$ of $a_1$ to $\mathbb{Z}$, and define $e(x):=e^{2\pi ia_1^*/p}$. Since $e(x)$ is independent of the choice of the lift $a_1^*$, we will simply write $e(x)=e^{2\pi i a_1/p}$. We also note that the conductor of $e(x)$ is $\mathcal{O}_{\infty}$, i.e. $e(x)=1$ for all $x\in\mathcal{O}_{\infty}$ and $e(x)$ is non-trivial on $\varpi_{\infty}^{-1}\mathcal{O}_{\infty}$.

The norm of an element $S\in\mathbb{F}_p[T]$ will be denoted by $|S|=|S|_{\infty}=p^{\deg{S}}$. The finite valuations of $k$ can be identified with the monic irreducible polynomials $P\in\mathbb{F}_p[T]$ by setting $k_P$ to be the fraction field of the completion of $R$ with respect to $P$. Abusing notation, for a valuation $v$ corresponding to $P$ and an element $c\in R$ we sometimes write $v\mid c$ to denote $P\mid c$.

Define the function field analogues of the classical gamma and zeta functions by
\[\Gamma_k(s)=\frac{1}{1-p^{-s}}\]
\[\zeta_k(s)=\sum_{a\in R\backslash \{0\}}\frac{1}{|a|^s}\]
Note that because the valuations of $k$ are indexed by \emph{monic} irreducible polynomials, the Euler product for $\zeta$ is $\zeta_k(s)=(p-1)\prod_P(1-|P|^{-s})^{-1}$, where the product is over all monic irreducible polynomials $P\in R$.

\subsubsection{Hilbert Symbols and Quadratic Reciprocity}
 %For $u\in k_v$ we will denote the quadratic character which is $1$ if $u$ is a square in $k_v$ and $-1$ if not, by $(u\:|\:\varpi_{v})$.
 Recall that we are assuming $p\equiv1 \bmod 4$. For $c\in R\backslash\{0\}$ such that the ideal generated by $c$ is prime in $R$ (i.e. $c$ is an irreducible and we do not care about the leading coefficient), and for $d\in R$ we will define the Legendre symbol mod $c$ by $$\left(\frac{d}{c}\right):=\begin{cases}0&\text{if $\gcd(c,d)\neq1$}\\ 1 & \textrm{$\gcd(c,d)=1$ and $d$ is a square modulo $c$}\\ -1 & \textrm{otherwise}\end{cases}.$$
Where we use $\gcd(c,d)=1$ to mean that the ideal generated by $c$ and $d$ is the whole ring $R$. As in the classical case of the Jacobi symbol, ee then extend this definition multiplicatively in the $c$ variable. Now let $v$ be a valuation of $k$ and let $k_v$ be the completion of $k$, $\mathcal{O}_v\subset k_v$ the ring of integers of $k_v$, and $\varpi_v$ a uniformizer at $v$.
For $\alpha,\beta\in k_v^{\times}$ we denote the Hilbert symbol at the place $v$ by $(\alpha,\beta)_v$. By definition
$$(\alpha,\beta)_v=\begin{cases}1 & \textrm{if there is a non-zero solution to $z^2=\alpha x^2 +\beta y^2$ with $(x,y,z)\in k_v^3$}\\-1 & \text{otherwise}\end{cases}$$ Recall that the Hilbert symbol is multiplicative in each variable. Moreover, since we are
working over $\F_p$ with $p\equiv 1\mod{4}$ it can easily be checked that the $(-1)^{(\cdot,\cdot)_v}$ is the unique symplectic form on $(k_v)^{\times}/((k_v)^{\times})^2\times(k_v)^{\times}/((k_v)^{\times})^2$.

Explicitly, for $v=\infty$, and $\varpi_{\infty}=T^{-1}$, we have $$(x,y)_{\infty} = \left(\frac{x_0}{T}\right)^m\left(\frac{y_0}{T}\right)^n$$ where $x=x_0\varpi_{\infty}^n,y=y_0\varpi_{\infty}^m$ and $x_0,y_0\in\Ooi^{\times}$.\\

The most important property of the Hilbert symbol is the product formula (cf. \cite{Hasse}), $\prod_{v}(\alpha,\beta)_v=1$, where the product is now over all valuations (including valuation at infinity).\\

\begin{lemma}\label{hilquad}
For $\alpha,\beta\in R\backslash\{0\}$ relatively prime we have $$\left(\frac{\alpha}{\beta}\right)=\prod_{v|\beta}(\alpha,\beta)_v$$
\end{lemma}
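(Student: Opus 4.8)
The plan is to relate the global Legendre symbol $\left(\frac{\alpha}{\beta}\right)$ to a product of local Hilbert symbols by exploiting the product formula $\prod_v(\alpha,\beta)_v = 1$. First I would observe that both sides are multiplicative in $\beta$: the left side by the very definition (we extended $\left(\frac{\cdot}{\cdot}\right)$ multiplicatively in the lower variable), and the right side because the Hilbert symbol is multiplicative in each argument and the set of places dividing a product $\beta_1\beta_2$ of coprime elements is the disjoint union of those dividing $\beta_1$ and those dividing $\beta_2$ (when $\alpha$ is coprime to $\beta_1\beta_2$). Hence it suffices to prove the identity when $\beta = P$ is a monic irreducible polynomial (the leading coefficient is irrelevant on both sides since units are squares in $\F_p^\times$, as $p\equiv 1\bmod 4$, so $u$ is a square and contributes trivially to both the Legendre and Hilbert symbols). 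Actually one should be slightly careful: $\beta$ need not be monic, but writing $\beta = c\cdot(\text{monic})$ with $c\in\F_p^\times$ a square reduces to the monic case.

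The core computation is then the case $\beta = P$ irreducible and $\alpha$ coprime to $P$. Here the right-hand side is the single factor $(\alpha,P)_v$ at the place $v$ corresponding to $P$. I would compute this directly from the explicit formula for the Hilbert symbol at a finite place: since $\alpha$ is a $v$-unit and $P$ is a uniformizer at $v$, the tame-symbol formula gives $(\alpha,P)_v = \left(\frac{\bar\alpha}{P}\right)$, the Legendre symbol of the reduction of $\alpha$ modulo $P$ — which is exactly $\left(\frac{\alpha}{P}\right)$ by definition. This identifies the two sides in the prime case, and then multiplicativity finishes the general coprime case.

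An alternative route, perhaps cleaner to write, uses the product formula as the organizing principle: start from $1 = \prod_v(\alpha,\beta)_v = \prod_{v\mid\beta}(\alpha,\beta)_v \cdot \prod_{v\nmid\beta}(\alpha,\beta)_v$, and argue that for $v\nmid\beta$ (including $v=\infty$ when $\deg\beta$ is even, handled via the explicit formula for $(\cdot,\cdot)_\infty$ recorded above) the local symbols $(\alpha,\beta)_v$ either vanish trivially or reassemble into a quadratic-reciprocity-type product over places dividing $\alpha$. This would make the lemma a restatement of quadratic reciprocity in $\F_p[T]$. However, I expect this to require keeping track of the $\infty$-place contribution carefully, and the direct tame-symbol computation avoids that bookkeeping, so I would favor the first approach.

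The main obstacle is purely one of conventions: matching the paper's specific normalizations of the Legendre symbol (extended multiplicatively in $c$, with the convention that the leading coefficient of $c$ is ignored) and of the Hilbert symbol at finite places with the classical tame symbol $(\alpha,\beta)_v = (-1)^{v(\alpha)v(\beta)}\overline{\alpha^{v(\beta)}/\beta^{v(\alpha)}}^{\,(q-1)/2}$, and verifying that the units contribute trivially because $p\equiv 1\bmod 4$ forces $\F_p^\times = (\F_p^\times)^2 \cdot$ (squares). Once those identifications are pinned down, the proof is a short multiplicativity reduction followed by the one-line tame-symbol evaluation.
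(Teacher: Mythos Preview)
Your approach is essentially the paper's: reduce to $\beta$ prime by multiplicativity, then identify the single Hilbert symbol $(\alpha,P)_P$ with the Legendre symbol via the tame formula. The paper achieves the multiplicativity step slightly differently, invoking the product formula to rewrite $\prod_{v\mid\beta}(\alpha,\beta)_v = (\alpha,\beta)_\infty\prod_{v\mid\alpha}(\alpha,\beta)_v$, which is visibly multiplicative in $\beta$ since the index set no longer depends on $\beta$; your direct argument via disjointness of supports works equally well.

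One correction: your claim that ``units are squares in $\F_p^\times$, as $p\equiv 1\bmod 4$'' is false --- the condition $p\equiv 1\bmod 4$ only guarantees that $-1$ is a square, not that every unit is. The leading unit $u$ of $\beta$ still contributes trivially, but for a different reason: at any finite place $v\mid\beta$, both $\alpha$ and $u$ are $v$-adic units, and the Hilbert symbol of two units at a non-archimedean place of odd residue characteristic is $1$. On the Legendre side the unit is ignored by the paper's convention that $\left(\frac{\cdot}{c}\right)$ depends only on the ideal $(c)$. So your conclusion stands, but the justification needs this fix.
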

\begin{proof}
By the product formula,$$\prod_{v|\beta}(\alpha,\beta)_v=\prod_{v\nmid\beta}(\alpha,\beta)_v=(\alpha,\beta)_{\infty}\prod_{v|\alpha}(\alpha,\beta)_v$$ which shows that both sides are defined multiplicatively in $\beta$. Hence it is enough to consider the case when $\beta$ is a prime and denote the valuation corresponding to $\beta$ by $\mathfrak{b}$. Let $k_{\mathfrak{b}}$ be the completion of $k$ at $\mathfrak{b}$. Then we have the following equalities
\[\prod_{v\mid \beta}(\alpha,\beta)_v=\left(\alpha,\beta\right)_{\mathfrak{b}}= \left(\frac{\alpha}{\beta}\right)\]
%Where the last equality of the first line comes from the fact that $\gcd(\alpha,\beta)=1$.
\end{proof}

The product formula implies, in particular, the following quadratic reciprocity formula for $k=\mathbb{F}_p(T)$:\\

\begin{lemma}\label{quadrec}
For $\alpha,\beta\in R\backslash\{0\}$ relatively prime we have $$\left(\frac{\alpha}{\beta}\right)\cdot\left(\frac{\beta}{\alpha}\right)=(\alpha,\beta)_{\infty}.$$
\end{lemma}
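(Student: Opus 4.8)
The plan is to deduce the quadratic reciprocity law directly from Lemma~\ref{hilquad} together with the Hilbert product formula $\prod_v (\alpha,\beta)_v = 1$, exploiting the fact that the only valuations at which the Hilbert symbol $(\alpha,\beta)_v$ can be nontrivial are those dividing $\alpha$, those dividing $\beta$, and the place at infinity (this is the standard tameness statement: at a finite place $v$ with $v\nmid \alpha\beta$, both $\alpha$ and $\beta$ are units, and since $p\equiv 1 \bmod 4$ the residue characteristic is odd, so the Hilbert symbol of two units vanishes — equivalently $z^2 = \alpha x^2 + \beta y^2$ is solvable over $\mathcal{O}_v$ by Hensel's lemma since it has a smooth point mod $\varpi_v$).

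First I would invoke the product formula to write
\[
1 = \prod_v (\alpha,\beta)_v = (\alpha,\beta)_\infty \cdot \prod_{v \mid \beta}(\alpha,\beta)_v \cdot \prod_{v \mid \alpha}(\alpha,\beta)_v,
\]
where the decomposition into exactly these three groups of factors uses $(\alpha,\beta)=1$ (so no finite place divides both $\alpha$ and $\beta$) and the tameness remark above (so all other finite places contribute $1$). Next I would apply Lemma~\ref{hilquad} to each of the two finite products: it gives directly $\prod_{v\mid\beta}(\alpha,\beta)_v = \left(\frac{\alpha}{\beta}\right)$, and by the symmetry of the Hilbert symbol in its two arguments, $\prod_{v\mid\alpha}(\alpha,\beta)_v = \prod_{v\mid\alpha}(\beta,\alpha)_v = \left(\frac{\beta}{\alpha}\right)$. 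Substituting these into the product-formula identity yields
\[
1 = (\alpha,\beta)_\infty \left(\frac{\alpha}{\beta}\right)\left(\frac{\beta}{\alpha}\right),
\]
and since each of the three factors lies in $\{\pm1\}$, rearranging gives $\left(\frac{\alpha}{\beta}\right)\left(\frac{\beta}{\alpha}\right) = (\alpha,\beta)_\infty$, as claimed.

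There is no serious obstacle here; the statement is essentially a repackaging of Lemma~\ref{hilquad}. The one point that needs a word of care is the claim that finite places dividing neither $\alpha$ nor $\beta$ contribute trivially to the product formula — this is where the hypothesis $p \equiv 1 \bmod 4$ (or just that $p$ is odd) enters, guaranteeing odd residue characteristic so that the conic $z^2 = \alpha x^2 + \beta y^2$ with unit coefficients has a $k_v$-point by Hensel. One should also note that $\alpha,\beta$ being relatively prime elements of $R$ and the Legendre symbols $\left(\frac{\alpha}{\beta}\right), \left(\frac{\beta}{\alpha}\right)$ being defined via the multiplicative extension in the denominator means both are nonzero, so the identity is an honest equality in $\{\pm1\}$. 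Finally, one might remark that this recovers the classical Carlitz reciprocity law for $\mathbb{F}_p[T]$, with the factor $(\alpha,\beta)_\infty$ playing the role of the "sign" correction term; with the explicit formula for $(x,y)_\infty$ recorded above one can unwind it into the usual statement in terms of leading coefficients and degrees of $\alpha,\beta$.
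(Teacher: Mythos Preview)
Your proof is correct and follows essentially the same route as the paper: both apply the product formula, note that finite places dividing neither $\alpha$ nor $\beta$ contribute trivially, and invoke Lemma~\ref{hilquad} (together with the symmetry of the Hilbert symbol) to identify the two remaining finite products with the Legendre symbols. The paper's version is more terse, but the content is identical.
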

\begin{proof}
Using the fact that if $v$ doesn't divide $\alpha$ or $\beta$ we have $(\alpha,\beta)_v=1$ together with lemma \ref{hilquad} we have
$$\left(\frac{\alpha}{\beta}\right)\cdot\left(\frac{\beta}{\alpha}\right)=\prod_{v\neq\infty}(\alpha,\beta)_v=(\alpha,\beta)_{\infty}$$ as desired.
\end{proof}

\subsection{The Metaplectic group}\label{metgp}

Following the treatment by Gelbart given in \cite{G}, we define the metaplectic group, $\widetilde{SL}_2(k_{\infty})$, to be the double cover of $SL_2(k_{\infty})$ defined by the following cocycle: For a matrix $g\in SL_2(k_{\infty})$ define $$X(g):=\begin{cases} c& c\neq 0\\ d & \textrm{otherwise}\end{cases}$$ and $$\epsilon(g_1,g_2):=(X(g_1),X(g_2))_{\infty}(X(g_2),X(g_3))_{\infty}(X(g_1),X(g_3))_{\infty}$$ where $g_3=g_1g_2$. That this is actually a cocycle is the main result of \cite{Ku}. Thus elements of $\widetilde{SL}_2(k_{\infty})$ can be written as $(g,\delta)$ where $g\in SL_2(k_{\infty}),\delta=\pm 1$ and the multiplication is
$(g_1,\delta_1)(g_2,\delta_2)=(g_1g_2,\delta_1\delta_2\epsilon(g_1,g_2))$.

In what follows, we will be interested in various operators (for instance Hecke operators), whose definition will depend on splitting properties of $\epsilon$ over various subgroups of $k_{\infty}$. First consider the subgroup $SL_2(\mathcal{O}_{\infty})$. By Lemma 2.9 of \cite{G} we have the following splitting of $\widetilde{SL}_2(k_{\infty})$ over $SL_2(\Ooi)$: For $g\in SL_2(\Ooi)$ define $$\kappa(g):=\begin{cases}(c,d)_{\infty}&\text{if $c\neq 0$ and $c\notin\Ooi^{\times}$} \\ 1&\text{otherwise}\end{cases}$$ Then the map $\iota:SL_2(\Ooi)\rightarrow\widetilde{SL}_2(k_{\infty})$ given by $\iota(g)=(g,\kappa(g))$ is a
group homomorphism. We mention that while Gelbart only works over fields of characteristic 0 an inspection of the proof shows that it carries over verbatim for local fields of odd residual characteristic.

The other subgroup we would like to consider is $SL_2(R)$. Let $\eta$ be the map of sets $\eta:SL_2(R)\rightarrow \widetilde{SL}_2(k_{\infty})$ given by
$$\eta(g)=\begin{cases}(g,\left(\frac{d}{c}\right))&\text{if $c\neq0$}\\ (g,1)&\text{if $c=0$ }\end{cases},$$
where $g=\left(\begin{smallmatrix} a&b\\ c&d\end{smallmatrix}\right)$. Then,\\

\begin{lemma}
$\eta$ is a group homomorphism.
\end{lemma}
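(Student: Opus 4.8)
The content of the lemma is best seen after one unwinds what it is asking. By construction $\eta$ is a set-theoretic section of the central extension $1\to\{\pm1\}\to\widetilde{SL}_2(k_{\infty})\to SL_2(k_{\infty})\to1$ lying over the subgroup $SL_2(R)$. Writing $\mu(g)=\left(\tfrac{d}{c}\right)$ for $g=\left(\begin{smallmatrix}a&b\\c&d\end{smallmatrix}\right)\in SL_2(R)$ (with the convention $\mu(g)=1$ when $c=0$, and noting that $ad-bc=1$ forces $\gcd(c,d)=1$, so that $\mu$ never vanishes), the multiplication law in $\widetilde{SL}_2(k_{\infty})$ gives
\[
\eta(g_1)\,\eta(g_2)=\bigl(g_1g_2,\;\mu(g_1)\,\mu(g_2)\,\epsilon(g_1,g_2)\bigr).
\]
Since $\mu$ is $\{\pm1\}$-valued, the identity $\eta(g_1)\eta(g_2)=\eta(g_1g_2)$ is therefore \emph{equivalent} to
\[
\epsilon(g_1,g_2)=\mu(g_1)\,\mu(g_2)\,\mu(g_1g_2),\qquad g_1,g_2\in SL_2(R),
\]
i.e.\ to the assertion that the restriction of Kubota's cocycle $\epsilon$ to $SL_2(R)$ is the coboundary of the Legendre-symbol cochain $\mu$. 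This is the function-field analogue of Kubota's classical splitting theorem, and the plan is to prove it by a direct computation in which quadratic reciprocity (Lemma~\ref{quadrec}) is the decisive ingredient.

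Write $g_i=\left(\begin{smallmatrix}a_i&b_i\\c_i&d_i\end{smallmatrix}\right)$ for $i=1,2$ and $g_3=g_1g_2$, so $c_3=a_2c_1+d_1c_2$ and $d_3=b_2c_1+d_1d_2$. First I would dispose of the degenerate configurations in which one of $c_1,c_2,c_3$ vanishes; these follow directly from the definitions of $\epsilon$ and $\mu$ together with multiplicativity of the Legendre symbol in both arguments, the explicit formula for $(\cdot,\cdot)_{\infty}$ recorded in \S\ref{not11}, Lemma~\ref{quadrec}, and the hypothesis $p\equiv1\bmod4$ (which makes $(-1,\cdot)_{\infty}$ trivial at $\infty$). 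In the generic case, where all three of $c_1,c_2,c_3$ are nonzero, one has $X(g_i)=c_i$, so $\epsilon(g_1,g_2)=(c_1,c_2)_{\infty}(c_2,c_3)_{\infty}(c_1,c_3)_{\infty}$ and the task is reduced to checking
\[
(c_1,c_2)_{\infty}\,(c_2,c_3)_{\infty}\,(c_1,c_3)_{\infty}=\left(\tfrac{d_1}{c_1}\right)\left(\tfrac{d_2}{c_2}\right)\left(\tfrac{d_3}{c_3}\right).
\]
Here the engine is Lemma~\ref{quadrec} in the form $\left(\tfrac{\alpha}{\beta}\right)\left(\tfrac{\beta}{\alpha}\right)=(\alpha,\beta)_{\infty}$ for coprime $\alpha,\beta$, together with Lemma~\ref{hilquad} and the product formula $\prod_v(\alpha,\beta)_v=1$: these allow one to rewrite each $\left(\tfrac{d_i}{c_i}\right)$ and each $(c_i,c_j)_{\infty}$ as a controlled product of Hilbert symbols over the finite places dividing the $c_i$, after which the matrix relations $c_3=a_2c_1+d_1c_2$, $d_3=b_2c_1+d_1d_2$, $a_id_i-b_ic_i=1$ (and the derived ones $c_1=d_2c_3-d_3c_2$, $c_2=a_1c_3-a_3c_1$) make the finite-place contributions telescope, leaving exactly the right-hand side.

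The hard part will be precisely this last step: organising the products of local Hilbert symbols. In particular one must allow $c_1$ and $c_2$ to share prime factors, so the naive pairing of factors breaks down; the natural fix is to split off $\gcd(c_1,c_2)$ first and reduce to the coprime case by multiplicativity, which keeps the reciprocity manipulation manageable. An alternative, more conceptual route that avoids this bookkeeping is to invoke the splitting of $SL_2(k)$ inside the adelic metaplectic group $\widetilde{SL}_2(\mathbb{A}_k)$ constructed via the global Weil representation in the appendix: restricting that splitting to $SL_2(R)$ and using the canonical local splittings $\iota_P$ at the finite places $P$ — at each of which $g\in SL_2(R)$ already lies in $SL_2(\mathcal{O}_P)$ — yields a homomorphism $SL_2(R)\to\widetilde{SL}_2(k_{\infty})$ whose sign at $\infty$ is $\prod_P\kappa_P(g)$, and by Lemma~\ref{hilquad} together with the symmetry of the Hilbert symbol this product equals $\left(\tfrac{d}{c}\right)=\mu(g)$; since $SL_2(\mathbb{F}_p[T])$ is moreover perfect for $p\geq5$, the splitting over $SL_2(R)$ is unique, which makes the identification clean. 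Either way the proof runs parallel to Kubota~\cite{Ku} and Gelbart~\cite{G}, with Lemma~\ref{quadrec} playing the role of the number-field reciprocity law used there.
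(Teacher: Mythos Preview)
Your proposal is correct and follows essentially the same route as the paper: convert the cocycle $\epsilon(g_1,g_2)=(c_1,c_2)_{\infty}(c_1,c_3)_{\infty}(c_2,c_3)_{\infty}$ into a product over finite places via the product formula, then use the matrix relations $c_3=c_1a_2+d_1c_2$, $a_id_i\equiv 1\pmod{c_i}$, etc., to identify the local contribution at each $v\mid c_i$ with $(c_i,d_i)_v$, and finally invoke Lemma~\ref{hilquad} to recover $\left(\tfrac{d_i}{c_i}\right)$. The paper's proof is extremely terse---the passage from $\prod_{v\neq\infty}(c_1,c_2)_v(c_1,c_3)_v(c_2,c_3)_v$ to $\prod_i\prod_{v\mid c_i}(c_i,d_i)_v$ is asserted in one line---so your explicit flagging of the shared-prime-factor case and the degenerate configurations actually fills in detail the paper omits. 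Your alternative adelic argument (restricting the global splitting $\eta_k$ of Lemma~\ref{modularity} and reading off the sign at $\infty$ from the finite-place splittings $\iota_P$) is also valid and is in fact the viewpoint of the appendix; it trades the case-by-case Hilbert-symbol bookkeeping for the machinery of the Weil representation.
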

\begin{proof}For simplicity we will assume that all of the $c_i$'s are non-zero though the other cases can be treated similarly. We will show that $\epsilon(g_1,g_2)=\prod_{i=1}^{3}\left(\tfrac{d_i}{c_i}\right)$. By the product formula for the Hilbert symbol we have
\begin{align*}
\epsilon(g_1,g_2)&=\prod_{1\leq i<j\leq 3}(c_i,c_j)_{\infty}\\
&=\prod_{v\neq \infty}\prod_{1\leq i<j\leq 3}(c_i,c_j)_v
\end{align*}

We will analyze the factors in the above product for each valuation separately. First note that for $u_1,u_2\in\mathcal{O}^{\times}$ we have $(u_1,u_2)=1$. Therefore the above product is finite and runs over places dividing $c_1c_2c_3$. Furthermore note that since $g_3=g_1g_2$, $c_3=c_1a_2+d_1c_2$. Therefore if $v$ divides two of the $c_i$'s it divides the third. We will calculate the above product separately for each valuation.

Before we start the computation we recall that for $a\in\mathcal{O}_v^{\times}$ and $b\in\varpi_v\mathcal{O}_v$, then $(a,b)_v$ depends only on the square class of $a\bmod b$.

\begin{itemize}
\item Let $v\nmid \gcd(c_1,c_2,c_3)$. In this case there are three possibilities according to $v\mid c_1$, $c_2$ or $c_3$.
\begin{itemize}
\item If $v\mid c_1$, then $v\nmid c_2c_3$, therefore $c_2,c_3\in\mathcal{O}_v^{\times}$ and hence $(c_2,c_3)_v=1$. Therefore $(c_1,c_2)_v(c_1,c_3)_v(c_2,c_3)_v=(c_1,c_2c_3)_v$. Since $c_3=c_1a_2+d_1c_2$, $c_2c_3\equiv d_1c_2^2\bmod c_1$. Therefore $(c_1,c_2c_3)_v=(c_1,d_1c_2^2)_v=(c_1,d_1)_v$.
\item If $v\mid c_2$, then $v\nmid c_1c_3$, therefore $c_1,c_3\in\mathcal{O}_v^{\times}$ and hence $(c_1,c_3)_v=1$. Therefore $(c_1,c_2)_v(c_1,c_3)_v(c_2,c_3)_v=(c_1c_3,c_2)_v$. Since $c_3=c_1a_2+d_1c_2$, $c_1c_3=c_1^2a_2\bmod c_2$. Also since $g_2\in SL_2(R)$ we have $a_2d_2-b_2c_2=\det(g_2)=1$, and therefore $a_2d_2\equiv1\bmod c_2\Rightarrow a_2\equiv d_2^{-1}\bmod c_2$. Combining these we then get $(c_1c_3,c_2)_v=(c_1^2a_2,c_2)_v=(c_1^2d_2^{-1},c_2)_v=(d_2,c_2)_v$.
\item If $v\mid c_3$, then $v\nmid c_1c_2$, therefore $c_1,c_2\in\mathcal{O}_v^{\times}$ and hence $(c_1,c_2)_v=1$. Therefore $(c_1,c_2)_v(c_1,c_3)_v(c_2,c_3)_v=(c_1c_2,c_3)_v$.

\begin{align*}
(c_1c_2,c_3)_v&=(-c_1c_2,c_3)_v\\
&=(-c_1c_2+d_2c_2c_3,c_3)_v\\
&=(-c_1c_2+d_2c_2(c_1a_2+d_1c_2),c_3)_v\\
&=(-c_1c_2+c_2c_1a_2d_2+d_1d_2c_2^2,c_3)_v\\
&=(-c_1c_2+c_2c_1(1+b_2c_2)+d_1d_2c_2^2,c_3)_v\\
&=(c_2^2(c_1b_2+d_1d_2),c_3)_v\\
&=(c_2^2d_3,c_3)_v\\
&=(d_3,c_3)_v
\end{align*}
\end{itemize}

\item Let $v\mid \gcd(c_1,c_2,c_3)$. Note that since $\det(g_i)=1$ for every $i$, $d_i\in\mathcal{O}_v^{\times}$ for every $i$. Let $c_i=u^{t_i}\varpi_v^{k_i}$ where $u\in\mathcal{O}_v^{\times}\backslash (\mathcal{O}_v^{\times})^2$, $k_i\geq1$ and $t_i\in\{0,1\}$. Then
\begin{align*}
(c_1,c_2)_v(c_1,c_3)_v(c_2,c_3)_v&=(u^{t_1}\varpi_v^{k_1},u^{t_2}\varpi_v^{k_2})_v(u^{t_1}\varpi_v^{k_1},u^{t_3}\varpi_v^{k_3})_v(u^{t_2}\varpi_v^{k_2},u^{t_3}\varpi_v^{k_3})_v\\
&=(u^{t_1},\varpi_v^{k_2+k_3})_v(u^{t_2},\varpi_v^{k_1+k_3})_v(u^{t_3},\varpi_v^{k_1+k_2})_v
\end{align*}

Note that since $c_3=c_1a_2+d_1c_2$ and $d_1,a_2\in\mathcal{O}_v^{\times}$, we need to have either $k_1=k_2\leq k_3$ of $k_3=\min\{k_1,k_2\}$. In particular at least two of the $k_1,k_2,k_3$ have to be the same. Then by the above computation we have
\begin{align*}
(c_1,c_2)_v(c_1,c_3)_v(c_2,c_3)_v&=(u^{t_1},\varpi_v^{k_2+k_3})_v(u^{t_2},\varpi_v^{k_1+k_3})_v(u^{t_3},\varpi_v^{k_1+k_2})_v\\
&=1
\end{align*}
On the other hand since $d_3=c_1b_2+d_1d_2$, $d_3\equiv d_1d_2\bmod \varpi_v$. Then we have,
\begin{align*}
(c_1,d_1)_v(c_2,d_2)_v(c_3,d_3)_v&=(c_1,d_1)_v(c_2,d_2)_v(c_3,d_1d_2)_v\\
&=(u^{t_1}\varpi_v^{k_1},d_1)_v(u^{t_2}\varpi_v^{k_2},d_2)_v(u^{t_3}\varpi_v^{k_3},d_1d_2)_v\\
&=1
\end{align*}
Therefore in this case we have $(c_1,d_1)_v(c_2,d_2)_v(c_3,d_3)_v=(c_1,c_2)_v(c_1,c_3)_v(c_2,c_3)_v$.

\end{itemize}

By the above argument we therefore get

 \begin{align*}
\epsilon(g_1,g_2)&=\prod_{1\leq i<j\leq 3}(c_i,c_j)_{\infty}\\
&=\prod_{v\neq \infty}\prod_{1\leq i<j\leq3}(c_i,c_j)_v\\
&= \prod_{v\mid c_1c_2c_3}\prod_{1\leq i<j\leq3}(c_i,c_j))v\\
&=\prod_{i=1}^3\prod_{v\mid c_1}(c_i,d_i)_v\\
&=\prod_{i=1}^3\left(\frac{d_i}{c_i}\right)
 \end{align*}
 as desired.
 \end{proof}

\subsection{Quadratic forms over function fields}

We begin with a review of quadratic forms over function fields. Let $Q(\vec{x})$ be a quadratic form in n-variables,
$\vec{x}=(x_1,x_2,\dots,x_n)$. $Q$ is said to be non-degenerate if its associated bilinear form
$$B(\vec{x},\vec{y})=\frac{1}{2}\left(Q(\vec{x}+\vec{y})-Q(\vec{x})-Q(\vec{y})\right)$$ is non-degenerate.
We also say that a form is \emph{anisotropic} over a field $k$ if it does not represent 0 non-trivially over that field. We remark that
over the field $\mathbb{R}$ of real numbers, the anisotropic forms are precisely the definite ones.

We shall be considering also quadratic forms $Q$ over rings $S$ instead of fields. Given such a ring $S$,a free $S$-lattice $L$, and a quadratic form $Q$ on $L$, we can represent $Q$
as a symmetric matrix $A_Q$ by picking a basis $l_1,l_2,\dots,l_n$ for $L$ over $S$, and writing $A_Q=[Q(l_i,l_j)\cdot\frac{1+\delta_{ij}}{2})]_{\{i,j\}}$ so that
$$Q(\sum_{i=1}^n \alpha_i l_i) = \vec{\alpha}\cdot A_Q\cdot \vec{\alpha}^t$$ where $\vec{\alpha}=(\alpha_1,\alpha_2,\dots,\alpha_n)$ and $\vec{\alpha}^t$ denotes the transpose vector. Changing bases by an element of $g\in GL_n(S)$ amounts to changing $A_Q$ to $gA_Qg^t$. Thus we can define the discriminant $\disc(Q)$ to be the determinant of $A_Q$, and this is a well
defiend element of $S/(S^{\times})^2.$ In particular, note that the ideal $(\disc(Q))$ is well defined in $S$.

\begin{lemma}\label{inverselattice}
Let $Q(x,y,z)$ be an anistropic quadratic form over $k_{\infty}$. Then for all $m\in\mathbb{Z}$, $Q^{-1}(\varpi_{\infty}^m\mathcal{O}_{\infty})$ is an $\mathcal{O}_{\infty}$ lattice in $k_{\infty}^3$.
\end{lemma}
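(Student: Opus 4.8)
The plan is to identify $Q^{-1}(\varpi_{\infty}^m\Ooi)$ with the closed ball of radius $p^{-m/2}$ about the origin for the function $N\colon k_{\infty}^3\to\mathbb{R}_{\ge0}$, $N(v):=|Q(v)|_{\infty}^{1/2}$, to observe that this $N$ is an ultrametric \emph{norm} on $k_{\infty}^3$ (this is the only place anisotropy is really used), so that the ball is automatically an $\Ooi$-submodule, and finally to sandwich that ball between two scalings of the standard lattice $\Ooi^3$ by a compactness argument, which forces it to be a full $\Ooi$-lattice.

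I would first dispose of the soft parts. Since $Q$ is anisotropic, $Q(v)\ne0$ for $v\ne0$, so $N$ is well defined, vanishes only at $0$, and satisfies $N(\alpha v)=|\alpha|_{\infty}N(v)$ because $Q(\alpha v)=\alpha^2Q(v)$. The set $S:=\Ooi^3\setminus\varpi_{\infty}\Ooi^3$ is compact, and $v\mapsto|Q(v)|_{\infty}$ is continuous and strictly positive on it with discrete image, hence bounded above and below there by positive powers of $p$; writing an arbitrary $v\ne0$ as $v=\varpi_{\infty}^{j}v_0$ with $v_0\in S$ and $j\in\mathbb{Z}$ and using homogeneity, one gets integers $b\le a$ with $\varpi_{\infty}^{a}\Ooi^3\subseteq Q^{-1}(\varpi_{\infty}^m\Ooi)\subseteq\varpi_{\infty}^{b}\Ooi^3$ (concretely $a=\lceil(m-m_1)/2\rceil$ and $b=\lceil(m-m_2)/2\rceil$, where $m_1\le v_{\infty}(Q(v_0))\le m_2$ for $v_0\in S$). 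In particular $Q^{-1}(\varpi_{\infty}^m\Ooi)$ is bounded and contains a $k_{\infty}$-basis of $k_{\infty}^3$, and it is visibly closed under multiplication by $\Ooi$.

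The substantive step is the ultrametric inequality $N(v+w)\le\max\!\big(N(v),N(w)\big)$, equivalently that $Q^{-1}(\varpi_{\infty}^m\Ooi)$ is closed under addition. It suffices to verify this after restricting $Q$ to the plane $\langle v,w\rangle$ (the linearly dependent case being trivial), and the restriction of an anisotropic form to a subspace is again anisotropic, so I am reduced to a binary anisotropic form over $k_{\infty}$. Completing the square (legitimate since $\mathrm{char}\,k_{\infty}=p$ is odd) writes such a form, in suitable $k_{\infty}$-linear coordinates $(u,v)$, as $\alpha\big(u^2-\varepsilon v^2\big)=\alpha\cdot N_{L/k_{\infty}}(u+v\sqrt{\varepsilon})$ with $\varepsilon\in k_{\infty}^{\times}$ a nonsquare and $L=k_{\infty}(\sqrt{\varepsilon})$ the corresponding quadratic extension. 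Since $|N_{L/k_{\infty}}(z)|_{\infty}^{1/2}=|z|_{L}$ for the (nonarchimedean) absolute value $|\cdot|_{L}$ on $L$ extending $|\cdot|_{\infty}$, the function $N$ on this plane is $|\alpha|_{\infty}^{1/2}$ times the $k_{\infty}$-linear pullback of $|\cdot|_{L}$, hence satisfies the ultrametric inequality. (Alternatively one may quote the classification of anisotropic ternary forms over a local field of odd residue characteristic, whereby $Q$ is a scalar multiple of the reduced norm on the trace-zero part of the quaternion division algebra, and $N$ is a scalar multiple of the valuation of that division algebra.)

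Putting the pieces together, $\Lambda:=Q^{-1}(\varpi_{\infty}^m\Ooi)$ is an $\Ooi$-submodule of $k_{\infty}^3$ contained in the finite free module $\varpi_{\infty}^{b}\Ooi^3$; as $\Ooi$ is a discrete valuation ring, hence a PID, $\Lambda$ is free of rank $\le3$, and since $\Lambda\otimes_{\Ooi}k_{\infty}=k_{\infty}^3$ (because $\Lambda\supseteq\varpi_{\infty}^{a}\Ooi^3$), the rank is exactly $3$. Thus $\Lambda$ is an $\Ooi$-lattice in $k_{\infty}^3$, as claimed. The one genuine obstacle is the ultrametric inequality for $N$; everything else is elementary topology together with the structure theory of finitely generated modules over a DVR.
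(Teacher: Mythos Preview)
Your proof is correct, but it takes a genuinely different and more conceptual route than the paper's. The paper proceeds by explicit classification: since the residue characteristic is odd, one diagonalizes $Q$, reduces each coefficient to a representative in $\{1,u,\varpi_{\infty},u\varpi_{\infty}\}$ of its square class, scales so the first coefficient is $1$, and then checks that (up to these moves) the only anisotropic ternary form is $Q_0(x,y,z)=x^2+uy^2+\varpi_{\infty}^{-1}z^2$; for this specific $Q_0$ one verifies by hand that $v_{\infty}(Q_0(x,y,z))=\min\{v_{\infty}(x^2),v_{\infty}(uy^2),v_{\infty}(\varpi_{\infty}^{-1}z^2)\}$ (the odd/even parity of the three valuations together with $u$ being a nonresidue prevents any cancellation), so $Q_0^{-1}(\varpi_{\infty}^m\Ooi)$ is visibly a box $\varpi_{\infty}^{a_1}\Ooi\times\varpi_{\infty}^{a_2}\Ooi\times\varpi_{\infty}^{a_3}\Ooi$. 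Your argument instead packages the ``no cancellation'' phenomenon as the ultrametric inequality for $N(v)=|Q(v)|_{\infty}^{1/2}$, reduces that to the binary case by restricting to planes, and there recognizes an anisotropic binary form as (a scalar times) the norm form of a quadratic extension, so that $N$ is literally the extended absolute value. The paper's approach is quicker and yields the lattice explicitly; yours is coordinate-free, works verbatim over any nonarchimedean local field of odd residue characteristic (and, via the alternative quaternion remark you mention, even identifies $N$ with the valuation on the division algebra), and separates cleanly the one nontrivial input (ultrametricity) from the soft topology and DVR module theory.
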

\begin{proof}
First note that since we are not over a field of even characteristic, we can diagonalize the form $Q$, so we can assume it is of the form $Q(x,y,z)=ax^2+by^2+cz^2$. Only the square-class of $a,b,c$ is relevant, so each of $a,b,c$ can be assumed to be one of $1,u,T,uT$, for $u$ a quadratic non-residue in
$\mathbb{F}_p$. Moreover, we can scale to assume that $a=1$. Note that the operations of changing basis and scaling done so far do not affect the statement of
the lemma.

Next, for $Q$ to be anisotropic, we can not have both $b$ and $c$ be elements of $\mathbb{F}_p$. Going through the cases, we see that by scaling (we note that scaling does not effect the statement of the lemma) and changing
basis we can turn every ternary anisotropic form into $Q_0(x,y,z)=x^2+uy^2+Tz^2$. Now, it is easily seen that $v_{\infty}(x^2+uy^2+Tz^2)=\max\{v_{\infty}(x^2),v_{\infty}(uy^2),v_{\infty}(Tz^2)\}$ and so $Q_0^{-1}(\Ooi) = \{(x,y,z)\mid x,y\in\Ooi,z\in\varpi_{\infty}\Ooi\}.$
\end{proof}

\subsection{Background on symmetric spaces}\label{backgr}
%Let $G$ will denote the algebraic group $PGL_2$, and $K$ will be the field $\mathbb{F}_p(t)$. Also, we define the Adeles
%$$\mathbb{A}=\prod_{v}^{'}[K_v : \mathcal{O}_V]$$ to be the restricted direct product over all valuations $v$ of $K$ of the
%completion $K_v$ with respect to the ring of integers $\mathcal{O}_V$, and define $\mathbb{O}=\prod_v \mathcal{O}_V.$ Now, since we are in a function
%field setting, the valuations of $K$ correspond to closed points on the non-singular complete algebraic curve corresponding to $K$;
%in this case, the projective space $\mathbb{P}^1(\mathbb{F}_p)$. The symmetric space
%
%$$X := G(K)\backslash G(\mathbb{A})\slash G(\mathbb{O})$$

We denote by $\mathbb{H}$ the ``upper half plane"  $$\mathbb{H} = PGL_2(k_{\infty})\slash PGL_2(\mathcal{O}_{\infty})$$ A complete set of coset
representatives for $\mathbb{H}$ is
$$\mathbb{H}=\setdef{
\left(\begin{smallmatrix}
y & x \\
0 & 1
\end{smallmatrix}\right)}{y\in k_{\infty}^{\times}\slash \mathcal{O}_{\infty}^{\times},\; x\in k_{\infty}\slash y\mathcal{O}_{\infty}}$$
Note that $\{T^m\mid m\in\mathbb{Z}\}$ forms a complete set of representatives for $y$. The space $\mathbb{H}$ carries a measure invariant under the action of $PGL_2(k_{\infty})$, which, in the $(x,y)$ coordinates, is given by $$\frac{dx\,dy}{|y|^2}$$ where $dx$ and $dy$ are normalized to give $\mathcal{O}_{\infty}$ measure 1.

\begin{subsubsection}{Metaplectic spaces}\label{metsp}

Recall from \S\ref{metgp} that we have a group homomorphism $\iota:SL_2(\Ooi)\rightarrow\widetilde{SL}_2(k_{\infty})$ given by $g\rightarrow (g,\kappa(g))$.
We will also be interested in functions on the space $$\widetilde{\mathbb{H}}=\widetilde{SL}_2(k_{\infty})/\iota(SL_2(\Ooi)).$$
A complete set of representatives for $\Tt$ is given by
$$\Tt  = \setdef{\left(\left(\begin{smallmatrix}
\sqrt{v} & u/\sqrt{v} \\
0 & 1/\sqrt{v} \\
\end{smallmatrix}\right),\pm1\right)}{ v\in (k_{\infty}^{\times})^2/(\mathcal{O}^{\times})^2,\; u\in k_{\infty}\slash v\mathcal{O}_{\infty}}$$
 Note that a function $F:\widetilde{SL}_2(k_{\infty})\rightarrow \mathbb{C}$ which is invariant under the central element $(Id,-1)\in \widetilde{SL}_2(k_{\infty})$ descends to function on $SL_2(k_{\infty})$. We would like to disregard these functions which are not ``really'' metaplectic. We thus insist on our functions $f$ to satisfy $F((Id,-1)\cdot g)=-F(g)$.
 If $F$ is a function on $\Tt$ and $g\in SL_2(k_{\infty})$ we often write $F(g)$ to mean $F((g,1))$ by abuse of notation.

The arithmetic of the metaplectic group enters when we study functions on $\widetilde{SL}_2(k_{\infty})$ which are left invariant under $\eta(SL_2(R))$. We also note that the metaplectic group is considered from the adelic viewpoint in \S\ref{sectiongauss}.
%For $\gamma=\left(\begin{smallmatrix}a&b\\c&d\end{smallmatrix}\right)\in SL_2(R)$ and $g\in\Tt$ define the \emph{standard multiplier} by
%\[j(\gamma,g)=\left(\frac{d}{c}\right)\kappa(g)\kappa(\gamma g)\epsilon(\gamma,g).\] This is motivated by the following lemma
%\begin{lemma}
%Let $\gamma\in SL_2(R)$. A function $F$ on $\Tt$ satisfies $F(\gamma g)=j(\gamma,g)F(g)$ for all $g\in\Tt$ if and only if $\widetilde{F}$ is left-invariant
%under $\eta(\gamma)$.
%\end{lemma}
%\begin{proof}
%For $g\in SL_2(R)$, we have
%\begin{align*}
%\widetilde{F}(\eta(\gamma)\iota(g))&=\widetilde{F}(\gamma g, \epsilon(\gamma,g)\kappa(k(g))\left(\frac{d}{c}\right))\\
%&= \widetilde{F}(\gamma g,\kappa(\gamma g)\cdot j(\gamma,g))\\
%&= j(\gamma,g)F(\gamma g)\\
%\end{align*}
%as desired.
%\end{proof}

\end{subsubsection}

\subsubsection{Depth $1$}

For the application to anisotropic quadratic forms, we shall also need to talk about automorphic forms with certain ramification type at the place ``$\infty$''. More precisely, for each integer $n\geq0$ we introduce the subgroups $K_0(\varpi_{\infty}^n)$ and $\widetilde{K}_0(\varpi_{\infty}^n)$,
\[K_0(\varpi_{\infty}^n)=\begin{setdef}{\left(\begin{smallmatrix}a&b\\c&d\end{smallmatrix}\right)\in PGL_2(\mathcal{O}_{\infty})}{c\equiv0\mod{\varpi_{\infty}^n}}\end{setdef}\]
\[\widetilde{K}_0(\varpi_{\infty}^n)=\begin{setdef}{\left(\begin{smallmatrix}a&b\\c&d\end{smallmatrix}\right)\in SL_2(\mathcal{O}_{\infty})}{c\equiv0\mod{\varpi_{\infty}^n}}\end{setdef}\]

and denote $PGL_2(k_{\infty})\slash K_0(\varpi_{\infty})\textrm{ and } \widetilde{SL}_2(k_{\infty})\slash \iota(\widetilde{K}_0(\varpi_{\infty}))$ by  $\mathbb{H}_1$ and $\Tt_1$, respectively.

$\mathbb{H}_1$ can be decomposed into two ``components" with natural coordinate systems, which we will denote by $\mathbb{H}_1^u$ and $\mathbb{H}_1^l$ and refer to as the `\emph{upper}' and `\emph{lower}' components respectively. A set of representatives
for the upper component $\mathbb{H}_1^u$ is
\begin{equation}\label{measu}
\mathbb{H}^u_1=\setdef{\left(\begin{smallmatrix}
y & x \\
0 & 1
\end{smallmatrix}\right)}{y\in k_{\infty}^{\times}\slash \mathcal{O}_{\infty}^{\times},\; x\in k_{\infty}\slash y\mathcal{O}_{\infty}}
\end{equation}
and a set of representatives for the lower part $\mathbb{H}_1^l$ is given by
\begin{equation}\label{measl}
\mathbb{H}^l_1=\setdef{\left(\begin{smallmatrix}
x & y \\
1 & 0
\end{smallmatrix}\right)}{y\in k_{\infty}^{\times}\slash \mathcal{O}_{\infty}^{\times},\; x\in k_{\infty}\slash y\varpi_{\infty}\mathcal{O}_{\infty}}
\end{equation}

The space $\mathbb{H}_1$ has a measure invariant under left multiplication by $PGL_2(k_{\infty})$ exactly as we did before. We will normalize our measure so that it is compatible with the measure on $\mathbb{H}$ under the projection map $\mathbb{H}_1\rightarrow \mathbb{H}$. Each point $z\in\mathbb{H}$ has $p+1$ pre-images in $\mathbb{H}_1$, and in our coordinate system we have
\[pr^{-1}\left(\begin{smallmatrix}y&x\\0&1\end{smallmatrix}\right)=\left\{\left(\begin{smallmatrix}y&x\\0&1\end{smallmatrix}\right)\right\}\cup\begin{setdef}{\left(\begin{smallmatrix}x+jy&y\\1&0\end{smallmatrix}\right)}{j\in \mathbb{F}_p}\end{setdef}\]

The measure normalization we will use on the upper and lower components will be different because of the compatibility with the measure on $\mathbb{H}$. More precisely, for a fixed $y$, the point $\left(\begin{smallmatrix}y&x\\ 0&1\end{smallmatrix}\right)\in\mathbb{H}_1^u$ and the point $\left(\begin{smallmatrix}x&y\\1&0\end{smallmatrix}\right)\in \mathbb{H}_1^l$ have different measures, with respect to the measure $\mu$, when considered as an equivalence class. This is because $x$ is defined $\bmod\, y\mathcal{O}_{\infty}$ on $\mathbb{H}_1^u$, which gives the equivalence class of $\left(\begin{smallmatrix}y&x\\0&1\end{smallmatrix}\right)$ mass $1$, whereas since $x$ is defined $\bmod\, y\varpi_{\infty}\mathcal{O}_{\infty}$ on $\mathbb{H}_1^l$, the equivalence class of $\left(\begin{smallmatrix}x&y\\1&0\end{smallmatrix}\right)$ gets mass $p^{-1}$. We will normalize our measure to compensate for the power of $p$. Thus, the invariant measure which pushes forward to $dxdy$ on $\mathbb{H}$ is
\[\mathbb{H}_1^u\longmapsto\frac{1}{p+1}\frac{dx\:dy}{|y|^2}\]
\[\mathbb{H}_1^l\longmapsto\frac{p}{p+1}\frac{dx\:dy}{|y|^2}\]

and this is the measure we will be using.

Likewise, $\Tt$ has two ``components", $\Tt^{u}_1$ and $\Tt^{l}_1$. A set of representatives for $\Tt_1^{u}$ is $$\Tt_1^u=\setdef{
\left(\left(\begin{smallmatrix}
v^{1/2} & u/v^{1/2} \\
0 & 1/v^{1/2} \\
\end{smallmatrix}\right),\pm1\right)}{ v\in (k_{\infty}^{\times})^2/(\mathcal{O}^{\times})^2,\; u\in k_{\infty}\slash v\mathcal{O}_{\infty}}$$

and a representative set for $\Tt^{l}_1$ is $$\Tt_1^l=\setdef{
\left(\left(\begin{smallmatrix}
 u/v^{1/2} & v^{1/2}\\
1/v^{1/2} & 0\\
\end{smallmatrix}\right),\pm1\right)}{v\in (k_{\infty}^{\times})^2/(\mathcal{O}^{\times})^2,\; u\in k_{\infty}\slash v\varpi_{\infty}\mathcal{O}_{\infty}}$$

Reasoning as above we get the invariant measures on the upper and lower components:

\[\Tt_1^u\longmapsto\frac{1}{p+1}\frac{du\:dv}{|v|^2}\]
\[\Tt_1^l\longmapsto\frac{p}{p+1}\frac{du\:dv}{|v|^2}\]

\subsection{Automorphic and metaplectic functions}

\subsubsection{Automorphic Functions} We denote $PGL_2(R)$ by $\Gamma,$ and for $N\in R$, we define $\Gamma_0(N)\subset\Gamma$ to be
$$\Gamma_0(N) = \setdef{\left(\begin{smallmatrix} a& b\\ c & d\end{smallmatrix}\right)\in PGL_2(R)}{c\equiv0\mod N}$$

We define the space of automorphic functions of level $N$, depth $0$,
to be the space of complex valued function on $\Gamma_0(N)\backslash\mathbb{H}.$
 $$M_0(\Gamma_0(N))=\{\phi:\Gamma_0(N)\backslash\mathbb{H}\rightarrow\mathbb{C}\}$$

Such a $\phi$ is moreover called \emph{cuspidal} if for any unipotent subgroup $U$ of $PGL_2(k_{\infty})$ such that $U\cap\Gamma_0(N)\neq 1$, the following
identity holds:
\begin{equation}\label{cuspidal}
\int_{\Gamma_0(N)\cap U\backslash U} \phi(nz) dn= 0,\textrm{   }\forall z\in \mathbb{H}
\end{equation}

We denote the space of cuspidal automorphic functions of level $N$, depth 0, by $S_0(\Gamma_0(N))$. If we take $U$ to be the upper-triangular matrix group
$U_{\infty} = \left\{\left(\begin{smallmatrix} 1& x\\ 0 & 1\end{smallmatrix}\right)\mid x\in k_{\infty}\right\}$
then (\ref{cuspidal}) says that the constant term in the Fourier expansion of $\phi\left(\begin{smallmatrix} y& x\\ 0 & 1\end{smallmatrix}\right)$ with respect to $x,$ vanishes. Cuspidality means that this is true for all the cusps of $\Gamma_0(N)\backslash\mathbb{H}$. It is well-known (cf. Corollary 1.2.3 of \cite{Ha}) that
the functions in $S_0(\Gamma_0(N))$ are supported on finitely many points and thus $S_0(\Gamma_0(N))$ is finite dimensional.

\subsubsection{Metaplectic Functions} We denote $\eta(SL_2(R))$ by $\widetilde{\Gamma},$ and for $N\in R$ we define $\widetilde{\Gamma}_0(N)\in\widetilde{\Gamma}$ to be
$$\widetilde{\Gamma}_0(N) = \setdef{\eta\left(\left(\begin{smallmatrix} a& b\\ c & d\end{smallmatrix}\right)\right)\in \widetilde{\Gamma}}{ c\equiv0\mod N}$$	
We define the space of metaplectic functions of level $N$ and depth $0$, $\widetilde{M}_0(\widetilde{\Gamma}_0(N))$ as follows.
$$\widetilde{M}_0(\widetilde{\Gamma}_0(N))=\begin{setdef}{F:\Tt\rightarrow\mathbb{C}}{F(\gamma w) = F(w)=-F(w\cdot(Id,-1)) ,\textrm{   }\forall \gamma\in \widetilde{\Gamma}_0(N)}\end{setdef}.$$

Such an $F$ is moreover called \emph{cuspidal} if it satisfies
\begin{equation}\label{mcuspidal}
\int_{\Gamma_0(N)\cap U\backslash U} \phi(\eta(n)w) dn= 0,\textrm{   }\forall w\in \Tt
\end{equation}  for every unipotent subgroup $U$ of $SL_2(k_{\infty})$ such that $U\cap \Gamma_0(N)\neq1$. We denote the space of cuspidal metaplectic functions of level $N$, and depth $0$ by $\widetilde{S}_0(\widetilde{\Gamma}_0(N)).$ As before (cf. \cite{Ha}, in particular Lemma 1.2.2, which works equally well over the metaplectic group), the functions in $\widetilde{S}_0(\widetilde{\Gamma}_0(N))$ are supported on finitely many $\widetilde{\Gamma}_0(N)$-orbits, and so
$\dim\left(\widetilde{S}_0(\widetilde{\Gamma}_0(N))\right)<\infty.$ \\

%\begin{rmk}Since the projection $\widetilde{SL}_2(R)\rightarrow SL_2(R)$ has a section (see Lemma \ref{modularity}), it is enough to consider the action of $SL_2(R)$ in order to define metaplectic functions.
%\end{rmk}

\subsubsection{Depth $1$}

If a function $\phi$ (resp. $F$) on $PGL_2(k_{\infty})$ (resp. $\widetilde{SL}_2(k_{\infty})$) satisfies all the conditions of being automorphic, with respect to the congruence subgroup $\Gamma_0(N)$ for some $N$ (resp. metaplectic) with respect to $\widetilde{\Gamma}_0(N)$), but with the condition of right invariance under $PGL_2(\mathcal{O}_{\infty})$ (resp. $\iota(SL_2(\mathcal{O}_{\infty}))$) weakened
to right invariance under $K_0(\varpi_{\infty})$ (resp. $\iota(\widetilde{K}_0(\varpi_{\infty}))$),  then $\phi$ (resp. $F$) is called an automorphic function of level $N$, depth $1$. (resp. metaplectic function of level $N$, depth $1$) Denote the space of all automorphic functions of level $N$, depth $1$, by $M_1(\Gamma_0(N)).$ Define the spaces $S_1(\Gamma_0(N)), \widetilde{M}_1(\widetilde{\Gamma}_0(N)),\textrm{ and }\widetilde{S}_1(\widetilde{\Gamma}_0(N))$ analogously.

\subsubsection{Fourier expansions}\label{Fourierexpand}

Let $\phi(z)$, on $\mathbb{H}$, and $F(w)$, on $\Tt$, be an automorphic and a metaplectic function of depth $0$ respectively. Since $\phi(z)$ and $F(w)$ are left invariant under the upper triangular groups
$\left\{\left(\begin{smallmatrix} 1 & a\\ 0 & 1\end{smallmatrix}\right)| a\in R\right\},$ and$\left\{\eta\left(\left(\begin{smallmatrix} 1 & a\\ 0 & 1\end{smallmatrix}\right)\right)\mid a\in R\right\} $ we can Fourier expand $\phi(z)$ and $F(w)$ for $z=\left(\begin{smallmatrix} y & x\\ 0 & 1\end{smallmatrix}\right)$ and $w=\left(\left(\begin{smallmatrix} \sqrt{v}& u/\sqrt{v}\\ 0 & 1/\sqrt{v}\end{smallmatrix}\right),1\right)$, as follows:
$$\phi(z)=\sum_{a\in R}e(T^2a   x)\phi_a(y)$$
\[F(w)=\sum_{a\in R}e(T^2au)F_a(v)\]
(Note that the introduction of the $T^2$ factor is because under our normalization of the character $e(x)$, the orthogonal complement of the ring $R$ is $T^2R$.) Since $x$ is only defined up to addition by $y\mathcal{O}_{\infty}$, and $u$ is defined modulo $v\mathcal{O}_{\infty}$, we must have $\phi_a(y) = 0$ unless $v_{\infty}(ay)>1$, and respectively $F_a(v)=0$ unless $v_{\infty}(av)>1$. Moreover, if $\phi(z)$ (or $F(w)$) is a cusp form, then
$\phi_0(y) = 0$ (and $F_0(v)=0$ respectively). By convention, we write $\phi_a(y)=0$ (respectively $F_a(v)=0$) when $a\notin R.$

\subsubsection{Depth $1$}\label{Fourierexpandinfty}

Let $z = \left(\begin{smallmatrix} y & x\\ 0 &1 \end{smallmatrix}\right)$ (or $z=\left(\begin{smallmatrix}x&y\\1&0\end{smallmatrix}\right)$ depending on whether we are on $\mathbb{H}_1^u$ or $\mathbb{H}_1^l$ respectively), and
$w=\left(\left(\begin{smallmatrix} \sqrt{v}& u/\sqrt{v}\\ 0& 1/\sqrt{v}\end{smallmatrix}\right),1\right)$ (or $w=\left(\left(\begin{smallmatrix} u/\sqrt{v} & \sqrt{v}\\ 1/\sqrt{v} & 0\end{smallmatrix}\right),1\right)$ depending on whether we are on $\Tt_1^u$ or $\Tt_1^l$ respectively) denote our chosen coordinates as before, and let $\phi(z)$ be an automorphic function of depth $1$ on $\mathbb{H}_1$, and $F(w)$ a metaplectic function of depth 1 on $\Tt_1$. Then reasoning as above, for both $\phi$ and $F$ we have two Fourier expansions, for the upper and lower components, as follows:

For $z\in\mathbb{H}_1^u$ and $w\in\Tt_1^u$ we have;
$$\phi(z)=\sum_{a\in R}e(T^2ax)\phi^u_a(y)$$
\[F(w)=\sum_{a\in R}e(T^2au)F_a^u(v)\]
For $z\in\mathbb{H}_1^l$ and $w\in\Tt_1^l$ we have;
$$\phi(w)=\sum_{a\in R}e(T^2ax)\phi^l_a(y)$$
\[F(w)=\sum_{a\in R}e(T^2au)F_a^l(v)\]
We reason as before that $\phi^u_a(y)$ and $F_a^u(v)$ are $0$ unless $v_{\infty}(ay)>1$ and $v_{\infty}(av)>1$ respectively, and $\phi^l_a(y)$ and $F_a^l(v)$ are $0$ unless $v_{\infty}(ay)>0$ and $v_{\infty}(av)>0$ respectively. Moreover, if $\phi(z)$, or $F(x)$, is cuspidal then $\phi^l_0(y)=\phi^u_0(y)=0$, and $F_0^u(w)=F_0^l(w)=0$.

\subsection{Petersson inner product}
Given automorphic functions $\phi_1,\phi_2$ of level $N$ and depth $0$, we define the Petersson inner product of $\phi_1$ and $\phi_2$ to be
$$\langle \phi_1,\phi_2\rangle = [\Gamma:\Gamma_0(N)]^{-1}\int_{\Gamma_0(N)\backslash\mathbb{H}} \phi_1(z)\overline{\phi_2(z)}d\mu(z)$$ where $d\mu(z)$ is the left invariant measure as defined in \S\ref{backgr}.

As in the number field case, this is a positive definite inner product with respect to which all the Hecke operators  $T_P$ (defined in the next section) for $(P,N)=1$ are self-adjoint. We define the Petersson inner product for the other spaces of functions that we have introduced analogously, and note that because of the measure normalizations on $\mathbb{H}_1$, the norm of a automorphic (or metaplectic) function is independent of its realization in $\mathbb{H}$ or in $\mathbb{H}_1$, as an old vector (similarly for the metaplectic functions).

\subsection{Whittaker Functions and Hecke Operators}

\subsubsection{Whittaker Functions on $\mathbb{H}$} In this section we define the analogue of the Laplacian on the space $\mathbb{H}$. As there is no
differential structure, we will define it as a convolution operator (the Hecke operator at $\infty$). Specifically, we shall define $\Delta$ to be right convolution with the double coset
$$PGL_2(\mathcal{O}_{\infty})\left(\begin{smallmatrix} \varpi_{\infty} & 0\\ 0 & 1\end{smallmatrix}\right) PGL_2(\mathcal{O}_{\infty})$$
and we normalize it (as in the classical Hecke operators, where weight is taken to be $0$) by dividing by $p$. Given an automorphic function $\phi(z)$ on $\mathbb{H}$ we can Fourier expand it as in \S\ref{Fourierexpand}:
$$\phi(z)=\sum_{a\in R}e(T^2ax)\phi_a(y)$$
Then the action of the Laplacian operator $\Delta$ on $\phi$ is realized by
\begin{equation}\label{Laplacianaction}
\Delta(\phi)(z)=p^{-1}\sum_{a\in R}e(T^2ax)\phi_a(yT)+p^{-1}\sum_{a\in R}\sum_{j\in\mathbb{F}_p}e(T^2a(x+jy))\phi_a\left(yT^{-1}\right)
\end{equation}
We will call automorphic functions $\phi$ that are eigenfunctions of $\Delta$, \emph{automorphic forms}. Now let $\phi$ be an automorphic form with Laplacian eigenvalue $\lambda_{\infty,\phi}$. We are interested in the relation between
the Fourier coefficients of $\phi$ and the Fourier coefficients of $\Delta(\phi)$. \\

\begin{lemma}\label{Laplacianrelation}Let $\chi_{\mathcal{O}_{\infty}}$ denote the characteristic function of $\mathcal{O}_{\infty}$. Then,

\[\lambda_{\infty,\phi}\:\phi_a(y)=p^{-1}\phi_a(yT)+\chi_{\Ooi}(ayT^2)\phi_a(yT^{-1})\]

\end{lemma}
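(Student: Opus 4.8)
The plan is to read off the identity directly from the explicit formula \eqref{Laplacianaction} for the action of $\Delta$ on a Fourier expansion, by comparing Fourier coefficients on both sides. First I would recall that by definition $\phi$ is an eigenfunction of $\Delta$ with eigenvalue $\lambda_{\infty,\phi}$, so that $\Delta(\phi)(z) = \lambda_{\infty,\phi}\,\phi(z) = \lambda_{\infty,\phi}\sum_{a\in R} e(T^2 a x)\phi_a(y)$. Thus the $a$-th Fourier coefficient (with respect to $x$, against the character $e(T^2 a x)$) of $\Delta(\phi)$ is simply $\lambda_{\infty,\phi}\,\phi_a(y)$.

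Next I would compute the $a$-th Fourier coefficient of the right-hand side of \eqref{Laplacianaction}. The first sum, $p^{-1}\sum_{a\in R} e(T^2 a x)\phi_a(yT)$, contributes $p^{-1}\phi_a(yT)$ immediately. For the second term, $p^{-1}\sum_{a\in R}\sum_{j\in\F_p} e(T^2 a (x+jy))\phi_a(yT^{-1})$, I would factor $e(T^2 a(x+jy)) = e(T^2 a x)\, e(T^2 a j y)$ and carry out the sum over $j\in\F_p$ first. The inner sum is $\phi_a(yT^{-1})\sum_{j\in\F_p} e(T^2 a j y)$, and the character sum $\sum_{j\in\F_p} e(j\cdot T^2 a y)$ equals $p$ when $T^2 a y \in \mathcal O_\infty$ (i.e.\ when $e(T^2 a y)=1$, using that $\mathcal O_\infty$ is the conductor of $e$) and $0$ otherwise; that is, it equals $p\,\chi_{\Ooi}(ayT^2)$. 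Hence the second term contributes $p^{-1}\cdot p\,\chi_{\Ooi}(ayT^2)\,\phi_a(yT^{-1}) = \chi_{\Ooi}(ayT^2)\,\phi_a(yT^{-1})$ to the $a$-th Fourier coefficient. Equating the two expressions for the $a$-th coefficient gives exactly
\[\lambda_{\infty,\phi}\,\phi_a(y) = p^{-1}\phi_a(yT) + \chi_{\Ooi}(ayT^2)\,\phi_a(yT^{-1}),\]
as claimed.

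There is essentially no serious obstacle here; the only point requiring a little care is the justification of the character-sum evaluation $\sum_{j\in\F_p} e(j c) = p\,\chi_{\Ooi}(c)$ for $c = T^2 a y \in k_\infty$. This follows from the additivity of $e$ and the fact, recorded in \S\ref{not11}, that $e$ has conductor $\Ooi$, so $e(c)=1$ iff $c\in\Ooi$, and in that case every term of the sum is $1$; otherwise $e(c)$ is a nontrivial $p$-th root of unity and the geometric sum vanishes. One should also note that the formal interchange of the (locally finite) Fourier sum with the finite sum over $j$ is harmless, and that matching coefficients against the orthonormal characters $e(T^2 a x)$ of $k_\infty/(T^{-2}\mathcal O_\infty)$ is legitimate because both sides lie in the same space of functions with the stated Fourier expansion. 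This completes the proof.
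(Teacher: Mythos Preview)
Your approach is the same as the paper's: both read the identity off from \eqref{Laplacianaction} by comparing Fourier coefficients. However, your evaluation of the character sum has a small but genuine error. You assert that $\sum_{j\in\F_p} e(jc) = p\,\chi_{\Ooi}(c)$ for $c=T^2 a y$, justifying this by ``$e(c)=1$ iff $c\in\Ooi$''. The latter equivalence is false: the conductor condition in \S\ref{not11} only says that $e$ is trivial on $\Ooi$ and nontrivial on $T\Ooi$, but $e(c)=1$ can certainly hold for $c\notin\Ooi$ (for instance $c=T^2$ has vanishing $T$-coefficient, so $e(T^2)=1$). The correct statement is that $\sum_{j\in\F_p} e(jc)$ equals $p$ when the $T$-coefficient of $c$ vanishes and $0$ otherwise; this agrees with $p\,\chi_{\Ooi}(c)$ only in the range $v_\infty(c)\ge -1$.

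The paper fixes exactly this point: it first invokes \S\ref{Fourierexpand} to note that $\phi_a(yT^{-1})=0$ unless $v_\infty(ay)>0$, so one may restrict attention to $v_\infty(ay)\ge 1$, i.e.\ $v_\infty(T^2 a y)\ge -1$. In that range your character-sum identity is valid, and then the conclusion follows as you wrote. So your argument becomes correct once you insert this vanishing observation before evaluating the sum over $j$; without it, the formula $p\,\chi_{\Ooi}(ayT^2)$ for the inner sum is not justified.
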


\begin{proof}
By (\ref{Fourierexpand}) we know that $\phi_a(yT^{-1}) = 0$ unless $v_{\infty}(ay)>0$, and in this case  the character sum over $j$ in (\ref{Laplacianaction}) vanishes iff $v_{\infty}(ay)=1.$ The statement now follows directly from (\ref{Laplacianaction}).
\end{proof}

Since, for fixed $a\in R$, $\phi_a(y)$ depends only on $v_{\infty}(y),$ we can determine $\phi_a(y)$ up to a constant using Lemma \ref{Laplacianrelation}.
We denote $\phi_1(T^{-n})$ by $c(n)$. Then the $c(n)$'s satisfy the recursion
\[\lambda_{\infty,\phi}\:c(n)=\frac{1}{p}c(n-1)+\delta_{n\geq 2}c(n+1)\]
and by \S\ref{Fourierexpand},  $c(n)=\phi_1(T^{-n})=0$ when $v_{\infty}(T^{-n})\leq 1$, i.e. for $n<2$. We write $\lambda_{\infty,\phi}$ as
$$\lambda_{\infty,\phi}\:p^{1/2}=e^{i\theta} + e^{-i\theta}$$
where\footnote{It will turn out that $\theta\in\mathbb{R}$ but we do not use it here.} $e^{i\theta}\in\mathbb{C}$. We solve the 2-step recursion to get the following

\begin{lemma}\label{wh} For $n\geq2$
\[c(n)=\frac{p^{\frac{1}{2}}}{e^{i\theta}-e^{-i\theta}}\left[\left(\frac{e^{i\theta}}{p^{\frac{1}{2}}}\right)^{n-1}-\left(\frac{e^{-i\theta}}{p^{\frac{1}{2}}}\right)^{n-1}\right]c(2)\]
and $c(n)=0$ for $n<2$ (We note that this is a special case of the Cassleman-Shalika formula, see \cite{CasSha}.).
\end{lemma}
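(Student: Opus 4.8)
The plan is to treat the two-step recursion
\[\lambda_{\infty,\phi}\,c(n)=\tfrac1p c(n-1)+\delta_{n\geq 2}c(n+1)\]
as a standard constant-coefficient linear recurrence and solve it explicitly. First I would separate the boundary data from the body of the recurrence: for $n\geq 3$ it reads $c(n+1)=\lambda_{\infty,\phi}\,c(n)-\tfrac1p c(n-1)$, whereas the case $n=2$, combined with $c(1)=0$ (recorded in \S\ref{Fourierexpand}), yields the single boundary relation $c(3)=\lambda_{\infty,\phi}\,c(2)$. Hence the whole sequence $(c(n))_{n\geq 2}$ is pinned down by $c(2)$ once the recurrence is solved.

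Next I would pass to the characteristic equation $x^2-\lambda_{\infty,\phi}\,x+\tfrac1p=0$. Writing $\lambda_{\infty,\phi}\,p^{1/2}=e^{i\theta}+e^{-i\theta}$, its discriminant equals $\lambda_{\infty,\phi}^2-\tfrac4p=(e^{i\theta}-e^{-i\theta})^2/p$, so the two roots are exactly $\alpha:=e^{i\theta}/p^{1/2}$ and $\beta:=e^{-i\theta}/p^{1/2}$. Consequently each of the sequences $n\mapsto\alpha^{\,n-1}$ and $n\mapsto\beta^{\,n-1}$ satisfies the recurrence, and so does every linear combination of them.

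I would then verify the stated closed form $c(n)=\frac{p^{1/2}}{e^{i\theta}-e^{-i\theta}}\bigl(\alpha^{\,n-1}-\beta^{\,n-1}\bigr)c(2)$ directly against the boundary data: at $n=2$ it collapses to $c(2)$, and at $n=3$ it equals $\tfrac{\alpha^2-\beta^2}{\alpha-\beta}\,c(2)=(\alpha+\beta)c(2)=\lambda_{\infty,\phi}\,c(2)$, which matches $c(3)$. Since this expression is a combination of the two characteristic solutions it also satisfies the recurrence, so an immediate induction on $n$ shows it agrees with $c(n)$ for every $n\geq 2$; the vanishing $c(n)=0$ for $n<2$ is nothing but the statement already extracted from \S\ref{Fourierexpand}.

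The computation is entirely routine, so there is no genuine obstacle; the only points needing a bit of care are the bookkeeping at the boundary (keeping the case $n=2$ separate from $n\geq 3$ and invoking $c(1)=0$) and the degenerate case $e^{i\theta}=e^{-i\theta}$, i.e.\ $\theta\in\{0,\pi\}$, in which the displayed formula is a $0/0$ expression to be read as its limit $c(n)=(n-1)\,(e^{i\theta}/p^{1/2})^{\,n-2}c(2)$, the usual repeated-root solution.
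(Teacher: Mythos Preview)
Your argument is correct and is exactly the routine computation the paper has in mind: the text simply says ``We solve the 2-step recursion to get the following'' without further detail, and your characteristic-equation approach with verification of the boundary values $c(2)$ and $c(3)$ is the standard way to carry this out. Your remark on the degenerate case $e^{i\theta}=e^{-i\theta}$ is a helpful addition that the paper leaves implicit.
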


Before we go on, we shall take a moment to motivate the definitions of Whittaker functions to come. Recall that in the classical theory of automorphic forms one asks for automorphic forms to be eigenfunctions of certain differential operators (Hecke operators at $\infty$ so to speak), which in the case of $GL(2)$ reduces to the Laplacian on the hyperbolic plane. This condition (together with the invariance under the unipotent group) imposes the $\phi_a(y)$'s to satisfy a certain differential equations in the $y$-variable (Bessel's differential equation in the case of $GL(2)$) whose solutions are the Whittaker functions, and we expand our automorphic forms in terms of these. In the case of a function field the role of the Laplacian is ``replaced" (in a sense it was always the ``Hecke operator" that we were interested in) by the Hecke operator defined above (in terms of the underlying graph this is the classical discrete Laplacian, which is averaging over nearest neighbors of vertices), and this operator gives rise to difference equations instead of differential equations, whose solutions, given in Lemma \ref{wh}, are (going to be) the Whittaker functions we will use to expand our automorphic functions. Note that while in the number field case the differential equation we get has two solutions and growth conditions at the cusp separate one solution out, in the function field case we are invariant under a much bigger group (more precisely the initial condition that $c(n)=0$ for $n<2$, described in \S\ref{Fourierexpand}, reduces the space of solutions by one dimension) on the right and so the growth conditions on the solutions follow for free.

By the above paragraph, we define our Whittaker functions (of depth $0$) by
\begin{equation}\label{integralWhittaker}
 W_{0,i\theta}(y) = \begin{cases} 0 & \text{                             if     } v_{\infty}(y) < 2\\
\frac{p^{\frac{1}{2}}}{e^{i\theta}-e^{-i\theta}}\left[\left(\frac{e^{i\theta}}{p^{\frac{1}{2}}}\right)^{n-1}-\left(\frac{e^{-i\theta}}
{p^{\frac{1}{2}}}\right)^{n-1}\right]&  \text{                                 $ n=v_{\infty}(y)\geq2$}\end{cases}
\end{equation}
The above analysis shows that there are constants $\lambda_{\phi}(a)$ such that
$$\phi(z) = \sum_{a\in R} \lambda_\phi(a)e(aT^2x)W_{0,i\theta}(ay)$$
We call $\lambda_{\phi}(a)$ the Fourier-Whittaker coefficients of $\phi.$

\subsubsection{Depth $1$} {\label{whittakerwithdepth}}

To get a nice theory of Whittaker functions on $\mathbb{H}_1$, we first have to restrict to those automorphic functions that first appear in depth 1.
There are two injections $\pi_1,\pi_2$ from $S_0(\Gamma_0(N))$ into $S_1(\Gamma_0(N))$ given by
$$\pi_1(\phi)(z) := \phi(z),\,\,\, \pi_2(\phi)(z) := \phi\left(z\left(\begin{smallmatrix} T & 0\\0 & 1\end{smallmatrix}\right)\right)$$ where we Define $S_1^{old}(\Gamma_0(N))$ to be the space
spanned by the image of both $\pi_1$ and $\pi_2$. We define the space of `new' cuspidal functions, $S_1^{new}(\Gamma_0(N))$, to be the orthogonal complement to
$S_1^{old}(\Gamma_0(N))$ with respect to the Petersson inner product. We mention that in the language of representation theory, these correspond to the representations
of depth 1 at the infinite place.

We also have a trace operator (which corresponds to summing over the fibers of the covering map $\mathbb{H}_1\xrightarrow{\rho} \mathbb{H}$) given by $$\tr: S_1(\Gamma_0(N))\rightarrow
S_0(\Gamma_0(N))$$
 $$\tr(\phi)(z_0):=\sum_{\substack{z\in\mathbb{H}_1\\\rho(z)=z_0}}\phi(z) $$
The trace operator is the adjoint to the inclusion $\pi_1$ and therefore annihilates $S_1^{new}(\Gamma_0(N))$. The inverse image of
$\left(\begin{smallmatrix} y & x\\ 0 & 1 \end{smallmatrix}\right)\in\mathbb{H}$ under the restriction map is
$$\left\{\left(\begin{smallmatrix} y & x\\ 0 & 1 \end{smallmatrix}\right)\right\}\cup\begin{setdef}{\left(\begin{smallmatrix} x+jy & y\\ 1 & 0 \end{smallmatrix}\right)}{j\in\mathbb{F}_p}\end{setdef}$$
Then since $\phi\in S_1^{new}(\Gamma_0(N))\Rightarrow\tr(\phi)=0$, we get the following relation between the ``upper" and ``lower" Fourier coefficients of an automorphic function $\phi(z)\in S_1^{new}(\Gamma_0(N))$;
\begin{equation}\label{genuineupperlower}
\phi_a^u(y)=-p\phi_a^l(y)\chi_{\Ooi}(aT^2y)\,\,\,\,\,\,\,\,\forall a\in R,\,y\in k_{\infty}^{\times}/\mathcal{O}_{\infty}^{\times}
\end{equation}

Taking the place of the Laplacian in this context is the Atkin-Lehner involution at infinity $W_{\infty}$, which we define to be

$$W_{\infty}(\phi)(z) := \phi\left(z\left(\begin{smallmatrix} 0 & 1\\ T^{-1} & 0\end{smallmatrix}\right)\right).$$

Note that $K_0(\varpi_{\infty})\left(z\left(\begin{smallmatrix} 0 & 1\\ T^{-1} & 0\end{smallmatrix}\right)\right)K_0(\varpi_{\infty})=\left(z\left(\begin{smallmatrix} 0 & 1\\ T^{-1} & 0\end{smallmatrix}\right)\right)K_0(\varpi_{\infty})$ so that $W_{\infty}$ can still be considered as convolution with a double coset.

Since $\left(\begin{smallmatrix} 0 & 1\\ T^{-1} & 0\end{smallmatrix}\right)^2$ is the identity, $W_{\infty}$ is indeed an involution. Moreover,
$\pi_1W_{\infty}=W_{\infty}\pi_2$ and $W_{\infty}$ is self-adjoint for the Petersson inner product, so $W_{\infty}$ preserves $S_1^{new}(\Gamma_0(N))$. We define a cuspidal form of depth 1 and level $N$ to be a new cuspidal function of level $N$ which is furthermore an eigenfunction of $W_{\infty}$.
Consider an automorphic form $\phi(z)$ of depth 1 with eigenvalue $w_{\phi,\infty}\in\{\pm1\}$ under $W_{\infty}$. By considering Fourier expansions we deduce
that $$w_{\infty,\phi}\phi_a^l(Ty)=\phi^u_a(y)$$ Combining this with (\ref{genuineupperlower}) we can solve the recursion for $\phi_a^u(y)$ and get
$$\phi_a^u(y)=\chi_{\Ooi}(aT^2y)\phi_a^u(T^{-2})(\lambda_{\infty,\phi})^{v_{\infty}(y)-2}$$ where $\lambda_{\infty,\phi}=-\frac{w_{\infty,\phi}}{p}$.
We thus define our Whittaker functions of depth $1$ by
\begin{equation}\label{integralWhittakerinfty}
W_{1,\lambda_{\infty,\phi}}(y) = \delta_{v_{\infty}(y)>1}\cdot\lambda_{\infty,\phi}^{v_{\infty}(y)-2}
\end{equation}
By the above analysis, we see that there are constants $\lambda^u_{\phi}(a)$ such that for $z\in\mathbb{H}_1^u$,
$$\phi(z) = \displaystyle\sum_{a\in R}\lambda^u_{\phi}(a)e(T^2ax)W_{1,\lambda_{\infty,\phi}}(ay)$$
As in the previous section we will refer to the $\lambda^u_{\phi}(a)$ as the Fourier-Whittaker coefficients of $\phi.$

\subsubsection{Hecke Operators on $\mathbb{H}$} Given a monic polynomial $P\in R$ such that $P$ and $N$ are relatively prime,  we associate to it a Hecke
operator $T_P$ that acts on $S_0(\Gamma_0(N))$ by the following formula
\[T_{P}(\phi)(z)=|P|^{-1}\sum_{\substack{GH=P\\J \bmod{H}}}\phi\left(\left(\begin{smallmatrix}G & J\\ 0 & H\end{smallmatrix}\right)\left(\begin{smallmatrix}y & x\\ 0 & 1\end{smallmatrix}\right)\right)\]
where $G,H$ are monic, and $z=\left(\begin{smallmatrix}y & x\\ 0 & 1\end{smallmatrix}\right)$ as usual.

As in the number field case these operators constitute a commuting family of normal operators, so they can be simultaneously diagonalised.
We will be interested in automorphic forms that are common eigenfunctions of all the Hecke operators. An automorphic form $\phi\in S_0(\Gamma_0(N))$ that is a common eigenfunction of all Hecke operators $T_P$ with $(P,N)=1$ will be called a cuspidal \emph{Hecke eigenform}.

Let $\phi$ be a Hecke eigenform and $P\in R$ a monic irreducible polynomial. Denote the
eigenvalue of $T_P$ on $\phi$ by $\lambda_{P,\phi}$. Then we have
\[\lambda_{P,\phi}\phi(z)=T_P(\phi)(z)=|P|^{-1}\sum_{\gamma} \phi(\gamma z)\]

where the summation is over $\gamma\in\left\{\left(\begin{smallmatrix}P & 0\\ 0 & 1\end{smallmatrix}\right),\left(\begin{smallmatrix}1 & H\\ 0
& P\end{smallmatrix}\right)\right\}$ with $H\in R/PR$, and $z=\left(\begin{smallmatrix}y&x\\0&1\end{smallmatrix}\right)$. Plugging this formula in the Fourier expansion of $\phi$ gives

\begin{equation}\label{integralHeckeaction}
|P|\lambda_{P,\phi}\sum_{a\in R}e(T^2ax)\phi_a(y)=\sum_{\substack{a\in
R\\H\bmod P}}e\left(T^2a\left(\frac{x+H}{P}\right)\right)\phi_a\left(\frac{y}{P}\right)+\sum_{a\in R}e(T^2aPx)\phi_a(Py)
\end{equation}
Note that when $a\notin PR$ in the first sum on the right hand side of the equality in (\ref{integralHeckeaction}), the coefficient for $a$ vanishes, since in that case $e(\cdot T^2H/P)$ is a \emph{non-trivial} character. Taking this into account and equating the $e(T^2ax)-$Fourier coefficients in (\ref{integralHeckeaction}) we end up with
\begin{equation}\label{integralHeckerelation}
\lambda_{P,\phi}\phi_a(y)=\phi_{aP}\left(\frac{y}{P}\right)+|P|^{-1}\phi_{\frac{a}{P}}(Py)
\end{equation}
Note that specializing to $a=1$, this relation implies that $\phi_P(y)=\lambda_{P,\phi}\phi_1(yP)$.  Note also that for a Hecke eigenform $\phi$ of level $N$, depth $0$, for $a\in R$ if we denote the $a$'th Fourier-Whittaker coefficient of $\phi$ by $\lambda_{\phi}(a)$ then for any $P$ relatively prime to $aN$, relation (\ref{integralHeckerelation}) implies that $\lambda_{\phi}(aP)=\lambda_{P,\phi}\lambda_{\phi}(a)$.

\subsubsection{Depth $1$}\label{integralheckedepth1}

Now suppose that $\phi(z)\in S_1(\Gamma_0(N))$ for some $N\in R$. Then we define the Hecke operators $T_P$ in the same way as before:
\[T_{P}(\phi)(g)=|P|^{-1}\sum_{\substack{GH=P\\Q\bmod H}}\phi\left(\left(\begin{smallmatrix}G & Q\\ 0 & H\end{smallmatrix}\right)g\right)\]
where $G,H$ are monic. If $\phi(z)$ is an eigenfunction of $T_P$ with eigenvalue $\lambda_{P,\phi},$ then in the notation of \S\ref{Fourierexpandinfty} this translates to:
\[\lambda_{P,\phi}\phi^u_a(y)=\phi^u_{aP}\left(\frac{y}{P}\right)+|P|^{-1}\phi^u_{\frac{a}{P}}(Py)\]
and
\[\lambda_{P,\phi}\phi^l_a(y)=\phi^l_{aP}\left(\frac{y}{P}\right)+|P|^{-1}\phi^l_{\frac{a}{P}}(Py)\]

Specializing to $a=1$ we again get $\phi_P^{\cdot}(y)=\lambda_{P,\phi}^{\cdot}\phi_1^{\cdot}(Py)$, for $\cdot=u$ or $l$. As in the previous section an automorphic form $\phi\in S_1(\Gamma_0(N))$ that is a common eigenfunction of all the Hecke operators $T_P$ with $(P,N)=1$ will be called a cuspidal Hecke eigenform of depth $1$.

\subsubsection{Whittaker functions on $\Tt$}\label{halfintegralwhittakerun}

%To define the appropriate analogue of the Laplacian for metaplectic functions, we need to recall how to consider these functions as functions on the metaplectic group $\widetilde{SL}_2(k_{\infty})$:

%Given $F:\tilde{H}\rightarrow \C$ we first lift to an function $F_0:SL_2(k_{\infty})\rightarrow \C$ which is right invariant under $SL_2(\Ooi)$. Now, as in $\S\ref{metsp}$ we define the function $\widetilde{F}:\widetilde{SL}_2(k_{\infty})\rightarrow\C$ such that $$\widetilde{F}(\iota(g)=F_0(g),\forall g\in SL_2(k_{\infty})$$ and $$\widetilde{F}(g\cdot(id,-1))=-\widetilde{F}(g),\forall g\in\widetilde{SL}_2(k_{\infty}).$$ The map $F\rightarrow\widetilde{F}$ is a bijection between functions on $\tilde{H}$ and functions on $\widetilde{SL}_2(k_{\infty})$ right invariant
%under $\iota(SL_2(\Ooi))$ which transform by $-1$ under $g\rightarrow (1,-1)\cdot g$.

For an $F$ that is a metaplectic function of level $N$, depth $0$, we define the Laplacian $\widetilde{\Delta}$ on $\widetilde{M}_{0}(\widetilde{\Gamma}_0(N))$ to be right convolution of $F$ with the characteristic function of
the double coset $\iota\left({SL}_2(\Ooi)\right)\left(\left(\begin{smallmatrix}\varpi_{\infty}&0\\0&\varpi_{\infty}^{-1}\end{smallmatrix}\right),1\right)\iota\left(SL_2(\Ooi)\right).$ A set of right coset representatives can be
computed to be
$$\begin{cases}
\alpha_b = \left(\left(\begin{smallmatrix}\varpi_{\infty}&\varpi_{\infty}^{-1}b\\0&\varpi_{\infty}^{-1}\end{smallmatrix}\right),1\right) & b\in \Ooi/\varpi_{\infty}^2\Ooi\\
\beta_h = \left(\left(\begin{smallmatrix}1&\varpi_{\infty}^{-1}h\\0&1\end{smallmatrix}\right), (h,\varpi_{\infty})_{\infty}\right) & h\in (\Ooi/\varpi_{\infty}\Ooi)^{\times}\\
\sigma = \left(\left(\begin{smallmatrix}\varpi_{\infty}^{-1}&0\\0&\varpi_{\infty}\end{smallmatrix}\right), 1\right)
\end{cases}$$

To see the above, note that $$\alpha_b = \iota\left(\left(\begin{matrix}1&b\\0&1\end{matrix}\right)\right)\cdot\left(\left(\begin{matrix}\varpi_{\infty}&0\\0&\varpi_{\infty}^{-1}\end{matrix}\right),1\right),$$
$$\sigma = \iota\left(\left(\begin{smallmatrix}0&1\\-1&0\end{smallmatrix}\right)\right)\cdot\left(\left(\begin{smallmatrix}\varpi_{\infty}&0\\0&\varpi_{\infty}^{-1}\end{smallmatrix}\right),1\right)\cdot
\iota\left(\left(\begin{smallmatrix}0&-1\\1&0\end{smallmatrix}\right)\right),$$ and
$$ \beta_h = \iota\left(\left(\begin{smallmatrix}0&h\\-h^{-1}&\varpi_{\infty}\end{smallmatrix}\right)\right)\cdot
\left(\left(\begin{smallmatrix}\varpi_{\infty}&0\\0&\varpi_{\infty}^{-1}\end{smallmatrix}\right),1\right)
\cdot\iota\left(\left(\begin{smallmatrix}1&0\\h^{-1}\varpi_{\infty}&1\end{smallmatrix}\right)\right).$$

The sign in $\beta_h$ comes from the fact that
\begin{align*}
\epsilon\left(\left(\begin{smallmatrix}0&h\\-h^{-1}&\varpi_{\infty}\end{smallmatrix}\right),\left(\begin{smallmatrix}\varpi_{\infty}&0\\0&\varpi_{\infty}^{-1}\end{smallmatrix}\right)\right)
&=(-h^{-1}\varpi_{\infty},\varpi_{\infty}^{-1})_{\infty}(-h^{-1},-h^{-1}\varpi_{\infty})_{\infty}(-h^{-1},\varpi_{\infty}^{-1})_{\infty}\\
&=(h,\varpi_{\infty})_{\infty}
\end{align*}

Let $F\in \widetilde{M}(\widetilde{\Gamma}_0(N))$. Carrying out the convolution we compute the action on the Fourier expansion of $F$ to be
\begin{equation}\label{halfweightLaplacianaction}
%\begin{align*}
%\widetilde{\Delta}(F)(w)&=(\varpi_{\infty},\sqrt{v})_{\infty}\sum_{b\in \Ooi/T^{-2}\Ooi}\sum_{a\in R}e(T^2a(u+vb))F_a(T^{-2}v)\\
%&+\sum_{h\in (\Ooi/T^{-1}\Ooi)^{\times}}\sum_{a\in R}(h,\varpi_{\infty})_{\infty}e(T^2a(u+hTv))F_a(v)\\
%&+(\varpi_{\infty},\sqrt{v})_{\infty}\sum_{a\in R}e(T^2au)F_a(T^2v)\\
%\end{align*}
\end{equation}
A metaplectic function $F$ that is an eigenfunction of $\widetilde{\Delta}$ is called a \emph{metaplectic form}. Now suppose $F$ is a metaplectic form with eigenvalue $\lambda_{\infty,F}.$ In computing the Fourier coefficient of $e(T^2au)$ the second sum on the RHS of \eqref{halfweightLaplacianaction} gives the sum
\begin{equation}\label{fouriergauss}
\sum_{h\in (\Ooi/T^{-1}\Ooi)^{\times}} (h,\varpi_{\infty})_{\infty}e(h\cdot T^3av)F_a(v)
\end{equation}

$F_a(v)$ vanishes unless $v_{\infty}(av)\geq 2$, and if $v_{\infty}(av)>2$ then the character $e(h\cdot T^3av)$ vanishes and hence we're left with $\sum_{h\in (\Ooi/T^{-1}\Ooi)^{\times}} (h,\varpi_{\infty})_{\infty}=0$. If $v_{\infty}(av)=2$ then \eqref{fouriergauss} becomes a Gauss sum which determined by the square class of $av$. Since $v\in (k_{\infty}^{\times})^2$ this is the same as the square class of $a$. Hence \eqref{fouriergauss} gives $F_a(v)\cdot p^{1/2}\delta_{v_{\infty}(av)=2}\cdot(a,\varpi_{\infty})_{\infty}$.

Equating Fourier coefficients we arrive at
\begin{equation}\label{halfweightLaplacianrelation}
\lambda_{\infty,F}F_a(v) = (\varpi_{\infty},\sqrt{v})_{\infty}p^2F_{a}(T^{-2}v)\chi_{\Ooi}(T^2av) + p^{1/2}F_a(v)\delta_{v_{\infty}(av)=2}(a,\varpi_{\infty})_{\infty} + (\varpi_{\infty},\sqrt{v})_{\infty}F_a(T^2v)
\end{equation}
Let $e^{i\gamma}$ be a complex number such that
$$e^{i\gamma} + e^{-i\gamma} = p^{-1}\lambda_{\infty,F}$$ Taking $\xi\in\{1,\epsilon,T^{-1},\epsilon T^{-1}\}$, where $\epsilon\in\mathbb{F}_p$ is a non-square, we define the metaplectic Whittaker function (of depth $0$) $\widetilde{W}_{0,i\gamma}(\xi v)$ to be:
\begin{multline*}\label{halfweightWhittaker}
\widetilde{W}_{0,i\gamma}(\xi v)=\\
\begin{cases}
\frac{p(\varpi_{\infty},\sqrt{v})_{\infty}^n}{e^{i\gamma}-e^{-i\gamma}}\left[\left(1-\frac{\delta_{v_{\infty}(\xi)=0}(\xi,\varpi_{\infty})_{\infty}e^{-i\gamma}}{\sqrt{p}}\right)\left(e^{i\gamma}p^{-1}\right)^n-
\left(1-\frac{\delta_{v_{\infty}(\xi)=0}(\xi,\varpi_{\infty})_{\infty}e^{i\gamma}}{\sqrt{p}}\right)\left(e^{-i\gamma}p^{-1}\right)^n\right] & v_{\infty}(\sqrt{v})=n\geq 1\\
0 & \textrm{else}
\end{cases}
\end{multline*}

Solving the recursion \eqref{halfweightLaplacianrelation} starting from the fact that $F_a(v)=0$ if $v_{\infty}(av)<2$, we see that there are constants $\lambda_F(a)$ such that $F_a(v) =
\widetilde{W}_{0,i\gamma}(av)(a,v^{1/2})_{\infty}\lambda_F(a).$ We call these the Fourier-Whittaker coefficients of $F.$

\subsubsection{Depth $1$}\label{halfintegralwhittakerram}

We begin by defining the analogue of the Laplacian in this context. Let $\widetilde{W}_{\infty}$ be right convolution with the characteristic function of the double coset

\[\iota(\widetilde{K}_0(\varpi_{\infty}))\left(\left(\begin{smallmatrix}0&-\varpi_{\infty}^{-1}\\\varpi_{\infty}&0\end{smallmatrix}\right),1\right)\iota(\widetilde{K}_0(\varpi_{\infty}))\]
A set of right coset representatives for the above is given by
\[\begin{setdef}{\left(\left(\begin{smallmatrix}0&-\varpi_{\infty}^{-1}\\ \varpi_{\infty}&i\end{smallmatrix}\right),1\right)}{i\in\mathbb{F}_p}\end{setdef}\]
The effect of $\widetilde{W}_{\infty}$ on the upper and lower coordinates is given by
\[\left(\left(\begin{smallmatrix}\sqrt{v}&u/\sqrt{v}\\0&1/\sqrt{v}\end{smallmatrix}\right),1\right)\longrightarrow
\begin{cases}\left(\left(\begin{smallmatrix}\sqrt{v}&u/\sqrt{v}-\sqrt{v}\varpi_{\infty}^{-1}i^{-1}\\0&\frac{1}{\sqrt{v}}\end{smallmatrix}\right),\left(\frac{i}{T}\right)\right)&\text{if $i\neq0$}\\ \left(\left(\begin{smallmatrix}u\varpi_{\infty}/\sqrt{v}&-\varpi_{\infty}^{-1}\sqrt{v}\\ \varpi_{\infty}/\sqrt{v}&0\end{smallmatrix}\right),(\varpi_{\infty},\sqrt{v})_{\infty}\right)&\text{if $i=0$}\end{cases}\]
\[\left(\left(\begin{smallmatrix}u/\sqrt{v}&\sqrt{v}\\ -1/\sqrt{v}&0\end{smallmatrix}\right),1\right)\longrightarrow\left(\left(\begin{smallmatrix}\varpi_{\infty}\sqrt{v}&-u\varpi_{\infty}^{-1}/\sqrt{v}+i\sqrt{v}\\0&\varpi_{\infty}^{-1}/\sqrt{v}\end{smallmatrix}\right),
(\varpi_{\infty},\sqrt{v})_{\infty}\right)\]
If $F$ is an eigenfunction of $\widetilde{W}_{\infty}$ with eigenvalue $\widetilde{w}_{\infty,F}$ we see that
\begin{equation}
\widetilde{w}_{\infty,F}F_a^l(v)=(\varpi_{\infty},\sqrt{v})_{\infty}\sum_{i\in\mathbb{F}_p}e(Taiv)F_a^u(T^{-2}v)
\end{equation}
and
\begin{equation}\label{laplacianeq}
\widetilde{w}_{\infty,F}F_a^u(v)=\sum_{i\in\mathbb{F}_p^{\times}}\left(\frac{i}{T}\right)F_a^u(v)e(T^3aiv)+(\varpi_{\infty},\sqrt{v})_{\infty}F_a^l(T^2v)
\end{equation}
Note that by Section \ref{Fourierexpandinfty} we know that $F_a^u(v)=0$ for $v_{\infty}(av)<2$ and $F^l_a(v)=0$ for $v_{\infty}(av)<1$. Using these and the relations above we find that the eigenvalue $
\widetilde{w}_{\infty,F}$   satisfies
\begin{equation}\label{laplacian}
\widetilde{w}_{\infty,F}^2=p
\end{equation}
Using these representatives it is an easy check to see that $\widetilde{W}_{\infty}$ is self adjoint for the Petersson inner product. As before, there is an inclusion $$\pi_1:\widetilde{S}_0(\widetilde{\Gamma}_0(N),1)\rightarrow \widetilde{S}_1(\widetilde{\Gamma}_0(N),1)$$ given by $\pi_1(F)(g):=F(g)$.
We define $\widetilde{S}^{old}_1(\widetilde{\Gamma}_0(N))\subset \widetilde{S}_1(\widetilde{\Gamma}_0(N))$ to be the space spanned by the image of $\pi_1$ and $\widetilde{W}_{\infty}\circ\pi_1$. We denote the orthogonal complement of $\widetilde{S}^{old}_1(\widetilde{\Gamma}_0(N))$ by $\widetilde{S}_1^{new}(\widetilde{\Gamma}_0(N))$. Each element of $\widetilde{S}_1^{new}(\widetilde{\Gamma}_0(N))$ is called a new cuspidal metaplectic function of level $N$. Since $\widetilde{W}_{\infty}$ is self adjoint,
it preserves the space of new cuspidal metaplectic functions of level $N$.

We call a function $F(z)\in \widetilde{S}^{new}_1(\widetilde{\Gamma}_0(N))$ which is an eigenfunction
of $\widetilde{W}_{\infty}$ a new metaplectic form of level $N$ . As before, we have the trace operator $\tr$ which is adjoint to $\pi_1$, which annihilates
$\widetilde{S}^{new}_1(\widetilde{\Gamma}_0(N))$, and thus for a new cuspidal metaplectic form $F(z)$ we have $$F_a^u(v)=-p\:F_a^l(v)\chi_{\Ooi}(aT^2v).$$

 Combining this with equation (\ref{laplacianeq}) we deduce that $$-\frac{\widetilde{w}_{\infty,F}}{p^2}F_a^u(T^2v)=(\varpi_{\infty},\sqrt{v})_{\infty}\chi_{\Ooi}(av)F_a^u(v)$$
For $\xi\in\{1,\epsilon,T^{-1},\epsilon T^{-1}\}$, where $\epsilon\in\mathbb{F}_p$ is a non-square, we define  Whittaker functions of depth $1$ by

$$\widetilde{W}_{1,\lambda_{\infty,F}}(\xi v)=\chi_{\Ooi}(T^2\xi v)\lambda_{\infty,F}^{v_{\infty}(T v^{1/2})}(\varpi_{\infty},\sqrt{v})_{\infty}^{v_{\infty}(v^{1/2})}$$ where $\lambda_{\infty,F}=-\frac{\widetilde{w}_{\infty,F}}{p^2}$.
We define the Fourier-Whittaker coefficients $\lambda_F(a)$ to be such that $F^u_a(v)=\lambda_F(a)(a,v^{1/2})_{\infty} \widetilde{W}_{1,\lambda_{\infty,F}}(av)$.

\subsubsection{Hecke operators on $\Tt$}

Recall the embedding $\eta:SL_2(R)\hookrightarrow\widetilde{SL}_2(k)$ given by
$$\eta\left(\left(\begin{smallmatrix} a&b\\c&d\end{smallmatrix}\right)\right) = \begin{cases}\left(\left(\begin{smallmatrix} a&b\\c&d\end{smallmatrix}\right),\left(\tfrac{d}{c}\right)\right)&\text{if $c\neq0$}\\ \left(\left(\begin{smallmatrix}a&b\\ c&d\end{smallmatrix}\right),1\right)&\text{if $c=0$}\end{cases}.$$

Let $P$ be a monic irreducible  polynomial that
is relatively prime to $N.$ We define the $P^2-$ metaplectic Hecke operator, $\widetilde{T}_{P^2}$, to be the \emph{left convolution} of $F$ with the characteristic function of the double coset
$$\eta(\Gamma_0(N))\left(\left(\begin{smallmatrix}P^{-1}&0\\0&P\end{smallmatrix}\right);1\right)\eta(\Gamma_0(N)).$$
 A set of left coset representatives for the double coset is given by:
\[\begin{cases}
\alpha_b = \left(\left(\begin{smallmatrix}P^{-1}&P^{-1}b\\0&P\end{smallmatrix}\right),1\right) & b\in R/P^2R\\
\beta_h = \left(\left(\begin{smallmatrix}1&P^{-1}h\\0&1\end{smallmatrix}\right), \left(\frac{h}{P}\right)\right) & h\in (R/PR)^{\times}\\
\sigma = \left(\left(\begin{smallmatrix}P&0\\0&P^{-1}\end{smallmatrix}\right), 1\right)
\end{cases}\]

We indicate the hardest case, $\beta_h$: Pick $d,\bar{h}\in R$ such $Pd-h\bar{h}=1$ and $N|h$. Then:

$$ \beta_h = \eta\left(\left(\begin{smallmatrix}1&0\\-P\bar{h}&1\end{smallmatrix}\right)\right)\cdot\left(\left(\begin{smallmatrix}P^{-1}&0\\0&P\end{smallmatrix}\right),1\right)
\cdot\eta\left(\left(\begin{smallmatrix}P&h\\\bar{h}&d\end{smallmatrix}\right)\right).$$

Carrying out the product using the cocycle we get that the sign of $\beta_h$ is $(h,P)_{\infty}\cdot\left(\frac{P}{h}\right)$ which is $\left(\frac{h}{P}\right)$ by lemma \ref{quadrec}.

The effect of $\widetilde{T}_{P^2}$ on the Fourier coefficients of a metaplectic form $F$ of level $N$, depth $0$ is:
\begin{align*}\label{halfweightHeckeaction}
\widetilde{T}_{P^2}(F)(w)&=(P,\sqrt{v})_{\infty}\sum_{b\in R/P^2R}\sum_{a\in R}e(T^2a(P^{-2}u+bP^{-2}))F_a(P^{-2}v)\\
&+\sum_{h\in (R/PR)^{\times}}\sum_{a\in R}\left(\frac{h}{P}\right)e(T^2a(u+hP^{-1}))F_a(v)\\
&+(P,\sqrt{v})_{\infty}\sum_{a\in R}e(T^2aP^2u))F_a(P^{2}v)\\
\end{align*}
In evaluating the above we come up against the following Gauss sum modulo $P$:
\[G_a(P)= \sum_{h\in R/PR}\left(\frac{h}{P}\right)e\left(\frac{haT^2}{P}\right)\]
If $P|a$ then $G_a(P)=\sum_{h\in R}\left(\frac{h}{P}\right) = 0$. Else, $G_a(P)$ is a Gauss sum and depends on $a$ through the quadratic character $\left(\frac{a}{P}\right)$.
Thus we can write $G_a(P)=\delta_{P\nmid a}\left(\frac{a}{P}\right)G_1(P)$.

We will call a metaplectic form $F\in \widetilde{M}_0(\widetilde{\Gamma}_0(N))$ that is an eigenfunction of all the metaplectic Hecke operaors $\widetilde{T}_{P^2}$ for $(P,N)=1$, a \emph{metaplectic Hecke eigenform}. Now suppose $F$ is an eigenfunction of $\widetilde{T}_{P^2}$ with eigenvalue $\lambda_{P^2,F}.$ Equating Fourier coefficients, we get
\begin{equation}\label{halfweightHeckerelation}
\lambda_{P^2,F}F_a(v) = (P,\sqrt{v})_{\infty}|P|^2F_{aP^2}(P^{-2}v) + G_1(P)\delta_{P\nmid a}\left(\frac{a}{P}\right)F_a(v) + (P,\sqrt{v})_{\infty}F_{aP^{-2}}(P^2v)
\end{equation}
Observe that if $a$ is relatively prime to $P$ then $$\lambda_{P^2,F}F_a(v) = (P,\sqrt{v})_{\infty}|P|^2F_{aP^2}(P^{-2}v) + G_1(P)\left(\frac{a}{P}\right)F_a(v)$$

\subsubsection{Depth $1$} We add this section for completeness as everything works the same as \S\ref{integralheckedepth1}. Hecke operators are defined in the same way as above and their action is now calculated on the ``upper" and ``lower" Fourier coefficients separately. For $F\in\widetilde{M}_1(\widetilde{\Gamma}_0(N))$ that is an eigenfunction of $\widetilde{T}_{P^2}$ with eigenvalue $\lambda_{P^2,F}$, the formula reads
\[\lambda_{P^2,F}F^u_a(v) = (P,\sqrt{v})_{\infty}|P|^2F^u_{aP^2}(P^{-2}v) +  G_1(P)\delta_{P\nmid a}\left(\frac{a}{P}\right)F^u_a(v) + F^u_{aP^{-2}}(P^2v)\]
and
\[\lambda_{P^2,F}F^l_a(v) = (P,\sqrt{v})_{\infty}P|^2F^l_{aP^2}(P^{-2}v) +  G_1(P)\delta_{P\nmid a}\left(\frac{a}{P}\right) + F^l_{aP^{-2}}(P^2v)\]
As in depth $0$ case, a metaplectic form $F\in\widetilde{M}_1(\widetilde{\Gamma}_0(N),1)$ that is a common eigenfunction of all the metaplectic Hecke operators $\widetilde{T}_{P^2}$ for $(P,N)=1$ will be called a metaplectic Hecke eigenform of depth $1$.

\subsection{Non-monic Fourier Coefficients}\label{dnotsquare}

We would like to only deal with Fourier coefficients $\lambda_F(D)$ where $D$ is a \emph{monic} polynomial. Every $D\in R$ can be decomposed as
$D=u_D D_0$ where $D_0$ is monic and $u_D\in\mathbb{F}_p^{\times}$. If $u_D = \alpha^2$ for some $\alpha\in\mathbb{F}_p^{\times}$, then since
$$\left(\left(\begin{smallmatrix}\alpha&0\\0&\alpha^{-1}\end{smallmatrix}\right),1\right)\left(\left(\begin{smallmatrix}y&x\\0&1\end{smallmatrix}\right),1\right)\left(\left(\begin{smallmatrix}\alpha^{-1}&0\\0&\alpha\end{smallmatrix}\right),1\right)=
\left(\left(\begin{smallmatrix}y&u_Dx\\0&1\end{smallmatrix}\right),1\right)$$
$\lambda_{F}(D)=\lambda_{F}(D_0)$. Now, let $u_D\in\mathbb{F}_p^{\times}$ be a non-square. We can extend our cocycle $\epsilon(g,h)$ to $g,h\in GL_2(k_{\infty})$ by making the following definitions (See \cite{G},pp 15-16).
\begin{enumerate}
\item $p(g):= \left(\begin{smallmatrix} 1 & 0\\ 0 & det(g)\end{smallmatrix} \right) \cdot g$
\item For $y\in k_{\infty},\, g\in GL_2(k_{\infty}), g^y:=\left(\begin{smallmatrix} 1 & 0\\ 0 & y\end{smallmatrix} \right)^{-1}g\left(\begin{smallmatrix} 1 & 0\\ 0 & y\end{smallmatrix} \right)$.
\item For $y\in k_{\infty},g=\left(\begin{smallmatrix} a & b\\ c & d\end{smallmatrix} \right)\in GL_2(k_{\infty})$ define $v(y,g):=\begin{cases} 1 & c\neq 0\\ (y,d)_{\infty} & else\end{cases}$.
\item For $g,h\in GL_2(k_{\infty})$ we extend $\epsilon$ by $$\epsilon(g,h):=\epsilon(p(g)^{\det(h)},p(h))\cdot v(\det(h),g).$$
\end{enumerate}

Moreover, the cocycle has splittings over $GL_2(\mathcal{O}_{\infty})$ and $GL_2(R)$, extending $\iota$ and $\eta$, defined as follows: For $g=\left(\begin{smallmatrix} a & b\\ c & d\end{smallmatrix} \right)$, we set
$$\iota(g)=\begin{cases} (g,1) & c=0\textrm{ or } c\in\mathcal{O}_{\infty}^{\times}\\ (g,(c,d\cdot \det(g))_{\infty}) & \textrm{else} \end{cases}$$
and for $g\in GL_2(R)$, $$\eta(g)=\begin{cases}(g,\prod_{v|c} (c,d\cdot \det(g))_v) = \left(g,\left(\frac{d\cdot \det(g)}{c}\right)\right)&\text{if $c\neq0$}\\ (g,1)&\text{if $c=0$}\end{cases}.$$

Define $d_{u_D}:=\left(\left(\begin{smallmatrix} 1 & 0\\ 0& u_D\end{smallmatrix} \right),1\right)$. For an element $\tilde{g}\in\widetilde{SL}_2(k_{\infty})$ define $\tilde{g}_{u_D}:= d_{u_D}\tilde{g}d_{u_D}^{-1}$. Given an element $F\in\widetilde{M}_1(\widetilde{\Gamma}_0(N))$ we define $F_{u_D}$ via
$$F_{u_D}(\tilde{g}) := F(\tilde{g}_{u_D}).$$ Since $d_{u_D}\in \iota(GL_2(k_{\infty}))$ it follows that $F_{u_D}$ is $\iota(SL_2(\Ooi))$ invariant on the right, and since $d_{u_D}\in\eta(GL_2(R))$ it follows that $F_{u_D}$ is $\eta(SL_2(R))$ invariant on the left. Thus, we have the following\\
\begin{lemma}\label{nonsquarecoef}
$F_{u_D}\in\widetilde{M}_1(\widetilde{\Gamma}_0(N))$.
\end{lemma}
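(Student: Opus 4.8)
The plan is to verify directly the three conditions defining $\widetilde{M}_1(\widetilde{\Gamma}_0(N))$ for $F_{u_D}(\tilde g) := F(\tilde g_{u_D})$, with $\tilde g_{u_D} = d_{u_D}\tilde g d_{u_D}^{-1}$: right-invariance under $\iota(\widetilde{K}_0(\varpi_{\infty}))$, left-invariance under $\widetilde{\Gamma}_0(N)$, and genuineness $F_{u_D}(\tilde g\cdot(Id,-1)) = -F_{u_D}(\tilde g)$. The common mechanism is that conjugation by $d_{u_D}$ is realized by the splittings themselves. Indeed $\left(\begin{smallmatrix}1&0\\0&u_D\end{smallmatrix}\right)$ lies in both $GL_2(\Ooi)$ and $GL_2(R)$ (as $u_D\in\mathbb{F}_p^{\times}$), and, since the extended $\iota$ and $\eta$ recalled just before the statement are group homomorphisms, $d_{u_D}$ equals $\iota\!\left(\left(\begin{smallmatrix}1&0\\0&u_D\end{smallmatrix}\right)\right)$ and $\eta\!\left(\left(\begin{smallmatrix}1&0\\0&u_D\end{smallmatrix}\right)\right)$ respectively; hence $d_{u_D}\iota(k)d_{u_D}^{-1} = \iota\!\left(\left(\begin{smallmatrix}1&0\\0&u_D\end{smallmatrix}\right)k\left(\begin{smallmatrix}1&0\\0&u_D^{-1}\end{smallmatrix}\right)\right)$ for $k\in SL_2(\Ooi)$, and similarly with $\eta$ for $k\in SL_2(R)$. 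The matrix conjugation here, $\left(\begin{smallmatrix}a&b\\c&d\end{smallmatrix}\right)\mapsto\left(\begin{smallmatrix}a&bu_D^{-1}\\cu_D&d\end{smallmatrix}\right)$, rescales only the off-diagonal entries by a unit, so it preserves every congruence condition imposed on the lower-left entry.

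I would then run the three checks in turn. For right-invariance, take $\tilde k = \iota(k)\in\iota(\widetilde{K}_0(\varpi_{\infty}))$ and write $F_{u_D}(\tilde g\tilde k) = F\!\left(\tilde g_{u_D}\cdot d_{u_D}\tilde k d_{u_D}^{-1}\right)$; by the identity above $d_{u_D}\tilde k d_{u_D}^{-1}$ is the image under $\iota$ of a matrix that still lies in $\widetilde{K}_0(\varpi_{\infty})$ (its lower-left entry $cu_D$ is divisible by $\varpi_{\infty}$), so right-$\iota(\widetilde{K}_0(\varpi_{\infty}))$-invariance of $F$ gives $F(\tilde g_{u_D}) = F_{u_D}(\tilde g)$. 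For left-invariance, take $\gamma = \eta\!\left(\left(\begin{smallmatrix}a&b\\c&d\end{smallmatrix}\right)\right)\in\widetilde{\Gamma}_0(N)$; then $d_{u_D}\gamma d_{u_D}^{-1} = \eta\!\left(\left(\begin{smallmatrix}a&bu_D^{-1}\\cu_D&d\end{smallmatrix}\right)\right)$, and $\left(\begin{smallmatrix}a&bu_D^{-1}\\cu_D&d\end{smallmatrix}\right)\in SL_2(R)$ (determinant $1$, entries in $R$) with $N\mid cu_D$, so $d_{u_D}\gamma d_{u_D}^{-1}\in\widetilde{\Gamma}_0(N)$ and left-invariance of $F$ gives $F_{u_D}(\gamma\tilde g) = F_{u_D}(\tilde g)$. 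Genuineness is immediate since $(Id,-1)$ is central in the metaplectic group: $d_{u_D}(Id,-1) = (Id,-1)d_{u_D}$ yields $F_{u_D}(\tilde g(Id,-1)) = F\!\left((Id,-1)\tilde g_{u_D}\right) = -F(\tilde g_{u_D}) = -F_{u_D}(\tilde g)$.

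The only point requiring genuine care is the compatibility of $d_{u_D}$-conjugation with the splittings, i.e.\ the identity $d_{u_D}\iota(k)d_{u_D}^{-1} = \iota\!\left(\left(\begin{smallmatrix}1&0\\0&u_D\end{smallmatrix}\right)k\left(\begin{smallmatrix}1&0\\0&u_D^{-1}\end{smallmatrix}\right)\right)$ inside the metaplectic $GL_2$, and its analogue for $\eta$. This rests on the extended cocycle and its splittings over $GL_2(\Ooi)$ and $GL_2(R)$ set up immediately before the lemma, together with the fact that $u_D\in\mathbb{F}_p^{\times}$ trivializes the Hilbert symbols entering the formulas for those splittings, so that $d_{u_D}$ is indeed the image of $\left(\begin{smallmatrix}1&0\\0&u_D\end{smallmatrix}\right)$ under $\iota$ and under $\eta$. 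Once this bookkeeping with the extended cocycle is in place, the three invariance checks above are entirely routine.
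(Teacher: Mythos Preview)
Your proposal is correct and follows essentially the same approach as the paper: the paper's proof is the terse remark preceding the lemma, namely that $d_{u_D}$ lies in the image of both the extended splittings $\iota$ and $\eta$, from which the right- and left-invariance follow by conjugation. You have simply made explicit what the paper leaves implicit---the conjugation formula $\left(\begin{smallmatrix}a&b\\c&d\end{smallmatrix}\right)\mapsto\left(\begin{smallmatrix}a&bu_D^{-1}\\cu_D&d\end{smallmatrix}\right)$, the preservation of the congruence conditions on the lower-left entry (for $\widetilde{K}_0(\varpi_\infty)$ and $\Gamma_0(N)$), and the genuineness check---but the argument is the same.
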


Computing products as defined above, it is easy to verify that if $\tilde{g}=\left(\left(\begin{smallmatrix} \sqrt{v} & u/\sqrt{v}\\ 0& 1/\sqrt{v}\end{smallmatrix}\right),1\right)$ then
$\tilde{g}_{u_D}=\tilde{g}\cdot(1,(-1)^{v_{\infty}(v)/2}).$

Since the map $F\rightarrow F_{u_D}$ anti-commutes with $\Delta_{\infty}$, it is easy to see that if $F$ is a cuspidal metaplectic form then so is $F_{u_D}$, and moreover $\lambda_F(D) = (-1)^{\lfloor deg(a)/2\rfloor}\lambda_{F_{u_D}}(u_D D)$.  We can therefore restrict ourselves to studying monic Fourier coefficients.

\subsection{Atkin-Lehner Operators}

\subsubsection{Atkin-Lehner Operators on $\mathbb{H}$} Let $\phi$ be a  Hecke eigenform in $M_0(\Gamma_0(N))$. For a prime $l$ (i.e. monic irreducible) such that $\ell^{\alpha}||N$, in analogy with the number field case we define the Atkin-Lehner involution $W_{\ell^{\alpha}}$ to be the following matrix
\[W_{\ell^{\alpha}}=\left(\begin{smallmatrix}\ell^{\alpha}&a\\N&b\ell^{\alpha}\end{smallmatrix}\right)\]
where $a,b\in R$ and $\ell^{2\alpha}b-aN=\ell^{\alpha}$. Then $W_{\ell^{\alpha}}\Gamma_0(N)W_{\ell^{\alpha}}=\Gamma_0(N)$ and $W_{\ell^{\alpha}}^2\in\Gamma_0(N)$. An important property of these operators it that $W_{\ell^{\alpha}}$ commutes with Hecke operators $T_P$ for $(P,N)=1$. Therefore if $\phi$ is also an eigenfunction of $W_{\ell^{\alpha}}$, then denoting the eigenvalue of $\phi$ by $w_{\ell^{\alpha},\phi}$ we have
\[\phi(W_{\ell^{\alpha}}z)=w_{\ell^{\alpha},\phi}\:\phi(z)\]
with $w_{\ell^{\alpha},\phi}\:\phi(z)=\pm1$.

\subsection{L-functions of cuspidal automorphic forms}

Let $\phi\in S_0(\Gamma_0(N))$ be a Hecke eigenform with the Laplacian eigenvalue $\lambda_{\infty,\phi}=
p^{-1/2}(e^{i\theta}+e^{-i\theta}).$ We define the L-function of $\phi$ by
$$L(s,\phi) = \sum_{a\in R\backslash\{0\}} \frac{\lambda_{\phi}(a)}{|a|^s}.$$

As the $\lambda_{\phi}(a)$ are the fourier coefficients of $\phi$ , and $\phi$ is bounded above as it is supported on finitely many $\Gamma_0(N)$ orbits, we have that
$\lambda_{\phi}(a)$ is bounded above independently of $a$. Hence, $L(s,\phi)$ converges absolutely in the region $Re(s)>1$.

As in the number field case, the L-function attached to $\phi$ is, up to an explicit factor, the Mellin transform of $\phi$. The purpose of this section is
to explicitly derive this relation over function fields. Following \cite{HMW} we define the Mellin transform of $\phi$ to be
\[M\phi(s)=\int_{k_{\infty}^{\times}}\phi\left(\left(\begin{smallmatrix} y & 0\\ 0 & 1\end{smallmatrix}\right)\right)|y|^sd^{\times}y\]
where the multiplicative measure is normalized so that $\int_{\Ooi^{\times}}d^{\times}y=1$. Note that since $\phi$ is cuspidal the integrand is supported only on
 finitely many points in $\mathbb{H}$, $M\phi(s)$ is a polynomial function of $p^{-s}$.

 Plugging in the Fourier expansion of $\phi$ we have
\[M\phi(s)=\int_{k_{\infty}^{\times}}\sum_{a\in R}\phi_a(y)|y|^sd^{\times}y.\]
For $Re(s)>2$ the sum and integral converge absolutely. Interchanging the sum and integral and using the Hecke relations (see end of $\S$2.7.1) gives
\begin{align*}M\phi(s)&=\sum_{a\in R}\int_{k_{\infty}^{\times}}\lambda_{\phi}(a)W_{0,i\theta}(ay)|y|^sd^{\times}y\\
&=\sum_{a\in R}\frac{\lambda_{\phi}(a)}{|a|^{s}}\int_{k_{\infty}^{\times}}W_{0,i\theta}(y))|y|^sd^{\times}y
\end{align*}
So we are left with the integral
\[I=\int_{k_{\infty}^{\times}}W_{0,i\theta}(y)|y|^sd^{\times}y\]
Now using (\ref{integralWhittaker})
\begin{align*}
I&=\frac{\sqrt{p}}{e^{i\theta}-e^{-i\theta}}\sum_{n=2}^{\infty}\left(\frac{e^{i(n-1)\theta}}{p^{(n-1)/2}}-\frac{e^{-i(n-1)\theta}}{p^{(n-1)/2}}\right)p^{-ns}\\
&=p^{-2s}\Gamma_k(s+1/2+i\gamma)\Gamma_k(s+1/2-i\gamma)
\end{align*}
Where $\gamma$ is such that $e^{i\theta}=p^{i\gamma}$. Hence the Mellin transform is
\begin{equation}
M\phi(s)=p^{-2s}\Gamma_k(s+1/2+i\gamma)\Gamma_k(s+1/2-i\gamma)L(s,\phi).
\end{equation}

It follows in particular that $L(s,\phi)$ is a polynomial function of $p^{-s}$.

\subsubsection{Depth $1$}

We now describe the Mellin transform of a $\phi\in S_1(\Gamma_0(N))$  which is a Hecke eigenform with the Laplacian eigenvalue $\lambda_{\infty,\phi}.$ We define the L-function of $\phi$ to be
$$L(s,\phi) = \sum_{a\in R\backslash\{0\}}\frac{\lambda^u_{\phi}(a)}{|a|^s}$$
Define the Mellin transform of $f(z)$ by
\[M\phi(s) = \int_{k_{\infty}^{\times}}\phi\left(\left(\begin{smallmatrix} y & 0\\ 0 & 1\end{smallmatrix}\right)\right))|y|^sd^{\times}y\]
Plugging in the Fourier expansion and proceeding as above, we arrive at
\[M\phi(s) = \sum_{a\in R}\frac{\lambda^u_{\phi}(a)}{|a|^{s}}\int_{k_{\infty}^{\times}}W_{1,\lambda_{\infty,\phi}}(y)|y|^sd^{\times}y\]
Evaluating the integral yields
\begin{equation}
M\phi(s)= \frac{p^{-2s}}{1-\lambda_{\infty,\phi}\:p^{-s}}\cdot L(s,\phi)
\end{equation}

\section{Lindel\"of Hypothesis for Families of L-functions}\label{sectiondrinfeld}

In this section we clarify the role that the generalised Riemann hypothesis and Drinfeld's work play in obtaining the relevant L-function bounds for proving the metaplectic Ramanujan conjecture. We start with a smooth, projective, geometrically irreducible curve $X$ over $\mathbb{F}_p$ of genus $g$. The zeta function attached to $X$ is defined as follows: \[\zeta(t,X)=\prod_{w\in X}\left(1-t^{\deg(w)}\right)^{-1}\]
where $w$ runs over closed points of $X$.  This product converges and defines a holomorphic function on the open unit disc $|t|<\frac1p$. Furthermore by the Riemann-Roch theorem there exists a degree $2g$ polynomial $P(t,X)$ such that
\[\zeta(t,X)=\frac{P(t,X)}{(1-t)(1-pt)}\]
By the work of Weil on the Riemann hypothesis for curves we know that the zeroes of $P(t,X)$ all lie on the circle $S=\{t\mid |t|=p^{-1/2}\}$. We see by the Riemann-Roch theorem again that $\zeta(t,X)$ satisfies a functional equation relating it to $\zeta(\tfrac{1}{pt},X)$. Translated to $P(t,X)$, this functional equation reads $$P\left(\frac{1}{pt},X\right)=C(X)t^{-2g}P(t,X).$$
Where $C(X)$ is the conductor of $X$ and it related to the genus of the curve by $C(X)=p^g$.
We will be considering the size of the polynomial $P(t,X)$ on the circle $S$ when $X$ is varying in a family $\mathcal{F}$ of smooth projective curves over $\mathbb{F}_p$ with increasing genus. We start with defining a special class of families of such curves.
\begin{definition*} A family $\mathcal{F}$ of curves with increasing genus is called \emph{Lindel\"of} if for $\epsilon>0$
\[M(X):=\max\left\{ |P(t,X)|\mid t\in S\right\}\] satisfies \[M(X)=O_{\epsilon}(C(X)^{\epsilon})\]
as $X$ varies in $\mathcal{F}$.
\end{definition*}

The asymptotic behavior of $M(X)$ depends heavily on the chosen family, and especially on the distribution of the zeroes of $P(t,X)$.
In particular we have the following Theorem:\\

\begin{thm}\label{lindelofcondition} A family $\mathcal{F}$ is Lindel\"of if and only if the zeros of $P(t,X)$ become equidistributed with respect to the Haar
measure on $S$ as genus goes to infinity.
\end{thm}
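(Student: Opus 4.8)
The plan is to take logarithms and reduce the statement to a quantitative equidistribution assertion about the zeros $t_1(X),\dots,t_{2g}(X)$ on the circle $S$. Write $P(t,X)=\prod_{j=1}^{2g}(1-t/t_j(X))$, where each $t_j(X)\in S$ by the Riemann hypothesis for curves. Parametrising $t=p^{-1/2}e^{i\alpha}$ and $t_j(X)=p^{-1/2}e^{i\theta_j(X)}$, we have
\[
\log|P(t,X)| = \sum_{j=1}^{2g}\log\left|1-e^{i(\alpha-\theta_j(X))}\right|
 = \sum_{j=1}^{2g}\log\left|2\sin\tfrac{\alpha-\theta_j(X)}{2}\right|.
\]
Thus $\log M(X) = \max_{\alpha}\sum_j f(\alpha-\theta_j(X))$ where $f(\beta)=\log|2\sin(\beta/2)|$ is the classical (integrable, $L^1$) logarithmic kernel with $\int_0^{2\pi} f\,\frac{d\beta}{2\pi}=0$. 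Since $C(X)=p^g$, the Lindel\"of condition $M(X)=O_\epsilon(C(X)^\epsilon)$ is equivalent to $\log M(X)=o(g)$, i.e.
\[
\max_{\alpha}\ \sum_{j=1}^{2g} f(\alpha-\theta_j(X)) = o(g)\qquad\text{as }g\to\infty.
\]

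For the ``if'' direction, suppose the normalised counting measures $\mu_X=\frac{1}{2g}\sum_j\delta_{\theta_j(X)}$ converge weakly to the uniform measure. The functional equation $P(1/(pt),X)=C(X)t^{-2g}P(t,X)$ forces the multiset of zeros to be stable under $t\mapsto 1/(p\bar t)$ on $S$ (equivalently $\theta\mapsto-\theta$), so $\mu_X$ is symmetric; this symmetry lets us replace $f$ by its symmetrised version and, more importantly, guarantees that $\mu_X$ has no atoms escaping to a single point. The subtlety is that $f$ is unbounded, so weak convergence alone does not immediately give $\int f\,d\mu_X\to 0$, let alone uniformly in the shift $\alpha$. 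I would handle this by a truncation argument: for $\delta>0$ split $f=f_\delta^{\mathrm{bdd}}+f_\delta^{\mathrm{sing}}$ where $f_\delta^{\mathrm{bdd}}$ is $f$ capped at $\log(2\sin(\delta/2))$ near the singularity. The bounded part is handled by weak convergence (uniformly in $\alpha$ by equicontinuity, or by noting $f_\delta^{\mathrm{bdd}}$ has bounded variation so one can use a Koksma--Erd\H os--Tur\'an type inequality). For the singular part one needs an a priori \emph{repulsion} estimate: no more than $o(g)$ of the $\theta_j(X)$ can lie in any arc of length $\delta$. Such a bound follows from equidistribution applied to a fixed bump function of width $\delta$, giving that the number in any $\delta$-arc is $\frac{\delta}{2\pi}\cdot 2g + o_\delta(g)$; then $\sum_{\theta_j\in(\alpha-\delta,\alpha+\delta)} f(\alpha-\theta_j)$ is crudely bounded, after a dyadic decomposition of the arc into subarcs $2^{-m}\delta$, by $\sum_m (\text{count in shell})\cdot|\log(2^{-m}\delta)|$, and the counts are $\le \frac{2^{-m+1}\delta}{2\pi}2g+o(g)$, so the whole singular contribution is $O(\delta\log(1/\delta)\,g)+o(g)$. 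Letting $g\to\infty$ then $\delta\to 0$ yields $\log M(X)=o(g)$.

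For the ``only if'' direction, assume $M(X)=O_\epsilon(C(X)^\epsilon)$, i.e. $\log M(X)=o(g)$, and deduce equidistribution. By the Weyl criterion it suffices to show $\widehat{\mu_X}(n)=\frac{1}{2g}\sum_j e^{-in\theta_j(X)}\to 0$ for each fixed nonzero integer $n$. The point is that these Fourier coefficients are (up to normalisation) the power sums of the inverse roots, hence read off from $\log P(t,X)$: expanding $\log P(t,X)=-\sum_{n\ge 1}\frac{S_n(X)}{n}t^n$ with $S_n(X)=\sum_j t_j(X)^{-n}=p^{n/2}\sum_j e^{-in\theta_j(X)}$, so $\widehat{\mu_X}(n)=S_n(X)/(2g\,p^{n/2})$. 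Now $S_n(X)/n$ is, by Cauchy's integral formula, $-\frac{1}{2\pi i}\oint_{|t|=r}\frac{\log P(t,X)}{t^{n+1}}\,dt$ for any $r<p^{-1/2}$; taking $r\to p^{-1/2}$ and using the subharmonicity/Jensen bound, $\log|P(t,X)|\le \log M(X)$ on $S$ together with the zeros all lying \emph{on} $S$ (so $P$ has no zeros inside, and $\log|P|$ is harmonic in the open disc $|t|<p^{-1/2}$), one gets $|S_n(X)| \le C\, p^{n/2}\cdot(\log M(X) + O(g))$? — here care is needed because the trivial bound on $|\log P|$ on $S$ from \emph{below} is $-O(g)$ only after the repulsion input, which we do not have for free in this direction. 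Instead I would argue directly: $|\widehat{\mu_X}(n)|^2 = \widehat{\mu_X}(n)\overline{\widehat{\mu_X}(n)}$ and use that $\int_S |P(t,X)|^2\,\frac{d\alpha}{2\pi}$, computable as $\sum |c_k|^2$ of the coefficients of $P$, is bounded by $M(X)^2 = C(X)^{o(1)}$, while a lower bound for the same integral in terms of $\sum_j e^{-in\theta_j}$ forces those sums to be small. The cleanest route: $M(X)^2 \ge \int_S|P|^2 = \sum_{k=0}^{2g} |e_k(X)|^2$ where $e_k$ are the elementary symmetric functions of the $1/t_j$; Newton's identities relate $e_1 = S_1$, and more generally bound $|S_n|$ polynomially in the $e_k$ and $g$, giving $|S_n(X)| = C(X)^{o(1)}\cdot \mathrm{poly}(g)$, hence $\widehat{\mu_X}(n) = o(1)$ after dividing by $2g\,p^{n/2}$ (the $p^{n/2}$ in the denominator is what makes this work — it converts the subexponential-in-$g$ bound on $|S_n|$ relative to $p^{n/2}$ into genuine decay, since $2g\to\infty$).

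\emph{Main obstacle.} The hard part is the unbounded logarithmic kernel. In the ``if'' direction this is the need for a \emph{uniform} (in the shift $\alpha$) control of $\int f\,d\mu_X$ despite $f\notin C(S)$, which forces the repulsion/arc-counting estimate and the dyadic shell argument sketched above; getting the error genuinely $o(g)$ rather than $O(g)$ requires first taking $g\to\infty$ (using equidistribution for the fixed-width bump) and only then $\delta\to 0$. In the ``only if'' direction the analogous obstacle is that the upper bound $\log|P|\le\log M(X)$ on $S$ does not by itself control the possibly large negative values of $\log|P|$ near zeros, so one cannot naively Cauchy-estimate the power sums; the fix is to work with $\int_S|P|^2$ (a genuinely two-sided quantity) and Newton's identities rather than with $\log P$ directly.
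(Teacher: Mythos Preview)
Your ``only if'' direction has a genuine gap. From $\int_S |P|^2 \le M(X)^2$ you get, after accounting for the radius $p^{-1/2}$, the coefficient bound $|e_k|\le M(X)\,p^{k/2}$. Feeding this into Newton's identities recursively yields $|S_n|\lesssim_n M(X)^n p^{n/2}$, so $|\widehat{\mu_X}(n)|=|S_n|/(2g\,p^{n/2})\lesssim_n M(X)^n/g$. But the Lindel\"of hypothesis only says $M(X)=p^{o(g)}$; this is subexponential in $g$, not $o(g^{1/n})$, so the quotient need not tend to zero. The $p^{n/2}$ in the denominator does not save you: it cancels exactly against the $p^{n/2}$ that the $e_k$-bounds feed into $S_n$, and you are left needing $M(X)^n=o(g)$, which is far stronger than Lindel\"of. (Concretely, $M(X)=e^{\sqrt g}$ is Lindel\"of but your bound gives nothing.)

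The paper's route is quite different and worth knowing. One passes to a weak-$*$ subsequential limit $\mu$ of the $\mu_X$; a theorem of Tsfasman--Vl\u adu\c t guarantees that any such limit of Weil measures has a \emph{continuous} density $F(\theta)$, so $\int_{S^1}\log|1-wz|\,d\mu(z)$ is well defined. A short smoothing argument then shows Lindel\"of forces $\int_{S^1}\log|1-wz|\,d\mu(z)\le 0$ for every $w\in S^1$. Now comes the key trick: averaging this inequality over $w$ gives $0$ on the nose (since $\int_{S^1}\log|1-wz|\,dw=0$), so the nonpositive continuous function of $w$ with zero mean must vanish identically. Expanding $\log(1-wz)=-\sum_{m\ge 1}(wz)^m/m$ then kills all nonzero Fourier coefficients of $F$, forcing $\mu$ to be Haar. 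Since every subsequential limit is Haar, $\mu_X\to$ Haar. The Tsfasman--Vl\u adu\c t input is the nontrivial external ingredient you are missing.

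On the ``if'' direction your plan works but is overengineered. The kernel $f(\beta)=\log|2\sin(\beta/2)|$ is bounded \emph{above} and its singularity contributes only negatively to $\sum_j f(\alpha-\theta_j)$; since $\log M(X)\ge 0$ (e.g.\ by $\int_S\log|P|=\log|P(0)|=0$), you only need an upper bound. So truncate from below: $f\le f_N:=\max(f,-N)$ is continuous, the family $\{f_N(\alpha-\cdot):\alpha\in S^1\}$ is compact in $C(S^1)$, so weak convergence $\mu_X\to$ Haar gives $\int f_N(\alpha-\cdot)\,d\mu_X\to\int f_N\,d\theta/2\pi$ uniformly in $\alpha$, and the right side tends to $0$ as $N\to\infty$. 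No repulsion or dyadic shells are needed.
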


\begin{proof} Let $\mathcal{F}$ be a family of curves. For each curve $X$ we have a \emph{Weil measure} on the unit circle given by
$\mu_X=\frac1{2g(X)}\displaystyle\sum_{i=1}^{2g(X)}\delta_{\alpha_i}$ where $p^{-\frac12}\alpha_i$ are the roots of $P(t,X)$. Note that the set of roots
is closed under complex conjugation, and hence $$\{\alpha_i, 1\leq i\leq 2g\} = \{\alpha_i^{-1},1\leq i\leq2g\}.$$ As in \cite{TV} we can pick a sequence $X_j\subset\mathcal{F}$ of curves with genera $g_j$ such that the sequence $\mu_j=\mu_{X_j}$ converges to a measure $\mu$ in the weak-* topology.

Next, we observe that for $t\in S$,
\[\frac{1}{2g_j}\log(P(t,X_j))=\frac{1}{2g_j}\sum_{i=1}^{2g_j}\log(1-p^{1/2}\alpha_{j,i}t)=\int_{S^1}\log(1-wz)d\mu_j(z)\] where we set $w=p^{1/2}t$ and $S^1$ is the unit circle. We thus deduce that the family $\mathcal{F}$ is Lindel\"of iff
\begin{equation}\label{integraloflog1}
\limsup_{j}\left\{\max_{w\in S^1}\int_{S^1}\log(1-wz)d\mu_j(z)\right\}\leq 0
\end{equation}

We would now like to apply the outer limit to the measures $\mu_j(z)$, but there is a slight technical annoyance coming from the fact that $\log(1-e^{i\theta})$ is singular at $\theta=0$. This can be addressed by a smoothing argument: For $\epsilon>0$ define
$$L_{\epsilon,w}(z)=\int_{\theta=0}^{\epsilon}\log(1-we^{i\theta}z)d\theta.$$

Then by equation \eqref{integraloflog1} it follows that for the family $\mathcal{F}$ to be Lindel\"of we must have
\begin{equation}
\limsup_j\int_{S^1}L_{\epsilon,w}(z)d\mu_j(z)\leq 0.
\end{equation}
As $L_{\epsilon,w}(z)$ is continuous, we may now exchange the limit and integral to arrive at

\begin{equation}\label{intermint}
\int_{S^1}L_{\epsilon,w}(z)d\mu(z)\leq 0.
\end{equation}

By \cite{TV} the measure $\mu$ can be written as $F(\theta)d\theta$ where $z=e^{i\theta}$ and $F(\theta)$ is a non-negative \emph{continuous} probability function. It follows from this that the singularity of $\log(1-z)$ at $z=1$ is integrable with respect to $\mu$. Since equation \eqref{intermint} holds for all $\epsilon>0$
we deduce that for $\mathcal{F}$ to be Lindel\"of we must have

\begin{equation}\label{integraloflog}
\int_{S^1}\log(1-wz)d\mu(z) \leq 0
\end{equation}

Since $$\limsup_j\int_{S^1}\log(1-wz)d\mu_j(z)\leq \int_{S^1}\log(1-wz)d\mu(z)$$ equation \ref{integraloflog} is actually equivalent to $\mathcal{F}$ being a
Lindel\"of family.

Since $\displaystyle\int_{|w|=p^{1/2}}\log(1-wz) |dw|=0$, equation (\ref{integraloflog}) is actually equivalent to the stronger statement
\begin{equation}\label{logiszero}
\int_{S^1}\log(1-wz)d\mu=0
\end{equation}
for all $w\in S^1.$  Now, since $F(z)$ is continuous it is also in $L^2(S^1)$ and thus has a Fourier expansion $F(z)=\sum_{m\in\mathbb{Z}}F_me^{mz}$ for $\{F_m\}\in l^2(\mathbb{Z})$.
Since $\log(1-z)$ is also in $L^2(S^1)$ we can write
\begin{align*}
\int_{S^1}\log(1-wz)d\mu_X &=\int_{S^1}\log(1-wz)F(z)d|z|\\
&=\displaystyle\sum_{m\neq 0} \frac{F_mw^m}{m}\\
\end{align*}
Since $\frac{F_m}{m}\in l^1(\mathbb{Z})$, for $\displaystyle\sum_{m\neq 0} \frac{F_mw^m}{m}$ to be identically zero we must have $F_m=0$ for $m\neq 0$, or equivalently that $d\mu$ is the usual Haar measure on $S^1$. This completes the proof.

\end{proof}

The following Lemma shows that not every family of curves is Lindel\"of, and thus being Lindel\"of is not a formal consequence of the Riemann hypothesis.
We call the \emph{universal family} the family consisting of \emph{all} curves.\\
%The first of the following two results show that \emph{not} every family is Lindel\"of. It is based on an argument of Serre constructing curves with many
%rational points. The second one is the one in interest in the current paper and states that if we have a reasonable growth condition on the number of rational%
%points of the curves in the family then the family is Lindel\"of. More precisely

\begin{cor} The universal family is \emph{not} Lindel\"of.
\end{cor}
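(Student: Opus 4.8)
The plan is to exhibit a subfamily of the universal family whose Weil measures converge to something other than Haar measure, and then invoke Theorem \ref{lindelofcondition}. The cleanest candidate is a sequence of curves for which the zeta polynomial factors in a highly structured way, so that the roots cluster. A natural choice: take $X_j$ to be (a smooth model of) the curve whose zeta polynomial is a perturbation of $(1-p^{1/2}t)^{2g_j}$-like behavior — more concretely, one can take products/fiber powers, or iterated constant-field-type constructions, to force many Frobenius eigenvalues to coincide or to accumulate near a single point on $S^1$. Even simpler, one may use the observation that for a curve $X$ over $\mathbb{F}_p$, base-changing to $\mathbb{F}_{p^n}$ replaces the eigenvalues $\alpha_i$ by $\alpha_i^n$; choosing a fixed curve $X_0$ with a Frobenius eigenvalue $\alpha$ of infinite order on $S^1$ and passing to $\mathbb{F}_{p^{n}}$ does not change the \emph{number} of eigenvalues, so instead one should take the curves $X_0 \otimes_{\mathbb{F}_p} \mathbb{F}_{p^n}$ viewed over $\mathbb{F}_p$ via Weil restriction, or simply take disjoint-union/connected-sum constructions of $n$ copies, whose genus grows linearly in $n$ while the eigenvalue multiset is $n$ copies of that of $X_0$ (possibly with Tate twists). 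In either case the Weil measure $\mu_{X_j}$ stays equal to the fixed discrete measure $\mu_{X_0}$ supported on the finitely many normalized eigenvalues of $X_0$, which is manifestly not the Haar measure on $S^1$.

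The steps, in order, would be: (1) fix a single curve $X_0/\mathbb{F}_p$ with $g(X_0)\geq 1$, so that $P(t,X_0)$ has at least two roots and $\mu_{X_0}$ is a discrete measure supported on at most $2g(X_0)$ points; (2) construct from $X_0$ a sequence $X_j$ of smooth projective geometrically irreducible curves over $\mathbb{F}_p$ with $g(X_j)\to\infty$ whose normalized Frobenius-eigenvalue measure $\mu_{X_j}$ equals $\mu_{X_0}$ for every $j$ — the safest route is to take $X_j$ a curve of genus $jg(X_0)$ constructed so that $H^1$ is isomorphic as a Frobenius-module to $H^1(X_0)^{\oplus j}$, for instance via an étale cover argument or a suitable fiber product, after which one normalizes back to a single curve using a standard "joining curves along points" lemma that adds genus additively without changing the set of eigenvalues beyond multiplicity; (3) observe that since $\mu_{X_j}=\mu_{X_0}$ for all $j$, the sequence already converges weak-$*$ to $\mu_{X_0}\neq \text{Haar}$; (4) apply Theorem \ref{lindelofcondition}: the universal family contains this sequence, equidistribution fails along it, hence the universal family is not Lindel\"of. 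One should also remark (as a sanity check consistent with the proof of Theorem \ref{lindelofcondition}) that the limit measure here is \emph{discrete}, not of the form $F(\theta)\,d\theta$ with $F$ continuous; this is exactly the kind of degenerate limit that the argument there rules out only when one has already assumed the Lindel\"of property, so no contradiction arises.

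The main obstacle is step (2): producing an honest sequence of smooth projective geometrically irreducible curves over $\mathbb{F}_p$ whose $L$-polynomials are literally powers (up to a fixed factor) of a single $P(t,X_0)$. Taking $n$ disjoint copies is not allowed (not geometrically irreducible), and naive fiber products are singular, so one must either invoke a known construction (e.g. curves in a tower, or curves built by gluing $j$ copies of $X_0$ at rational points and normalizing, tracking the effect on $H^1$) or else weaken the requirement: it is enough to find \emph{any} sequence for which $\limsup_j \max_{w\in S^1}\int_{S^1}\log|1-wz|\,d\mu_j(z) > 0$, i.e. for which some subsequential limit measure is not Haar. A soft way to get this, avoiding explicit curves entirely, is a counting/dimension argument: the number of curves of genus $g$ over $\mathbb{F}_p$ grows only like a fixed power of $p^g$, while a "positive proportion near equidistribution" statement would force $M(X)=O_\epsilon(C(X)^\epsilon)$ for \emph{all} of them, and one then derives a contradiction by pigeonholing against the functional equation $P(1/(pt),X)=p^{g}t^{-2g}P(t,X)$, which forces $|P(t,X)|$ to be as large as $p^{g}$ in absolute-value-integral terms and hence large pointwise on $S$ for at least one curve of each genus. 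I expect the cleanest writeup to go through an explicit gluing construction for step (2), with the functional-equation/pigeonhole remark included as an alternative.
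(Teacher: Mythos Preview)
Your overall strategy---exhibit a subfamily whose Weil measures fail to converge to Haar measure and invoke Theorem \ref{lindelofcondition}---is the right one and matches the paper. But the specific construction cannot work, and the error is precisely at the point you flag as a ``sanity check.'' You claim that the continuity of the limiting density $F(\theta)$ in the proof of Theorem \ref{lindelofcondition} is only established conditionally on the Lindel\"of property, so a discrete limit measure would be permitted. This is a misreading: the result of Tsfasman--Vl\u{a}du\c{t} \cite{TV} invoked there is an \emph{unconditional} structure theorem for weak-$*$ limits of Weil measures along any sequence of curves with $g\to\infty$; every such limit is absolutely continuous with continuous density. Consequently no sequence of smooth projective geometrically irreducible curves of growing genus can have Weil measures equal to (or converging to) a fixed discrete measure $\mu_{X_0}$, so your step (2) is not merely hard but impossible---the gluing and fiber-product attempts must fail for this structural reason, not just for irreducibility or smoothness reasons. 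The pigeonhole alternative is also unsound as stated: the functional equation is a symmetry of $|P(t,X)|$ on the critical circle $S$, not a lower bound on its supremum there.

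The paper's proof is a one-line citation: by work of Tsfasman--Vl\u{a}du\c{t} and Serre \cite{Ser}, the family of modular curves $X_0(N)$ over $\mathbb{F}_p$ has Weil measures converging to the $p$-adic Plancherel measure $\mu_p$, which is not Haar; since the universal family contains this subfamily, it is not Lindel\"of. In terms of the point-count reformulation stated just after Theorem \ref{lindelofcondition}, what one needs is a sequence with $|X(\mathbb{F}_{p^r})|\gg g(X)$ for some fixed $r$, i.e.\ an asymptotically good tower. The modular curves supply exactly this. The moral is that the right obstruction comes from curves with \emph{many} rational points relative to their genus, not from curves with artificially repeated Frobenius eigenvalues---the latter do not exist in the required form.
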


\begin{proof}
This follows from the work of Tsafman and Vladut \cite{TV} and Serre \cite{Ser} who showed that the family of modular curves $X_0(N)$ over $\mathbb{F}_p$ has Weil measures converging to
the p-adic Plancherel measure $\mu_p,$ and thus is not Lindel\"of.
\end{proof}

The distribution of zeroes for a curve $X$ is related to the number of rational points of $X$ by (see for example formula $(76)$ of \cite{Ser})
$$|X(\mathbb{F}_{p^r})|=p^r+1-p^{r/2}\displaystyle\sum_{i=1}^{2g(X)}\alpha_{X,i}^r$$

As all the Weil measures are invariant under $z\rightarrow\bar{z}$ the above relation combined with Theorem \ref{lindelofcondition} implies
that for a family to be Lindel\"of, it is neccessary and sufficient that
the number of rational points grows slower than the genus, i.e. $\forall r\in\mathbb{N}, |X(\mathbb{F}_{p^r})|=o_r(g(X))$. \\

\begin{definition*} The \emph{gonality}, $G(X)$, of a curve $X$ is the minimal degree of a morphism from $X$ to $\mathbb{P}^1_{/\mathbb{F}_p}.$
\end{definition*}

Because of the inequality $|X(\mathbb{F}_{p^r})|\leq G(X)(p^r+1)$, we immediately deduce the following theorem.\\

\begin{thm} A family $\mathcal{F}$ is Lindel\"of if we have $G(X)=o(g(X))$ as $X$ varies over $\mathcal{F}$.
\end{thm}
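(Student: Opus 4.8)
The plan is to combine the point-count inequality coming from gonality with the criterion for being Lindel\"of established in Theorem \ref{lindelofcondition} and the discussion immediately following it. Recall from that discussion that a family $\mathcal{F}$ is Lindel\"of if and only if for each fixed $r\in\mathbb{N}$ one has $|X(\mathbb{F}_{p^r})|=o_r(g(X))$ as $X$ ranges over $\mathcal{F}$. So it suffices to verify this point-count bound under the hypothesis $G(X)=o(g(X))$.

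First I would recall why $|X(\mathbb{F}_{p^r})|\leq G(X)(p^r+1)$: a morphism $f:X\to\mathbb{P}^1_{/\mathbb{F}_p}$ of degree $G(X)$ has fibers of size at most $G(X)$, and every $\mathbb{F}_{p^r}$-point of $X$ maps to one of the $p^r+1$ points of $\mathbb{P}^1(\mathbb{F}_{p^r})$, so summing over the fibers above those points gives the inequality. (One should note $f$ is defined over $\mathbb{F}_p$, hence is $\mathrm{Gal}(\overline{\mathbb{F}_p}/\mathbb{F}_{p^r})$-equivariant, so it genuinely sends $\mathbb{F}_{p^r}$-points to $\mathbb{F}_{p^r}$-points.) Then, for fixed $r$,
\[
\frac{|X(\mathbb{F}_{p^r})|}{g(X)}\leq (p^r+1)\,\frac{G(X)}{g(X)}\longrightarrow 0
\]
as $X$ varies in $\mathcal{F}$, since $p^r+1$ is a constant and $G(X)/g(X)\to 0$ by hypothesis. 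This gives $|X(\mathbb{F}_{p^r})|=o_r(g(X))$ for every $r$, which is exactly the necessary and sufficient condition for $\mathcal{F}$ to be Lindel\"of noted after Theorem \ref{lindelofcondition}. Hence $\mathcal{F}$ is Lindel\"of.

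There is essentially no serious obstacle here — the statement is a formal consequence of Theorem \ref{lindelofcondition} together with the elementary gonality inequality. The only point that warrants a line of care is the reduction of the Lindel\"of condition to the point-count condition uniformly in the family: one must observe, as is done in the paragraph preceding the definition of gonality, that the Weil measures $\mu_X$ are invariant under $z\mapsto\bar z$, so that equidistribution of the zeros is equivalent to the vanishing in the limit of all the power sums $\sum_i \alpha_{X,i}^r$ relative to $g(X)$, which via the Lefschetz-type formula $|X(\mathbb{F}_{p^r})|=p^r+1-p^{r/2}\sum_i\alpha_{X,i}^r$ translates to $|X(\mathbb{F}_{p^r})|=o_r(g(X))$. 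Granting that translation (already recorded in the excerpt), the argument above closes the proof.
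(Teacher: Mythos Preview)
Your proposal is correct and follows essentially the same approach as the paper: the paper's proof is the single sentence ``Because of the inequality $|X(\mathbb{F}_{p^r})|\leq G(X)(p^r+1)$, we immediately deduce the following theorem,'' relying on the same point-count criterion for Lindel\"of recorded just before. Your write-up simply spells out the fiber-counting justification of the gonality inequality and the reduction via Theorem~\ref{lindelofcondition} in more detail.
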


The authors have been noted that this result has also appeared in (\cite{F}, proposition 5.1) for the case of hyperelliptic curves, and in \cite{Shp} in the general case.

We now specialize to the case of hyperelliptic curves $X_D$, which are by definition those algebraic curves with gonality 2. These are obtained by adjoining $\sqrt{D}$ to the function field $\mathbb{F}_p(T)$ for a square-free
polynomial $D$. Define the L-function $L(s,\chi_D)$ as follows:
$$L(s,\chi_D) = \sum_{a\in R}\left(\frac{D}{a}\right)p^{-\deg(a)s}.$$ Then we have the identity
 $$P(p^{-s},X_D) = L(s,\chi_D).$$ As hyperelliptic curves have gonality 2, the family is Lindel\"of, but we can actually
do better for this family of curves. The following explicit bound is the analogue of a result of Littlewood \cite{L} in the number field case, and we follow very closely a simplified proof of Soundararajan, \cite{S}.\\

\begin{thm}\label{lindelofhyper}
Let $D$ be a polynomial of degree $2g$ or $2g+1$.
Then \[|L(1/2,\chi_D)|\leq e^{\frac{2g}{\log_p(g)} + 4p^{\frac12}g^{\frac12}}.\]
\end{thm}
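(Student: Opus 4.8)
The plan is to transcribe, to the polynomial ring $R=\F_p[T]$, Soundararajan's streamlined proof \cite{S} of Littlewood's bound. We may assume $L(1/2,\chi_D)\neq 0$, since otherwise $\log|L(1/2,\chi_D)|=-\infty$ and nothing needs to be shown. The only input from geometry is the identification $L(s,\chi_D)=P(p^{-s},X_D)$ together with the Riemann Hypothesis for $X_D$, recalled earlier in \S\ref{sectiondrinfeld}: this gives $L(s,\chi_D)=\prod_{j=1}^{N}(1-\alpha_j p^{-s})$ with $N\le 2g$ and $|\alpha_j|=p^{1/2}$. Put $\Lambda(P^k)=\deg P$ (the usual von Mangoldt weight on monic prime powers) and $\psi_D(m)=\sum_{f\ \mathrm{monic},\ \deg f=m}\Lambda(f)\chi_D(f)$, so that the Euler product gives $\log L(s,\chi_D)=\sum_{m\ge 1}\frac{\psi_D(m)}{m}p^{-ms}$ and $\psi_D(m)=-\sum_j\alpha_j^{\,m}$. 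The whole argument rests on the two resulting bounds
\[|\psi_D(m)|\le p^m\qquad\text{(trivially, from }|\chi_D|\le 1\text{ and }\textstyle\sum_{\deg f=m}\Lambda(f)=p^m\text{)}\]
and
\[|\psi_D(m)|\le 2g\,p^{m/2}\qquad\text{(from the Riemann Hypothesis).}\]

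The crux is the function field analogue of Soundararajan's lemma: \emph{under the Riemann Hypothesis for $L(s,\chi_D)$, for every integer $x\ge 2$,}
\[\log|L(1/2,\chi_D)|\ \le\ \Re\sum_{1\le m\le x}\frac{\psi_D(m)}{m}\,p^{-m/2}\Bigl(1-\frac mx\Bigr)\ +\ \frac{2g}{x}\ +\ O_p(1).\]
I would prove this exactly as in \cite{S}: using the completed $L$-function one writes $\frac{L'}{L}(s,\chi_D)=\sum_\rho\frac{1}{s-\rho}-(\text{functional-equation term})$ --- the Hadamard factorization being here literally the factorization of $P(\cdot,X_D)$, hence free of growth estimates --- and inserts this into a Riesz-mean (Perron-type) integral with kernel $X^{w}/w^{2}$, $X=p^{x}$. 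On a far-right line the shifted $\frac{L'}{L}$ is an absolutely convergent Dirichlet series, so the integral equals $\log X$ times the displayed Dirichlet polynomial; moving the contour leftward and collecting residues reconstitutes $\log|L|$ near the centre. The Riemann Hypothesis enters in two essential, non-spectral ways (as the introduction flags): first, to control the sum over zeros produced by the contour shift --- placing every $\rho$ on $\Re s=1/2$ both damps $X^{\rho-s}$ and makes the sum over the $\le 2g$ fundamental zeros manageable; second, to justify letting $\Re s\downarrow 1/2$ at the end, which is legitimate because $\Re\frac{1}{s-\rho}\ge 0$ for $\Re s>1/2$. The functional-equation term carries the conductor of $\chi_D$ (of size $\asymp p^{2g}$, compare the functional equation for $P(t,X_D)$ recalled earlier), and after division by $\log X=x\log p$ it contributes the $\frac{2g}{x}$.

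Granted the lemma, the rest is elementary. Since $|\psi_D(m)|\le p^m$ and $0\le 1-\frac mx\le 1$,
\[\Re\sum_{m\le x}\frac{\psi_D(m)}{m}p^{-m/2}\Bigl(1-\frac mx\Bigr)\ \le\ \sum_{m\le x}\frac{|\psi_D(m)|}{m}p^{-m/2}\ \le\ \sum_{m\le x}\frac{p^{m/2}}{m}\ <\ \frac{\sqrt p}{\sqrt p-1}\,p^{x/2}.\]
Take $x=\lceil\log_p g\rceil$, which is $\ge 2$ as soon as $\log_p g>1$. Then $p^{x/2}\le p^{(\log_p g+1)/2}=\sqrt p\,\sqrt g$, so the displayed sum is $<\frac{p}{\sqrt p-1}\sqrt g<4\sqrt p\,\sqrt g$ for every prime $p\ge 5$ (recall $p\equiv 1\bmod 4$), with room to spare to absorb the $O_p(1)$; meanwhile $\frac{2g}{x}\le\frac{2g}{\log_p g}$. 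Hence $\log|L(1/2,\chi_D)|\le\frac{2g}{\log_p g}+4\sqrt p\,\sqrt g$. For the remaining $g$, those with $\log_p g\le 1$ (finitely many for each $p$), the claimed right-hand side already exceeds the exponent $2g\log 2$ of the trivial bound $|L(1/2,\chi_D)|=\prod_j|1-\alpha_jp^{-1/2}|\le 2^N\le 4^g$, since then $\frac{2g}{\log_p g}\ge 2g>2g\log 2$, so nothing is to be proved there.

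The one substantive step is the lemma of the second paragraph --- the function field Soundararajan inequality --- and inside it the two uses of the Riemann Hypothesis: damping and summing the zero contribution of the contour shift, and justifying the descent $\Re s\downarrow 1/2$ via the positivity of $\Re\frac{1}{s-\rho}$. Pinning down the constant in front of $g/x$ (and hence of $g/\log_p g$) also requires care with the ramification of $\chi_D$ at the place $\infty$ in the functional equation. Everything downstream --- the estimate $|\psi_D(m)|\le\min(p^m,2g\,p^{m/2})$, the prime-power sum bound, the choice $x=\lceil\log_p g\rceil$, and the numerology behind $4\sqrt p$ --- is routine.
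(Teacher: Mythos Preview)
Your strategy is the same as the paper's---both transcribe Soundararajan's argument \cite{S}---and your endgame (truncate at $x\approx\log_p g$, bound $\sum_{m\le x}p^{m/2}/m$, land on $2g/\log_p g + O(\sqrt{pg})$) matches exactly. The difference is that you black-box the key inequality while the paper proves it in full, and your sketch of that proof glosses over a function-field-specific wrinkle.

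The wrinkle is periodicity: $L(s,\chi_D)$ is a polynomial in $p^{-s}$, hence $\frac{2\pi i}{\log p}$-periodic in $s$, and its zeros in the $s$-plane form an infinite periodic array. A Perron integral with kernel $X^w/w^2$ over a full vertical line, shifted leftward, meets infinitely many copies of each zero; the resulting zero-sum is not the finite sum over $\le 2g$ roots you want, and your appeal to ``the Hadamard factorization being here literally the factorization of $P(\cdot,X_D)$'' conflates the $t$-variable and $s$-variable pictures. The paper sidesteps this by integrating over a single period, $\frac{1}{2\pi i}\int_{2}^{2+\frac{2\pi i}{\log p}}$, against the adapted kernel $\dfrac{p^{hw}p^{-w}}{(1-p^{-w})^2}$, which has a double pole at $w=0$ and simple poles only at the finitely many fundamental zeros.

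Structurally, the paper's argument is also more hands-on than a single positivity step. It first compares $\log|L(1/2)|$ with $\log|L(s_0)|$ for $s_0=\frac12+\frac{1}{h\log p}$ via the explicit calculus inequality of Lemma~\ref{hardineq}, then bounds $\log|L(s_0)|$ through the period-contour identity and a second pointwise inequality (Lemma~\ref{calcineq}), and finally chooses $s_0$ so that the coefficient of the zero-sum $F(s_0)=\sum_j\Re\bigl(\frac{1}{1-\alpha_jp^{1/2-s_0}}-\frac12\bigr)$ becomes negative and can be dropped by positivity. Every constant is tracked, so there is no $O_p(1)$ left to absorb. Your outline can certainly be completed along these lines, but as written the contour/kernel step and the constant-tracking are genuine gaps, not formalities.
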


\begin{proof}

We begin with some notation. Let $g=g_D$ be the genus of the curve $X_D,$ so that depending on the parity of $\deg(D)$, $|D|=p^{2g}$ or $p^{2g+1}$. We also define $p^{-1/2}\alpha_i, 1\leq i \leq 2g$ to be the roots of $P(t,X_D)$. By the Riemann hypothesis for curves we know that $|\alpha_i|=1$, and since
 $L(s,\chi_D)$ is real on the real line, the set of $\alpha_i$ is closed under inversion. Taking the logarithmic derivative of $L(s,\chi_D)$ gives
\[\frac{L'}{L}(s,\chi_D) = \log(p)\left(-2g+\sum_{i=1}^{2g}\frac{1}{1-\alpha_i p^{\frac{1}{2}-s}}\right)\]

Define $$F(s)=\sum_{i=1}^{2g}\Re\left(\frac{1}{1-\alpha_ip^{\frac12-s}}-\frac{1}{2}\right),$$ so that
\begin{equation}\label{logd}
\frac{L'}{L}(s,\chi_D) = \log(p)\left(-g+F(s)\right).
\end{equation}
Let $s_0$ be a real number such that $s_0>\frac12$. Integrating the above from $\frac{1}{2}$ to $s_0$ and taking real parts gives
\begin{equation}\label{Lfun1}
\log|L(1/2,\chi_D)|-\log\left|L\left(s_0,\chi_D\right)\right| = g\log(p)\left(s_0-\frac12\right)-\log(p)\sum_{i=1}^{2g}\int_{\frac12}^{s_0}\Re\left(\frac{1}{1-\alpha_i p^{\frac{1}{2}-s}}-\frac12\right)ds.
\end{equation}
To control the second term on the right hand side, we have the following lemma:

\begin{lemma}\label{hardineq}

Let $\theta, 0<t<1$ be real numbers. Then \[\int_0^t\Re\left(\frac{1}{1-e^{-s-i\theta}}-\frac12\right)ds\geq 2\cdot\frac{1+e^{-t}}{1-e^{-t}}\cdot \Re\left(\frac{1}{1-e^{-t-i\theta}}-\frac12\right)\]
\end{lemma}

\begin{proof}

Computing the integral on the left hand side, we arrive at \[\frac{t}{2}+\Re\log\left(\frac{1-e^{-t-i\theta}}{1-e^{-i\theta}}\right).\]
Using that $\Re(\log(z))=\log|z|$, we see the expression above equals:
\begin{align*}
\frac{t}{2}+\frac{1}{2}\cdot\log\left|\frac{1-2\cos(\theta)e^{-t}+e^{-2t}}{2-2\cos(\theta)}\right|&=
\frac{1}{2}\cdot\log\left|\frac{e^t-2\cos(\theta)+e^{-t}}{2-2\cos(\theta)}\right|\\
&=\log\left|1+\frac{e^t+e^{-t}-2}{2-2\cos(\theta)}\right|.\\
\end{align*}

Now using the inequality $\log(1+x)\geq \frac{x}{x+1}$ for $x>0$, we have that
\begin{align*}
\log\left|1+\frac{e^t+e^{-t}-2}{2-2\cos(\theta)}\right|&\geq \frac{1-2e^{-t}+e^{-2t}}{1-2e^{-t}\cos(\theta)+e^{-2t}}\\
&=\frac{1-e^{-t}}{1+e^{-t}}\cdot\left(\frac{1-e^{-2t}}{1-2e^{-t}\cos(\theta)+e^{-2t}}\right)\\
&=2\cdot\frac{1-e^{-t}}{1+e^{-t}}\cdot\left(\Re\left(\frac{1}{1-e^{-t-i\theta}}\right)-\frac{1}{2}\right)\\
\end{align*}
\end{proof}

 Using equation \eqref{Lfun1} and lemma \ref{hardineq} gives
\begin{equation}\label{firstsoundformula}
\log\left|L\left(\frac12,\chi_D\right)\right|-\log\left|L\left(s_0,\chi_D\right)\right|\leq g\log(p)\left(s_0-\frac12\right) -2\cdot\frac{1+p^{\frac12-s_0}}{1-p^{\frac12-s_0}}\cdot F(s_0)
\end{equation}

Define $h=\lceil\log_p(g)\rceil.$

Next, for $\Re(s)>0$ we compute $$\frac{1}{2\pi i}\displaystyle\int_{2}^{2+\frac{2\pi i}{\log(p)}}-\frac{L'}{L}(s+w,\chi_D)\frac{p^{hw}p^{-w}}{(1-p^{-w})^2}dw$$ in two different ways:

First by
expanding into $$\frac{L'}{L}(s,\chi_D) = \sum_{n\in\mathbb{N}}\frac{\lambda_D(n)}{p^{ns}}$$ and integrating term by term, and second by analytically continuing to the left
and picking up the residues. Note that there are double poles at $w=0$, as well as single poles at the values of $w$ for which $p^{s+w}=\alpha_ip^{\frac12}$.  We end up with the relation
$$-\log(p)^{-2}\sum_{0\leq n\leq h}\frac{\lambda_D(n)\log(p^{h-n})}{p^{ns}} =
h\log^{-1}(p)\frac{L'}{L}(s,\chi_D)+\log^{-2}(p)\left(\frac{L'}{L}(s,\chi_D)\right)'+\sum_{i=1}^{2g}\frac{(\alpha_i p^{\frac12-s})^h\alpha_i^{-1}p^{s-\frac12}}{(1-\alpha_i^{-1}p^{s-\frac12})^2}$$

We integrate the above from $s_0$ to $\infty$ , take real parts and multiply by $\log(p)^2$ to get

\begin{align*}
h\log(p)\log|L(s_0,\chi_D)| &= -\frac{L'}{L}(s_0,\chi_D)	+\log(p)^{-1}\sum_{0\leq n\leq h}\frac{\lambda_D(n)\log(p^{h-n})}{np^{ns_0}}\\
&+\log(p)^2\sum_{i=1}^{2g}\int_{s_0}^{\infty}\Re\left(\frac{(\alpha_i p^{\frac12-s})^h\alpha_i^{-1}p^{s-\frac12}}{(1-\alpha_i^{-1}p^{s-\frac12})^2}\right)ds
\end{align*}

\begin{lemma}\label{calcineq}
For $s>s_0>\frac12$ we have the inequality

\begin{align*}
\log(p)\left|\frac{\alpha_i^{-1}p^{s-\frac12}}{(1-\alpha_i^{-1}p^{s-\frac12})^2}\right|&\leq\left(s-\frac12\right)^{-1}\Re\left(\frac1{1-\alpha_ip^{\frac12-s}}\right)\\
&\leq\left(s_0-\frac12\right)^{-1}\Re\left(\frac1{
1-\alpha_ip^{\frac12-s_0}}\right)
\end{align*}

\end{lemma}

\begin{proof}
We first prove the first inequality: set $y=p^{s-\frac12}$ and $\alpha_i=e^{\theta_i}$. Using
$\Re(\frac1z)=\frac{z+\overline{z}}{2|z|^2}$ the inequality in the lemma rearranges to

\[\left|\frac{y}{(1-\alpha_i^{-1}y)^2}\right|\leq\frac{y}{\log(y)}\cdot\frac{y-\cos(\theta_i)}{\left|1-\alpha_i^{-1}y\right|^2}\]
which is true since
\[\log(y)\leq y-1\leq y-\cos(\theta_i)\] for $y>1$.

To see the second inequality, set $y = \Re\alpha_i$ so that $|y|<1$ and set $F(x)=x^{-1}\cdot\frac{1-ye^{-x}}{1-2ye^{-x}+e^{-2x}}$. Then setting $x=\log(p)\cdot(s-\frac12)$ the claim amounts to proving
that $F(x)$ is decreasing in the range $x>0$. Taking the derivative, we see $$F'(x) = -\frac{e^-t(e^{3t}+ e^t(1+2s^2-2t)+e^{2t}(-3s+st)+s(-1+t))}{4t^2(s-\cosh(t))^2}.$$
We need to show that $F'(x)<0$, or equivalently that $W_s(t):=e^-t(e^{3t}+ e^t(1+2s^2-2t)+e^{2t}(-3s+st)+s(-1+t))>$ is positive. Differentiating with respect to $t$ we see that
$W_s'(t)=e^{-t} (-1 + e^{2t}) (2e^t + s (-2 + t))$ which is clearly positive as $t>0$ and $s<1$. Finally, $W_s(0)=2(s-1)^2>0$. This completes the proof.

\end{proof}

The above lemma implies
\begin{align*}
\int_{s_0}^{\infty}\Re\left(\frac{(\alpha_i p^{\frac12-s})^h\alpha_i^{-1}p^{s-\frac12}}{(1-\alpha_i^{-1}p^{\frac12-s})^2}\right)ds&\leq \int_{s_0}^{\infty}\left|\frac{(\alpha_i p^{\frac12-s})^h\alpha_i^{-1}p^{s-\frac12}}{(1-\alpha_i^{-1}p^{\frac12-s})^2}\right|ds\\
&\leq\log(p)^{-1}\left(s_0-\frac12\right)^{-1}\Re\left(\frac1{1-\alpha_ip^{-s_0}}\right)\int_{s_0}^{\infty}|p^{h(1/2 -s)}|ds
\end{align*}
so that

\begin{align*}
\log|L(s_0,\chi_D)|&\leq -h^{-1}\log(p)^{-1}\frac{L'}{L}(s_0)+h^{-1}\log(p)^{-2}\sum_{0 < n\leq h}\frac{\lambda_D(n)\log(p^{h-n})}{np^{ns_0}}\\
 &+ h^{-2}\log(p)^{-1}\left(s_0-\frac{1}{2}\right)^{-1}F(s_0)p^{h(1/2-s_0)}
 \end{align*}

 Adding this to (\ref{firstsoundformula}) and using (\ref{logd}) we get

 \begin{align*}
 \log|L(1/2,\chi_D)|&\leq g\left(\log(p)\left(s_0-\frac12\right)+h^{-1}\right)+\\ &+ F(s_0)\left(\frac{\left(s_0-\frac12 \right)^{-1}p^{h(1/2-s_0)}}{h^2\log(p)}-2\cdot\frac{1+p^{\frac12-s_0}}{1-p^{\frac12-s_0}}-\frac{1}{h}\right)\\
&+ h^{-1}\log(p)^{-2}\sum_{0 < n\leq h}\frac{\lambda_D(n)\log(p^{h-n})}{np^{ns_0}}
\end{align*}

Taking $s_0=\frac{1}{2}+\frac{1}{h\log(p)}$  ensures the coefficient of $F(s_0)$ is negative. Since $F(s_0)>0$, we arrive at
\begin{equation}\label{mainineqlindelof}
\log|L(1/2,\chi_D)|\leq \frac{2g}{h} +h^{-1}\log(p)^{-2}\sum_{0 < n\leq h}\frac{\lambda_D(n)\log(p^{h-n})}{np^{ns_0}}
\end{equation}

We need to estimate $|\lambda_D(n)|$. We have the product formula \[L(s,\chi_D)=(p-1)\prod_{P}\frac{1}{1-\chi_D(P)p^{-\deg(P)s}}\]
 where the product is over all monic irreducible polynomials $P\in\F_p[T]$. Therefore
 \[\frac{L'}{L}(s,\chi_D)=\sum_{P}\sum_{n\geq 1} \chi_D^n(P)p^{-n\deg(P)s}.\]

 Thus $$\lambda_D(n)=\sum_{d|n}\sum_{\deg(P)=d}\chi_D^n(P)\leq n\#\{P\mid\deg(P)=n\}\leq np^n.$$
Using this and $s_0>\frac12$ we arrive at
$$\log|L(1/2,\chi_D)|\leq \frac{2g}{h} + 4p^{\frac{h}{2}}\leq\frac{2g}{\log_p(g)} + 4p^{\frac12}g^{\frac12}.$$

Exponentiating implies the result.

\end{proof}

We will also bound $L(1,\chi_D)$ as it will come up for us as a normalizing coefficient in the Siegel mass formula. We point out that in the number field case, the ineffectivity of Theorem \ref{representationTheorem} is due to the ineffectivity of the bound on $L(1,\chi_D)$ that is caused by a possible Siegel zero. Since in the function field setting Riemann hypothesis eliminates all the possible Siegel zeroes we get an effective bound. The following is the analogue of a result of Littlewood over $\mathbb{Q}$.\\

\begin{lemma}\label{classnumber}
Let $D$ be a square-free polynomial of degree $2g$ or $2g+1$. Then
$$(\log_p(g))^{-1}\ll_p|L(1,\chi_D)|\ll_p (\log_p(g))$$
\end{lemma}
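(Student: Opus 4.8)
The plan is to express $\log|L(1,\chi_D)|$ as a rapidly convergent power series in the normalized Frobenius eigenvalues of $X_D$ and to estimate its head and tail separately, using the Riemann hypothesis for curves for the tail and the elementary fact that $X_D$ is a double cover of $\mathbb{P}^1$ for the head. This is the exact function-field transcription of Littlewood's conditional bound for $L(1,\chi)$.

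First I would record the eigenvalue expansion. Since $D$ is square-free, by the discussion preceding Theorem~\ref{lindelofhyper} we have $P(p^{-s},X_D)=L(s,\chi_D)$; writing the roots of $P(t,X_D)$ as $p^{-1/2}\alpha_i$ with $|\alpha_i|=1$ (Riemann hypothesis for curves) and using that the multiset $\{\alpha_i\}$ is stable under $\alpha\mapsto\alpha^{-1}$ and under complex conjugation, one gets $P(t,X_D)=\prod_{i=1}^{2g}(1-p^{1/2}\alpha_i t)$, hence $L(1,\chi_D)=P(p^{-1},X_D)=\prod_{i=1}^{2g}(1-\alpha_i p^{-1/2})$. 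As $|\alpha_i p^{-1/2}|=p^{-1/2}<1$, expanding each logarithm gives the absolutely convergent real series
\[
\log|L(1,\chi_D)| = -\sum_{m\ge 1}\frac{S_m}{m\,p^{m/2}},\qquad S_m:=\sum_{i=1}^{2g}\alpha_i^m\in\mathbb{R}.
\]
Two bounds on $S_m$ are then available: the Riemann hypothesis gives $|S_m|\le 2g$, while the point-count formula $|X_D(\mathbb{F}_{p^m})|=p^m+1-p^{m/2}S_m$ together with $0\le |X_D(\mathbb{F}_{p^m})|\le 2(p^m+1)$ (each fiber of $X_D\to\mathbb{P}^1$ has at most two points) gives $|S_m|\le 4p^{m/2}$. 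I would rewrite the head of the series in terms of point counts:
\[
-\sum_{1\le m\le M}\frac{S_m}{m\,p^{m/2}}
= -\sum_{1\le m\le M}\frac1m + \sum_{1\le m\le M}\frac{|X_D(\mathbb{F}_{p^m})|}{m\,p^{m}} + O_p(1).
\]

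The key choice is the cutoff $M:=2\log_p g$, which is precisely where the trivial geometric bound $|S_m|\le 4p^{m/2}$ and the Riemann-hypothesis bound $|S_m|\le 2g$ cross over. For the tail one uses $|\sum_{m>M}\frac{S_m}{m p^{m/2}}|\le 2g\sum_{m>M}\frac{p^{-m/2}}{m}\ll_p \frac{g}{M\,p^{M/2}}=\frac{1}{2\log_p g}=O_p(1)$, since $p^{M/2}=g$. For the head, $\sum_{1\le m\le M}\frac1m=\log\log_p g+O_p(1)$ and $0\le\sum_{1\le m\le M}\frac{|X_D(\mathbb{F}_{p^m})|}{m\,p^{m}}\le 4\sum_{1\le m\le M}\frac1m=4\log\log_p g+O_p(1)$. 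Combining,
\[
-\log\log_p g+O_p(1)\ \le\ \log|L(1,\chi_D)|\ \le\ 3\log\log_p g+O_p(1),
\]
and exponentiating yields $(\log_p g)^{-1}\ll_p |L(1,\chi_D)|\ll_p(\log_p g)^{3}$, which is in fact stronger than the claimed lower bound; the finitely many $D$ with $\log_p g$ bounded are absorbed into the implied constants.

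I do not expect a serious obstacle here: the only point requiring care is the calibration of $M$, since one needs $p^{M/2}$ genuinely comparable to $g$ so that the Riemann-hypothesis tail is $O_p(1)$ rather than $O(\log\log_p g)$, and one should verify that the implied constants depend only on $p$ (they enter only through $(1-p^{-1/2})^{-1}$ and the error term in the prime polynomial theorem, both harmless).
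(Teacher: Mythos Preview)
Your proof is correct and follows essentially the same route as the paper: both arguments write $\log|L(1,\chi_D)|$ as $-\sum_{m\ge1}S_m/(m\,p^{m/2})$, split at $M=2\log_p g$, use the Riemann-hypothesis bound $|S_m|\le 2g$ for the tail, and use the double-cover bound $|X_D(\mathbb{F}_{p^m})|\le 2(p^m+1)$ for the head. The only difference is cosmetic: the paper applies the triangle inequality directly to the head (getting the symmetric bound $|\log|L(1,\chi_D)||\le 3\log\log_p g+O_p(1)$), whereas you rewrite the head in terms of point counts and track signs, which yields the slightly sharper lower bound $(\log_p g)^{-1}$ in place of $(\log_p g)^{-3}$.
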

\begin{proof}

We start with the identity $$\log(L(1,\chi_D))=-\sum_{r\in\mathbb{N}}\frac{1}{rp^{\frac{r}2}}\sum_{i=1}^{2g}\alpha_i^r$$ where
$p^{-\frac{1}{2}}\alpha_i$ are the roots of $L(s,\chi_D)$ as above.
Since $|\alpha_i|=1$ we have the bound $\left|\sum_{i=1}^{2g}\alpha_i^r\right|\leq 2g$. Also, since the hyperelliptic family under consideration has gonality 2,
we know that \[2(p^r+1)=2\mathbb{P}^1(\F_{p^r})\geq |X_D(\F_{p^r})| = p^r + 1 -p^{\frac{r}{2}}\sum_{i=1}^{2g}\alpha_i^r\] so we have the bound $\left|\sum_{i=1}^{2g}\alpha_i^r\right|\leq (p^{\frac{r}{2}}+p^{-\frac{r}{2}}).$ We can thus write
$$|\log(L(1,\chi_D))|\leq O(1)+\displaystyle\sum_{1\leq r\leq 2\log_p(g)}\frac1r + 2g\sum_{2\log_p(g)<r}p^{-r/2}\leq O_p(1) + \log\log_p(g)$$
The Lemma then follows by exponentiating the above.
\end{proof}

We will also need to study the degree $2$ L-functions
$$L(s,\phi\times\chi_D)=\displaystyle\sum_{\substack{a\in\mathbb{F}_p[T]\\(a,D)=1}}\lambda_{\phi}(a)\chi_D(a)|a|^{-s}$$
where $\phi$ is an automorphic Hecke eigenform of level $N.$ By Drinfeld's work \cite{Dr} on the Langlands conjectures for $GL(2)$ and Deligne's proof of the Weil conjectures \cite{Del}, this is a polynomial of degree
at most $DN$ in $p^{-s}$ which only has roots on the line $\frac{1}{2}$. It also follows that $|\lambda_{\phi}(P)|\leq 2|P|^{\frac{1}{2}}$ for every monic irreducible
polynomial $P$.  Define $\beta_{\phi,D}$ by expanding the logarithmic derivative $$-\frac{L'}{L}(s,\phi\times\chi_D)=\sum_{n\in\mathbb{N}} \frac{\beta_{\phi,D}(n)}{p^{ns}}.$$

%Using the Hecke relations, we also have the product formula \[L(s,\phi\times\chi_D)=\prod_{P\not\mid N}(1-alpha
Using the product formula we have the bound $|\beta_{\phi,D}(n)|\leq  2np^n\log(p)$. Then repeating the proof of Theorem \ref{lindelofhyper} we arrive at the following analogue:\\

\begin{thm}\label{lindelofhyper2}
Let $D$ be a polynomial of degree $2g$ or $2g+1$. Then

\[|L(1/2,\phi\times\chi_D)| \ll e^{\frac{2g}{\log_p(g)} + 8p^{\frac12}g^{\frac12}}\]
\end{thm}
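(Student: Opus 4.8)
The plan is to run the Littlewood--Soundararajan argument of the proof of Theorem \ref{lindelofhyper} essentially verbatim, with the Dirichlet character $\chi_D$ replaced by the automorphic twist $\phi\times\chi_D$, and to track the two places where the input data differs. First I would record the structural inputs. By Drinfeld's work and Deligne's estimates, $L(s,\phi\times\chi_D)$ is a polynomial in $p^{-s}$ of degree $d=d_{\phi,D}\leq \deg(DN)=2g+O_N(1)$, all of whose zeros lie on $\operatorname{Re}(s)=\tfrac12$; write them as $p^{-1/2}\alpha_i$ with $|\alpha_i|=1$ for $1\leq i\leq d$. Since the Hecke eigenvalues $\lambda_\phi(a)$ are real (self-adjointness of the $T_P$) and $\chi_D$ is a quadratic character, $\phi\times\chi_D$ is self-dual: $L(s,\phi\times\chi_D)$ has real coefficients and a functional equation $s\mapsto 1-s$, so the multiset $\{\alpha_i\}$ is stable under $\alpha\mapsto\alpha^{-1}=\bar\alpha$. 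Exactly as in \eqref{logd} this gives
\[\frac{L'}{L}(s,\phi\times\chi_D)=\log(p)\bigl(-\tfrac{d}{2}+F(s)\bigr),\qquad F(s)=\sum_{i=1}^{d}\Re\!\left(\frac{1}{1-\alpha_ip^{-s}}-\frac12\right),\]
with $\tfrac{d}{2}=g+O_N(1)$. Integrating from $\tfrac12$ to a real $s_0>\tfrac12$ and applying Lemma \ref{hardineq} termwise reproduces the analogues of \eqref{Lfun1} and \eqref{firstsoundformula}, namely
\[\log\bigl|L(\tfrac12,\phi\times\chi_D)\bigr|-\log\bigl|L(s_0,\phi\times\chi_D)\bigr|\leq (g+O_N(1))\log(p)\bigl(s_0-\tfrac12\bigr)-2\cdot\frac{1+p^{1/2-s_0}}{1-p^{1/2-s_0}}\,F(s_0).\]

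Next I would carry out the contour-integral step unchanged: with $h=\lceil\log_p(g)\rceil$, evaluating $\frac{1}{2\pi i}\int_2^{2+2\pi i/\log p}-\frac{L'}{L}(s+w,\phi\times\chi_D)\frac{p^{hw}p^{-w}}{(1-p^{-w})^2}dw$ first by the Dirichlet expansion $-\frac{L'}{L}(s,\phi\times\chi_D)=\sum_n\beta_{\phi,D}(n)p^{-ns}$ and then by shifting left past the double pole at $w=0$ and the simple poles at $p^{s+w}=\alpha_ip^{1/2}$, integrating the resulting identity from $s_0$ to $\infty$, and invoking Lemma \ref{calcineq} to control the sum over zeros, yields (as in the derivation of \eqref{mainineqlindelof}) an upper bound for $\log|L(s_0,\phi\times\chi_D)|$ in which the only arithmetic term is the short Dirichlet sum $h^{-1}\log(p)^{-2}\sum_{0<n\leq h}\frac{\beta_{\phi,D}(n)\log(p^{h-n})}{np^{ns_0}}$. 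Adding this to the displayed inequality and choosing $s_0=\tfrac12+\tfrac{1}{h\log p}$ makes the coefficient of $F(s_0)$ negative; since $F(s_0)>0$ we may discard it, obtaining
\[\log\bigl|L(\tfrac12,\phi\times\chi_D)\bigr|\leq \frac{2g}{h}+O_N\!\left(\frac1h\right)+h^{-1}\log(p)^{-2}\sum_{0<n\leq h}\frac{\beta_{\phi,D}(n)\log(p^{h-n})}{np^{ns_0}}.\]
Finally, the bound $|\beta_{\phi,D}(n)|\leq 2np^n\log(p)$ (this is where Deligne's estimate $|\lambda_\phi(P)|\leq 2|P|^{1/2}$ and the degree-$2$ Euler product enter) together with $s_0>\tfrac12$ gives $h^{-1}\log(p)^{-2}\sum_{0<n\leq h}\frac{\beta_{\phi,D}(n)\log(p^{h-n})}{np^{ns_0}}\leq \frac{2p^{1/2}}{\log(p)(p^{1/2}-1)}\,p^{h/2}\leq 8p^{h/2}\leq 8p^{1/2}g^{1/2}$, using $p^{h/2}\leq p^{1/2}g^{1/2}$ and $\tfrac{p^{1/2}}{\log(p)(p^{1/2}-1)}\leq 4$ for $p\geq 5$. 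Absorbing the $O_N(1/h)=o(1)$ term into the implied constant and exponentiating yields $|L(\tfrac12,\phi\times\chi_D)|\ll e^{2g/\log_p(g)+8p^{1/2}g^{1/2}}$.

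The main point to get right — rather than a genuine obstacle — is precisely this last comparison: the factor $2$ in $|\beta_{\phi,D}(n)|\leq 2np^n\log(p)$, forced by the two Satake parameters at each place, is exactly what promotes the constant $4$ of Theorem \ref{lindelofhyper} to $8$, while the $O_N(1)$ extra zeros coming from the level perturb the main term only by $O_N(1/\log_p g)$ and are what necessitate the weaker conclusion ``$\ll$'' in place of ``$\leq$''. One should also double-check that self-duality of $\phi\times\chi_D$ — reality of its coefficients and inversion-stability of $\{\alpha_i\}$ — is all that Lemmas \ref{hardineq} and \ref{calcineq} and the residue bookkeeping actually use about the family $\chi_D$, so that no property of $\phi$ beyond its being a Hecke eigenform with Ramanujan-bounded eigenvalues is required.
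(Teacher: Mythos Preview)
Your proposal is correct and follows exactly the approach the paper takes: the paper's entire proof is the single sentence ``Using the product formula we have the bound $|\beta_{\phi,D}(n)|\leq 2np^n\log(p)$. Then repeating the proof of Theorem \ref{lindelofhyper} we arrive at the following analogue,'' and you have simply written out what that repetition entails, including the observation that the extra factor of $2$ in the $\beta$-bound is what turns the constant $4$ into $8$.
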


We conclude this section by mentioning that the above is not limited to the specific family of $GL_2$ forms that we are considering. In particular, let $\mathcal{F}_{cusp}$ denote the family of unitary cuspidal automorphic representations of $GL(n)$ for a \emph{fixed} $n$. Now consider the family of \emph{standard} L-functions, $\mathcal{F}:=\begin{setdef}{L(s,\pi)}{\pi\in \mathcal{F}_{cusp}}\end{setdef}$ attached to each member of $\mathcal{F}_{aut}$. We sketch an analytic argument proving that the family $\mathcal{F}$ is Lindel\"of.

Let $L(s,\pi)\in\mathcal{F}$ be given by
\[L(s,\pi)=\sum_a\frac{\lambda_{\pi}(a)}{|a|^s}\]
The L-function $L(s,\pi)$ also factors as $L(s,\pi)=\prod_{v}L_v(s,\pi)$ where each local factor is given by
$$L_v(s,\pi) = \prod_{i=1}^n \left(1-\frac{\beta_{\pi,i}(v)}{|v|^s}\right)^{-1}$$ where some of the $\beta_{\pi,i}(v)$ could vanish if $\pi$ is ramified at $v$.
Since we are fixing the dimension of the underlying group, $n$, and considering only the standard $L$-functions, by the unitarity of $\pi$'s there is a $t>0$ such that the $L$-functions $L(s,\pi)$ all converge for $\Re(s)>t$. Then we see that each eigenvalue uniformly satisfies the upper bound $|\beta_{\pi,i}(v)|\leq |v|^t$. Hence the coefficients $\lambda_{\pi}(a)$ satisfy
\begin{equation}\label{rankbound}
|\lambda_{\pi}(a)|\leq n|a|^t
\end{equation}
We now follow the argument in the proof of Theorem \ref{lindelofhyper}. By the work of Lafforgue and
Deligne \cite{La}, the zeroes of $L(s,\pi)$ are all on the critical line $\Re(s)=\frac12$. The non-zero moments of the zeroes are
controlled by the coefficients $\lambda_{\pi}(a)$ and are thus bounded by (\ref{rankbound}). The zeroes are therefore equidistributed, and the Lindel\"of bound follows as before.

\section{Shimura Correspondence and Niwa's Lemma}\label{sectionshimura}

The goal of this and the following section is to prove a ``Waldspurger type formula" relating the Fourier-Whittaker coefficients of a metaplectic form to the central values of the
twisted L-function of its Shimura lift. Our method of proof will follow Katok-Sarnak \cite{KS}.

\subsection{Siegel Theta Functions for function fields}
% I realize now that all this nonsense works only if deg D = even. If it isn't even, this has to be redefined somehow. I think we won't bother, because
% it's super annoying? of course, we could... adellically it makes no difference.

In this section we will define the theta functions that will be used to define the theta lifts. We start with some notation. Let $V$ be the three dimensional vector space $(h_1,h_2,h_3)$ over $k_{\infty}$, $Q$ the quadratic form
$Q(\vec{h}) = h_2^2 - 4h_1h_3$, and $B(\cdot,\cdot)$ its associated billinear form. Define $\mathcal{O}_V$ to be the $\mathcal{O}_{\infty}$ lattice
$\{h_1,h_2,h_3\}\in \mathcal{O}_{\infty}^3$, and $L$ to be the $R$ lattice $\{h_1,h_2,h_3\}\in (TR)^3$. Recall that we write $w$ symbolically for the element $\left(\left(\begin{smallmatrix}\sqrt{v}&u/\sqrt{v}\\0&1/\sqrt{v}\end{smallmatrix}\right),1\right)$. Finally, as in the number field case, we let $PGL_2(k_{\infty})$ act on $V$ via the isomorphism with the orthogonal group. Namely,
$$g\left(\left(\begin{smallmatrix} 2h_3 & h_2\\ h_2 & 2h_1\end{smallmatrix}\right)\right) =
\det(g)^{-1}\cdot g\left(\begin{smallmatrix} 2h_3 & h_2\\ h_2 & 2h_1\end{smallmatrix}\right)g^t.$$
For convenience, we defie the function $\tau:k_{\infty}^{\times}\rightarrow\pm \{1\}$ by $\tau(x)=(\varpi_{\infty},x)_{\infty}^{v_{\infty}(x)}.$
We define the Siegel theta function of level $0$, depth $0$, to be

$$ \Theta(w;z) = \tau(\sqrt{v})|v|^{3/4}\displaystyle\sum_{\vec{h}\in L} e(Q(\vec{h})u)\chi_{\mathcal{O}_V}(\sqrt{v}z^{-1}(\vec{h}))$$

where $w\in \widetilde{\mathbb{H}}$ and $z\in\mathbb{H}$. As $\mathcal{O}_V$ is invariant under the action of $PGL_2(\mathcal{O}_{\infty})$, $\Theta(w;z)$ is a well
defined function on $\Tt\times \mathbb{H}$. Also, in the $z$-variable $\Theta$ is evidently left invariant under the action of the full modular group $PGL_2(R)$. In the $w$-variable it is an element of $\widetilde{M}_0(\widetilde{\Gamma})$ by Lemma \ref{invariance}.

We will need more general classes of theta functions to get our lifts. To define them, first let $D$ be a square-free polynomial of even degree, such that $D\in \left(k_{\infty}^{\times}\right)^2$. For a quadratic form $Q$ over $R$ such that $D\mid\disc(Q)$, define $W_D(Q)$, as in
Kohnen \cite{Ko} as follows:\\

\begin{definition*}\label{kohnen}
If for all $v\in \mathcal{O}_V$, $Q(v)$ is not relatively prime to $D$ then $W_D(Q) = 0.$ Else, pick a vector $v_0$ such that $(Q(v_0),D)=1,$ and define
$W_D(Q) =  \left(\frac{Q(v_0)}{D}\right).$ Note that this definition is independent of $v_0.$
\end{definition*}

Identifying $\vec{h}$ with the quadratic form $h_1X^2+h_2XY+h_3Y^2$, we can speak meaningfully of $W_D(\vec{h})$.

We then define

$$\Theta_D(w;z) = \tau(\sqrt{v})|v|^{3/4}\displaystyle\sum_{\substack{\vec{h}\in L\\ D\mid Q(\vec{h})}}W_D(\vec{h})e(Q(\vec{h})u/D)\chi_{\mathcal{O}_V}(\sqrt{v}z^{-1}(h)/\sqrt{D})$$

As before, it is clear $\Theta_D$ is a well-defined function on $\Tt\times \mathbb{H}$ and is invariant under the action of the full modular group in the $z$-variable. The fact
that $\Theta_D(w;z)$ is an element of $\widetilde{M}_0(\widetilde{\Gamma})$ in the $w$-variable follows from Lemma \ref{invariance}.

As we will be working over a general congruence subgroup $\Gamma_0(N),$ we will need the analogue of $\Theta_D(w;g)$ for arbitrary level $N$ that is relatively prime to $D$. Let $N\in R$ such that $\gcd(N,R)=1$, and define the Siegel theta function of level $N$, depth $0$, to be:

\[\Theta^N_D(w;g) = \tau(\sqrt{v})|v|^{3/4}\displaystyle\sum_{\substack{\vec{h}\in L,\, h_1\in TNR\\ D\mid Q(\vec{h})}}W_D(\vec{h})e(Q(\vec(h))u/D)\chi_{\mathcal{O}_V}(\sqrt{v}g^{-1}(h)/\sqrt{D})\]

As before $\Theta_D^N$ defines a function on $\widetilde{\mathbb{H}}\times \mathbb{H}$. Note that $\Theta_D^N$ is invariant under the congruence subgroup $\Gamma_0(N)$ in the $g$-variable. The same argument in Lemma \ref{invariance} and an appropriate choice of functions at various completions yield $\Theta^N_D(w;g)\in \widetilde{M}_{0}(\widetilde{\Gamma}_0(N))$ in the $w$-variable. More specifically, in the notation of \S\ref{classicaltheta}, one chooses $\phi_w$ for all $w$ such that $v_w(N)>0$ to be
$$\phi_w(h_1,h_2,h_3) =\chi_{L_w}(N^{-1}h_1,h_2,h_3)$$ where $L_w:=L\otimes_{R}\Oo_w$ and $\chi_{L_w}$ denotes the characteristic function of $L_w$.

\subsubsection{Depth $1$}

We shall also need a Siegel theta function to transfer depth $1$ forms. Taking our cue from the adelic description, define $\Oo_{V,1}$ to be the lattice $\mathcal{O}_{V,1}:=\begin{setdef}{\vec{h}=(h_1,h_2,h_3)}{(Th_1,h_2,h_3)\in\mathcal{O}_{\infty}^3}\end{setdef}.$ We then pick $\phi_{\infty}(v) = \chi_{\Oo_{V,1}}(v).$ This yields

\[\Theta^{N}_{D,1}(w;z) = \tau(\sqrt{v})|v|^{3/4}\displaystyle\sum_{\substack{\vec{h}\in L,\, D\mid\det(\vec{h})\\ h_1\in NTR}}W_D(\vec{h})
e((h_2^2 - 4h_1h_3)u/D)\chi_{O_{V,1}}(\sqrt{v}z^{-1}(h)/\sqrt{D})\]

Then $\Theta_{D,1}^N$ is a well-defined function on $\widetilde{\mathbb{H}}_1\times \mathbb{H}_1$, that it is invariant under $\Gamma_0(N)$ in the $z$-variable, and once again using the same arguments in Lemma \ref{invariance} we see that it is an element of
$\widetilde{M}_{1}(\widetilde{\Gamma}_0(N))$ in the $w$-variable. Note that the above Fourier expansion in the $w$-variable is valid only on $\Tt^u_{1}\times\mathbb{H}_1.$

\subsection{Maass-Shintani lift}

Let $D,N\in R$, $\gcd(D,N)=1$ be as above. We will use $\Theta^N_D(w;z)$ as a kernel against which we integrate cuspidal automorphic forms and cuspidal metaplectic forms to go from one space to the other. As in \cite{KS}, for a given cuspidal automorphic form $\phi(z)\in S(\Gamma_0(N))$, we define its $D$'th Maass-Shintani lift $F^{[D]}(w)$ to be the  metaplectic function given by $$F^{[D]}(w) = \int_{\Gamma_0(N)\backslash \mathbb{H}} \phi(z)\,\Theta^N_D(w;z)dz.$$

Similarly, if $\phi(z)\in S_1(\Gamma_0(N)$ we define its $D$'th Maass-Shintani lift $F^{[D]}(w)$ to be the  metaplectic function given by $$F^{[D]}(w) = \int_{\Gamma_0(N)\backslash \mathbb{H}_1} \phi(g)\,\Theta^N_{D,1}(w;z)dz.$$

Our main result for this section is:\\
\begin{thm}\label{shintanilift}
Let $\phi(g)$ be a cuspidal automorphic form of level $N$, depth $0$ or $1$, and for $D\in R$, $\gcd(D,N)=1$, $F^{[D]}(w)$ be its D'th Maass-Shintani lift. Then $F^{[D]}$ is a cuspidal metaplectic
 form, and we have the following identity:
 $$\lambda_{F^{[D]}}(D)=\begin{cases}C_{\phi}G_1(D)|D|^{-3/4}L(1/2,\phi\times\chi_D)\prod_{i=1}^m \left(1+\left(\frac{l_i^{\alpha_i}}{D}\right)w_{l_i^{\alpha_i},\phi}\right)&\text{if $\phi$ is of depth $0$}\\ C_{\phi}G_1(D)|D|^{-3/4}L(1/2,\phi\times\chi_D)\prod_{i=1}^m \left(1+\left(\frac{l_i^{\alpha_i}}{D}\right)w_{l_i^{\alpha_i},\phi}\right)(1+w_{\infty,\phi})&\text{if $\phi$ is of depth $1$}\end{cases}$$
Where $N=\prod_{i=1}^m l_i^{\alpha_i}$ is the prime factorization of $N$, $w_{l_i^{\alpha_i},\phi}$ are the eigenvalues of $\phi$ under the Atkin-Lehner
involutions, and $$C_{\phi} = \begin{cases}
\frac{p^{-3/2}}{\left(1-\frac{e^{-i\theta}}{\sqrt{p}}\right)\left(1-\frac{e^{i\theta}}{\sqrt{p}}\right)}& \textrm{ depth 0}\\
\frac{-pw_{\infty,\phi}}{p+w_{\infty,\phi}} & \textrm{depth 1 case}\end{cases}$$ where $\lambda_{\infty,\phi}=p^{-1/2}(e^{i\theta}+e^{-i\theta})$ is the Laplacian eigenvalue of $\phi$ in the depth $0$ case, and $w_{\infty,\phi}$ is the eigenvalue of $\phi$ under the Atkin-Lehner involution $W_{\infty}$ in the depth 1 case.
\end{thm}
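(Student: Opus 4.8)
The plan is to follow the strategy of Katok--Sarnak \cite{KS}, adapted to the function field setting, in three movements: first establish that $F^{[D]}$ is again a cuspidal metaplectic form, then unfold the defining integral to express $\lambda_{F^{[D]}}(D)$ as a toral period of $\phi$, and finally evaluate that period in terms of the twisted central $L$-value. That $F^{[D]}$ transforms correctly in the $w$-variable, i.e. lies in $\widetilde M_0(\widetilde\Gamma_0(N))$ (resp. $\widetilde M_1(\widetilde\Gamma_0(N))$), is inherited from the corresponding property of the kernel $\Theta^N_D(w;g)$ (resp. $\Theta^N_{D,1}(w;g)$) established in Lemma~\ref{invariance}, since integration against $\phi(g)$ over $\Gamma_0(N)\backslash\mathbb{H}$ (resp. $\Gamma_0(N)\backslash\mathbb{H}_1$) commutes with the left $\widetilde\Gamma_0(N)$-action in the $w$-variable. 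For cuspidality one moves the unipotent period $\int_{U\cap\widetilde\Gamma_0(N)\backslash U}$ inside the $g$-integral; the resulting constant term of the degree-$3$ theta kernel along $U$ is a finite sum of rank-one theta series, which spans an ``Eisenstein/genus'' subspace of $M_0(\Gamma_0(N))$ orthogonal to cusp forms, so it pairs to zero against $\phi$. Finally, $F^{[D]}$ is an eigenfunction of $\widetilde\Delta$ in depth $0$ (resp. $\widetilde W_\infty$ in depth $1$): the kernel intertwines these convolution operators in the $w$-variable with $\Delta$ (resp. $W_\infty$) in the $g$-variable, a fact checked directly on the kernel using that both operators are self-adjoint, so $\widetilde\Delta F^{[D]}$ is the lift of $\Delta\phi$; the resulting eigenvalue relation is dictated by the local theta correspondence at $\infty$, and this is also where the archimedean part of the normalizing constant $C_\phi$ (the ratio of metaplectic and integral Whittaker functions at $\infty$) enters.

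To compute $\lambda_{F^{[D]}}(D)$ I would write out the Fourier--Whittaker expansion of $\Theta^N_D(w;g)$ in the $w$-variable and read off the $e(T^2Du)$-coefficient. In the kernel the $u$-dependence is through $e\!\left((h_2^2-4h_1h_3)u/D\right)$, so this coefficient is governed by the vectors $\vec h\in L$ with $h_1\in TNR$, $D\mid\det(\vec h)$, $W_D(\vec h)\neq0$ and $Q(\vec h)/D$ in the prescribed value and square-class, that is (viewing $\vec h$ as a binary quadratic form) the ``$\Gamma_0(N)$-Heegner'' forms of the discriminant attached to $D$ in the sense of Kohnen \cite{Ko}. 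These vectors fall into finitely many $\Gamma_0(N)$-orbits; for a representative $\vec h_0$, unfolding $\int_{\Gamma_0(N)\backslash\mathbb{H}}$ against $\sum_{\gamma\in\Gamma_{\vec h_0}\backslash\Gamma_0(N)}$ collapses the sum and leaves $\int_{\Gamma_{\vec h_0}\backslash\mathbb{H}}\phi(g)\,\chi_{\mathcal{O}_V}\!\big(\sqrt v\,g^{-1}\vec h_0/\sqrt D\big)\cdot(\text{the }v\text{-factor})\,dg$, where $\Gamma_{\vec h_0}=\mathrm{Stab}_{\Gamma_0(N)}(\vec h_0)$ is, up to conjugacy in $PGL_2(k_\infty)$, the $R$-points of the diagonal torus intersected with $\Gamma_0(N)$. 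Since $D\in(k_\infty^\times)^2$ this torus is split at $\infty$, so the orbit of $\vec h_0$ is a compact ``geodesic cycle'' and the unfolded integral is essentially a Mellin transform of $\phi$ along a split-torus orbit, twisted by the quadratic character built into the weights $W_D$ and the congruence $D\mid\det(\vec h)$.

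Carrying out that $v$- (equivalently $y$-) integral is then exactly the Mellin-transform computation already performed for $L$-functions of cuspidal automorphic forms: it yields the $\Gamma_k$-factors (reciprocated by the matching factor in $C_\phi$) times $L(1/2,\phi\times\chi_D)$, the twist by $\chi_D$ appearing precisely because $W_D(\vec h)$ together with the restriction $D\mid Q(\vec h)$ cut out the $\chi_D$-component. The Gauss sum $G_1(D)$ is the total mass of the $W_D$-weights over residues modulo $D$, produced by summing the quadratic character over the orbit data at the primes dividing $D$, while the power $|D|^{-3/4}$ comes from the $|v|^{3/4}$ normalization and the rescaling by $\sqrt D$ in the characteristic function. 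The bad-prime factor $\prod_i\big(1+(\tfrac{l_i^{\alpha_i}}{D})w_{l_i^{\alpha_i},\phi}\big)$ arises from the orbit count at each $l_i\mid N$: there are two $\Gamma_0(N)$-orbits of Heegner data there, interchanged by the Atkin--Lehner involution $W_{l_i^{\alpha_i}}$, and since $\phi$ is an Atkin--Lehner eigenform their contributions add to $1+(\tfrac{l_i^{\alpha_i}}{D})w_{l_i^{\alpha_i},\phi}$, the Legendre symbol recording how $W_{l_i^{\alpha_i}}$ permutes the $W_D$-twist. In depth $1$ the same mechanism operates at $\infty$: the kernel $\Theta^N_{D,1}$ lives over $\widetilde{\mathbb{H}}_1^u\times\mathbb{H}_1$ and the two components $\mathbb{H}_1^u,\mathbb{H}_1^l$ furnish two orbits swapped by $W_\infty$, yielding the extra factor $(1+w_{\infty,\phi})$, while the archimedean Whittaker normalization changes $C_\phi$ to $-pw_{\infty,\phi}/(p+w_{\infty,\phi})$.

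The main obstacle I anticipate is the bookkeeping of the orbit decomposition together with the exact evaluation of all local constants: verifying that the Heegner vectors of the relevant discriminant form precisely the stated $\Gamma_0(N)$-orbit set, with the Atkin--Lehner involutions (and $W_\infty$ in depth $1$) acting as claimed; pinning down the archimedean integral so that the $\Gamma_k$-factors in depth $0$ (resp. the geometric series in $\lambda_{\infty,\phi}$ in depth $1$) cancel against $C_\phi$, leaving exactly $G_1(D)|D|^{-3/4}L(1/2,\phi\times\chi_D)$; and justifying the interchange of the sum over $\vec h$ with the integral over $\Gamma_0(N)\backslash\mathbb{H}$. Cuspidality of $\phi$, which confines its support to finitely many orbits, makes every sum and integral here finite and removes all convergence issues, but one still has to track the measure normalizations on $\mathbb{H}$ versus $\mathbb{H}_1$ carefully because of the $\frac{1}{p+1}$ and $\frac{p}{p+1}$ factors recorded in \S\ref{backgr}.
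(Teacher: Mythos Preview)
Your approach is essentially the paper's: extract the $D$'th Fourier coefficient of the kernel, reduce to the vectors $\vec h$ with $Q(\vec h)=D^2$, unfold over $\Gamma_0(N)$-orbits, and recognise the remaining integral as a Mellin transform in the torus direction producing $L(1/2,\phi\times\chi_D)$; the Atkin--Lehner product arises because the orbit representatives are exactly $\{W_t\cdot(0,D,j):j\bmod D,\ W_t\ \text{Atkin--Lehner}\}$ (a fact the paper cites from Bir\'o \cite{B}), and in depth~$1$ the extra $(1+w_{\infty,\phi})$ comes from $W_\infty$ interchanging the two components of $\mathbb{H}_1$. For cuspidality the paper simply invokes Gelbart--Piatetski-Shapiro \cite{G-P-S}, Corollary~15.6, rather than arguing directly as you sketch.

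Two small corrections. First, in this function field setting the stabiliser $\Gamma_{\vec h_0}$ of $(0,D,0)$ in $PGL_2(R)$ is the diagonal torus over $R$, hence just $\mathbb F_p^\times$, which is \emph{finite}; so the unfolded integral is over all of $\mathbb{H}$ (up to a constant), not over a compact cycle. The Mellin transform picture you invoke is still correct---after the $x$-integral, what remains is exactly the $y$-integral over $k_\infty^\times$---but the ``closed geodesic'' language from the archimedean Katok--Sarnak setting does not carry over literally. Second, your claim that cuspidality of $\phi$ ``removes all convergence issues'' is not right: after unfolding and Fourier-expanding $\phi(g_j g)$, one is left with $\int_{k_\infty^\times}\sum_{a\in R}e(T^2aj/D)\lambda_\phi(a)W_{0,i\theta}(y)\chi_{\mathcal O_\infty}(Ty/\sqrt{vD})\,d^\times y$, and the $a$-sum is \emph{not} absolutely convergent (the $\lambda_\phi(a)$ are merely bounded, not summable). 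The paper handles this by inserting $|a|^{-s}$, computing for $\Re(s)\gg 0$ where the interchange is justified, obtaining $L(1/2+s,\phi\times\chi_D)$ times an explicit Whittaker integral, and then analytically continuing to $s=0$. You should flag this step rather than dismiss it.
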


\begin{proof} The cuspidality follows Corollary 15.6 of \cite{G-P-S}. For the statement of the lemma we only need the $D$'th Fourier coefficient of $F^{[D]}$. For simplicity,
we first do the case of $N=1.$  Since $F^{[D]}$ is left-invariant under $\widetilde{\Gamma}_{\infty}=\begin{setdef}{\left(\begin{smallmatrix}1&a\\0&1\end{smallmatrix}\right)}{a\in R}\end{setdef}$, we can recover its Fourier coefficients as follows:
\begin{align*} \label{quadraticformssum} % figure out labeling
F^{[D]}_{D}(v) &=\mu(T^{-1}\mathcal{O}_{\infty})^{-1}\int_{T^{-1}\mathcal{O}_{\infty}}F^{[D]}(w)e(-DT^2u)du \\
&=\tau(v)|v|^{3/4}\int_{\Gamma\backslash \mathbb{H}} \phi(z)\sum_{\vec{h}\in T^{-1}L, Q(\vec{h})=D^2}W_D(\vec{h})\chi_{\mathcal{O}_V}(T\sqrt{v}z^{-1}(\vec{h})/\sqrt{D})dz \\
&=\tau(v)|v|^{3/4}\sum_j W_D(\vec{h_j})\int_{\mathbb{H}} \phi(z)\chi_{\mathcal{O}_V}(T\sqrt{v}z^{-1}(\vec{h_j})/\sqrt{D})dz
\end{align*}
Where the last step is obtained by unfolding the integral, and $\vec{h_j}$ is a set of representatives for the action of $PGL_2(R)$ on vectors
with norm $D^2$. A complete such set of representatives is $\left\{\vec{h_j} = (0,D,j)\mid j\in R/DR\right\}$.
%which corresponds to the quadratic forms $DXY + jY^2$
  Note $W_D(h_j) = \left(\frac{j}{D}\right)$. We define $g_j$ to be the element $\left(\begin{smallmatrix}
1 & \frac{j}{D} \\
0 & 1 \\
\end{smallmatrix}
\right)$, so that $g_j^{-1}(\vec{h_j})=\vec{h_0}$. Denoting the summand in the above sum corresponding to $h_j$ by $I_j$ and  making a change of
coordinates we have
$$I_j = \tau(v)|v|^{3/4}\left(\frac{j}{D}\right)\int_{\mathbb{H}}\phi(g_jg)\chi_{\mathcal{O}_V}(T\sqrt{v}g^{-1}(\vec{h_0})/\sqrt{D})dg$$

Recalling $z=\left(\begin{smallmatrix} y & x\\ 0 & 1\end{smallmatrix}\right)$, we have $z^{-1}(\vec{h_0}) = (0,D,-Dx/y)$, so
we
can rewrite the integral as follows
\begin{align*}
I_j &= \tau(v)|v|^{3/4}\left(\frac{j}{D}\right)\int_{\mathbb{H}}\phi(g_jg)\chi_{\mathcal{O}_{\infty}}(Tx\sqrt{vD}/y)\chi_{\mathcal{O}_{\infty}}(T\sqrt{vD})dx\frac{d^{\times}y}{y} \\
&=\tau(v)|v|^{3/4}\chi_{\mathcal{O}_{\infty}}(T\sqrt{vD})\left(\frac{j}{D}\right)\int_{\mathbb{H}}\sum_{a\in R}e(T^2aj/D)\lambda_{\phi}(a)W_{0,i\theta}(ay)e(aT^2x)\chi_{\mathcal{O}_{\infty}}(Tx\sqrt{vD}/y))dx\frac{d^{\times}y}{y} \\
&=p^{-1}\tau(v)|D|^{-1/2}\chi_{\mathcal{O}_{\infty}}(T^2vD)\left(\frac{j}{D}\right)|v|^{1/4}
\int_{k_{\infty}^{\times}}\sum_{a\in R}e(T^2aj/D)\lambda_{\phi}(a)W_{0,i\theta}(ay)\chi_{\mathcal{O}_{\infty}}(Tay/\sqrt{vD})d^\times y\\
&=p^{-1}\tau(v)|D|^{-1/2}\chi_{\mathcal{O}_{\infty}}(T^2vD)\left(\frac{j}{D}\right)|v|^{1/4}
\int_{k_{\infty}^{\times}}\sum_{a\in R}e(T^2aj/D)\lambda_{\phi}(a)W_{0,i\theta}(y)\chi_{\mathcal{O}_{\infty}}(Ty/\sqrt{vD})d^\times y\\
\end{align*}

where the third equality follows by evaluating the integral over $x$, and the fourth equality is the change of variables $y\rightarrow\frac{y}{a}.$

We now want to interchange the sum and integral signs, but unfortunately the sum is not absolutely convergent in the 4th equation. To get around this problem, we introduce the complex variable $s$ and define
$$I_j(s) :=\tau(v)p^{-1}|D|^{-1/2}\chi_{\mathcal{O}_{\infty}}(T^2vD)\left(\frac{j}{D}\right)|v|^{1/4}
\int_{k_{\infty}^{\times}}\sum_{a\in R}e(T^2aj/D)\frac{\lambda_{\phi}(a)}{|a|^s}W_{0,i\theta}(ay)\chi_{\mathcal{O}_{\infty}}(Tay/\sqrt{vD}) d^\times y$$

Note that $I_j(0)=I_j$, and $I_j(s)$ is analytic in $s.$ Now, for $Re(s)>>0,$ the sum is absolutely convergent, so we can write:
$$I_j(s)=\tau(v)p^{-1}|D|^{-1/2}\chi_{\mathcal{O}_{\infty}}(T^2vD)\left(\frac{j}{D}\right)|v|^{1/4}
\sum_{a\in R}e(T^2aj/D)\frac{\lambda_{\phi}(a)}{|a|^s}\int_{k_{\infty}^{\times}}W_{0,i\theta}(y)\chi_{\mathcal{O}_{\infty}}(Ty/\sqrt{vD}) d^\times y$$

Since $\chi_{\mathcal{O}_{\infty}}(T^2vD)$ vanishes for $v_{\infty}(vD)<2$, we restrict to  $v$ such that $v_{\infty}(vD)\geq2$. It follows that $v_{\infty}(Ty/\sqrt{vD})\leq v_{\infty}(y)-2$, so  that $\chi_{\mathcal{O}_{\infty}}(Ty/\sqrt{vD})\neq0$ only if $v_{\infty}(y)\geq2$. Since by equation (\ref{integralWhittaker}) $W_{0,i\theta}(y)\neq0$ only if $v_{\infty}(y)\geq2$, we get

\[\int_{k_{\infty}^{\times}}W_{0,i\theta}(y)\chi_{\mathcal{O}_{\infty}}(Ty/\sqrt{vD})d^{\times}y=\sum_{n=v_{\infty}(\sqrt{vD})+1}^{\infty}W_{0,i\theta}(T^{-n})\]

Evaluating the sum yields

\[\frac{p^{\frac{2-M}{2}}}{e^{i\theta}-e^{-i\theta}}\left(\frac{e^{iM\theta}}{\sqrt{p}-e^{i\theta}}-\frac{e^{-iM\theta}}{\sqrt{p}-e^{-i\theta}}\right)\]

where $M=v_{\infty}(\sqrt{vD})$. This multiplied by $\chi_{\mathcal{O}_{\infty}}(T^2vD)|v|^{1/4}$ is easily seen to be $|D|^{-1/4}C_{\phi}\widetilde{W}_{0,i\theta}(Dv)$. Adding up the $I_j$
and using the fact that

$$\sum_{j\mod{D}} \left(\frac{j}{D}\right)e(T^2aj/D)=G_1(D)\left(\frac{a}{D}\right),$$ we arrive at

$$\sum_j I_j(s) = G_1(D)|D|^{-3/4}L(1/2+s,\phi\times\chi_D)\widetilde{W}_{0,i\theta}(Dv)C_{\phi}.$$

Continuing analytically to $s=0,$ we get

$$\lambda_{F^{[D]}}(D)\widetilde{W}_{0,i\theta}(Dv)=F^{[D]}_D(v)=G_1(D)|D|^{-3/4}L(1/2,\phi\times\chi_D)\widetilde{W}_{0,i\theta}(Dv)C_{\phi}$$

and so $$\lambda_{F^{[D]}}(D)=C_{\phi}G_1(D)|D|^{-3/4}L(1/2,\phi\times\chi_D)$$ as desired.

We now say a few words about the case of general level $N.$ Repeating the above calculation, we arrive at

$$F^{[D]}(v)=\tau(v)|v|^{3/4}\sum_{j,\; t} W_D(\vec{h^t_j})\int_{\mathbb{H}} \phi(g)\chi_{\mathcal{O}_V}(T\sqrt{v}z^{-1}(\vec{h^t_j})/\sqrt{D})dg$$

where now $\vec{h^t_i}$ is a set of representatives for the action of $\Gamma_0(N)$ on vectors $\vec{h}=(h_1,h_2,h_3)$ with $Q(\vec{h})=D^2$ such that $N|h_1.$ The set of
such forms is given by
$$\newline\left\{\vec{h^t_j} = W_t\vec{h_j}\mid \vec{h_j} = (0,D,j), j \in R/DR\right\}$$ and $W_t$ varies over the Atkin-Lehner
involutions of $\Gamma_0(N).$ Recall that this means that $t$ varies over the prime powers $l^{\alpha}$ such that $l^{\alpha}\mid\mid N$. A
proof of this can be found in \cite{B} pg.29 (Biro proves the statment for $\mathbb{Z}$ and the proof carries through verbatim to our setting.).
\begin{lemma}
$W_D(\vec{h^t_j}) = \left(\frac{t}{D}\right)W_D(\vec{h_j}).$
\end{lemma}
\begin{proof}
Note that if we consider the symmetric matrix $M$ corresponding to the qudratic form $\vec{h^t_j}$, then $W_D(\vec{h_j}) = \left(\frac{v_0Mv_0^t}{D}\right)$ for some $v_0$. Now, letting
$v_1=W_t^{-1}v_0$ and recalling that $\det(W_t)=t$ we have $$W_D(\vec{h^t_j})=\left(\frac{t^{-1}\cdot v_1W_tMW_t^{t}v_1^t}{D}\right)=\left(\frac{t}{D}\right)W_D(\vec{h_j})$$ as desired.
\end{proof}

Given the above lemma, we have
\begin{align*}
F^{[D]}(v)&=\tau(v)|v|^{3/4}\sum_{j,\; t} W_D(\vec{h^t_j})\int_{\mathbb{H}} \phi(z)\chi_{\mathcal{O}_V}(T\sqrt{v}z^{-1}(\vec{h^t_j})/\sqrt{D})dz\\
&=\tau(v)|v|^{3/4}\sum_j W_D(h_j)\sum_t \left(\frac{t}{D}\right)\int_\mathbb{H}\phi(W_tz)\chi_{\mathcal{O}_V}(T\sqrt{v}z^{-1}(\vec{h_j})/\sqrt{D})dz\\
&=\tau(v)|v|^{3/4}\prod_{i=1}^m \left(1+\left(\frac{l_i^{\alpha_i}}{D}\right)w_{l_i^{\alpha_i},\phi}\right) \sum_j W_D(h_j)\int_\mathbb{H}\phi(z)\chi_{\mathcal{O}_V}(T\sqrt{v}z^{-1}(\vec{h_j})/\sqrt{D})dz\\
\end{align*}
 the calculation then proceeds as before and the main result follows. \end{proof}

\subsubsection{Depth $1$}

For $\phi(g)\in S_1 (\Gamma_0(N))$ a cuspidal automorphic form of level $N$, depth $1$, we proceed as above to get
\begin{align*}
F^{[D],u}_D(v) &=\mu(T^{-1}\mathcal{O}_{\infty})^{-1}\int_{T^{-1}\mathcal{O}_{\infty}}F^{[D],u}(w)e(-DT^2u)du\\
&=\int_{\Gamma_0(N)\backslash \mathbb{H}_1} \phi(g)\sum_{\vec{h}\in R^3, \det(\vec{h})=D^2}W_D(\vec{h})\chi_{O_{V,1}}(T\sqrt{v}z^{-1}(\vec{h})/\sqrt{D})dz\\
&=\sum_{j,\; t} W_D(\vec{h^t_j})\int_{\mathbb{H}_1} \phi(g)\chi_{O_{V,1}}(T\sqrt{v}z^{-1}(\vec{h_j})/\sqrt{D})dz\\
\end{align*}

where the $h^t_j$ are as before. Now, since the Atkin Lehner involution $W_{\infty}$ preserves $O_{V,1},$ we can restrict to the integral over the upper component $\mathbb{H}^u_1:$

\begin{equation}
F^{[D],u}_D(v)=(1+w_{\infty,\phi})\sum_{j,\; t} W_D(\vec{h^t_j})\int_{\mathbb{H}_1^u} \phi(g)\chi_{O_{V,\infty}}(T\sqrt{v}g^{-1}(\vec{h_j})/\sqrt{D})dg\\
\end{equation}

We can now proceed as before except that the Whittaker function is different. The result follows as before from the following identity, which amounts to evaluating a geometric series as before:
\begin{align*}
&(\varpi_{\infty},v)^{deg(v)/2}|Dv|^{1/4}\chi_{\Ooi}(T^2vD)\int_{k_{\infty}^{\times}}W_{1,\lambda_{\infty,\phi}}(y)\chi_{\Ooi}(Ty/\sqrt{vD})d^{\times}y=\\
&\frac{-p^2w_{\infty,\phi}}{w_{\infty,\phi}+p} \widetilde{W}_{1,\lambda_{\infty,F}}(vD)
\end{align*}

where $\lambda_{\infty,F} = -w_{\infty,\phi}p^{3/2}$.

\subsection{Niwa's Lemma}

In \cite{Shim} it is shown how to construct a holomorphic modular form of integral weight starting from a holomorphic modular form of half integral weight. The
method that Shimura uses to show that the resulting function is a modular form is the converse theorem of Weil. Later after Shintani's \cite{Sh} proof
of the ``reverse'' correspondence (which was studied earlier by Maass \cite{M} in certain specific cases), Niwa \cite{Niwa} gave a more direct proof of Shimura's result by using theta functions. In \cite{KS} the result is generalized to the non-holomorphic case following the methods of Niwa. In this section, we generalize this approach to the function field setting.

For $F$ a cuspidal metaplectic form (of depth $0$ or $1$), let the Fourier-Whittaker expansion of $F$ be given by
\[F(w)=\begin{cases}\sum_{l\in R}\lambda_F(l)\widetilde{W}_{0,\lambda_{\infty,F}}(v)e(T^2lu)&\text{if $F$ is of depth $0$}\\ \sum_{l\in R}\lambda^u_F(l)\widetilde{W}_{1,\lambda_{\infty,F}}(v)e(T^2lu)&\text{if $F$ is of depth $1$} \end{cases}\]

Now suppose $F$ is of level $N$ and let $D\in R$ be a square-free polynomial that is relatively prime to $N$. We define the L-function $L(s,F,D)$ by
\[L(s,F,D)=\begin{cases}\sum_{l\in R}\frac{\lambda_F(Dl^2)}{|l|^s}&\text{if $F$ is of depth $0$}\\ \sum_{l\in R}\frac{\lambda^u_F(Dl^2)}{|l|^s}&\text{if $F$ is of depth $1$}\end{cases}\]

For such a $D$, we define the $D$'th Shimura lift of $F$ as follows:\\

\begin{definition*}[$D'th$ Shimura lift] Let $F(w)$ be a cuspidal metaplectic form of level $N$ (and of depth $0$ or $1$) and $D\in R$ a square-free polynomial. Define the $D'th$ Shimura lift $\phi(z)$ of $F$ by:\\
\[\phi(z) = \begin{cases}\displaystyle\int_{\widetilde{\Gamma}_0(N)\backslash\Tt} F(w)\overline{\Theta^N_D(w;z)}dw& \text{if $F$ is of depth $0$}\\ \displaystyle\int_{\widetilde{\Gamma}_0(N)\backslash\Tt_1} F(w)\overline{\Theta^{N}_{D,1}(w;z)}dw& \text{if $F$ is of depth $1$}\end{cases} \]
\end{definition*}

The main result of this section is the following theorem.\\
\begin{thm}\label{Niwa}
Let $F(w)\in \widetilde{S}_0^{new}(\Gamma_0(N))$ or $\widetilde{S}_1^{new}(\Gamma_0(N))$ be a cuspidal metaplectic form and $D$ a square-free polynomial which is relatively prime to $N.$ Let $\phi(g)$ be the $D'th$ Shimura lift of $F(w)$. Then we have the following equalities of L-functions:
$$L(s,\phi) =\begin{cases} p^{-1/2}L^N(1+s,\chi_D)|D|^{3/4}L(s-1/2,F,D)&\text{if $F$ is of depth $0$}\\  \frac{\lambda_{\infty,F}p^{-1/2}}{p+1}L^N(1+s,\chi_D)|D|^{3/4}L(s-1/2,F,D)&\text{if $F$ is of depth $1$}\end{cases}$$
Where $L^N(s,\chi_D) = \displaystyle\prod_{P\nmid N}\left(1-\frac{\left(\tfrac{P}{D}\right)}{|P|^{s}}\right)^{-1}	$.
\end{thm}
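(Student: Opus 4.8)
The plan is to compute the Fourier--Whittaker expansion of the Shimura lift $\phi(g)$ directly by unfolding the theta integral, exactly as in the Maass--Shintani direction but now integrating in the $w$-variable against $\overline{\Theta^N_D(w;g)}$. First I would recall the Fourier expansion of $\Theta^N_D(w;g)$ in the $w=\left(\left(\begin{smallmatrix}\sqrt v & u/\sqrt v\\0&1/\sqrt v\end{smallmatrix}\right),1\right)$ variable: grouping the lattice vectors $\vec h\in L$ with $D\mid Q(\vec h)$ by the value $Q(\vec h)/D = m$, one gets that the $e(T^2 m u)$-Fourier coefficient of $\overline{\Theta^N_D}$ is $\tau(\sqrt v)|v|^{3/4}\sum_{\vec h : Q(\vec h)=mD}W_D(\vec h)\overline{\chi_{\mathcal O_V}(\sqrt v g^{-1}(\vec h)/\sqrt D)}$ (with the level condition $h_1\in TNR$). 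Pairing this against the Fourier--Whittaker expansion $F(w)=\sum_l\lambda_F(l)(l,\sqrt v)_\infty \widetilde W_{0,i\gamma}(lv)e(T^2 l u)$ and using orthogonality of the additive characters, the $u$-integral over $\widetilde\Gamma_\infty\backslash\Tt$ collapses the double sum onto the diagonal $l=m$, leaving an integral over the $v$-line of $\lambda_F(m)\widetilde W_{0,i\gamma}(mv)$ times the (conjugated) Gaussian/Whittaker weight times a sum over $\vec h$ with $Q(\vec h)=mD$. The key arithmetic input is that for $m=Dl^2$ a vector $\vec h$ with $Q(\vec h)=D^2l^2$ exists only in appropriate square classes, and the $PGL_2(R)$- (resp. $\Gamma_0(N)$-)orbits of such $\vec h$ are parametrized just as in Theorem~\ref{shintanilift}; the new feature here is that one must sum over all $l\in R$, which is precisely what assembles the Dirichlet series $L(s-1/2,F,D)=\sum_l\lambda_F(Dl^2)|l|^{-(s-1/2)}$ after taking the Mellin transform in $g$.

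Concretely, I would apply the Mellin transform $M\phi(s)=\int_{k_\infty^\times}\phi\left(\left(\begin{smallmatrix}y&0\\0&1\end{smallmatrix}\right)\right)|y|^s d^\times y$ and interchange it with the $w$-integral (justified, as in the proof of Theorem~\ref{shintanilift}, by inserting an auxiliary complex variable to force absolute convergence and then continuing analytically). For $g=\left(\begin{smallmatrix}y&x\\0&1\end{smallmatrix}\right)$ one has $g^{-1}(\vec h)=(h_1,h_2-2h_1 x/y,\dots)$ explicitly, so $\chi_{\mathcal O_V}(\sqrt v g^{-1}(\vec h)/\sqrt D)$ factors into a condition forcing $h_1=0$ (after the $x$-integration kills the off-diagonal, exactly as the character sum $\sum_j\left(\frac jD\right)e(T^2aj/D)=G_1(D)\left(\frac aD\right)$ did before, but now contributing the Euler product $L^N(1+s,\chi_D)$ from the $l$-sum over the local densities at $P\nmid N$) together with an archimedean truncation of $y$ against $v$. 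Evaluating the resulting geometric series in $v_\infty(y)$ produces exactly the normalizing constant $p^{-1/2}$ (resp. $\lambda_{\infty,F}p^{-1/2}/(p+1)$ in the depth-$1$ case, where the $\pm 1$ Atkin--Lehner split at $\infty$ and the differing measure normalizations on $\Tt_1^u$ vs $\Tt_1^l$ contribute the $1/(p+1)$ and the $\lambda_{\infty,F}$), and the factor $|D|^{3/4}$ comes from the $|v|^{3/4}$ in the theta function combined with the change of variables scaling by $\sqrt D$. Collecting everything gives $M\phi(s)$ as a product of $\Gamma_k$-factors times $p^{-1/2}L^N(1+s,\chi_D)|D|^{3/4}L(s-1/2,F,D)$, and comparing with the formula $M\phi(s)=p^{-2s}\Gamma_k(s+1/2+i\gamma)\Gamma_k(s+1/2-i\gamma)L(s,\phi)$ from \S2.9 strips off the $\Gamma_k$-factors and yields the claimed identity of $L$-functions.

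The main obstacle I anticipate is bookkeeping the local factors correctly at the bad places and at $\infty$: one must verify that the orbit count of $\vec h$ with $Q(\vec h)=D^2 l^2$ and $h_1\in TNR$, weighted by $W_D(\vec h)$, reproduces exactly the coefficient $\left(\frac{P}{D}\right)|P|^{-(1+s)}$ in the Euler product $L^N(1+s,\chi_D)$ for $P\nmid N$ and contributes $1$ (not something involving the Atkin--Lehner eigenvalues) at $P\mid N$ — this is where the hypothesis $\gcd(D,N)=1$ and the new-vector condition on $F$ are used, and it is the analogue of the $\prod_i(1+(\tfrac{l_i^{\alpha_i}}{D})w_{l_i^{\alpha_i},\phi})$ bookkeeping in Theorem~\ref{shintanilift} run in reverse. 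The depth-$1$ case adds the further subtlety that the theta kernel $\Theta^N_{D,1}$ lives on $\Tt_1\times\mathbb H_1$ with two components and non-uniform measure normalization; one checks $W_\infty$-invariance of $\mathcal O_{V,1}$ to restrict to the upper component $\Tt_1^u\times\mathbb H_1^u$, and then the computation is formally identical to depth $0$ but with $\widetilde W_{1,\lambda_{\infty,F}}$ in place of $\widetilde W_{0,i\gamma}$ and the extra constant $\lambda_{\infty,F}/(p+1)$ emerging from the geometric series and the measure. Everything else is the same unfolding-and-geometric-series routine already executed in Theorem~\ref{shintanilift}.
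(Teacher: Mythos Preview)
Your proposal has the right overall shape (compute the Mellin transform of $\phi$, unfold, match against the known relation $M\phi(s)=p^{-2s}\Gamma_k(s+1/2+i\gamma)\Gamma_k(s+1/2-i\gamma)L(s,\phi)$), but it is missing the central technical device of the argument and is vague at exactly the point where the real work happens.

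The paper does \emph{not} proceed by pairing the Fourier expansion of $F$ against the $u$-Fourier expansion of $\overline{\Theta^N_D}$ and then counting orbits of $\vec h$ with $Q(\vec h)=mD$. The integral defining $\phi$ is over $\widetilde{\Gamma}_0(N)\backslash\Tt$, not over $\widetilde{\Gamma}_{\infty}\backslash\Tt$, so you cannot simply Fourier-expand and use orthogonality of additive characters; you need something to unfold against. The actual mechanism is: at $g=(0,y)$ the kernel factors as $\Theta^N_D(w;(0,y))=\theta(D,w,\cdot)\cdot\theta_2(w,y)$, where the first factor is a one-variable theta function in $h_2$ and the second involves only $h_1,h_3$. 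One then applies \emph{Poisson summation in $h_3$} to $\theta_2$, after which the Mellin integral $\int_{k_\infty^\times}\theta_2(w,y)|y|^s\,d^\times y$ evaluates to $\Gamma_k(1+s)$ times a sum that is recognized as an \emph{Eisenstein series} $E_{N,D}(w,(1+s)/2)$ (with the factor $L^N(1+s,\chi_D)$ emerging from stripping the gcd of $(h_1,s_3)$ via a small sieve lemma). The problem thus becomes a Rankin--Selberg integral $\int_{\widetilde{\Gamma}_0(N)\backslash\Tt} F(w)\,\overline{\theta(D,w,0)}\,E_{N,D}(w,(1+s)/2)\,dw$, and it is the unfolding of this Eisenstein series that reduces the domain to $\widetilde{\Gamma}_{\infty}\backslash\Tt$ and produces the Dirichlet series $\sum_l \lambda_F(Dl^2)|l|^{-(s-1/2)}$.

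Your sentence ``after the $x$-integration kills the off-diagonal \dots\ contributing the Euler product $L^N(1+s,\chi_D)$ from the $l$-sum over the local densities'' is where the gap lies: there is no $x$-integration in the Mellin transform (it is taken at $x=0$), the $L^N(1+s,\chi_D)$ factor does not come from orbit counts or local densities, and the anticipated ``orbit count of $\vec h$ with $Q(\vec h)=D^2l^2$'' is not how the computation is organized in this direction. The new-vector hypothesis on $F$ is used not to control Atkin--Lehner factors at $P\mid N$, but to kill the contribution of the congruence Eisenstein series $E_{N/d,D}$ for $d\mid N$, $d\neq 1$ (these produce old functions, orthogonal to $F$). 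Your depth-$1$ remarks are closer to correct in spirit, but the same Poisson-summation/Eisenstein-series structure is needed there as well, with $E_{N,D}$ replaced by the shifted series $E_{1,N,D}(w,s)=\sum_{\gamma}\mathrm{Im}\bigl(\gamma w\bigl(\begin{smallmatrix}\varpi_\infty^{-1}&0\\0&1\end{smallmatrix}\bigr)\bigr)^s$.
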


For clarity of exposition and convenience of the reader, we will first prove Niwa's Lemma for the case $N=1$ and forms unramified at infinity, and then outline the necessary modifications for the general level and ramification.

\begin{proof}

During the proof unless otherwise stated we will use (as usual)
\begin{align*}
w&=(u,v)=\left(\left(\begin{smallmatrix}\sqrt{v}&u/\sqrt{v}\\0&1/\sqrt{v}\end{smallmatrix}\right),1\right)\in\Tt\\
z&=(x,y)=\left(\begin{smallmatrix}y&x\\0&1\end{smallmatrix}\right)
 \in \mathbb{H}.
\end{align*}

\textbf{Case of N=D=1, depth 0}

We will calculate the Mellin transform of $\phi$ and show that it is the Mellin transform of the form in the Theorem.

 \[M\phi(s)=\int_{k_{\infty}^{\times}}\phi(0,y)|y|^sd^{\times}y\]
 \[=\int_{k_{\infty}^{\times}}\left(\int_{\widetilde{\Gamma}\backslash\Tt} F(w)\overline{\Theta(w,(0,y))}\frac{du\,dv}{|v|^2}\right)|y|^sd^{\times}y\]

The theta function factors as:
 \[\Theta(w,(0,y))=\tau(v)|v|^{1/4}\sum_{h_2\in TR}e(uh_2^2)\chi_{\Oo_{\infty}}(vh_2^2)\;\;\times|v|^{1 /2}\sum_{h_1,h_3\in TR}e(-4h_1h_3u)\chi_{\Oo_{\infty}}\left(\frac{\sqrt{v}}{y}h_3\right)\chi_{\Oo_{\infty}}(\sqrt{v}y h_1)\]

 \[=\Theta(w)\theta_2(w,y)\]

We now use Poisson summation in the $h_3$ variable in order to separate the $h_1$ and $h_3$ variables in $\theta_2$. Define
 \[f(\alpha)=\sum_{h_1\in TR}e(-4h_1u\alpha)\chi_{\Oo_{\infty}}\left(\frac{\sqrt{v}}{y}\alpha\right)\chi_{\Oo_{\infty}}(\sqrt{v}y h_1)\]

The Fourier transform of $f$ is
\begin{align*}
\hat{f}(\beta)&=\int_{k_{\infty}}f(\alpha)e(\beta \alpha)d\alpha\\
 &=\sum_{h_1\in TR}\chi_{\Oo_{\infty}}\left(\sqrt{v}yh_1\right)
\int_{k_{\infty}}e(-4h_1u\alpha+\alpha\beta)\chi_{\Oo_{\infty}}\left(\frac{\sqrt{v}}{y} \alpha\right)d\alpha\\
&=\sum_{h_1\in TR}\chi_{\Oo_{\infty}}\left(\sqrt{v}yh_1\right)\int_{\Oo_{\infty}}e\left(\frac{y(-4h_1u+\beta)\alpha}{\sqrt{v}}\right)\frac{|y|d\alpha}{|\sqrt{v}|}\\
&=\frac{|y|}{|\sqrt{v}|}\sum_{h_1\in TR}\chi_{\Oo_{\infty}}(\sqrt{v}{y}h_1)\chi_{\Oo_{\infty}}\left(\frac{y(-4h_1u+\beta)}{\sqrt{v}}\right)
\end{align*}

By Poisson summation we get
\[\theta_2(w,y)=|y|\sum_{h_1,h_3\in TR}\chi_{\Oo_{\infty}}(\sqrt{v}{y}h_1)\chi_{\Oo_{\infty}}\left(\frac{y(-4h_1u+h_3)}{\sqrt{v}}\right)\]

Define $I=\int_{k_{\infty}^{\times}}\overline{\theta_2(w,y)}|y|^sd^{\times}y.$ Then

\[ I=\sum_{h_1,h_3\in TR}\int_{k_{\infty}^{\times}}\chi_{\Oo_{\infty}}(\sqrt{v}h_1y)\chi_{\Oo_{\infty}}\left(\frac{(4h_1u-h_3)y}{\sqrt{v}}\right)|y|^{s+1}d^{\times}y\]

Let $\alpha=\frac{(4h_1u-h_3)y}{\sqrt{v}}$, then
\[I=\sum_{h_1,h_3\in TR}\frac{|v|^{(s+1)/2}}{|4h_1u-h_3|^{s+1}}\int_{k_{\infty}^{\times}}\chi_{\Oo_{\infty}}\left(\frac{vh_1\alpha}{4h_1u-h_3}\right)\chi_{\Oo_{\infty}}(\alpha)|\alpha|^{s+1}d^{\times}\alpha\]

\[=\sum_{h_1,h_3\in TR}\frac{|v|^{(s+1)/2}}{|4h_1u-h_3|^{s+1}}\int_{\Oo_{\infty}\backslash\{0\}}\chi_{\Oo_{\infty}}\left(\frac{vh_1\alpha}{4h_1u-h_3}\right)|\alpha|^{s+1}d^{\times}\alpha\]

Let $v_{\infty}\left(\frac{vh_1}{4h_1u-h_3}\right)=A$, then
\[\int_{\Oo_{\infty}\backslash\{0\}}\chi_{\Oo_{\infty}}\left(\frac{vh_1\alpha}{4h_1u-h_3}\right)|\alpha|^{s+1}d^{\times}\alpha=\sum_{j=\max\{-A,0\}}^{\infty}\int_{\Oo_{\infty}^{\times}}p^{-j(s+1)}d^{\times}\alpha\]
\[=\Gamma_k(s+1)p^{\min\{A,0\}(s+1)}\]

So we get
\begin{align*}
I&=\Gamma_k(s+1)\sum_{h_1,h_3\in TR}\frac{|v|^{(s+1)/2}}{|4h_1u-h_3|^{s+1}}p^{(s+1)\min\{v_{\infty}\left(\frac{vh_1}{4h_1u-h_3}\right),0\}}\\
&=\Gamma_k(s+1)\sum_{h_1,h_3\in TR}\frac{|v|^{(s+1)/2}}{|4h_1u-h_3|^{s+1}}\min\left\{\frac{|4h_1u-h_3|^{s+1}}{|vh_1|^{s+1}},1\right\}\\
&=\Gamma_k(s+1)\sum_{h_1,h_3\in TR}\frac{|v|^{(s+1)/2}}{\max\{|vh_1|,|-4h_1u+h_3|\}^{s+1}}\\
&=\frac{\Gamma_k(s+1)\zeta_k(s+1)}{p^{s+1}}E(w,(s+1)/2)
\end{align*}

Where the Eisenstein series is defined as in the usual way
\[E(w,s)=\sum_{\gamma\in\widetilde{\Gamma}_{\infty}\backslash\widetilde{\Gamma}}Im(\gamma w)^s=
\sum_{\substack{c,d\in R\\ (c,d)=1}}\frac{|v|^s}{\max\{|cv|,|cu+d|\}^{2s}}.\] This is absolutely convergent for $Re(s)>1$ and evidently left-invariant by $\widetilde{\Gamma}$.

Putting things together we have
\begin{equation}\label{mellin}
M\phi(s)=\frac{\Gamma_k(s+1)\zeta_k(s+1)}{p^{s+1}}\int_{\widetilde{\Gamma}\backslash\Tt}F(w)\overline{\Theta(w)}E(w,(s+1)/2)\frac{du\,dv}{|v|^2}\end{equation}

Define $J =\int_{\widetilde{\Gamma}\backslash\Tt}F(w)\overline{\Theta(w)}E(w,(s+1)/2)\frac{du\,dv}{|v|^2}.$ Then
\begin{align*}
J&=\int_{\widetilde{\Gamma}\backslash\Tt}F(w)\overline{\Theta(w)}E(w,(s+1)/2)\frac{du\,dv}{|v|^2}\\
&=\int_{\widetilde{\Gamma}_{\infty}\backslash\Tt}F(w)\overline{\Theta(w)}|v|^{(s+1)/2}\frac{du\,dv}{|v|^2}
\end{align*}

where the second equality is obtained by unfolding the Eisenstein series..
%Now, since $F(z)$ vanishes for $v_{\infty}(v)\leq 0,$ we get
\begin{align*}
J&=\int_{\widetilde{\Gamma}_{\infty}\backslash\Tt}F(w)\overline{\Theta(w)}|v|^{(s+1)/2}\frac{du\,dv}{|v|^2}\\
&=2\iint\limits_{\substack{v\in k_{\infty}^{\times} \\ u\in T^{-1}\Oo_{\infty}}}\tau(v)|v|^{s/2-5/4}\left(\sum_{a\in R}F_a(v)e(T^2au)\right)\overline{\left(\sum_{l\in R}e(T^2l^2u)\chi_{\Oo_{\infty}}(T^2l^2v)\right)}du\,dv
\end{align*}

The inner integral over $u$ vanishes unless $a=l^2,$ so we get
\begin{align*}
J&=\sum_{l\in R}\frac{\lambda_F(l^2)}{|l|^{s-1/2}}\int_{v\in k_{\infty}^{\times}}|v|^{s/2-1/4}\widetilde{W}_{0,i\gamma}(v)d^{\times}v\\
&=p^{-s+1/2}\Gamma_k(s-1/2+i\xi)\Gamma_k(s-1/2-i\xi)\Gamma^{-1}(1+s)L(s-1/2,F)
\end{align*}

Where $p^{i\xi}=e^{i\gamma}$.

Plugging $J$ back in we get,
\[M\phi(s) =  p^{-2s-1/2}\zeta_k(s+1)\Gamma_k(s-1/2+i\xi)\Gamma_k(s-1/2-i\xi)L(s-1/2,F)\]

which completes the proof in the case $N=D=1$ and depth $0.$\\

\textbf{Case of N=D=1, depth 1}
We first introduce the following modified Eisenstein series on $\Tt_1$:
 $$E_1(w,s) :=\sum_{\gamma\in\widetilde{\Gamma}_{\infty}\backslash\widetilde{\Gamma}}Im\left(\gamma w\left(\begin{smallmatrix} \varpi_{\infty}^{-1} & 0\\0 & 1 \end{smallmatrix}\right)\right)^s $$
(We note that since $\left(\begin{smallmatrix}\varpi_{\infty}^{-1} & 0\\0 & 1\end{smallmatrix}\right)^{-1}\widetilde{K}_0(\varpi_{\infty})\left(\begin{smallmatrix}\varpi_{\infty}^{-1}& 0\\0 & 1\end{smallmatrix}\right)\subset SL_2(\Ooi)$ we see that $E_1(w,s)$ is well defined and of depth 1.)

Let $F(w)$ be a newform of depth 1, and define $\phi(z)$ to be its Shimura lift. Then following the above computations replacing $\Theta(w;z)$ by
$\Theta_1(w;z)$ we arrive at the following analogue of (\ref{mellin}):

\begin{equation}\label{meldep}
M\phi(s)=\frac{\Gamma_k(s+1)\zeta_k(s+1)}{p^{s+1}}p^{-\frac{s+1}{2}}\int_{\widetilde{\Gamma}\backslash\Tt_1}F(w)\overline{\Theta_1(w)}E_1(w,(s+1)/2)\frac{du\,dv}{|v|^2}
\end{equation}

Define $J =\int_{\widetilde{\Gamma}\backslash\Tt_1}F(w)\overline{\Theta_1(w)}E_1(w,(s+1)/2)\frac{du\,dv}{|v|^2}.$ Then
\begin{align*}
J&=\int_{\widetilde{\Gamma}\backslash\Tt_1}F(w)\overline{\Theta_1(w)}E_1(w,(s+1)/2)\frac{du\,dv}{|v|^2}\\
&=\frac{1}{p+1}\int_{\widetilde{\Gamma}_{\infty}\backslash\Tt^u_1}F(w)\overline{\Theta_1(w)}p^{(s+1)/2}|v|^{(s+1)/2}\frac{du\,dv}{|v|^2}\\
&+\frac{p}{p+1}\int_{\widetilde{\Gamma}_{\infty}\backslash\Tt^l_1}F(w)\overline{\Theta_1(w)}p^{-(s+1)/2}|v|^{(s+1)/2}\frac{du\,dv}{|v|^2}
\end{align*}

Using the fact that $F_a^l(v)=-\frac{F_a^u(v)}{p}$, we Fourier expand as before and get the following:
\begin{align*}
J &=\frac{1}{p+1} (p^{\frac{s+1}{2}} - p^{-\frac{1+s}{2}})\sum_{l\in R}\frac{\lambda_F(l^2)}{|l|^{s-1/2}}\int_{v\in T^{-1}\Ooi\backslash \{ 0 \} }|v|^{s/2-1/4}
\widetilde{W}_{1,\lambda_{\infty,F}}(v)d^{\times}v\\
 &= \frac{1}{p+1}p^{\frac{1+s}{2}}\Gamma_k^{-1}(1+s)L(s-\frac{1}{2},F)\frac{\lambda_{\infty,F}\:p^{1/2-s}}{1-\lambda_{\infty,F}\:p^{1/2-s}}\\
\end{align*}

Plugging this expression back into (\ref{meldep}), we get

\[M\phi(s)=\frac{\lambda_{\infty,F}\:p^{-1/2}}{p+1}L(s-\frac{1}{2},F)\zeta_k(s+1)\frac{p^{-2s}}{1-\lambda_{\infty,F}\:p^{1/2-s}}\]

as desired.\\

\textbf{General case, depth 0}

We now do the above computations but with the general theta function $\Theta^N_D(w;g).$

As before, we shall compute the Mellin transform \[M\phi(s)=\int_{k_{\infty}^{\times}} \phi(0,y)|y|^{s} d^{\times}y. \]

We shall need the theta functions $$\theta(D,w,h) = \tau(v)\displaystyle\sum_{\substack{m\in TR,\\ m\equiv h\;\text{mod $D$}}} e\left(\frac{um^2}{D}\right)\chi_{\Ooi}\left(\frac{vm^2}{D}\right)$$
where $h\in R/DR.$ We begin by writing:
\[\Theta^N_D(w;g) = \tau(v)\displaystyle\sum_{\substack{\vec{h}\in L,\;N|h_1\\ D\mid Q(\vec{h})}}W_D(\vec{h})e((h_2^2 - 4h_1h_3)u/D)\chi_{O_V}(\sqrt{v}g^{-1}(h)/\sqrt{D})\]

Splitting over representatives $\vec{r}$ defined modulo $D$ we get
\begin{multline*}
\Theta^N_D(w;(0,y)) = \tau(v)|v|^{3/4}\sum_{\substack{\vec{r}\in TR^3/DTR^3\\ r_2^2-4r_1r_3\equiv 0\:(D)}}W_D(\vec{r})\theta(D,w,r_2)\\
\times\sum_{\substack{h_1\equiv r_1\;(D)\\h_3\equiv r_3\;(D) \\NT|h_1\\T|h_3}}e\left(\frac{-4h_1h_3u}{D}\right)\chi_{\Ooi}\left(\frac{\sqrt{v}h_3}{y\sqrt{D}}\right)
\chi_{\Ooi}\left(\frac{\sqrt{v}y h_1}{\sqrt{D}}\right)
\end{multline*}

Applying Poisson summation over $h_3$, we get (we denote the dual variable by $s_3$ which should not be confused with the exponent $s$ of the Mellin transform)

\begin{multline*}
\Theta^N_D(w;(0,y)) = \frac{\tau(v)|y||v|^{3/4}}{|\sqrt{vD}|}\sum_{\substack{\vec{r}\in TR^3/DTR^3\\ r_2^2-4r_1r_3\equiv 0\:(D)}}W_D(\vec{r})\theta(D,w,r_2)\\
\times\sum_{\substack{h_1\equiv r_1\;(D)\\ NT|h_1\\T|s_3}}e\left(\frac{-s_3r_3}{D}\right)\chi_{\Ooi}\left(\frac{\sqrt{v}yh_1}{\sqrt{D}}\right)
\chi_{\Ooi}\left(\frac{y(-4h_1u+s_3)}{\sqrt{vD}}\right)
\end{multline*}

So that

\begin{multline*}
\int_{k^{\times}_{\infty}}\Theta^N_D(w;(0,y))|y|^s d^{\times}y = \Gamma_k(1+s)|D|^{s/2}\tau(v)|v|^{1/4}
\sum_{\substack{\vec{r}\in TR^3/DTR^3\\ r_2^2-4r_1r_3 \equiv 0\:(D)}}W_D(\vec{r})\theta(D,w,r_2)\\
\times\sum_{\substack{h_1\equiv r_1\;(D)\\ NT|h_1\\T|s_3}}e\left(\frac{-s_3r_3}{D}\right)\frac{|v|^{\frac{1+s}{2}}}{\max\{|vh_1|,|-4h_1u+s_3|\}^{1+s}}
\end{multline*}

Define $G_{N,D,r_2}(w,s)$ by
$$G_{N,D,r_2}(w,s)=\sum_{\substack{r_1,r_3\in R/DR\\4r_1r_3\equiv r_2^2\;(D)}}W_D(\vec{r})\sum_{\substack{h_1\equiv r_1\:(D)\\ NT|h_1\\T|s_3}}\left(\frac{-s_3r_3}{D}\right)\frac{|v|^{\frac{1+s}{2}}}{\max\{|vh_1|,|-4h_1u+s_3|\}^{1+s}}$$
We now express $G_{N,D,r_2}(w,s)$ in terms of Eisenstein series. Specifically, we define the following congruence subgroups of $\eta(SL_2(R)):$
$$\widetilde{\Gamma}_1(D)=\left\{\eta(\gamma)\in \eta(SL_2(R))\;\left|\;\gamma \equiv \begin{pmatrix} 1 & *\\ 0 & 1\end{pmatrix}\mod D\right.\right\}$$ and $\widetilde{\Gamma}_{N,D}=\widetilde{\Gamma}_0(N)\cap\widetilde{\Gamma}_1(D).$

Now define the Eisenstein series via $$E_{N,D}(w,s) = \sum_{\gamma\in\widetilde{\Gamma}_{\infty}\backslash\widetilde{\Gamma}_{N,D}}Im(\Gamma_k(w))^s $$
Then for $\gamma_{c_0,d_0}=\eta\left(\left(\begin{smallmatrix} * & *\\ c_0 & d_0 \end{smallmatrix}\right)\right)\in SL_2(R),$ we readily compute
$$E_{N,D}(\gamma_{c_0,d_0}(w),(1+s)/2) = \sum_{\substack{c\equiv c_0\;\text{mod $D$}\\ d\equiv d_0\;\text{mod $D$}\\ (c,d)=1}} \frac{|v|^{\frac{1+s}{2}}}{\max\{|vc|,|cu+d|\}^{1+s}}$$
Though $E_{N,D}(w,s)$ looks similiar to $G_{N,D,r_2}(w,s),$ we have to deal with the fact that $h_1$ and $s_3$ might not be relatively prime. In order to separate the gcd from the sum we define the following function
\[\tilde{G}_{N,D,r_2}(w,s)=\sum_{\substack{r_1,r_3\in R/DR\\r_1r_3\equiv r_2^2\;(D)}}W_D(\vec{r})\sum_{\substack{h_1\equiv r_1\:(D)\\ NT|h_1\\(s_3,N)=1\\T|s_3}}e\left(\frac{-s_3r_3}{D}\right)\frac{|v|^{\frac{1+s}{2}}}{\max\{|vh_1|,|-4h_1u+s_3|\}^{1+s}}\]
The following Lemma separates the factors of $DN$ from the sum.

\begin{lemma}\label{sieve}
\[G_{N,D,r_2}(w,s)=\sum_{\substack{r_1,r_3\in R/DR\\r_1r_3\equiv r_2^2\;(D)}}W_D(\vec{r})\sum_{\substack{h_1\equiv r_1\:(D)\\ NT|h_1\\((r_1,s_3),D)=1\\T|s_3}}e\left(\frac{-s_3r_3}{D}\right)\frac{|v|^{\frac{1+s}{2}}}{\max\{|vh_1|,|-4h_1u+s_3|\}^{1+s}}\]
and
\[G_{N,D,r_2}(w,s)=\sum_{d\mid N}d^{-s-1}\left(\frac{d}{D}\right)\tilde{G}_{\tfrac{N}{d},D,r_2}(w,s)\]

\end{lemma}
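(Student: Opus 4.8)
The plan is to establish the two displayed identities separately: the first strips the primes dividing $D$ out of the inner summation, and the second strips the primes dividing $N$.

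\emph{First identity.} Starting from the defining sum for $G_{N,D,r_2}(w,s)$, I would reorganise the summation over $r_3\bmod D$ and show that every term with $((r_1,s_3),D)\neq 1$ contributes zero, so that the condition $((r_1,s_3),D)=1$ may be imposed for free. Fix a prime $P\mid D$ (so $P\parallel D$, as $D$ is square-free) dividing both $r_1$ and $s_3$, and sum over the residue of $r_3$ modulo $P$ with all other data held fixed. Because $P\mid r_1$, the congruence $4r_1r_3\equiv r_2^2\pmod D$ forces $P\mid r_2$, so the binary form attached to $\vec h=(r_1,r_2,r_3)$ reduces to $r_3Y^2\bmod P$; by Definition~\ref{kohnen} this means $W_D(\vec h)=0$ when $P\mid r_3$ and $W_D(\vec h)=\left(\frac{r_3}{P}\right)W_{D/P}(\cdots)$ otherwise, while $P\mid s_3$ renders the remaining $r_3$-dependence (through $e(-s_3r_3/D)$ and the denominator) trivial modulo $P$. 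Hence the $r_3\bmod P$-sum collapses to $\sum_{r_3\bmod P}\left(\frac{r_3}{P}\right)=0$. Running this over all such primes $P\mid D$ yields the first identity.

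\emph{Second identity.} Here I would group the terms of $G_{N,D,r_2}(w,s)$ according to $d:=\gcd(s_3,N)$. A valuation-by-valuation check gives $\gcd(s_3/d,N/d)=1$, so writing $s_3=dt$ parametrises the sum bijectively by pairs $(d,t)$ with $d\mid N$ and $\gcd(t,N/d)=1$. Since $NT\mid h_1$ and $d\mid N$ we have $d\mid h_1$, so I substitute $h_1=dh_1'$; assuming $\gcd(N,T)=1$ (otherwise the $T$-power is carried along in the same manner), the condition $NT\mid h_1$ becomes exactly $(N/d)T\mid h_1'$ and $T\mid s_3$ becomes $T\mid t$. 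The denominator rescales as $\max\{|vh_1|,|{-4h_1u}+s_3|\}^{1+s}=d^{1+s}\max\{|vh_1'|,|{-4h_1'u}+t|\}^{1+s}$, which produces the factor $d^{-s-1}$. Finally, using $\gcd(d,D)=1$, the map $r_1\mapsto d^{-1}r_1$, $r_3\mapsto dr_3$ is a bijection of residues modulo $D$ fixing $r_2$, preserving $4r_1r_3\equiv r_2^2$, sending $h_1\equiv r_1\pmod D$ to $h_1'\equiv r_1\pmod D$ and $e(-s_3r_3/D)$ to $e(-tr_3/D)$; the attached binary form gets scaled by $d$, so $W_D(\vec h)$ changes by exactly $\left(\frac{d}{D}\right)$ (multiplicativity of the Kohnen symbol in its value variable). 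Collecting these substitutions identifies the contribution of a fixed $d$ with $d^{-s-1}\left(\frac{d}{D}\right)\tilde G_{N/d,D,r_2}(w,s)$.

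\emph{Main obstacle.} The only delicate point is the bookkeeping in the second identity: checking that the simultaneous rescaling $h_1=dh_1'$, $s_3=dt$ together with the reindexing $r_1\mapsto d^{-1}r_1$, $r_3\mapsto dr_3$ is compatible with all of the constraints at once ($NT\mid h_1$, $T\mid s_3$, $h_1\equiv r_1\bmod D$, $4r_1r_3\equiv r_2^2\bmod D$), and that the weight $W_D(\vec h)$ transforms with the precise factor $\left(\frac{d}{D}\right)$ and no stray sign — which rests on $\gcd(d,D)=1$ and on the reciprocity conventions fixed in \S\ref{not11}. Everything else is routine manipulation of the sums.
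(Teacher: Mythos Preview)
Your proposal is correct and follows essentially the same route as the paper. For the first identity you split $r_3$ via CRT prime--by--prime where the paper does it in one step using $b=((h_1,s_3),D)$, and for the second identity you group by $d=\gcd(s_3,N)$ and rescale, which is exactly the substitution $\{h_1\to h_1/c,\;s_3\to s_3/c,\;r_1\to r_1/c,\;r_3\to cr_3\}$ the paper uses---the only cosmetic difference is that the paper arrives at the M\"obius--inverted relation $\tilde G_{N,D,r_2}=\sum_{d\mid N}\mu(d)d^{-s-1}\left(\tfrac{d}{D}\right)G_{N/d,D,r_2}$ and leaves the inversion implicit, whereas you obtain the stated identity directly.
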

\begin{proof} Let $((r_1,s_3),D)=b$. Since $D$ is square-free, $D$ and $D/b$ are relatively prime. Hence by the Chinese Remainder Theorem we have $$R/DR=R/bR\oplus R/(D/b)R.$$ Using this decomposition we separate the $r_3$ sum into $r_3\;\text{mod $b$}$ and $r_3\;\text{mod $(D/b)$}$, and $W_D$ splits accordingly as $W_D=W_{(D/b)}W_b$. Since $b\mid s_3$ the character $e(-s_3r_3/D)$ is constant on $R/bR$ and the sum over $r_3\;\text{mod $b$}$ becomes
\[\sum_{r_3\; (b)}W_b(r_3)=\sum_{r_3}\left(\frac{b}{r_3}\right)=0\]
This establishes the first statement.

For the second statement let $c=(s_3,N)$. The substitution $\{h_1\rightarrow \tfrac{h_1}{c},\;s_3\rightarrow
\tfrac{s_3}{c},\; r_1\rightarrow \tfrac{r_1}{c},\; r_3\rightarrow cr_3\}$ leaves the character $e(-r_3s_3/D)$ invariant.  Note that with the flip, $W_D$ changes
by the quadratic character: $$W_D((r_1,r_2,r_3))\rightarrow W_D((r_1/b,r_2,br_3))=\left(\tfrac{b}{D}\right)W_D((r_1,r_2,r_3).$$

Summing over all such $c$ gives
%Then by the inclusion-exclusion principle we have
\[\tilde{G}_{N,D,r_2}(w,s)=\sum_{d\mid N}\mu(d)d^{-s-1}\left(\frac{d}{D}\right)G_{\tfrac{N}{d},D,r_2}(w,s)\] as desired.
%Applying M\"obius inversion to the above sum we get
%\[G_{N,D,r_2}(w,s)=\sum_{d\mid N}d^{-s-1}\left(\frac{D}{d}\right)\tilde{G}_{\tfrac{N}{d},D,r_2}(w,s)\]
\end{proof}

Now for $((h_1,s_3),ND)=c\neq 1$ the same substitution  $\{h_1\rightarrow \tfrac{h_1}{c},\;s_3\rightarrow
\tfrac{s_3}{c},\; r_1\rightarrow \tfrac{r_1}{c},\; r_3\rightarrow cr_3\}$ as in the proof above gives
\begin{multline*}
\tilde{G}_{N,D,r_2}(w,s)=\prod_{P\nmid DN}\left(1-\frac{\left(\tfrac{P}{D}\right)}{|P|^{1+s}}\right)^{-1}\\ \times\sum_{\substack{r_1,r_3\in R/DR\\
r_1r_3\equiv r_2^2\;(D)}}W_D(\vec{r})\sum_{\substack{h_1\equiv r_1\:(D)\\ NT|h_1 \\ (h_1,s_3)=T}}e\left(\frac{-s_3r_3}{D}\right)\frac{|v|^{\frac{1+s}{2}}}{\max\
{|vh_1|,|-4h_1u+s_3|\}^{1+s}}}
\end{multline*}

Note that the first factor is $L^N(s+1,\chi_D) = \displaystyle\prod_{P\nmid N}\left(1-\frac{\left(\tfrac{P}{D}\right)}{|P|^{1+s}}\right)$, and so the above simplifies to
\begin{equation}\label{eisensteinappears}
\tilde{G}_{N,D,r_2}(w,s)=L^N(1+s,\chi_D)\sum_{\substack{r_1,r_3\in R/DR\\
r_1r_3\equiv r_2^2\;(D)}}W_D(\vec{r})\sum_{s_3\in R/DR} p^{-(1+s)}E_{N,D}(\gamma_{-4r_1,s_3}(w),(1+s)/2)e\left(\frac{-s_3r_3}{D}\right)
\end{equation}

We now compute the Mellin transform of $\phi((0,y))$ as intended.
\begin{align*}
\overline{M\phi(\overline{s})} &= p^{-(1+s)}\int_{k_{\infty}^{\times}}\int_{\widetilde{\Gamma}_0(N)\backslash\Tt}\overline{F(w)} \Theta_D^N(w;(0,y))|y|^s dwdy\\
&=p^{-(1+s)} \Gamma_k(1+s)|D|^{s/2}\int_{\widetilde{\Gamma}_0(N)\backslash\Tt}|v|^{1/4}\overline{F(w)}\sum_{r_2}\theta(D,w,r_2)G_{N,D,r_2}(s)dw\\
&= p^{-(1+s)}\Gamma_k(1+s)|D|^{s/2}\int_{\widetilde{\Gamma}_0(N)\backslash\Tt}|v|^{1/4}\overline{F(w)}\sum_{r_2}\theta(D,w,r_2)\sum_{d\mid N}d^{-s-1}\left(\frac{d}{D}\right)\tilde{G}_{\tfrac{N}{d},D,r_2}(s)dw\\
\end{align*}

Now, for $d\neq 1, d|N $, $\theta(D,w,r_2)E_{\frac{N}{d},D}(w,s)$ is invariant under $\widetilde{\Gamma}(D)\cap\widetilde{\Gamma}(\frac{N}{d})$  which is not contained inside $\widetilde{\Gamma}_0(N)$.  However, $F(w)$ is a newform for $\widetilde{\Gamma}_0(N)$ and so is orthogonal to any function invariant by a bigger group
 as it would then be in the old space. Thus the above integral vanishes unless $d=1.$ So
\begin{align*}
\overline{M\phi(\overline{s})} &= p^{-(1+s)}\Gamma_k(1+s)|D|^{s/2}\int_{\widetilde{\Gamma}_0(N)\backslash\Tt}|v|^{1/4}\overline{F(w)}\sum_{r_2\in(R/DR)}\theta(D,w,r_2)\tilde{G}_{N,D,r_2}(s)dw\\
&= p^{-(1+s)}\Gamma_k(1+s)|D|^{s/2}\int_{\widetilde{\Gamma}_0(N)\backslash\Tt}|v|^{1/4}\overline{F(w)}\sum_{\vec{r}\in(R/DR)^3}W_D(\vec{r})\theta(D,w,r_2)L^N(1+s,\chi_D)\\
&\times\sum_{s_3\in R/DR} E_{N,D}(\gamma_{-4r_1,s_3}(w),(1+s)/2)e\left(\frac{-s_3r_3}{D}\right)dw\\
&=p^{-(1+s)}\Gamma_k(1+s)|D|^{s/2}[\widetilde{\Gamma}_0(N):\widetilde{\Gamma}_{N,D}]^{-1}\\
&\times\int_{\widetilde{\Gamma}_{N,D}\backslash\Tt}|v|^{1/4}\overline{F(w)}\sum_{\substack{\vec{r}\in(R/DR)^3\\r_2^2-4r_1r_3=0}}W_D(\vec{r})\theta(D,w,r_2)L^N(1+s,\chi_D)\\
&\times\sum_{s_3\in R/DR} E_{N,D}(\gamma_{-4r_1,s_3}(w),(1+s)/2)e\left(\frac{-s_3r_3}{D}\right)dw\\
\end{align*}

Since $(N,D)=1$, we can pick representatives for $r_1,s_3$ such that $\gamma_{r_1,s_3}\in\widetilde{\Gamma}_0(N).$ Then $F(w)$ is invariant under the change of variables
$w\rightarrow\gamma_{-4r_1,s_3}^{-1}(w).$ As in \cite{Shim} we have the following transformation property:

$$\theta(D,\gamma^{-1}_{-4r_1,s_3}(w),r_2) = |D|^{-1/2}i(u/d)\sum_{\nu\in R/DR} C_{r_2,\nu}\theta(D,w,\nu)$$

where $C_{r_2,\nu}$ are constants and $C_{r_2,0} = \left(\frac{r_1}{D}\right)e\left(\frac{s_3(4r_1)^{-1}r_2^2}{D}\right) = W_D(\vec{r})e\left(\frac{s_3r_3}{D}\right).$ Now, since $E_{N,D}(w,s)$ and $F(w)$ are both invariant under $w\rightarrow w+1,$ and $\theta(D,w+1,a)=e(\frac{a^2}{D})\theta(D,w,a),$ everything but the $\theta(D,w,0)$ contribution vanishes. Therefore, we are left with

\begin{align*}
\overline{M\phi(\overline{s})} &=p^{-(1+s)}\Gamma_k(1+s)    L^N(1+s,\chi_D)|D|^{\frac{s+1}{2}}[\Gamma_0(N):\Gamma_{N,D}]^{-1}\kappa(D)\\
&\times\int_{\widetilde{\Gamma}_{N,D}\backslash\widetilde{\mathbb{H}}}|v|^{1/4}\overline{F(w)}\theta(D,w,0)
 E_{N,D}(w,(1+s)/2)dw\\
\end{align*}

where $\kappa(D)=\#\{\vec{r}\mid W_D(\vec{r})\neq 0\}$. A simple computation yields
\[[\widetilde{\Gamma}_0(N):\widetilde{\Gamma}_{N,D}]^{-1}\kappa(D) = 1.\]

Unfolding the Eisenstein series, we get

\[\overline{M\phi(\overline{s})} =p^{-(1+s)}\Gamma_k(1+s)L^N(1+s,\chi_D)|D|^{\frac{1+s}{2}}\int_{\widetilde{\Gamma}_{\infty}\backslash\Tt}\overline{F(w)}\theta(D,w,0)|v|^{s/2+3/4}\frac{du\,dv}{|v|^2}\]

Unfolding as before leads to

\[M\phi(s) =p^{-2s-1/2}L^N(1+s,\chi_D)|D|^{3/4}\Gamma_k(s+1/2-i\xi)\Gamma_k(s+1/2+i\xi)L_D(s-1/2,F)\]

from which the result follows. \\

\textbf{General Case, depth 1}

The above calculations go through, except $$E_{N,D}(w,s) = \sum_{\gamma\in\widetilde{\Gamma}_{\infty}\backslash\widetilde{\Gamma}_{N,D}}Im(\gamma(w))^s $$
is replaced by $$E_{1,N,D}(w,s) = \sum_{\gamma\in\widetilde{\Gamma}_{\infty}\backslash\widetilde{\Gamma}_{N,D}}Im\left(\gamma(w)\left(\begin{smallmatrix} \varpi_{\infty}^{-1} & 0\\ 0 & 1\end{smallmatrix}\right)\right)^s$$ as in the case where $N=D=1$.

As before, $E_{1,N,D}(w,s)$ is a depth 1 function. Following the above calculations gives the result.	

\end{proof}

\section{A Waldspurger type formula}\label{sectionwaldspurger}
\subsection{Qualitative facts about the Shimura correspondence}\label{qualwalds}
We will need two general facts about the correspondences that we have defined:  that they preserve cuspidality and Hecke eigenspaces. To establish these we use the fact that locally, the correspondences that we defined (for any $D\in\mathbb{F}_p[T]$ that we are considering) are instances of a general
correspondence known as the \emph{local Howe correspondence}. To define the correspondence we start with two closed subgroups $H$ and $G$ of a symplectic group that are maximal commutants of each other. (Such pairs of groups are called \emph{dual reductive pairs}.) Denote their inverse image in the metaplectic cover of the symplectic cover by $\widetilde{H}$ and $\widetilde{G}$ respectively. Then a representation $\pi_{\widetilde{H}}$ of $\tilde{H}$ is said to correspond to a representation $\pi_{\widetilde{G}}$ of $\widetilde{G}$ if the tensor product $\pi_{\widetilde{H}}\otimes\pi_{\widetilde{G}}$ can be realized as a quotient of the Weil representation of the symplectic group that $H$ and $G$ sit in. Howe's conjectures (proved for archimedean local fields by Howe \cite{H} and for nonarchimedean local fields of characteristic not $2$ by Waldspurger \cite{Wa}) roughly states that for any representation $\pi_{\widetilde{G}}$ of $\widetilde{G}$ there is at most one irreducible representation $\pi_{\widetilde{H}}=\theta(\pi_{\widetilde{G}})$ of $\widetilde{H}$ that it corresponds to, and if $\theta(\pi_{\widetilde{G}})\neq0$ and $\theta(\pi_{\widetilde{G}})\cong\theta(\pi_{\widetilde{G}}')$ then $\pi_{\widetilde{G}}\cong\pi_{\widetilde{G}}'$. We will only need the first part of the statement which gives a local multiplicity one result. The case that is pertinent here is that of $H=PGL(2)$ and
$G=\widetilde{SL}(2)$.  Translated to the language of the present paper, the Howe correspondence, combined with the strong multiplicity one theorem for $PGL(2)$,  says that if $F$ is a Hecke eigenform on $\widetilde{SL}(2)$, than so is its Shimura lift $\phi$. So the Shimura lift
preserves Hecke eigenspaces.

In \cite{G-P-S}, Gelbart and Piatetski-Shapiro proved cuspidality results for general theta lifts (cf. Theorem 15.1 and Corollary 15.6), which in our case gives:
\begin{itemize}
\item[(i)]  If $F$ is a cuspidal Hecke eigenform on $\widetilde{SL}(2)$ that is not a theta function of 1-variable, then its ($D$'th) Shimura lift is also cuspidal.
\item[(ii)]  If $\phi$ is a cuspidal Hecke eigenform on $PGL(2)$, then its ($D$'th) Maass-Shintani lift is also cuspidal.
\end{itemize}
In what follows we shall also use these two facts.

\subsection{Waldspurger's formula}

We are now ready to deduce Waldspurger's formula. We start with the depth 0 case. Pick an Hecke-eigenbasis for $\widetilde{S}_0(\widetilde{\Gamma}_0(N))$: $\{F_1,F_2,..,F_r\}$, which
 is orthonormal for the Petterson inner product. Let $N=\prod_{i=1}^m l_i^{\alpha_i}$ be the decomposition of $N$ into irreducible prime powers. Let $\phi(z)$ be an
automorphic Hecke eigenform of depth 0 of level $N$, with first Fourier-Whittaker coefficient $\lambda_{\phi}(1)$ equal to 1. Let $F^{[D]}(w)$ be the $D$'th Maass-Shintani lift of $\phi$. We are going to write the $D$-th Fourier-Whittaker coefficient $\lambda_{F^{[D]}}(D)$
of $F$ in 2 different ways. On the one hand by Theorem \ref{shintanilift} we have

\[ \lambda_{F^{[D]}}(D) =  C_{\phi}G_1(D)|D|^{-3/4}L(1/2,\phi\times\chi_D)\prod_{i=1}^m \left(1+\left(\frac{l_i^{\alpha_i}}{D}\right)w_{l_i^{\alpha_i},\phi}\right)(1+w_{\infty,\phi})\]

On the other hand, since we know that $F^{[D]}$ is cuspidal, we can expand it as
\[F^{[D]}(w)=\sum_j\langle F^{[D]},F_j\rangle\; F_j(w)\]
We also have by definition
\begin{multline*}
\langle F^{[D]},F_j\rangle=\int_{\widetilde{\Gamma}_0(N)\backslash \widetilde{\mathbb{H}}}F^{[D]}(w)\overline{F}_j(w)dw=\int_{\widetilde{\Gamma}_0(N)\backslash \widetilde{\mathbb{H}}}\int_{\Gamma_0(N)\backslash \mathbb{H}}\Theta_D^N(w;z)\phi(z)\overline{F}_j(w)dzdw\\ =\int_{\Gamma_0(N)\backslash \mathbb{H}}\phi(z)\overline{\phi}_j(z)dz=\langle \phi,\phi_j\rangle
\end{multline*}
where $\phi_j$ is the $D$'th Shimura lift of $F_j$. Since $\phi_j$ is a Hecke eigenform, by strong multiplicity one it is either a constant multiple of $\phi$ or orthogonal to it. If it is a constant multiple $M_j$ of $\phi$, then $$\langle\phi_j,\phi\rangle = M_j\langle\phi,\phi\rangle,$$ and thus

\begin{align*}
C_{\phi}G_1(D)|D|^{-3/4}L(1/2,\phi\times\chi_D)\prod_{i=1}^m \left(1+\left(\frac{l_i^{\alpha_i}}{D}\right)w_{l_i^{\alpha_i},\phi}\right)(1+w_{\infty,\phi})&=\lambda_F(D)\\
&=\sum_j \langle F,F_j\rangle\lambda_{F_j}(D)\\
&=\sum_j\overline{M}_j\langle\phi,\phi\rangle\lambda_{F_j}(D)\\
\end{align*}

where $C_{\phi} = \frac{p^{-3/2}}{\left(1-\frac{e^{-i\theta}}{\sqrt{p}}\right)\left(1-\frac{e^{i\theta}}{\sqrt{p}}\right)}$.

By Niwa's Lemma, if $\lambda_{F_j}(D)\neq 0,$ then $M_j=p^{-1/2}|D|^{3/4}\lambda_{F_j}(D)/\lambda_{\phi}(1)$. Putting this together, we arrive at:\\
\begin{thm}[Waldspurger's formula for depth 0]

Let $\phi\in S_0(\Gamma_0(N))$ with $D$ coprime to $N$, squarefree and $D\in k_{\infty}^2$. Then we have the following identity:
$$\frac{1}{\langle\phi,\phi\rangle} p^{1/2}C_{\phi}G_1(D)|D|^{-3/2}L(1/2,\phi\times\chi_D)\prod_{i=1}^m \left(1+\left(\frac{l_i^{\alpha_i}}{D}\right)w_{l_i^{\alpha_i},\phi}\right) = \sum_{\textrm{ Shim}(F_j) = \phi} |\lambda_{F_j}(D)|^2$$ where $C_{\phi} = \frac{p^{-3/2}}{\left(1-\frac{e^{-i\theta}}{\sqrt{p}}\right)\left(1-\frac{e^{i\theta}}{\sqrt{p}}\right)}.$	
\end{thm}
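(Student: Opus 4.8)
The plan is to compute the $D$-th Fourier--Whittaker coefficient $\lambda_{F^{[D]}}(D)$ of the Maass--Shintani lift of $\phi$ in two independent ways and then equate them. The first expression is supplied directly by Theorem \ref{shintanilift}: since $\phi$ has depth $0$,
\[\lambda_{F^{[D]}}(D) = C_{\phi}\,G_1(D)\,|D|^{-3/4}\,L(1/2,\phi\times\chi_D)\prod_{i=1}^m\left(1+\left(\tfrac{l_i^{\alpha_i}}{D}\right)w_{l_i^{\alpha_i},\phi}\right).\]
For the second expression I would use that $F^{[D]}$ is cuspidal (fact (ii) of \S\ref{qualwalds}), hence expands in the orthonormal Hecke eigenbasis as $F^{[D]} = \sum_j \langle F^{[D]},F_j\rangle F_j$, so that $\lambda_{F^{[D]}}(D) = \sum_j \langle F^{[D]},F_j\rangle\,\lambda_{F_j}(D)$.

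The crucial structural input is the adjointness of the theta correspondence. Unfolding the definition of $F^{[D]}$ and interchanging the two integrations (legitimate since the integrands are supported on finitely many orbits), the common kernel $\Theta^N_D(w;z)$ identifies
\[\langle F^{[D]},F_j\rangle = \int_{\widetilde{\Gamma}_0(N)\backslash\widetilde{\mathbb{H}}}\int_{\Gamma_0(N)\backslash\mathbb{H}} \Theta^N_D(w;z)\,\phi(z)\,\overline{F_j(w)}\,dz\,dw = \langle\phi,\phi_j\rangle,\]
where $\phi_j$ is the $D$-th Shimura lift of $F_j$. By the Howe correspondence and the qualitative facts of \S\ref{qualwalds}, $\phi_j$ is a cuspidal Hecke eigenform, so strong multiplicity one for $PGL(2)$ forces $\phi_j = M_j\phi$ for a scalar $M_j$ when $\mathrm{Shim}(F_j)=\phi$ and $\phi_j\perp\phi$ otherwise. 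Hence only the terms with $\mathrm{Shim}(F_j)=\phi$ survive, and for those $\langle F^{[D]},F_j\rangle = \overline{M_j}\,\langle\phi,\phi\rangle$.

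To determine $M_j$ I would invoke Niwa's Lemma (Theorem \ref{Niwa}): comparing $L(s,\phi_j) = p^{-1/2}L^N(1+s,\chi_D)|D|^{3/4}L(s-1/2,F_j,D)$ with $L(s,\phi_j)=M_jL(s,\phi)$ and matching leading Dirichlet coefficients (using $\lambda_\phi(1)=1$ and that the $l=1$ term of $L(s-1/2,F_j,D)$ is $\lambda_{F_j}(D)$) yields $M_j = p^{-1/2}|D|^{3/4}\lambda_{F_j}(D)$. Substituting this back, and noting $p^{-1/2}|D|^{3/4}>0$,
\[\lambda_{F^{[D]}}(D) = \sum_{\mathrm{Shim}(F_j)=\phi}\overline{M_j}\,\langle\phi,\phi\rangle\,\lambda_{F_j}(D) = p^{-1/2}|D|^{3/4}\langle\phi,\phi\rangle\sum_{\mathrm{Shim}(F_j)=\phi}|\lambda_{F_j}(D)|^2.\]
Equating the two expressions for $\lambda_{F^{[D]}}(D)$ and solving for the sum gives the asserted identity after collecting powers of $p$ and $|D|$.

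Most of this is bookkeeping once Theorems \ref{shintanilift} and \ref{Niwa} and strong multiplicity one are available; the step that genuinely needs care is the adjointness identity $\langle F^{[D]},F_j\rangle = \langle\phi,\phi_j\rangle$ — one must verify absolute convergence to justify Fubini and keep track of which slot carries the complex conjugation — together with the minor point that any $F_j$ with $\lambda_{F_j}(D)=0$ contributes nothing (such an $F_j$ forces $M_j=0$, hence $\phi_j\perp\phi$, consistent with the range of summation). I would also note that the right-hand side is manifestly independent of the chosen orthonormal Hecke eigenbasis of the $\phi$-isotypic subspace.
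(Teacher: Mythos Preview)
Your proposal is correct and follows essentially the same argument as the paper: compute $\lambda_{F^{[D]}}(D)$ once via Theorem~\ref{shintanilift}, once by expanding $F^{[D]}$ in an orthonormal Hecke eigenbasis and using the adjointness $\langle F^{[D]},F_j\rangle=\langle\phi,\phi_j\rangle$ together with strong multiplicity one and Niwa's Lemma to identify $M_j$. Your additional remarks on Fubini, the vanishing case $\lambda_{F_j}(D)=0$, and basis independence are welcome clarifications but do not alter the route.
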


For the depth 1 case, we pick an orthonormal Hecke eigenbasis for $\widetilde{S}^{new}_0(\widetilde{\Gamma}_0(N),1)$.  Now repeating the argument
above,we arrive at:\\

\begin{thm}[Waldspurger's formula for depth 1]

Let $\phi\in S^{new}_1(\Gamma_0(N))$ with $D$ coprime to $N$, square-free, and $D\in k_{\infty}^2$. Then we have the following identity:
$$\frac{1}{\langle\phi,\phi\rangle}\frac{(p+1)}{p+w_{\infty,\phi}}C_{\phi}G_1(D)|D|^{-3/2}L(1/2,\phi\times\chi_D)\prod_{i=1}^R \left(1+\left(\frac{l_i^{\alpha_i}}{D}\right)w_{l_i^{\alpha_i},\phi}\right)(1+w_{\infty,\phi}) = \sum_{\textrm{ Shim}(F_j) = \phi} |\lambda_{F_j}(D)|^2$$ where
$C_{\phi} = \frac{-pw_{\infty,\phi}}{p+w_{\infty,\phi}}.$
\end{thm}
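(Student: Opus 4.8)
The plan is to reprise, mutatis mutandis, the two-way computation of a single Fourier--Whittaker coefficient that yielded the depth $0$ formula, feeding in the depth $1$ inputs established in \S\ref{sectionshimura}. First I would fix an orthonormal Hecke eigenbasis $\{F_1,\dots,F_r\}$ of the new space $\widetilde{S}^{new}_1(\widetilde{\Gamma}_0(N))$, take $\phi\in S^{new}_1(\Gamma_0(N))$ to be an automorphic Hecke eigenform normalized so that its leading upper Fourier--Whittaker coefficient equals $1$, and form the $D$'th Maass--Shintani lift $F^{[D]}$ of $\phi$ using the depth $1$ kernel $\Theta^N_{D,1}(w;g)$. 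By fact (ii) of \S\ref{qualwalds} this $F^{[D]}$ is cuspidal of level $N$ and depth $1$, and because $\phi$ is new it lies in $\widetilde{S}^{new}_1(\widetilde{\Gamma}_0(N))$, exactly as in the ``$d\neq1$ vanishing'' step of the proof of Theorem \ref{Niwa}. I would then evaluate $\lambda_{F^{[D]}}(D)$ in two ways: on one hand, by the depth $1$ branch of Theorem \ref{shintanilift}, which gives it as $C_\phi G_1(D)|D|^{-3/4}L(1/2,\phi\times\chi_D)\prod_i\bigl(1+(\tfrac{l_i^{\alpha_i}}{D})w_{l_i^{\alpha_i},\phi}\bigr)(1+w_{\infty,\phi})$ with $C_\phi=\tfrac{-pw_{\infty,\phi}}{p+w_{\infty,\phi}}$; on the other hand, by the spectral expansion $F^{[D]}=\sum_j\langle F^{[D]},F_j\rangle F_j$ against the eigenbasis.

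Next I would compute $\langle F^{[D]},F_j\rangle$ by unfolding. Substituting the definition of $F^{[D]}$ as an integral over $\Gamma_0(N)\backslash\mathbb{H}_1$ of $\phi$ against $\Theta^N_{D,1}$, interchanging the two integrals (legitimate since the cusp forms here are supported on finitely many orbits), and recognizing the resulting $w$-integral as the $D$'th Shimura lift $\phi_j$ of $F_j$ with respect to the \emph{same} kernel, one gets $\langle F^{[D]},F_j\rangle=\langle\phi,\phi_j\rangle$; the depth $1$ measure normalizations on $\Tt_1$ and $\mathbb{H}_1$ fixed in \S\ref{backgr} are precisely what make this adjunction exact. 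By the Howe correspondence together with strong multiplicity one for $PGL(2)$ (\S\ref{qualwalds}), each $\phi_j$ is a $PGL(2)$ Hecke eigenform, hence either orthogonal to $\phi$ or a scalar multiple $M_j\phi$; in the latter case $\langle\phi,\phi_j\rangle=\overline{M_j}\langle\phi,\phi\rangle$, and the depth $1$ branch of Theorem \ref{Niwa}, compared on leading coefficients, forces $M_j=\tfrac{\lambda_{\infty,F_j}\,p^{-1/2}}{p+1}|D|^{3/4}\lambda_{F_j}(D)$ whenever $\lambda_{F_j}(D)\neq0$. Here $\lambda_{\infty,F_j}=-w_{\infty,\phi}p^{3/2}$ is the \emph{same} for every $F_j$ with $\mathrm{Shim}(F_j)=\phi$ --- this is read off the depth $1$ Maass--Shintani computation, which ties the $\widetilde{W}_\infty$-eigenvalue of the lift to the Atkin--Lehner eigenvalue $w_{\infty,\phi}$ --- so it may be pulled outside the sum.

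Combining the two evaluations yields an identity equating $C_\phi G_1(D)|D|^{-3/4}L(1/2,\phi\times\chi_D)\prod_i\bigl(1+(\tfrac{l_i^{\alpha_i}}{D})w_{l_i^{\alpha_i},\phi}\bigr)(1+w_{\infty,\phi})$ with $\langle\phi,\phi\rangle\,\lambda_{\infty,F}\,p^{-1/2}|D|^{3/4}(p+1)^{-1}\sum_{\mathrm{Shim}(F_j)=\phi}|\lambda_{F_j}(D)|^2$; solving for the sum and substituting $C_\phi=\tfrac{-pw_{\infty,\phi}}{p+w_{\infty,\phi}}$ and $\lambda_{\infty,F}=-w_{\infty,\phi}p^{3/2}$ collapses the constants to $\langle\phi,\phi\rangle^{-1}\tfrac{p+1}{p+w_{\infty,\phi}}$, which is the claimed identity. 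The step I expect to cost the most care is not this bookkeeping but the new/old dichotomy: in depth $1$ the space $\widetilde{S}_1$ contains old vectors inflated from depth $0$, so I must confirm both that $F^{[D]}$ genuinely lands in $\widetilde{S}^{new}_1$ and, dually, that the ``old'' Eisenstein-type constituents of $\Theta^N_{D,1}$ integrate to zero against the newform $\phi$; without this one cannot invoke strong multiplicity one in the clean form used above. Everything else is a transcription of the depth $0$ argument with $\widetilde{W}_{1,\lambda_{\infty,F}}$ and the upper-component Fourier coefficients replacing their depth $0$ analogues.
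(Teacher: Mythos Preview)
Your proposal is correct and follows essentially the same route as the paper: the paper's depth $1$ proof is literally ``pick an orthonormal Hecke eigenbasis for $\widetilde{S}^{new}_1(\widetilde{\Gamma}_0(N))$ and repeat the depth $0$ argument,'' and that is exactly what you do, plugging in the depth $1$ branches of Theorems \ref{shintanilift} and \ref{Niwa} and collapsing the constants. Your extra remarks on the new/old dichotomy and on the uniformity of $\lambda_{\infty,F_j}$ across the fibre over $\phi$ are not spelled out in the paper but are the right checks to make the transcription honest.
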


By Drinfeld's \cite{Dr} and Deligne's \cite{Del} works, $L(s,\phi\times\chi_D)$ satisfies the Riemann hypothesis. Hence by Theorem \ref{lindelofhyper2} we know that it satisfies the Lindel\"of Hypothesis. Therefore we get\\ \\

\begin{thm}[Metaplectic Ramanajun conjecture]\label{metramcon}

Let $F$ be a fixed cuspdial meteplectic form of level $N$, depth $0$ or a newform of depth $1$. Then for $D\in R$ square-free of even degree and relatively prime to $N$ we have the following bound
\[\begin{array}{cc}
|\lambda_F(D)|^2\ll_F |D|^{-1/2+o(1)}\,,&\text{as }|D|\rightarrow\infty
\end{array}\]
\end{thm}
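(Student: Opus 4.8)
The plan is to combine Waldspurger's formula with the effective Lindelöf-type bound for the twisted L-functions. Concretely, fix a cuspidal metaplectic form $F$ of level $N$ and depth $0$ (or a newform of depth $1$), and let $D\in R$ be square-free of even degree with $(D,N)=1$ and $D\in (k_\infty^\times)^2$. The square-free, even-degree hypothesis is exactly what is needed so that $D$ lies in $(k_\infty^\times)^2$ and the theory of the preceding sections applies; in particular $F$ (being a Hecke eigenform) is a constant multiple of some $F_j$ in an orthonormal Hecke eigenbasis, and its Shimura lift $\phi = \mathrm{Shim}(F)$ is a cuspidal automorphic Hecke eigenform of level $N$ on $PGL_2$, with $\phi$ not a one-variable theta function (else $F$ would be, contradicting that we are in the orthogonal complement). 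First I would invoke Waldspurger's formula (the depth $0$ or depth $1$ version, as appropriate) to get
\[
|\lambda_F(D)|^2 \;\le\; \sum_{\mathrm{Shim}(F_j)=\phi} |\lambda_{F_j}(D)|^2 \;=\; \frac{c_\phi\, G_1(D)\,|D|^{-3/2}\, L(1/2,\phi\times\chi_D)}{\langle\phi,\phi\rangle}\prod_{i=1}^m\Bigl(1+\bigl(\tfrac{l_i^{\alpha_i}}{D}\bigr)w_{l_i^{\alpha_i},\phi}\Bigr),
\]
where $c_\phi$ is the explicit constant from that theorem.

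Next I would bound each factor on the right in $|D|$, treating everything depending only on $F$ (equivalently on $\phi$, since $\phi$ is determined by $F$) as an $F$-dependent constant. The quantity $\langle\phi,\phi\rangle$, the constant $c_\phi$, and the finite product of Atkin–Lehner factors $\prod_i(1+(\tfrac{l_i^{\alpha_i}}{D})w_{l_i^{\alpha_i},\phi})$ are all $O_F(1)$ (the last is bounded by $2^m$). The Gauss sum satisfies $|G_1(D)| = |D|^{1/2}$, so $G_1(D)|D|^{-3/2} = |D|^{-1}$. It remains to bound the central value $L(1/2,\phi\times\chi_D)$. Here I would cite Theorem \ref{lindelofhyper2}: since $D$ has degree $2g$ or $2g+1$, we have
\[
|L(1/2,\phi\times\chi_D)| \;\ll\; e^{\frac{2g}{\log_p(g)} + 8p^{1/2}g^{1/2}} \;=\; |D|^{o(1)},
\]
because both exponents are $o(g) = o(\log_p |D|)$. (The hypotheses of Theorem \ref{lindelofhyper2} — polynomiality and the Riemann hypothesis for $L(s,\phi\times\chi_D)$ — are supplied by Drinfeld's proof of Langlands for $GL(2)$ and Deligne's proof of the Weil conjectures, as recorded in \S\ref{sectiondrinfeld}.) Collecting these estimates gives $|\lambda_F(D)|^2 \ll_F |D|^{-1}\cdot|D|^{o(1)} = |D|^{-1+o(1)}$, hence $|\lambda_F(D)| \ll_F |D|^{-1/2+o(1)}$, which is the stated bound. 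Effectivity of the constant follows because every ingredient — the Waldspurger constants, $\langle\phi,\phi\rangle$, and the bound in Theorem \ref{lindelofhyper2} — is effective, the key point (as emphasized in the introduction) being that the Riemann hypothesis over function fields rules out Siegel zeros.

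The main obstacle is not in this final assembly, which is essentially bookkeeping, but in the two substantial inputs it rests on: Waldspurger's formula (Theorems in \S\ref{sectionwaldspurger}, resting in turn on the Shimura/Maass–Shintani lift computations of \S\ref{sectionshimura} and the local Howe correspondence), and the effective subconvexity-strength bound of Theorem \ref{lindelofhyper2}. Within the present proof, the one genuinely delicate point to get right is the reduction to a single Hecke eigenform: one must use strong multiplicity one for $PGL(2)$ together with the Howe correspondence (\S\ref{qualwalds}) to know that $\phi=\mathrm{Shim}(F)$ is a well-defined cuspidal Hecke eigenform and that $F$ itself appears in the Waldspurger sum with positive contribution, so that $|\lambda_F(D)|^2$ is genuinely $\le$ the spectral sum rather than merely comparable to it. Once that is in place, the degree bookkeeping — in particular the cancellation $|G_1(D)|\,|D|^{-3/2}=|D|^{-1}$ against the $|D|^{+1/2}$ that a convexity bound for $L(1/2,\phi\times\chi_D)$ would contribute, leaving room for the Lindelöf-quality estimate to close the gap — is routine.
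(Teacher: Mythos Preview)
Your overall approach matches the paper's: combine Waldspurger's formula with Theorem~\ref{lindelofhyper2}, using $|G_1(D)|=|D|^{1/2}$ to get the $|D|^{-1}$ and the Lindel\"of bound to get the $|D|^{o(1)}$. However, there are two genuine gaps.

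First, your claim that ``the square-free, even-degree hypothesis is exactly what is needed so that $D$ lies in $(k_\infty^\times)^2$'' is false. An element of $k_\infty^\times$ is a square iff its $\infty$-valuation is even \emph{and} its leading unit is a square in $\mathbb{F}_p^\times$; since $p\equiv 1\pmod 4$ there are non-squares in $\mathbb{F}_p^\times$. So a square-free $D$ of even degree with non-square leading coefficient is \emph{not} in $(k_\infty^\times)^2$, and Waldspurger's formula as stated does not apply to it. The paper closes this gap via \S\ref{dnotsquare}: for such $D$ one picks a non-square $\epsilon\in\mathbb{F}_p$, notes that $\epsilon D\in(k_\infty^\times)^2$, and uses Lemma~\ref{nonsquarecoef} to conclude $|\lambda_F(D)|=|\lambda_{F_\epsilon}(\epsilon D)|$, reducing to the case already handled. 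Without this step your argument only proves the theorem for $D$ with square leading coefficient.

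Second, you dismiss the possibility that $F$ is a one-variable theta function by saying this would contradict ``that we are in the orthogonal complement,'' but the theorem statement carries no such orthogonality hypothesis. If $F$ is a one-variable theta, the Shimura lift is not cuspidal (cf.\ \S\ref{qualwalds}), so the Waldspurger machinery does not apply. The paper handles this case separately and tautologically: the Fourier--Whittaker coefficients of a one-variable theta are supported on a single square class, so for square-free $D$ there is at most one value of $|D|$ with $\lambda_F(D)\neq 0$, and the asymptotic bound is vacuous. You need to include this case split.
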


\begin{proof}

Without loss of generality we can assume that $F$ is a Hecke eigenform. If $F$ is a 1-variable theta function then the Fourier-Whittaker coefficients $\lambda_F(D)$ are supported only on one square class, and the theorem follows tautologically.

Else, consider the $D$'th Shimura lift, $\phi$, of $F$. By the discussion in \S\ref{qualwalds}, $\phi$ will is a cuspidal Hecke eigenform itself. Then Waldspurger's formula combined with the fact that $|G_1(D)|=|D|^{1/2}$ and $|w_{\infty,\phi}|=|w_{l^{\alpha},\phi}|=1$ gives us
\begin{equation}\label{ineqram}
|\lambda_{F}(D)|^2\leq \sum_{Shim(F_j)=\phi}|\lambda_{F_j}(D)|^2\ll_F\frac{L(1/2,\phi\times\chi_D)}{\langle\phi,\phi\rangle}|D|^{-1}\ll_F L(1/2,\phi\times \chi_D)|D|^{-1}
\end{equation}

Where the implied constant in the last inequality is \emph{independent} of $D$ (Note that the Shimura lifts depended on $D$).  This is because the space $S_0(\Gamma_0(N))$ is finite dimensional, so we have $\langle\phi,\phi\rangle\geq \min\{\langle\phi_1,\phi_1\rangle,\cdots,\langle\phi_n,\phi_n\rangle\}$ independently of $D$, where $\{\phi_j\}_{j=1}^n$ is a Hecke eigenbasis for $S_{0}(\Gamma_0(N))$ (and similarly $S_1(\Gamma_0(N))$).

Then \eqref{ineqram} combined with Theorem \ref{lindelofhyper2} and gives the result for $D\in k_{\infty}^2$. For $D$ of even degree and not in $k_{\infty}^2$, take $\epsilon\in\F_p$ not a square so that we have $\epsilon D\in k_{\infty}^2$. We appeal to section \ref{dnotsquare} to see that $|\lambda_F(D)|=|\lambda_{F_{\epsilon}}(\epsilon D)|$ and the result follows.

\end{proof}

Note that the implied constants in $\ll_F$ notation are \emph{effective} and the stronger bound in Theorem \ref{lindelofhyper2} applies.

\section{Proof of Theorem \ref{representationTheorem}}\label{sectionquadratic}

We are now ready to finish the proof of Theorem \ref{representationTheorem}. Let $Q$ be an anisotropic ternary quadratic form, which is rationally equivalent over $k_{\infty}$
 to $T\epsilon x^2 + Ty^2 +\epsilon z^2$, where $\epsilon\in \mathbb{F}_p\backslash \mathbb{F}_p^2$. Define $\Theta_Q(z)$ and $\Theta_G(z)$ as in equations \ref{thetaG} and \ref{thetaQ}. Then by Siegel's Theorem (Theorem (\ref{SiegelsTheorem})) we can expand it as a sum of cuspidal Hecke eigenforms of type 1, depth 1 and level $\disc(Q)$ as

$$ \Theta_Q(z) = \Theta_G(z) + \displaystyle\sum_j F_j(z)$$

Recall that the sum on the right hand side is a finite sum.

Take $D\in R$ to be of even degree, square-free and relatively prime to $\disc(Q).$ By looking at the $D$'th Fourier-Whittaker coefficients of the above equality we get

$$ |v|^{3/4}r_Q(D)\chi_{\mathcal{O}_\infty}(Dv) = |v|^{3/4}r_G(D)\chi_{\mathcal{O}_\infty}(Dv) + \displaystyle\sum_j \lambda_{F_j}(D)\widetilde{W}_{F_j}(Dv)$$

where $\widetilde{W}_{F_j}$ is the appropriate Whittaker function, depending on which of the spaces
$\widetilde{S}_1^{new}(\widetilde{\Gamma}_0(N))$ and $\widetilde{S}_0(\widetilde{\Gamma}_0(N))$ contains
$F_j$.

By theorem \ref{metramcon} we have $|\lambda_{F_j}(D)|\ll_j |D|^{-1/2+\epsilon}$. Also by Siegel's mass formula \ref{Siegelmassformula} we have $r_G(D)\backsim L(q,\chi_D)|D|^{1/2}$. This, combined with Lemma \ref{classnumber} shows that $r_G(D)\gg_Q|D|^{1/2}/\log_p\log_p|D|$. (See \S\ref{Siegelmassformula}.) We conclude that, for $\epsilon>0$

\[r_Q(D) = r_G(D) + O_{\epsilon}(|D|^{\frac{1}{4}+\epsilon})\]

as desired. Note that the effectivity of Lemma \ref{classnumber} implies that the above is effective. In particular for a given form $Q$ we can actually write down all the even degree polynomials $D\in\mathbb{F}_p[T]$ that it represents.

\section{An Example}\label{Example}

In order to give a flavor of the kind of representability questions to which the methods developed in this paper applies to, we give the following example. Let $p=5$, $k=\mathbb{F}_5(T)$, $\epsilon\in\mathbb{F}_{5}$ a non-square, and $Q_1,\; Q_2$ be the anisotropic quadratic forms given by
\[Q_1(X,Y,Z)=X^2+(T^3+T+1)Y^2+\epsilon Z^2\]
\[Q_2(X,Y,Z)=(T^2-T-1)X^2+(T+1)XY+TY^2+\epsilon Z^2\]
First note that the two forms have the same discriminant $$\disc(Q_1)=\disc{(Q_2)}=\epsilon(T^3+T+1)=N$$ which is square-free over $\mathbb{F}_{5}$. \\
\begin{lemma}
$Q_1$ and $Q_2$ belong to the same genus.
\end{lemma}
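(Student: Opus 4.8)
The plan is to verify the genus condition place-by-place: two ternary quadratic forms over $\mathbb{F}_5[T]$ with the same discriminant lie in the same genus precisely when they are isomorphic over $\mathcal{O}_v$ (equivalently over $k_v$, once integral structure is pinned down by the equal discriminants and the fact that both forms are $\mathcal{O}_v$-unimodular away from the support of $N$) for every completion, including $v=\infty$. Since $\disc(Q_1)=\disc(Q_2)=N=\epsilon(T^3+T+1)$ and $N$ is square-free, the only places where the forms are not $\mathcal{O}_v$-unimodular are the places $v \mid N$, namely $v=\infty$ (from the leading behaviour, as $\deg N = 3$ is odd so $N\notin(k_\infty^\times)^2$ up to units) and the finite places dividing $T^3+T+1$. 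At every other finite place $P\nmid N$ both forms are unimodular of the same rank and discriminant, hence isomorphic over $\mathcal{O}_P$ by the standard classification of unimodular quadratic forms over a local ring of odd residue characteristic.

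So the work reduces to checking the finitely many bad places. First I would handle $v=\infty$: the excerpt already records that $Q_1$ is rationally equivalent over $k_\infty$ to $T\epsilon x^2 + Ty^2 + \epsilon z^2$ (this is the normal form used in \S\ref{sectionquadratic}), and one checks directly, using the explicit formula for the Hilbert symbol at $\infty$ in \S\ref{not11} together with the classification of ternary forms over a local field by discriminant and Hasse invariant, that $Q_2$ has the same Hasse invariant at $\infty$; since they already share a discriminant, they are isometric over $k_\infty$, and then over $\Ooi$ after scaling by the uniformizer to make the appropriate lattice integral. Next, for each monic irreducible $P \mid T^3+T+1$ (over $\mathbb{F}_5$, $T^3+T+1$ factors into irreducibles; one computes this factorization), I would again compare Hasse invariants: a ternary form over $k_P$ is determined up to isometry by its discriminant and its Hasse--Witt invariant $\prod_{i<j}(a_i,a_j)_P$, and both are easily evaluated from the diagonalizations $Q_1 \sim \langle 1, T^3+T+1, \epsilon\rangle$ and the diagonalization of $Q_2$ (complete the square in $X,Y$ to get $Q_2 \sim \langle T^2-T-1,\ \det/(T^2-T-1)\cdot(\text{unit}),\ \epsilon\rangle$, using $\disc Q_2 = N$). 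Since the discriminants agree, the comparison of Hasse invariants at each $P\mid N$ is a finite Legendre-symbol computation via Lemma \ref{quadrec}.

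The main obstacle is purely computational bookkeeping rather than conceptual: one must (i) factor $T^3+T+1$ over $\mathbb{F}_5$, (ii) correctly diagonalize $Q_2$ over each relevant completion while tracking the unit part of the discriminant so that the equality $\disc Q_1 = \disc Q_2$ is genuinely used, and (iii) evaluate the relevant Hilbert symbols $(a_i,a_j)_P$ and $(a_i,a_j)_\infty$ and check they match for the two forms. The one subtlety to be careful about is that matching invariants over the local \emph{field} $k_v$ only gives local isometry of the \emph{integral} lattices because the discriminants are equal and square-free, so the lattices $\mathcal{O}_V$ are determined (up to isometry) by the rational data at each $v$; I would state this reduction explicitly. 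Once all the local Hasse invariants are checked to coincide, the genus equivalence follows from the local classification at every place, completing the proof.
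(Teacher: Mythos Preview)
Your approach via Hasse--Witt invariants is correct and will work, but it differs from the paper's argument. The paper exploits the special shape of the two forms: both split as $Q_i = Q_i' \oplus \langle \epsilon\rangle$ with $Q_i'$ a \emph{binary} form in $X,Y$ of discriminant $T^3+T+1$, and then applies the Kohnen symbol $W_D$ (Definition~\ref{kohnen}) to the binary parts. For a binary form with $P\|\disc$, the reduction $\bmod\ P$ has rank~$1$, so $W_P(Q') = \bigl(\tfrac{Q'(v_0)}{P}\bigr)$ is a well-defined genus invariant, and a single Legendre symbol $\bigl(\tfrac{T^2-T-1}{T^3+T+1}\bigr)$ settles all the bad finite places at once. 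Your route computes Hasse symbols directly and does not rely on this splitting, which is more general-purpose but more laborious; in practice the computations collapse because $T^3+T+1$ is irreducible over $\mathbb{F}_5$ (it has no root in $\mathbb{F}_5$), so there is only one bad finite place and your ``factoring'' step is vacuous. One small correction: at $v=\infty$ the genus condition is $k_\infty$-equivalence (the analogue of $\mathbb{R}$), not $\mathcal{O}_\infty$-equivalence, so there is no lattice to ``make integral'' there; it suffices that both forms are anisotropic ternary forms over $k_\infty$ with the same discriminant class, which they are by hypothesis.
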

\begin{proof}
 In order to check the equivalence of these forms at each completion we check the equivalence of the Kohnen symbol for each form. Note that for $w\nmid N$ the two forms are diagonalizable over $\mathcal{O}_w$ and since they have the same determinant (which is a unit in $\mathcal{O}_w$) they are equivalent and we only need to check $w$ dividing $N$. We start with $Q_1$. $Q_1$ represents $1$ hence the Kohnen symbol is $1$. The second form represents $T^2-T-1$. The Kohnen symbol is
\[\left(\frac{T^3+T+1}{T^2-T-1}\right)=1\]
Hence the forms belong to the same genus.
\end{proof}

Now, Taking $X=0,Y=1,Z=0$ we see that $Q_2$ represents $T$, while it easy to see by degree considerations that $Q_1$ does not represent any degree 1 polynomial.
Therefore there is no local to global principle in this case. However, our Corollary \ref{representationCor} implies that for all sufficiently large odd degrees,
$Q_1$ and $Q_2$ will represent the same polynomials.

\subsection{Acknowledgements}
The authors would like to thank Peter Sarnak for originally suggesting the problem under consideration and for many helpful discussions. We would also like to thank Benjamin Bakker and Bhargav Bhatt for related conversations. We would also like to thank the referee for a careful read of the first draft of the paper and pointing out to numerous inaccuracies in the exposition.

\appendix\label{sectiongauss}
\section{Weil representation}

In this section we will recall the Weil representation and then use it to get the desired transformation properties of the theta functions used throughout the text.

\begin{subsection}{Gauss sums}\label{sectiongauss1}

\begin{subsubsection}{Gauss sums on quadratic spaces}\label{gausquad}

Let $k_w$ be the completion of $\mathbb{F}_p(T)$ at a place $w$, and $p\equiv1\bmod 4$ (actually for this subsection all we need is a local field of characteristic not $2$), and let $\psi_w$ be a non-trivial additive character of $k_w$. In this section we will define Gauss sums for a quadratic space $(V_w,Q_w)$ over $k_w$. Since the cahracteristic of $k_w$ is not $2$, there exists $1$-dimensional quadratic subspaces $(V_{w,i},Q_{w,i})$ such that $(V_w,Q_w)=\bigoplus (V_{w,i},Q_{w,i})$. Let $d\mu_{w,i}$ be the self-dual Haar measure on $V_{w,i}$ with respect to the Fourier transform defined by the bilinear form associated to $Q_{w,i}$. More explicitly; define the bilinear form $B_{Q_{w,i}}:V_{w,i}\times V_{w,i}\rightarrow \mathbb{C}$ by $B_{Q_{w,i}}(x,y)=(Q_{w,i}(x+y)-Q_{w,i}(x)-Q_{w,i}(y))/2$. Then the measure $d\mu_{w,i}$ is normalized so that $\hat{\hat{f}}(x)=f(-x)$, where the Fourier transform is defined by
\begin{equation}\label{four}
\hat{f}(y):=\int_{V_{w,i}}f(x)\psi_w(-2B_{Q{w,i}}(x,y))d\mu_{w,i}(x)
\end{equation}
Now given a triple $(V_w,Q_w,\psi_w)$ define $G_{\psi_w,Q_w}$ by
\[G_{\psi_w,Q_w}:=\prod_{i}G_{\psi_w,Q_{w,i}}\]
Where we define the $1$-dimensional Gauss sums $G_{\psi_w,Q_{w,i}}$ as follows: Let $\{U_j\}_{j=1}^{\infty}$ be a set of compact open subsets of $V_{w,i}$ such that $U_i\subset U_j$ for $i\leq j$, and $\cup_{j=1}^{\infty}U_j=V_{w,i}$. Then define
\[G_{\psi_w,Q_{w,i}}:=\lim_{j\rightarrow \infty}\int_{U_j}\psi_w(Q_{w,i}(x))d\mu_{w,i}(x)\]
The fact that this limit exists and is independent of the choice of the cover $\{U_j\}_{j=1}^{\infty}$ follows from Lemma 1.5.1 of \cite{Sha} (the sequence of integrals stabilize after a sufficiently large $j$). The independence of $G_{\psi_w,Q_w}$ of the decomposition into $Q_{w,i}$'s also follow from the same lemma.

Finally, for $\alpha\in k_w^{\times}$ we define $G_{\psi_w,Q_w}(\alpha)$ by $\prod_{i}G_{\psi_{w},Q_{w,i}}(\alpha)$, where the $1$-dimensional sums are defined by
\[G_{\psi_w,Q_{w,i}}(\alpha):=\lim_{j\rightarrow \infty}\int_{U_j}\psi_w(\alpha Q_{w,i}(x))d\mu_{w,i}(x)\]

\end{subsubsection}

\begin{subsubsection}{Gauss sums for a general additive character}

Following \cite{Sha}, in this section we will calculate the $1$-dimensional Gauss sums for the quadratic space $(k_w,Q_w)$, where $Q_w(x)=x^2$. The notation is as in \S\ref{gausquad} Let $\varpi_w$ be a uniformizer for the maximal ideal $\mathcal{O}_w\subset k_w$, and denote the conductor of $\psi_w$ by $\mathfrak{f}_w=\varpi_w^{m_w}$. Let $\mu_w$ be the self-dual Haar measure on $k_w$ as in \S\ref{gausquad}. i.e. $\mu_w$ is normalized by $\hat{\hat{f}}(x)=f(-x)$ where the Fourier transform is defined by (\ref{four}).
We denote the Gauss sum $G_{\psi_w,Q_w}$ simply by $G_{\psi_w}$. i.e.
\begin{equation}\label{g1}
G_{\psi_w}=\lim_{m\rightarrow\infty}\int_{\varpi_w^{-m}\mathcal{O}_w}\psi_w(x^2)d\mu_w(x)
\end{equation}
As before, for any $\alpha\in k_w^{\times}$ we have the Gauss sums, $G_{\psi_w}(\alpha)$;
\begin{equation}\label{g2}
G_{\psi_w}(\alpha)=\lim_{m\rightarrow\infty}\int_{\varpi_w^{-m}}\psi_{w}(\alpha x^2)d\mu(x)
\end{equation}
A quick computation (cf. \cite{Sha}) shows that the Gauss sums above reduces to:\\

\begin{lemma}[Shalika, \cite{Sha}]\label{sh1}Let $\alpha\in k_{w}^{\times}$. Then
\[G_{\psi_w}(\alpha)=\left|\alpha\right|_w^{-1/2}\begin{cases}1&\text{if $v_w(\alpha)-m_w\equiv0\bmod 2$}\\ |\varpi_w|_w^{\frac{1}{2}}\displaystyle\sum_{x\in\mathcal{O}_w/\varpi_w\mathcal{O}_w}\psi_w(\alpha_0\varpi_w^{m_w-1}x^2)&\text{if $v_w(\alpha)-m_w\equiv1\bmod 2$}\end{cases}\]
Where $\alpha_0\in\mathcal{O}_{w}^{\times}$ is defined by $\alpha=\alpha_0\varpi_{w}^{v_{w}(\alpha)}$.
\end{lemma}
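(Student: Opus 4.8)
The plan is to run the computation directly, following \cite{Sha}. Write $q$ for the cardinality of the residue field $\mathcal{O}_w/\varpi_w\mathcal{O}_w$, so that $|\varpi_w|_w=q^{-1}$, put $v=v_w(\alpha)$ and $\alpha=\alpha_0\varpi_w^v$ with $\alpha_0\in\mathcal{O}_w^{\times}$; recall that $2\in\mathcal{O}_w^{\times}$ since $p$ is odd. The first step is to pin down the normalization of $\mu_w$: the Fourier transform of $\chi_{\mathcal{O}_w}$ with respect to the pairing $(x,y)\mapsto\psi_w(-2xy)$ is $\mu_w(\mathcal{O}_w)\,\chi_{\mathfrak{f}_w}$ by orthogonality of characters, so Fourier inversion forces $\mu_w(\mathcal{O}_w)^2|\varpi_w|_w^{m_w}=1$, i.e. $\mu_w(\mathcal{O}_w)=q^{m_w/2}$, whence $\mu_w(\varpi_w^N\mathcal{O}_w)=q^{m_w/2-N}$ and $\mu_w(\mathcal{O}_w^{\times})=q^{m_w/2}(1-q^{-1})$. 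Next I would decompose $\varpi_w^{-m}\mathcal{O}_w$ into the origin together with the annuli $A_j=\{x:v_w(x)=-j\}$, $j\le m$; on $A_j$ the substitution $x=\varpi_w^{-j}u$ with $u\in\mathcal{O}_w^{\times}$ gives $d\mu_w(x)=q^j\,d\mu_w(u)$ and $\alpha x^2=\beta u^2$ with $\beta=\alpha\varpi_w^{-2j}$ and $e:=v_w(\beta)=v-2j$, so the contribution of $A_j$ is $q^jI(\beta)$, where $I(\beta):=\int_{\mathcal{O}_w^{\times}}\psi_w(\beta u^2)\,d\mu_w(u)$. The whole problem thus reduces to evaluating $I(\beta)$ as a function of $e$.

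There are three regimes for $I(\beta)$. If $e\ge m_w$, then $\beta u^2\in\mathfrak{f}_w$ for every unit $u$, so the integrand is $1$ and $I(\beta)=\mu_w(\mathcal{O}_w^{\times})$. If $e\le m_w-2$, partition $\mathcal{O}_w^{\times}$ into cosets of $\varpi_w^N\mathcal{O}_w$ with $N=m_w-e-1\ge 1$ (which satisfies $2N\ge m_w-e$): expanding $(a+\varpi_w^N t)^2=a^2+2a\varpi_w^N t+\varpi_w^{2N}t^2$, multiplication by $\beta$ sends the $t^2$-term into $\mathfrak{f}_w$, while the linear term yields the character $t\mapsto\psi_w(2a\beta\varpi_w^N t)$, which is nontrivial on $\mathcal{O}_w$ because $v_w(2a\beta\varpi_w^N)=m_w-1$; hence each coset integral, and so $I(\beta)$, vanishes. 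Finally, if $e=m_w-1$, write $u=a+\varpi_w t$ with $a$ ranging over $(\mathcal{O}_w/\varpi_w\mathcal{O}_w)^{\times}$: both $2a\beta\varpi_w t$ and $\beta\varpi_w^2t^2$ lie in $\mathfrak{f}_w$, so $\psi_w(\beta u^2)=\psi_w(\beta a^2)$ depends only on $a\bmod\varpi_w$, giving $I(\beta)=q^{m_w/2-1}\sum_{a\in(\mathcal{O}_w/\varpi_w\mathcal{O}_w)^{\times}}\psi_w(\beta_0\varpi_w^{m_w-1}a^2)$, where $\beta=\beta_0\varpi_w^{m_w-1}$.

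It remains to assemble the annular contributions. Since $A_j$ contributes $0$ as soon as $v-2j\le m_w-2$, the limit defining $G_{\psi_w}(\alpha)$ stabilizes (reproving Lemma 1.5.1 of \cite{Sha} in this case) and equals the convergent sum of all annular contributions. If $v-m_w$ is even, no annulus has $e=m_w-1$, the annuli with $e\ge m_w$ (i.e. $j\le(v-m_w)/2$) contribute the geometric series $\mu_w(\mathcal{O}_w^{\times})\sum_{j\le(v-m_w)/2}q^j=q^{v/2}$, and the rest vanish, so $G_{\psi_w}(\alpha)=q^{v/2}=|\alpha|_w^{-1/2}$. If $v-m_w$ is odd, the annuli with $e\ge m_w+1$ contribute $\mu_w(\mathcal{O}_w^{\times})\sum_{j\le(v-m_w-1)/2}q^j=q^{(v-1)/2}$, the annuli with $e\le m_w-3$ vanish, and the unique annulus with $e=m_w-1$ sits at $j=(v-m_w+1)/2$, where $\beta=\alpha\varpi_w^{-(v-m_w+1)}=\alpha_0\varpi_w^{m_w-1}$ (so $\beta_0=\alpha_0$) and contributes $q^j q^{m_w/2-1}\sum_a\psi_w(\alpha_0\varpi_w^{m_w-1}a^2)=q^{(v-1)/2}\sum_{a\in(\mathcal{O}_w/\varpi_w\mathcal{O}_w)^{\times}}\psi_w(\alpha_0\varpi_w^{m_w-1}a^2)$; adding the two pieces and absorbing the $a=0$ term gives $q^{(v-1)/2}\sum_{x\in\mathcal{O}_w/\varpi_w\mathcal{O}_w}\psi_w(\alpha_0\varpi_w^{m_w-1}x^2)=|\alpha|_w^{-1/2}|\varpi_w|_w^{1/2}\sum_{x}\psi_w(\alpha_0\varpi_w^{m_w-1}x^2)$, which is the asserted formula. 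The only genuinely delicate point is the vanishing in the regime $e\le m_w-2$ — i.e. choosing $N$ so that the quadratic term is swallowed by the conductor while the linear term still produces a nontrivial additive character — together with the clean bookkeeping of the two parity classes; the remaining steps are the normalization of $\mu_w$ and the summation of a geometric series.
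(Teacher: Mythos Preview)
Your computation is correct and complete. The paper itself does not give a proof here: it simply cites Lemma~1.3.2 of \cite{Sha}. Your annular decomposition, the three-regime analysis of $I(\beta)$ (trivial for $e\ge m_w$, vanishing for $e\le m_w-2$ via the nontrivial linear character, and reducing to a residue-field sum for $e=m_w-1$), and the geometric-series bookkeeping are exactly the standard argument, and all the normalizations and parity checks are handled correctly.
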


\begin{proof}This is Lemma 1.3.2 of \cite{Sha}.
\end{proof}

\end{subsubsection}

\begin{subsubsection}{Explicit computation of a concrete Gauss sum}\label{expcomp}

In what follows we will be considering various quadratic forms and $\theta$-functions associated to them. We will derive their transformation properties from the underlying Weil representation. In this subsection we will calculate various Gauss sums that will appear in that context.

The setup of this subsection is almost identical to that of \S\ref{sectiongauss1} but with the difference that now we will explicitly chose a global additive character and uniformizers at each place. Let $p\equiv1\bmod4$ be an odd prime, and $k=\mathbb{F}_p(T)$. Let $w$ be a valuation of $k$ and denote the completion of $k$ at $w$ by $k_w$. Let $\mathcal{O}_w$ be the ring of integers, $\mathbb{F}_w$ be the residue field at $w$, and $\varpi_w$ be a uniformizer for $\mathcal{O}_w$. Let $\psi_w$ denote the character $\psi_w(\alpha)=e^{2\pi i(\frac{Tr_w(Res_w(-\alpha d^{\times}T))}{p})}$, where $\alpha \in k_w$, $d^{\times}T=\frac{dT}{T^2}$ is a meromorphic differential on $\mathbb{P}^1$ (for a precise definition of a differential see for instance chapter 4 of \cite{Sti}). For any differential form $\omega_w$, $Res_w(\omega_w)$ denotes the residue of the form at the point $w$ (note that this is independent of the choice of the uniformizer, cf \cite{Sti}) and $Tr_w$ is the trace function, $Tr_w:\mathbb{F}_w\rightarrow \mathbb{F}_p$.

In this setup we can explicitly compute the Gauss sum that appears in Lemma \ref{sh1}. We start with the base case when $\mathbb{F}_w\cong\mathbb{F}_p$.\\

\begin{lemma}\label{sh2}Let $w$ be a degree $1$ valuation, i.e. $\mathbb{F}_w\cong\mathbb{F}_p$, and $\alpha\in k_w^{\times}$, and let $\psi_w$ be the character defined above. Then,
\[|\alpha|_w^{1/2}G_{\psi_w}(\alpha)=\begin{cases}1&\text{if $v_w(\alpha)-m_w\equiv0\bmod2$}\\ (\alpha_0,\varpi_w)_w&\text{if $v_{w}(\alpha)-m_w\equiv 1\bmod 2$}\end{cases}\]
Where $\alpha_0\in\mathcal{O}_w^{\times}$ is defined by $\alpha=\alpha_0\varpi_w^{v_w(\alpha)}$.
\end{lemma}

\begin{proof}The part when $v_w(\alpha)-m_w\equiv0\bmod 2$ is a direct consequence of Lemma \ref{sh1}. Therefore for the rest of the proof assume that $v_w(\alpha)-m_w\equiv1\bmod 2$. Therefore we are reduced to computing
\[|\varpi_w|_w^{1/2}\sum_{x\in\mathcal{O}_w/\varpi_w\mathcal{O}_w}\psi_w(\alpha_0\varpi_w^{m_w-1}x^2)\]
 We first start with the case $w$ is not the valuation induced by $T,$ or $T^{-1}$. In this case $d^{\times}T$ is holomorphic (hence $m_w=0$) and the residue of $\alpha_0 \varpi_w^{m_w-1}x^2d^{\times}T$ is $\alpha_{0,0}x_0^2$, where $\alpha_0\in \alpha_{0,0}+\varpi_w\mathcal{O}_w$ and $x\in x_0+\varpi_w\mathcal{O}_w$, and we have identified $\mathcal{O}_w/\varpi_w\mathcal{O}_w$ with $\mathbb{F}_p$. Then by definition
\[\sum_{x\in\mathcal{O}_w/\varpi_w\mathcal{O}_w}\psi_w(\alpha_0\varpi_w^{m_w-1}x^2)=\sum_{\beta\bmod p}e^{\frac{2\pi i -\alpha_{0,0} \beta^2}{p}}\]
which is a classical Gauss sum. Now recall that $p\equiv1\bmod4$, therefore the above sum is equal to $\left(\frac{\alpha_{0,0}}{\varpi_w}\right)\sqrt{p}$. Then the lemma follows by noting that $(\alpha_0,\varpi_w)_w=\left(\frac{\alpha_{0,0}}{\varpi_w}\right)$.

If $w=\infty$, i.e. it is attached to the degree valuation, then in the local coordinates $d^{\times}T$ becomes $-d\varpi_{\infty}$ and the above argument works verbatim with the flip of a sign (Note that since $p\equiv1\bmod 4$ this sign flip does not change the answer.).

Finally for $w$ induced by $T$, in local coordinates we have $d^{\times}T=\frac{dT}{T^2}$ hence $m_w=2$. The residue of $\alpha_0\varpi_w^{m_w-1}x^2d^{\times}T$ is again $\alpha_{0,0}x_0^2$ where $\alpha_0\in\alpha_{0,0}+\varpi_w\mathcal{O}_w$ and $x\in x_0+\varpi_w\mathcal{O}_w$. The above argument then gives the result.

\end{proof}

We will now pass to the general case of arbitrary degree valuations by using Hasse-Davenport relations which we quickly recall. Let $\mathbb{F}_{q^s}/\mathbb{F}_q$ be an extension of finite fields of degree $s$. Let $\psi$ be a non-trivial additive character of $\mathbb{F}_q$ and $\chi$ be a non-trivial multiplicative character of $\mathbb{F}_q^{\times}$. Let $Tr$ and $N$ denote the trace, $Tr:\mathbb{F}_{q^s}\rightarrow \mathbb{F}_q$, and norm, $N:\mathbb{F}_{q^s}^{\times}\rightarrow \mathbb{F}_q$, maps respectively. Let $\tau(\psi,\chi)$ denote the Gauss sum attached to any pair, $(\psi,\chi)$, of additive and multiplicative character of a field $\mathbb{F}_q$. i.e.
\[\tau(\psi,\chi):=\sum_{\alpha\in\mathbb{F}_q^{\times}}\chi(\alpha)\psi(\alpha)\]
The Hasse-Davenport relation relates the two Gauss sums $\tau(\psi,\chi)$ and $\tau(\psi\circ Tr,\chi\circ N)$, and states that
\[(-\tau(\psi,\chi))^s=-\tau(\psi\circ Tr,\chi\circ N)\]
For a proof of this relation see \cite{IK} pg. 274-278. We can now pass to the general case of computing the Gauss sums over an arbitrary residue field.\\

\begin{lemma}\label{sh3}Let $w$ be a place of $k$, $k_w$ be the completion of $k$ at $w$, $\mathcal{O}_w$ the ring of integers of $k_w$, and $\varpi_w$ be a uniformizer at $w$. Let $\mathbb{F}_w$ be the residue field at $w$, and let $s_w$ be such that $\#\mathbb{F}_w=p^{s_w}$. Let $\psi_w$ be the character defined above. Then  for each $\alpha \in k_w^{\times}$ we have
\[|\alpha|_w^{\frac{1}{2}}G_{\psi_w}(\alpha)=\begin{cases}1&\text{if $v_w(\alpha)-m_w\equiv 0\bmod 2$}\\(-1)^{1+s_w}(\alpha_0,\varpi_w)_w&\text{if $v_w(\alpha)-m_w\equiv1\bmod2$}\end{cases}\]
Where $\alpha_0\in\mathcal{O}_w^{\times}$ is defined by $\alpha=\alpha_0\varpi_w^{v_w(\alpha)}$.
\end{lemma}

\begin{proof}Again by Lemma \ref{sh1} the proof reduces to the case where $v_w(\alpha)-m_w\equiv1\bmod 2$, therefore we assume this for the rest of the proof. We need to compute
\[|\varpi_w|_w^{\frac{1}{2}}\displaystyle\sum_{x\in\mathcal{O}_w/\varpi_w\mathcal{O}_w}\psi_w(\alpha_0\varpi_w^{m_w-1}x^2)\]
We start with noting the following; if we denote the quadratic character that is $1$ on squares and $-1$ on non-squares of $\mathbb{F}_{p^{s_w}}$ by $\left(\tfrac{\beta}{\mathbb{F}_{p^{s_w}}}\right)$ (note that this is \emph{not} the Legendre symbol $\bmod\, p^{s_w}$), then we have
\[\sum_{x\in\mathcal{O}_w/\varpi_w\mathcal{O}_w}\psi_w(\alpha_0\varpi_w^{m_w-1}x^2)=\sum_{x\in\mathbb{F}_{p^{s_w}}}e^{\frac{2\pi i Tr_w(-Res_w(\alpha_{0}\varpi_w^{m_w-1}x^2d^{\times}T))}{p}}=\sum_{x\in\mathbb{F}_{p^{s_w}}^{\times}}\left(\frac{x}{\mathbb{F}_{p^{s_w}}}\right)e^{\frac{2\pi i Tr_w(-Res_w(\alpha_{0}\varpi_w^{m_w-1}xd^{\times}T))}{p}}\]
Since the multiplicative group of $\mathbb{F}_{p^{s_w}}$ is isomorphic to its own dual, and is cyclic of order $p^{s_w}-1$ and $p$ is odd, it has a \emph{unique} subgroup of index $2$ which means there is a unique character of order $2$ on $\mathbb{F}_{p^{s_w}}$. Denoting the norm function from $\mathbb{F}_{p^{s_w}}$ to $\mathbb{F}_p$ by $N_w$, note that both $\left(\frac{\cdot}{\mathbb{F}_{p^{s_w}}}\right)$ and $\left(\frac{N_w(\cdot)}{p}\right)$ (note that we are using the ordinary Legendre symbol this time) are characters of $\mathbb{F}_{p^{s_w}}^{\times}$ of oerder $2$, and hence they are the same. Therefore we get that the sum above is
\[\sum_{x\in\mathbb{F}_{p^{s_w}}^{\times}}\left(\frac{x}{\mathbb{F}_{p^{s_w}}}\right)e^{\frac{2\pi i Tr_w(-Res_w(\alpha_{0}\varpi_w^{m_w-1}xd^{\times}T))}{p}}=\left(\frac{\alpha_{0,0}}{\mathbb{F}_{p^{s_w}}}\right)\sum_{x\in\mathbb{F}_{p^{s_w}}^{\times}}\left(\frac{N_w(x)}{p}\right)e^{\frac{2\pi i Tr_w(-Res_w(\varpi_w^{m_w-1}xd^{\times}T))}{p}}\]
\[=\left(\frac{\alpha_{0,0}}{\mathbb{F}_{p^{s_w}}}\right)\tau\left(\psi\circ Tr_w,\left(\frac{\cdot}{p}\right)\circ N_w\right)=-\left(\frac{\alpha_{0,0}}{\mathbb{F}_{p^{s_w}}}\right)\left(-\tau\left(\psi,\left(\frac{\cdot}{p}\right)\right)\right)^{s_w}\]
by the Hasse-Davenport relation. Now in order to finish the proof note that the inner Gauss sum was computed in Lemma \ref{sh2} and its value is $\sqrt{p}$, and the quadratic character $\left(\tfrac{\alpha_{0,0}}{\mathbb{F}_{p^{s_w}}}\right)$ is equal to the Hilbert symbol $(\alpha_0,\varpi_w)_w$. The lemma follows.
\end{proof}

\end{subsubsection}

\end{subsection}

\subsection{Weil representation and theta functions}

The theory of Weil representation provides a natural framework in which theta functions can be viewed as automorphic forms. We will introduce the Weil representation and the Metaplectic group and show that our theta functions fit nicely into this framework, and use this to show the modularity of such. Referances for this section are \cite{G}, \cite{P}, \cite{Sha}, \cite{W}.

\subsubsection{Local Weil representation}
We begin by sketching the general construction of the Weil representation and the metaplectic group. Let $k_w$ denote the completion of the function field $k=\mathbb{F}_p(T)$ at a place $w$ as usual. ($char(k)\neq2$) Let $\Omega$ be an $n$-dimensional vector space over $k_w$ with a non-degenerate symplectic form, $\langle\cdot\,,\cdot\rangle:\Omega\times\Omega \rightarrow k_w$. Define the Heisenberg group of $\Omega$ by
\[H(\Omega)=\left\{(u,t)\mid u\in \Omega,\: t\in k_w\right\}\]
where multiplication is given by
\[(u_1,t_1)(u_2,t_2)=(u_1+u_2,t_1+t_2+\frac12 \langle u_1,u_2\rangle)\]

For any nontrivial additive character $\psi_w$ of $k_w$, the Stone-von Neumann Theorem guarantees the existence of a unique irreducible representation $\rho_{\psi_w}$ of $H(\Omega)$ on which $k_w$ acts by the character $\psi_w$. We also have, by definition of $H(\Omega)$, $Sp(\Omega)$ acting on $H(\Omega)$ by $g(u,t)=(gu,t)$. For each $g\in Sp(\Omega)$ the composition $\rho_{\psi_w}(g\:\cdot)$ gives another irreducible representation of $H(\Omega)$ on which $k_w$ acts by $\psi_w$. Then by the uniqueness part of Stone-von Neumann Theorem we get an intertwiner $\tilde{\sigma}_{\psi_w}$, unique up to a scalar such that
\[\rho_{\psi_w}(gu,t)=\tilde{\sigma}_{\psi_w}(g)\rho_{\psi_w}(u,t)\tilde{\sigma}_{\psi_w}(g)^{-1}\]
Since the intertwiner is defined up to a scalar, $g\rightarrow \tilde{\sigma}_{\psi_w}(g)$ defines a projective representation of $Sp(\Omega)$. By the standard theory of projective representations (\cite{Sha}), this representation gives rise to a unique cohomology class of order two in $H^2(Sp(\Omega),S^1)$ (\cite{W}, \cite{G}), where $S^1=\{z\in\mathbb{C}\mid |z|=1\}$ is the unit circle. This cohomology class then defines a double cover of $Sp(\Omega)$ which is called the \emph{metaplectic group} and denoted by $\widetilde{Sp}(\Omega)$. Furthermore we also know that $H^2(Sp_n,\mathbb{Z}/2\mathbb{Z})\cong \mathbb{Z}/2\mathbb{Z}$ (cf. Theorem 10.4 of \cite{Mo}) so this construction gives \emph{the} double cover of $Sp(\Omega)$.

We will now adapt the above general picture to our situation of $SL_2$ (mainly following Gelbart \cite{G}) and give an explicit realization of the Weil representation (We give the construction for $n=3$ dimensions here, but the construction works almost verbatim for general $n$.). Let $V_w$ be a $3$ dimensional vector space over $k_w$ and $Q_w$ a non-degenerate quadratic form on $V_w$. Fix an additive character $\psi_w$ of $k_w$. We define our symplectic group, $Sp(V_w\times V_w)$, to be the group of automorphisms (over $k_w$) of $V_w\times V_w$ fixing the symplectic form: $(v,u)\rightarrow B_{Q_w}(v_1,u_2)-B_{Q_w}(v_2,u_1)$, $(v,u)\in V_w\times V_w$. Notice that $k_w$ acts on $V_w$ by scalar multiplication, which induces an action of $SL_2(k_w)$ on $V_w\times V_w$ by $(v,u)\rightarrow (av+bu,cv+du)$, where $\left(\begin{smallmatrix}a&b\\c&d\end{smallmatrix}\right)\in SL_2(k_w)$. This shows that $SL_2(k_w)$ embeds into $Sp(V_w\times V_w)$ and then the above construction associates a projective representation to $SL_2(k_w)$ whose cocycle is of order $2$, and hence defines a representation of the double cover of $SL_2(k_w)$. Therefore for each quadratic space, $(V_w,Q_w)$, and a non-trivial additive character, $\psi_w$, this construction\footnote{The above constuction works perfectly well over an arbitrary local field $F$ of residual characteristic \emph{not} equal to $2$ and for any $n$ dimensional vector space $V$ over the field. When the dimension $n$ is even the projective representation reduces a genuine representation of the group $SL_2(F)$ and when $n$ is odd we get a representation of $\widetilde{SL}_2(F)$.} gives a representation of the double cover of $SL_2(k_w)$.

We can realize the Weil representation explicitly as follows. Let $V_w\times V_w=X\oplus Y$, with $X$ and $Y$ maximal isotropic subspaces, be a complete polarization of the symplectic space $V_w\times V_w$. (Note that the decomposition $V_w\times V_w=V_w\oplus V_w$ produces such a decomposition and we will be working with this particular decomposition)  Let $\psi_w$ be the additive character defined by $\psi_w(h_w)=e^{2\pi i(\frac{Tr_w(-Res(h_wd^{\times}T))}{p})}$, where $h_w\in k_w$, $\varpi_w$ is a uniformizing parameter at $w$, $d^{\times}T=\frac{dT}{T^2}$ is a meromorphic differential on $\mathbb{P}^1$, for any differential form $\omega_w$, $Res(\omega_w)$ denotes the residue of the form (at the point $w$) and $Tr_w$ is the trace function, $Tr_w:\mathcal{O}_w/\varpi_w\mathcal{O}_w\rightarrow \mathbb{F}_p$. Let $\mathcal{S}(V_w)$ be the Schwartz space of locally constant functions on $V_w$ and denote the unitary operators on $\mathcal{S}(V_w)$ by $U(\mathcal{S}(V_w))$. Following \cite{Sha} we define a projective representation, which by abuse of language we still will call Weil representation, $\sigma_{w,Q_w}:SL_2(k_w)\longrightarrow U(\mathcal{S}(V_w))$, by
\begin{multline}\label{weilrep}\sigma_{w,Q_{w}}\left(\left(\begin{smallmatrix}a&b\\c&d\end{smallmatrix}\right)\right)f(\nu)=\\ \begin{cases} |a|_w^3 \frac{G_{\psi_w,Q_w}(a)}{G_{\psi_w,Q_w}}\psi_w\left(abB_{Q_w}(\nu,\nu)\right)f(a\nu)&\text{if $c=0$}\\ G_{\psi_w,Q_w}(c)\int_{V_w}\psi_w\left(\frac{aB_{Q_w}(\nu,\nu)-2B_{Q_w}(\nu,\nu_1)+dB_{Q_w}(\nu_1,\nu_1)}{c}\right)f(\nu_1)d\mu_w(\nu_1)&\text{if $c\neq0$}\end{cases}
\end{multline}
Where the measure $d\mu_w$ is normalized so that $\hat{\hat{f}}(\nu)=f(-\nu)$, where the Fourier transform is defined by
\begin{equation}\label{menor}
\hat{f}(\nu)=\int_{V_w}f(\nu_1)\psi_w(-2B_{Q_w}(\nu,\nu_1))d\mu_w(\nu_1)
\end{equation}
and $G_{\psi_w,Q_w}(\alpha)$ is as defined in \S\ref{gausquad}. We note that the normalization of measure used above is the same normalization used in defining the Gauss sums $G_{\psi_w,Q_w}$.

It follows from Theorem 2.22 of \cite{G} that this definition makes sense and defines a projective representation of $SL_2(k_w)$ whose cocycle is non-trivial (cf. \cite{G}, Corollary 2.24, and note that $\dim(V_w)=3$) and is of order $2$, therefore defines \emph{the} double cover, $\widetilde{SL}_2(k_w)$, of $SL_2(k_w)$.

\begin{subsubsection}{Comparison of cocyles associated to certain quadratic forms}\label{comparison}

For our applications to quadratic forms we will need to calculate the cocylcles defined above explicitly for certain quadratic forms $Q_w$. We start with the quadratic form $Q_w=\sum_{i=1}^3x_1^2$. We show that the cocycle defined in by the above construction matches the cocycle given in \S\ref{metgp}.\\

\begin{prop}\label{cocyclestandard}Let $Q_w$ be equivalent over $k_w$ to $\sum_{i=1}^3x_i^2$. Recall the general definition of the 2-cocycle $\epsilon$ given in \S\ref{metgp}. Given $g_1,g_2, g_1g_2=g_3\in SL_2(k_w)$, $\epsilon(g_1,g_2)=(X(g_1),X(g_2))_w(X(g_1),X(g_3))_w(X(g_2),X(g_3))_w$, where $X(g)$ is the lower left or lower right entry depending on the lower left entry being non-zero or not respectively. Then the cocycle defined by $\sigma_{w,Q_w}$ is equal to $\epsilon$.
\end{prop}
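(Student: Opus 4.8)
The plan is to pin down the projective representation $\sigma_{w,Q_w}$ concretely enough to read off its cocycle, and then match it term-by-term with $\epsilon$. First I would reduce to a standard set of generators of $SL_2(k_w)$: the lower-triangular unipotents $n^-(c)=\left(\begin{smallmatrix}1&0\\c&1\end{smallmatrix}\right)$, the diagonal torus $t(a)=\left(\begin{smallmatrix}a&0\\0&a^{-1}\end{smallmatrix}\right)$, and the Weyl element $w_0=\left(\begin{smallmatrix}0&1\\-1&0\end{smallmatrix}\right)$ (equivalently the upper unipotents $n^+(b)$), since every element of $SL_2(k_w)$ is a product of a bounded number of these and the cocycle is determined by its values on pairs of generators together with the cocycle identity. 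From formula \eqref{weilrep}, $\sigma_{w,Q_w}(t(a))$ acts by $|a|_w^{3}\,(G_{\psi_w,Q_w}(a)/G_{\psi_w,Q_w})\,f(a\nu)$ up to the $\psi_w$-factor (which is trivial for $b=0$), $\sigma_{w,Q_w}(n^+(b))$ acts by multiplication by $\psi_w(bB_{Q_w}(\nu,\nu))$, and $\sigma_{w,Q_w}(w_0)$ is (a normalized) Fourier transform. The key arithmetic input is the behaviour of the normalized Gauss-sum quotient $\gamma(a):=G_{\psi_w,Q_w}(a)/G_{\psi_w,Q_w}$ under multiplication: from \S\ref{gausquad}, \S\ref{sectiongauss1} and the Weil index formalism one gets $\gamma(a)\gamma(b)=\gamma(ab)\,(a,b)_w^{\dim V_w}=\gamma(ab)(a,b)_w$ since $\dim V_w=3$ is odd. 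This identity (and the analogous ones mixing the Fourier transform with dilations and unipotents) is exactly what produces Hilbert symbols $(\,\cdot\,,\,\cdot\,)_w$ in the cocycle.

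Concretely, I would carry out the following steps. (1) Compute $\sigma_{w,Q_w}(g_1)\sigma_{w,Q_w}(g_2)$ versus $\sigma_{w,Q_w}(g_1g_2)$ for each pair drawn from $\{t(a),n^+(b),w_0\}$, using the explicit kernels in \eqref{weilrep} and the Gauss-sum multiplicativity above; this yields an explicit cocycle $c_\sigma(g_1,g_2)\in\{\pm1\}$ on generators, expressed entirely in Hilbert symbols of the relevant matrix entries. (2) Do the same bookkeeping for the cocycle $\epsilon$ of \S\ref{metgp} on the same pairs of generators — this is a direct substitution into $\epsilon(g_1,g_2)=(X(g_1),X(g_2))_w(X(g_2),X(g_3))_w(X(g_1),X(g_3))_w$ with $X$ the ``lower-left-or-lower-right'' entry — and check the two agree. (3) Invoke that any two $\{\pm1\}$-valued $2$-cocycles on $SL_2(k_w)$ that agree on a generating set and are both normalized (trivial on the identity) and define the same — unique, by $H^2(SL_2(k_w),\mathbb{Z}/2)\cong\mathbb{Z}/2$ together with the fact that both are nontrivial, cf. \cite{G}, Corollary 2.24 and Theorem 2.22 — cohomology class must in fact be cohomologous; then adjust by the explicit splitting coboundary. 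A cleaner route for step (3): show directly that $c_\sigma$ and $\epsilon$ differ by the coboundary $\partial s$ of the function $s(g)=$ (the explicit normalization scalar built from $\gamma$ and powers of $|\det|$ that Gelbart uses to rigidify $\sigma$), i.e. exhibit the same section on both sides. Since the standard cocycle $\epsilon$ is, by construction in \cite{Ku} and \cite{G}, the one attached to the splitting $X(g)$, and since Gelbart's normalization of the Weil representation (the factors $G_{\psi_w,Q_w}(a)/G_{\psi_w,Q_w}$ and $|a|_w^{3}$ in \eqref{weilrep}) is exactly the one chosen to realize this splitting, the coboundary identification is forced.

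The main obstacle is step (1): carrying the Fourier-transform normalizations and Gauss-sum quotients through the Weyl-element computations without sign errors, in particular the composition $w_0 \cdot t(a)\cdot w_0^{-1}$ and $n^-(c)=w_0^{-1}n^+(-c)w_0$ type relations, where the odd-dimensionality of $V_w$ makes the Weil index genuinely contribute a Hilbert symbol rather than cancelling. Here I would lean on Lemma \ref{sh1} (Shalika's evaluation) to get $\gamma(a)$ in closed form — it is $|a|_w^{-\dim V_w/2}$ times an explicit Gauss sum, so $\gamma(a)$ up to a fourth root of unity is a power of the Hilbert symbol $(a,\disc Q_w)_w$ and of $(a,\varpi_w)_w$ — and on Lemmas \ref{sh2}, \ref{sh3} to handle the residue-field extensions uniformly. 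The bilinear-form factors $\psi_w(ab B_{Q_w}(\nu,\nu))$ and the Gauss sum $G_{\psi_w,Q_w}(c)$ in \eqref{weilrep} only enter through their values, which are recorded in \S\ref{sectiongauss1}, so once step (1) is done the matching in steps (2)--(3) is formal. I expect the argument to be essentially the specialization of Kubota's and Gelbart's general computation (\cite{Ku}, \cite{G} §2) to $\dim V_w=3$, so the novelty is only in bookkeeping the function-field additive character $\psi_w(\alpha)=e^{2\pi i \mathrm{Tr}_w(\mathrm{Res}_w(-\alpha\,d^\times T))/p}$ and its conductor, which is precisely what Lemmas \ref{sh2} and \ref{sh3} were set up to control.
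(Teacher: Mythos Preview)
Your overall strategy matches the paper's: reduce to a small set of generators, compute the cocycle as a ratio of Gauss sums via the explicit kernels in \eqref{weilrep}, and match those ratios against the Hilbert symbols in $\epsilon$ using Lemmas \ref{sh1}--\ref{sh3}. The Weil-index multiplicativity $\gamma(a)\gamma(b)=\gamma(ab)(a,b)_w$ you isolate is exactly the arithmetic content of the paper's case-by-case checks.

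There is, however, a genuine gap in your reduction. In step (1) you only propose to compute $c_\sigma(g_1,g_2)$ for \emph{pairs of generators}. Knowing a $2$-cocycle on $S\times S$ for a generating set $S$ does not determine it on $G\times G$: the cocycle identity $c(g_1g_2,g_3)c(g_1,g_2)=c(g_1,g_2g_3)c(g_2,g_3)$ lets you peel generators off the \emph{first} argument only if you already know $c(s,\,\cdot\,)$ for $s\in S$ and \emph{arbitrary} second argument. Your step (3) does not repair this: the $H^2(SL_2(k_w),\mathbb{Z}/2)\cong\mathbb{Z}/2$ argument shows $c_\sigma$ and $\epsilon$ are \emph{cohomologous}, not equal, and the claim that ``the coboundary identification is forced'' by Gelbart's normalization is exactly the statement to be proved. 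The paper avoids this by checking $c_\sigma(g_1,g_2)=\epsilon(g_1,g_2)$ for $g_1\in\{T_w,\gamma\}$ (Weyl element and upper-triangular) and $g_2$ \emph{arbitrary}: it computes $\sigma_w(T_w)\sigma_w(g)$ and $\sigma_w(\gamma)\sigma_w(g)$ directly against $\sigma_w(T_wg)$ and $\sigma_w(\gamma g)$, splitting into subcases according to whether the $(2,1)$-entry of $g$ (and of the product) vanishes, and completing the square in the Fourier integral when it does not. Once you have $c(s,g)$ for $s$ a generator and all $g$, the cocycle identity gives $c$ everywhere, and equality with $\epsilon$ follows. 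Your plan becomes correct if you enlarge step (1) to ``$g_1$ a generator, $g_2$ arbitrary'' and drop step (3) entirely.
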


\begin{proof} We start the proof by noticing the following; since our form $Q_w$ is equivalent to $\sum_{i=1}^3x_i^2$, the Gauss sums $G_{\psi_w,Q_w}(\alpha)$ (which will appear below momentatily) which are defined as in \S\ref{gausquad} becomes the same as the explicit Gauss sums in (\ref{g2}), whose values are calculated by lemmas \ref{sh1} and \ref{sh3}. We will be using these computations without further reference. It might also be instructive to go through the computation given in the proof of Proposition \ref{cocycleaniso} to see an explicit calculation of Gauss sums.

For convenience, throughout the proof we will denote $\sigma_{w,Q_w}$ by $\sigma_w$. Recall that by the Bruhat decomposition element $g=\left(\begin{smallmatrix}a&b\\ c&d\end{smallmatrix}\right)\in SL_2(k_w)$ can be expressed as $g=\gamma$ or $g=\gamma T_w\gamma^{'}$ depending on $c=0$ or $c\neq0$ respectively, where $T_{w}=\left(\begin{smallmatrix}0&-1\\1&0\end{smallmatrix}\right)$, and $\gamma=\left(\begin{smallmatrix}s&t\\0&s^{-1}\end{smallmatrix}\right)$ and $\gamma^{'}$ are upper triangular matrices. Therefore in order to check the cocycle we only need to check it for the products $T_wg$ and $\gamma g$. By (\ref{weilrep}) we have
\begin{align*}
\sigma_w(\gamma)f(\nu)&=|s|_w^3\frac{G_{\psi_w,Q_w}(s)}{G_{\psi_w,Q_w}}\psi_w(stQ_w(\nu))f(s\nu)\\
\sigma_w(T_w)f(\nu)&=G_{\psi_w,Q_w}\hat{f}(\nu)
\end{align*}
We now compare $\epsilon$ with the cocycle defined by $\sigma_w$ case by case.
\begin{itemize}
\item $T_wg=\left(\begin{smallmatrix}-c&-d\\ a&b\end{smallmatrix}\right)$. Then
\[\sigma_w\left(T_wg\right)f(\nu)=\\ \begin{cases} |c|_w^3 \frac{G_{\psi_w,Q_w}(c)}{G_{\psi_w,Q_w}}\psi_w\left(cdB_{Q_w}(\nu,\nu)\right)f(-c\nu)&\text{if $a=0$}\\ G_{\psi_w,Q_w}(a)\int_{V_w}\psi_w\left(\frac{-cB_{Q_w}(\nu,\nu)-2B_{Q_w}(\nu,\nu_1)+bB_{Q_w}(\nu_1,\nu_1)}{a}\right)f(\nu_1)d\mu_w(\nu_1)&\text{if $a\neq0$}\end{cases}\]
Let us first assume that $c=0$. Then $a\neq0$ and $\sigma_w(g)f(\nu)=|a|_w^3\frac{G_{\psi_w,Q_w}(a)}{G_{\psi_w,Q_w}}\psi_w(abQ_w(\nu))f(a\nu)$. Then $\sigma_{w}(T_w)\sigma_w(g)f(\nu)$ is given by
\[|a|_w^{3}G_{\psi_w,Q_w}(a)\int_{V_w}\psi_w(abQ_w(\nu_1))\psi_w(-2B_{Q_w}(\nu,\nu_1))f(a\nu_1)d\mu_w\nu_1\]
Changing the variables $\nu_1\mapsto a^{-1}\nu_1$ shows that the cocycle is $1$ in this case. On the other hand $\epsilon(T_w,g)=(1,d)_w(1,a)_w(a,d)_w=1$ since $a=d^{-1}$ and hence they belong to the same square class.

Now assume that $c\neq0$. Then there are two sub-cases, $a=0$ or $a\neq0$. First suppose that $a=0$. Then
\begin{multline}
\sigma_{w}(T_w)\sigma_w(g)f(\nu)=G_{\psi_w,Q_w}G_{\psi_w,Q_w}(c)\times\\ \int_{V_w\times V_w}\psi_w\left(\frac{-2B_{Q_w}(\nu_1,\nu_2)+dB_{Q_w}(\nu_2,\nu_2)}{c}-2B_{Q_w}(\nu_1,\nu)\right)d\mu_w(\nu_2)f(\nu_2)d\mu_w(\nu_1)
\end{multline}
Changing the order of integration (we remark that the $\nu_2$ integral does not quite convergence but is interpreted in the distributional sense, we omit the details) we get that this is equal to
\[|c|_w^3 G_{\psi_w,Q_w}G_{\psi_w,Q_w}(c)\psi_w\left(cdB_{Q_w}(\nu,\nu)\right)f(-c\nu)\]
Therefore the cocycle in this case is seen to be $G_{\psi_w,Q_w}^2=1$. On the other hand $\epsilon(T_w,g)=(1,c)_w(1,d)_w(c,b)_w=1$ since $c=b^{-1}$.

Finally assume that $c\neq0$ and $a\neq0$. In this case we have
\begin{multline}
\sigma_{w}(T_w)\sigma_w(g)f(\nu)=G_{\psi_w,Q_w}G_{\psi_w,Q_w}(c)\times\\ \int_{V_w\times V_w}\psi_w\left(\frac{aB_{Q_w}(\nu_1,\nu_1)-2B_{Q_w}(\nu_1,\nu_2)+dB_{Q_w}(\nu_2,\nu_2)}{c}-2B_{Q_w}(\nu_1,\nu)\right)d\mu_w(\nu_2)f(\nu_2)d\mu_w(\nu_1)
\end{multline}
As before interchanging the order of integration and completing the $\nu_1$ integral to square we get that the above is equal to
\[G_{\psi_w,Q_w}G_{\psi_w,Q_w}(c)G_{\psi_w,Q_w}(ac^{-1})G_{\psi_w,Q_w}(a)^{-1}\sigma_w(T_wg)f(\nu)\]
Therefore the cocycle in this case is given by
\[\frac{G_{\psi_w,Q_w}G_{\psi_w,Q_w}(c)G_{\psi_w,Q_w}(ac^{-1})}{G_{\psi_w,Q_w}(a)}\]
Now a case by case check with respect to the $w$-adic valuations of $a$ and $c$, using the explicit formulas of \S\ref{expcomp} shows that the above is equal to $(a,c)_w$ which in particular is equal to $\epsilon(T_w,g)=(1,c)_w(1,a)_w(a,c)_w$. This finishes the proof for $T_wg$.

\item $\gamma g=\left(\begin{smallmatrix}sa+tc&sb+td\\ s^{-1}c&s^{-1}d\end{smallmatrix}\right)$. As before, we have two cases according to whether $c=0$ or not. If $c=0$, then
\[\sigma_w(\gamma)\sigma_w(g)f(\nu)=\left|as\right|_w^3\frac{G_{\psi_w,Q_w}(a)G_{\psi_w,Q_w}(s)}{G_{\psi_w,Q_w}^2}\psi_w(s(abs+t)Q_w(\nu))f(as\nu)\]
and
\[\sigma_w(\gamma g)=|as|_w^3\frac{G_{\psi_w,Q_w}(as)}{G_{\psi_w,Q_w}}\psi_w(s(abs+t)Q_w(nu))f(as\nu)\]
Therefore the cocycle is given by $\frac{G_{\psi_w,Q_w}(a)G_{\psi_w,Q_w}(s)}{G_{\psi_w,Q_w}G_{\psi_w,Q_w}(as)}$. On the other hand $\epsilon(\gamma,g)=(s,a)_w(s,sa)_w(a,sa)_w=(s,a)_w$. Then again a case by case check (using the computations of \S\ref{expcomp}) depending on the valuations of $a$ and $s$ shows that the two are equal.

Finally if $c\neq0$, then
\begin{multline*}
\sigma_w(\gamma)\sigma_w(g)f(\nu)=|s|_w^3\frac{G_{\psi_w,Q_w}(s)G_{\psi_w,Q_w}(c)}{G_{\psi_w,Q_w}}\times \\ \psi_w(stQ_w(\nu)) \int_{V_w}\psi_w\left(\frac{as^2B_{Q_w}(\nu,\nu)-2sB_{Q_w}(\nu,\nu_1)+dB_{Q_w}(\nu_1,\nu_1)}{c}\right)f(\nu_1)d{\mu_w}(\nu_1)
\end{multline*}
and
\begin{multline*}
\sigma_w(\gamma g)f(\nu)=G_{\psi_w,Q_w}(s^{-1}c)\psi_w(stQ_w(\nu))\times\\ \int_{V_w}\psi_w\left(\frac{as^2B_{Q_w}(\nu,\nu)-2sB_{Q_w}(\nu,\nu_1)+dB_{Q_w}(\nu_1,\nu_1)}{c}\right)f(\nu_1)d\mu_w(\nu_1)
\end{multline*}
Hence the cocycle is defined by $\frac{|s|^3_wG_{\psi_w,Q_w}(s)G_{\psi_w,Q_w}(c)}{G_{\psi_w,Q_w}G_{\psi_w,Q_w}(cs^{-1})}$. On the other hand $\epsilon(\gamma,g)=(s^{-1},c)_w(s^{-1},cs^{-1})_w(c,cs^{-1})_w=(s,c)_w$, and a case by case check as before finishes the proof.

\end{itemize}

\end{proof}

We will also be needing the anisotropic form $Q_w$ equivalent to $u x_1^2+\varpi_wx_2^2+u\varpi_wx_3^2$ over $k_w$, where $u\in \mathcal{O}_w^{\times}\backslash (\mathcal{O}_w^{\times})^2$. \\

\begin{prop}\label{cocycleaniso}Let $Q_w$ be equivalent to the anisotropic form $ux_1^2+\varpi_w x_2^2+u\varpi_wx_3^2$ over $k_w$, where $u\in \mathcal{O}_w^{\times}\backslash (\mathcal{O}_w^{\times})^2$. Then the cocycle defined by $\sigma_{w,Q_w}$ is equal to $\epsilon$ as defined in \S\ref{metgp} and in Proposition \ref{cocyclestandard}.
\end{prop}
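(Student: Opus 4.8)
The plan is to follow the proof of Proposition \ref{cocyclestandard} line by line, the only new ingredient being the explicit evaluation of the Gauss sums $G_{\psi_w,Q_w}(\alpha)$ attached to the anisotropic form. First I would fix the diagonalization $Q_w(x_1,x_2,x_3)=ux_1^2+\varpi_w x_2^2+u\varpi_w x_3^2$ and note that, by formula \eqref{weilrep}, the operator $\sigma_{w,Q_w}(g)$ depends on $Q_w$ only through the bilinear form $B_{Q_w}$ — which, after the linear change of variables identifying $(V_w,B_{Q_w})$ with $(k_w^3, B')$ for the chosen diagonal $B'$, contributes exactly the same exponential and dilation factors as in the standard case — and through the scalars $G_{\psi_w,Q_w}$ and $G_{\psi_w,Q_w}(\alpha)$ (the self-dual measure $d\mu_w$ of \eqref{menor} being attached to $B_{Q_w}$). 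Consequently the cocycle $\sigma_{w,Q_w}(g_1)\sigma_{w,Q_w}(g_2)\,\sigma_{w,Q_w}(g_1g_2)^{-1}$ is, in each Bruhat case, the \emph{same} expression in Gauss sums as the one computed in Proposition \ref{cocyclestandard}; so it suffices to check that each such Gauss-sum expression takes the same value for our $Q_w$ as for $\sum_{i=1}^3 x_i^2$, and then that case analysis applies verbatim, giving the identity with $\epsilon$.

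Next I would evaluate the Gauss sums. By multiplicativity over the one-dimensional summands, $G_{\psi_w,Q_w}(\alpha)=G_{\psi_w,ux^2}(\alpha)\,G_{\psi_w,\varpi_wx^2}(\alpha)\,G_{\psi_w,u\varpi_wx^2}(\alpha)$, and each factor is computed from Lemmas \ref{sh1} and \ref{sh3} (relating $G_{\psi_w,ax^2}(\alpha)$ to $G_{\psi_w}(a\alpha)$ and keeping track of the measure normalization). The crucial elementary point is that $\disc Q_w = u^2\varpi_w^2$ is a square in $k_w^\times$, exactly as for $\sum_{i=1}^3 x_i^2$. This forces the ratio $\alpha\mapsto G_{\psi_w,Q_w}(\alpha)/G_{\psi_w,Q_w}$ — which by the scaling formula for the Weil index (\cite{W}, or here by direct inspection of Lemmas \ref{sh1}, \ref{sh3}) depends on $Q_w$ only through $\dim V_w=3$ and the square class of $\disc Q_w$ — to be the very same function of $\alpha$ as in the standard case; and the remaining discrepancy between $G_{\psi_w,Q_w}$ and its standard-form counterpart is the ratio of Hasse invariants, a sign, so that $G_{\psi_w,Q_w}^2$ also agrees with the standard value.

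With these two facts in hand I would revisit the cocycle computed in each Bruhat case in the proof of Proposition \ref{cocyclestandard}. In every case that cocycle was a product of a power of $|s|_w$ (independent of $Q_w$), several factors $G_{\psi_w,Q_w}(\alpha)/G_{\psi_w,Q_w}$, and at most one factor $G_{\psi_w,Q_w}^2$; for instance, in the hardest subcase ($T_wg$ with $c\neq 0$, $a\neq 0$) it equals
\[
G_{\psi_w,Q_w}^2\cdot\frac{\bigl(G_{\psi_w,Q_w}(c)/G_{\psi_w,Q_w}\bigr)\bigl(G_{\psi_w,Q_w}(ac^{-1})/G_{\psi_w,Q_w}\bigr)}{G_{\psi_w,Q_w}(a)/G_{\psi_w,Q_w}},
\]
which by the previous paragraph equals its standard-form value, namely $(a,c)_w$, and $(a,c)_w$ is exactly the value of $\epsilon$ on this pair of matrices. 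Running through the remaining cases — $T_wg$ with $c=0$, with $c\neq 0,\,a=0$; and $\gamma g$ with $c=0$ and with $c\neq 0$ — one recovers in each the Hilbert-symbol expression defining $\epsilon$ (invoking Lemma \ref{quadrec} where a product formula is needed), which proves the proposition.

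The main obstacle I anticipate is purely computational and concentrated in the two places where the self-dual measure enters: confirming that $G_{\psi_w,Q_w}(\alpha)/G_{\psi_w,Q_w}$ and $G_{\psi_w,Q_w}^2$ are genuinely form-independent here requires carefully tracking the even/odd-valuation dichotomy of Lemma \ref{sh1}, the residue-degree sign $(-1)^{1+s_w}$ of Lemma \ref{sh3}, and the normalization of $d\mu_w$ across the coefficients $u$, $\varpi_w$, $u\varpi_w$. This is precisely the bookkeeping carried out in \S\ref{expcomp}, and the point of organizing the argument around the invariants $\dim V_w$ and $\disc Q_w$ is that the anisotropy of $Q_w$ enters only through these intermediate signs and never alters the final Hilbert symbols.
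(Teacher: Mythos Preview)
Your proposal is correct and follows essentially the same strategy as the paper: both reduce to the Gauss-sum expressions already computed in the proof of Proposition~\ref{cocyclestandard} and then verify case by case that they yield the Hilbert symbols defining $\epsilon$. The only difference is organizational: the paper simply recomputes one of the one-dimensional Gauss sums for the anisotropic form explicitly (via Lemmas~\ref{sh1} and~\ref{sh3}) and then says ``a case by case check finishes the proof,'' whereas you package the computation more conceptually by observing that the cocycle depends on $Q_w$ only through the ratios $G_{\psi_w,Q_w}(\alpha)/G_{\psi_w,Q_w}$ and through $G_{\psi_w,Q_w}^2$, and that both of these are determined by $\dim V_w$ and the square class of $\disc Q_w$ (together with the Hasse-invariant sign, which squares away). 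Since $\disc Q_w=u^2\varpi_w^2$ is a square just as for $\sum x_i^2$, the values match. This is a cleaner explanation of \emph{why} the case-by-case check succeeds, but it is the same proof. One small remark: your parenthetical ``invoking Lemma~\ref{quadrec} where a product formula is needed'' is unnecessary here---the comparison is entirely local, and quadratic reciprocity does not enter.
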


\begin{proof} The proof is exactly the same as the proof of Proposition \ref{cocyclestandard}. The only difference is the computation of Gauss sums, $G_{\psi_w,Q_w}$, that show up in the cocycle associated to $\sigma_{w,Q_w}$ which we recall here. All the necessary computations were done in \S\ref{expcomp}, which we take all the notation and definitions from. We have the quadratic form $Q_w$ which is equivalent to $ux_1^2+\varpi_wx_2^2+u\varpi_ux_3^2$ over $k_{w}$. We will follow the recipe given in \S\ref{gausquad} to explicitly write the Gauss sum. Since we are given the diagonal form of $Q_w$, by the notation of \S\ref{gausquad}, we can take $(V_w,Q_w)=\bigoplus (k_w,Q_{w,i})$, where $Q_{w,i}(x)=ux^2$, $\varpi_wx^2$ or $u\varpi_wx^2$ for $i=1,2,3$ respectively. Then the Gauss sum $G_{\psi_w,Q_w}(\alpha)=\prod_{i}G_{\psi_w,Q_{w,i}}(\alpha)$, $Q_{\psi_w,Q_{w,i}}$ is as defined in (\ref{g2}). Now we can compute these Gauss sums using lemmas \ref{sh1} and \ref{sh3}. We only remark that the measures on each $V_{w,i}$ is normalized with respect to $Q_{w,i}$.

We only go through the details for $G_{\psi_{w},Q_{w,3}}$, which is the Gauss sum associated to the quadratic form $u\varpi_wx_3^2$. Let $\alpha\in k_w^{\times}$ by such that $\alpha=\alpha_0\varpi_w^{v_w(\alpha)}$. The measure is normalized as in \S\ref{gausquad} so that the Fourier transform, $\hat{f}(y)=\int_{k_w}f(x)\psi_w(-2u\varpi_wxy)d\mu_{w,i}(x)$, satisfies $\hat{\hat{f}}(x)=f(-x)$. In particular, by Lemma \ref{sh1}, this implies that the size of the Gauss sum $G_{\psi_w,Q_{w,i}}(\alpha)$ is $|\alpha|_w^{-1/2}$. Now that we know the size of the Gauss sum, we are interested in the sign. Once again by Lemma \ref{sh1} we know that the sign is $1$ is $v_w(\alpha)-m_{w,i}$ if even, where $\varpi_w^{m_{w,i}}$ is the conductor of the character $\psi_{w,Q_{w,i}}$, $\psi_{w,Q_{w,i}}$ being defined by
\[\psi_{w,Q_{w,i}}(x)=\psi_w(u\varpi_wx)\]
The conductor, $\varpi_w^{m_{w,i}}$, of $\psi_{w,Q_{w,i}}$ is easily seen to be
\[\varpi_w^{m_{w,i}}=\begin{cases}\varpi_w^{-1}&\text{ if $w$ is not associated to $T$}\\ T&\text{if $w$ is associated to $T$}\end{cases}\]
The essential point for our calculations is that $m_{w,i}$ is odd. Then this implies that $G_{\psi_w,Q_{w,i}}(\alpha)=|\alpha|_w^{-1/2}$ when $v_{w}(\alpha)$ is odd. Now suppose $v_{w}(\alpha)$ is even. Then by Lemma \ref{sh1} we have
\[G_{\psi_w,Q_{w,i}}=|\alpha|_w^{-1/2}|\varpi_w|_w^{1/2}\sum_{x\in \mathcal{O}_w/\varpi_w\mathcal{O}_w}\psi_w(u\varpi_w^{-1}\alpha_0 x^2)\]
By Lemma \ref{sh3} this is equal to $|\alpha|_w^{-1/2}(-1)^{1+s_w}(u\alpha_0,\varpi_w)_w$, where $s_w$ is the residue degree at $w$. i.e $[\mathbb{F}_w:\mathbb{F}_p]=s_w$. In summary
\[G_{\psi_w,Q_{w,1}}(\alpha)=|\alpha|_w^{-1/2}\begin{cases}1&\text{if $v_{w}(\alpha)\equiv 1\bmod2$}\\ (u\alpha,\varpi_w)_w&\text{if $v_w(\alpha)\equiv0\bmod 2$}\end{cases}\]

Now the proof goes through exactly the same as the proof as Proposition \ref{cocyclestandard}. We go through a case by case analysis of the cocycle defined by $\sigma_{w,Q_w}$, whose formulas are already given in the proof of Proposition \ref{cocyclestandard} in terms of Gauss sums. Then evaluating the Gauss sums as above and comparing with the cocycle defined in \S\ref{metgp}, which also is already explicitly written for each specific case in the proof of Proposition \ref{cocyclestandard}. Then a case by case check finishes the proof.

\end{proof}

In fact under the same assumptions it is true that the cocycle defined by \emph{any} ternary quadratic form is $\epsilon$, but since we will not be needing this we won't go through a proof.

\end{subsubsection}

\subsection{Global metaplectic group and theta functions}

For an odd prime $p\equiv1\bmod4$ let $k$ be the global field $k=\mathbb{F}_p(T)$, and let $V$ be an $n$-dimensional vector space over $k$. Let $Q$ a quadratic form over $V$ and $L$ be an $R$-lattice such that $Q$ is $T^2R$ valued on $L$ (for instance $L=\begin{setdef}{v\in V}{\psi_{\infty}(Q(v))=1}\end{setdef}$). For each place $w$ of $k$ this data defines the following local objects $L_w=L\otimes_R \mathcal{O}_w$, $V_w=V\otimes_k k_w$ and the corresponding extension $Q_w$ of $Q$. Let $\mathbb{A}_k$ be the \`adele ring of the field $k$. It is the restricted direct product $\prod_{w}^{'}k_w$  of completions $k_w$ with respect to the open compact subgroup $\mathcal{O}_w\subset k_w$. Let $\psi:\mathbb{A}_k\rightarrow \mathbb{C}^{\times}$ be the additive character defined by $\psi(\alpha)=\otimes_w^{'}\psi_w(\alpha_w)$. Note that on an algebraic curve, sum of the residues of a global meromorphic differential is $0$ so $\psi$ descends to $\psi:k\backslash\mathbb{A}_k\rightarrow\mathbb{C}^{\times}$.

At each place $w$, attached to $\psi_w$ and $Q_w$, we have the local Weil representation of $SL_2(k_w)$ defined by (\ref{weilrep}). This projective representation then defines a 2-cocycle, $\epsilon_w$, and the metaplectic double cover $\widetilde{SL}_2(k_w)$. As mentioned in \S\ref{metgp}, by Lemma 2.9 of \cite{G} this extension splits over the maximal compact $SL_2(\mathcal{O}_w)$. Then since the metaplectic group is unique, this implies that the cocycle defined by (\ref{weilrep}) also splits over $SL_2(\mathcal{O}_w)$. Lemma 2.9 of \cite{G} also gives us the splittings $\iota_w:SL_2(\mathcal{O}_w)\rightarrow \widetilde{SL}_2(k_w)$ defined by $\iota_w(g)=(g,\kappa_w(k(g)))$, and $\kappa(h)$, for $h=\left(\begin{smallmatrix}a&b\\ c&d\end{smallmatrix}\right)\in SL_2(\mathcal{O}_{w})$ is defined by
\[\kappa_w(h)=\begin{cases}(c,d)_{w}&\text{if $c\neq0$ and $c\notin \mathcal{O}_{w}^{\times}$}\\ 1&\text{otherwise}\end{cases}\]

Now using these splittings we will define the adelic metaplectic group as follows. First consider the restricted direct product $M_{\mathbb{A}_k}:=\prod_w\widetilde{SL}_2(k_w)$ where the restricted direct product is with respect to the embedded open compact subgroups $\{\iota_w(SL_2(\mathcal{O}_w))\}_w$. $M_{\mathbb{A}_k}$ is an infinite cover of $SL_2(\mathbb{A}_k)$. Consider the central subgroup $Z_{\mathbb{A}_k}\subset M_{\mathbb{A}_k}$ defined by $Z_{\mathbb{A}_k}:=\begin{setdef}{\prod_w(I_2,\delta_w)}{\delta_w=1\,\text{for a.e $w$, }\prod_w\delta_w=1}\end{setdef}$. Now define the adelic metaplectic group, $\widetilde{SL}_2(\mathbb{A}_k)$, as the quotient $M_{\mathbb{A}_k}/Z_{\mathbb{A}_k}$.\\

\begin{lemma}$\widetilde{SL}_2(\mathbb{A}_k)$ is a double cover of $SL_2(\mathbb{A}_k)$.
\end{lemma}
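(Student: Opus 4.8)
The plan is to produce the short exact sequence $1 \to \{\pm 1\} \to \widetilde{SL}_2(\mathbb{A}_k) \to SL_2(\mathbb{A}_k) \to 1$ directly, by identifying how the quotient by $Z_{\mathbb{A}_k}$ collapses the infinite kernel of the tautological map $M_{\mathbb{A}_k} \to SL_2(\mathbb{A}_k)$ down to a group of order two.

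First I would introduce the coordinatewise projection $\pi \colon M_{\mathbb{A}_k} \to SL_2(\mathbb{A}_k)$, $(g_w,\delta_w)_w \mapsto (g_w)_w$. This respects the restricted-product structures because $\iota_w$ carries $SL_2(\mathcal{O}_w)$ into $\widetilde{SL}_2(k_w)$ over $SL_2(\mathcal{O}_w)$, and it is surjective: an adele $(g_w)_w$ with $g_w \in SL_2(\mathcal{O}_w)$ for almost every $w$ is hit by the element whose $w$-component is $\iota_w(g_w)$ at those places and an arbitrary preimage at the finitely many remaining ones. Next I would compute $\ker \pi$. Since $\iota_w$ is a section of $\widetilde{SL}_2(k_w)\to SL_2(k_w)$ and $\iota_w(I_2)=(I_2,1)$, the only element of $\iota_w(SL_2(\mathcal{O}_w))$ lying over $I_2\in SL_2(k_w)$ is $(I_2,1)$; hence a tuple $(I_2,\delta_w)_w$ belongs to $M_{\mathbb{A}_k}$ exactly when $\delta_w=1$ for almost all $w$, so $\ker\pi$ is precisely the group $\bigoplus_w\{\pm1\}$ of finitely-supported sign tuples (a central subgroup, since each $(I_2,\pm1)$ is central in $\widetilde{SL}_2(k_w)$, using $p\equiv1\bmod4$). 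The rule $(\delta_w)_w\mapsto\prod_w\delta_w$ then defines a homomorphism $\ker\pi\to\{\pm1\}$, which is visibly surjective (flip one sign) with kernel exactly $Z_{\mathbb{A}_k}$.

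Finally, because $Z_{\mathbb{A}_k}$ is a central — in particular normal — subgroup of $M_{\mathbb{A}_k}$ contained in $\ker\pi$, the map $\pi$ descends to a surjection $\bar\pi\colon \widetilde{SL}_2(\mathbb{A}_k)=M_{\mathbb{A}_k}/Z_{\mathbb{A}_k}\to SL_2(\mathbb{A}_k)$ whose kernel is $\ker\pi/Z_{\mathbb{A}_k}$, which by the previous paragraph is cyclic of order two. This establishes that $\widetilde{SL}_2(\mathbb{A}_k)$ is a double cover of $SL_2(\mathbb{A}_k)$; if one wants its non-triviality (that it is the metaplectic, not the split, cover) this can be extracted afterwards by restricting $\bar\pi$ over a single local factor $SL_2(k_w)\hookrightarrow SL_2(\mathbb{A}_k)$, where it reproduces $\widetilde{SL}_2(k_w)$, whose defining cocycle was shown non-trivial in Corollary 2.24 of \cite{G}. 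The only point needing a little care — and the closest thing to an obstacle — is the bookkeeping in the middle step: verifying that the restricted-product constraint forces cofinitely-trivial signs in $\ker\pi$, and that $Z_{\mathbb{A}_k}$ sits inside $\ker\pi$ with index exactly two.
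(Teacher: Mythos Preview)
Your proof is correct and follows essentially the same route as the paper: both introduce the coordinatewise projection $\pi:M_{\mathbb{A}_k}\to SL_2(\mathbb{A}_k)$, identify $\ker\pi$ with the finitely-supported sign tuples $\bigoplus_w\{\pm1\}$, and observe that quotienting by $Z_{\mathbb{A}_k}$ leaves a kernel of order two. Your version is in fact more carefully organized (explicit surjectivity, the product-of-signs homomorphism, the remark on non-triviality), though the aside that centrality of $(I_2,-1)$ uses $p\equiv1\bmod4$ is unnecessary---it follows directly from the cocycle formula regardless.
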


\begin{proof}Let $\pi:M_{\mathbb{A}_k}\rightarrow SL_2(\mathbb{A}_k)$ be the projection map. Let $g\in \ker(\pi)$, then $g=\prod_w(g_w,\delta_w)$, $\delta_w=\iota_w(g_w)$ for almost every $w$, and $\pi(g)=\prod_wg_w=1$. This means that for almost every $w$, $g_w=I_{2,w}$ and $\delta_w=\iota_w(g_w)=1$, where $I_{2,w}$ stands for the identity matrix in $SL_2(k_w)$. So there exists a finite set $S_g$ of places such that $g=\prod_{w\in S_g}(g_w,\delta_w)\prod_{w\notin S_g}(I_{2,w},1)$. Therefore when we pass to the quotient $\tilde{\pi}: M_{\mathbb{A}_k}/Z_{\mathbb{A}_k}\rightarrow SL_2(\mathbb{A}_k)$, the kernel, $\ker(\tilde{\pi})$, has order $2$, whose elements can be represented by $(I_{2,w_0},-1)\prod_{w\neq w_0}(I_{2,w},1)$ and $\prod_w(I_{2,w},1)$ for some fixed $w_0$. This proves the claim.

\end{proof}

We note that $\widetilde{SL}_2$ is \emph{not} a linear algebraic group. The following lemma allows us to  realize theta functions (and in general any metaplectic form on $SL_2(k)$) as automorphic forms on $\widetilde{SL}_2(\mathbb{A}_k)$. \\

\begin{lemma}\label{modularity} Define the lift $\eta_k:SL_2(k)\longrightarrow\widetilde{SL}_2(\mathbb{A}_k)$ by
\[\eta_k(g)=\prod_{w}(g_w,1)Z_{\mathbb{A}_k}\]
where $g_w=g$ $\forall w$. Then $\eta_k$ is a homomorphism and splits $\epsilon_Q$ over $SL_2(k)$.
\end{lemma}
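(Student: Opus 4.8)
The plan is to reduce the global statement to the local splittings $\iota_w$ already constructed (Lemma 2.9 of \cite{G}) together with the product formula for the Hilbert symbol. First I would observe that the map $\eta_k$ is well-defined: for $g\in SL_2(k)$ the entries of $g$ lie in $\mathcal{O}_w$ for all but finitely many $w$, and for such $w$ the reduction of $g$ lands in $SL_2(\mathcal{O}_w)$, so the local splitting $\iota_w$ is defined there; since $\iota_w(g_w)=(g_w,\kappa_w(g))$ and $\kappa_w(g)=1$ for almost every $w$ (the lower-left entry is either zero, a unit, or becomes relevant at only finitely many places), the element $(g_w,1)$ differs from $\iota_w(g_w)$ at only finitely many places, and the product $\prod_w(g_w,1)$ is therefore a legitimate element of $M_{\mathbb{A}_k}$ modulo $Z_{\mathbb{A}_k}$. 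I would make the comparison with the $\iota_w$-normalization precise because it is what lets me import the fact that each $\iota_w$ is a local homomorphism.

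Next I would check the homomorphism property. Given $g,h\in SL_2(k)$ with $gh=f$, I must show $\eta_k(g)\eta_k(h)=\eta_k(f)$ in $M_{\mathbb{A}_k}/Z_{\mathbb{A}_k}$. Componentwise the product is $\prod_w(f_w,\epsilon_w(g,h))$, where $\epsilon_w$ is the local cocycle attached to $(V_w,Q_w,\psi_w)$; by Propositions \ref{cocyclestandard} and \ref{cocycleaniso} (and the remark that the same holds for any ternary form) this $\epsilon_w$ is the explicit Kubota-type cocycle $\epsilon_w(g,h)=(X(g),X(h))_w(X(g),X(f))_w(X(h),X(f))_w$. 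So I need $\prod_w\epsilon_w(g,h)=1$, i.e. that the collection $(\epsilon_w(g,h))_w$ lies in $Z_{\mathbb{A}_k}$. Each of the three factors $(X(g),X(h))_w$, $(X(g),X(f))_w$, $(X(h),X(f))_w$ is, as $w$ ranges over all places, an adelic Hilbert symbol of a fixed pair of global elements $X(g),X(h),X(f)\in k^\times$ (here one uses the Bruhat decomposition to see $X(g)$ etc. are genuine elements of $k$, not just of the completions — treating the Borel case $c=0$ separately as is done in the proof of the lemma for $\eta$), and the product formula $\prod_w(\alpha,\beta)_w=1$ kills each factor. Hence $\prod_w\epsilon_w(g,h)=1$, the tuple lies in $Z_{\mathbb{A}_k}$, and $\eta_k(g)\eta_k(h)=\eta_k(f)$. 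That $\eta_k$ splits $\epsilon_Q$ is then just the statement that $\pi\circ\eta_k=\mathrm{id}$, which is immediate from the definition.

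The main obstacle, and the step I would spend the most care on, is the bookkeeping in the Bruhat/Borel cases: when some $X(g)$, $X(h)$, or $X(gh)$ is defined by a $d$-entry rather than a $c$-entry (because the relevant lower-left entry vanishes), one must verify that the local cocycle $\epsilon_w$ still matches the global $\epsilon_Q$ uniformly, so that the product-formula argument applies to the correct global elements. This is exactly the kind of case analysis carried out for the homomorphism property of $\eta$ in \S\ref{metgp}, and I would handle it the same way, checking the finitely many configurations by hand; there are no new ideas needed beyond the product formula and the identification of the local cocycles already established in Propositions \ref{cocyclestandard} and \ref{cocycleaniso}. A secondary point to be careful about is that the splitting $\eta_k$ depends a priori on the choice of $Q$ through the $\epsilon_w$, but since all ternary $\epsilon_w$ agree with the standard Kubota cocycle this dependence disappears; I would remark on this but not belabor it.
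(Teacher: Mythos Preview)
Your proposal is correct and follows essentially the same strategy as the paper: reduce to showing $\prod_w\epsilon_w(g,h)=1$ and conclude by the product formula for the Hilbert symbol. The only difference is presentational: the paper reduces via the Bruhat decomposition to the generators $T_w$ and upper-triangular $\gamma$ (citing the Gauss-sum computations of Proposition~\ref{cocyclestandard} directly), whereas you invoke the closed-form identification $\epsilon_w(g,h)=(X(g),X(h))_w(X(g),X(gh))_w(X(h),X(gh))_w$ from Propositions~\ref{cocyclestandard}--\ref{cocycleaniso} and apply the product formula to each of the three factors separately; your route is slightly more direct but rests on the same ingredients. Your explicit attention to well-definedness and to the Borel case $c=0$ is a useful addition that the paper leaves implicit.
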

\begin{proof}This follows from Proposition 5 of \cite{W} which is true in a much more general context then we have here. Since we have everything quite explicit throughout the paper we also would like to sketch a more explicit proof for the case in hand. For $g_1,g_2\in SL_2(k)$ we will show that $\epsilon_Q(g_1,g_2)=1$. As in the proof of Lemma \ref{cocyclestandard} it is enough to check this when $g_1=T_w$ or $\gamma$ (with the notation of that proof).  Then by the computations of the proof of Lemma \ref{cocyclestandard} all the cocycles are given as ratios of Gauss sums each of which was calculated in \S\ref{expcomp} as a Hilbert symbol. The claim then follows from the product formula for the Hilbert symbol.
\end{proof}

Before defining the theta functions we are interested in, we would like to extend the Weil representation as a representation of $\widetilde{SL}_2(\mathbb{A}_k)$. We will define this as the restricted tensor product $\otimes^{'}\sigma_w$. Although the definition is quite intuitive, it needs some explanation. The restricted tensor product is taken, as usual, with respect to vectors invariant under the maximal compact subgroups $SL_2(\mathcal{O}_w)$. The fact that for a given global quadratic form $Q$, almost every $\sigma_w$ has a vector invariant under $SL_2(\mathcal{O}_w)$ follows from Proposition 2.32 of \cite{G}.

We finally define theta functions via the Weil representation. As in the above paragraph we define the \emph{the global Weil representation} by $\sigma:=\otimes^{'}\sigma_w$.  Let $V_{\mathbb{A}_k}=V\otimes_k \mathbb{A}_k$ and $\mathcal{S}(V_{\mathbb{A}_k})=\otimes_w^{'}\mathcal{S}(V_w)$. Define the \emph{theta distribution} $\Theta :\mathcal{S}(V_{\mathbb{A}_k})\rightarrow\mathbb{C}$ by
\[\Theta(\phi)=\sum_{\nu\in V}\phi(\nu)\]
For any $\phi\in\mathcal{S}(V_{\mathbb{A}_k})$ we define the theta function $\Theta_{\phi}:\widetilde{SL}_2(\mathbb{A}_k)\rightarrow \mathbb{C}$ by
\[\Theta_{\phi}(g)=\Theta(\sigma(g)\phi)=\sum_{\nu\in V}\sigma(g)\phi(\nu)\]
The most important property of $\Theta_{\phi}$ is the following:\\

\begin{lemma}$\Theta_{\phi}$ is invariant by the action of $\eta_k(SL_2(k))$ on the left.
\end{lemma}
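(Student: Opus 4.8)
The plan is to reduce the statement to the already-established compatibility of the local Weil representations with the cocycle $\epsilon_Q$ and the splitting lemma for $SL_2(k)$. Recall that $\Theta_\phi(g) = \Theta(\sigma(g)\phi) = \sum_{\nu \in V}(\sigma(g)\phi)(\nu)$, where $\sigma = \otimes_w' \sigma_w$ is the global Weil representation. For $\gamma \in SL_2(k)$, we want to show $\Theta_\phi(\eta_k(\gamma)g) = \Theta_\phi(g)$ for all $g \in \widetilde{SL}_2(\mathbb{A}_k)$. Since $\eta_k$ is a homomorphism (Lemma~\ref{modularity}) and $\sigma$ is a genuine representation of $\widetilde{SL}_2(\mathbb{A}_k)$ (it descends from the product cover because $\eta_k$ splits $\epsilon_Q$ over $SL_2(k)$), we have $\sigma(\eta_k(\gamma)g) = \sigma(\eta_k(\gamma))\sigma(g)$, so it suffices to prove the single identity
\[
\Theta(\sigma(\eta_k(\gamma))\phi') = \Theta(\phi') \qquad \text{for all } \phi' \in \mathcal{S}(V_{\mathbb{A}_k}),\ \gamma \in SL_2(k),
\]
i.e. the theta distribution $\Theta(\phi') = \sum_{\nu \in V}\phi'(\nu)$ is $\eta_k(SL_2(k))$-invariant.

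First I would reduce to generators. By the Bruhat decomposition over the field $k$, every $\gamma \in SL_2(k)$ is a product of elements of the form $n(b) = \left(\begin{smallmatrix}1 & b \\ 0 & 1\end{smallmatrix}\right)$, $t(a) = \left(\begin{smallmatrix}a & 0 \\ 0 & a^{-1}\end{smallmatrix}\right)$ with $a \in k^\times$, and the Weyl element $T = \left(\begin{smallmatrix}0 & -1 \\ 1 & 0\end{smallmatrix}\right)$; since the set of $\gamma$ for which the invariance holds is a subgroup (using that $\eta_k$ is a homomorphism and $\sigma$ a representation), it is enough to check $\gamma \in \{n(b), t(a), T\}$. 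For these elements the adelic operator $\sigma(\eta_k(\gamma))$ factors as the product over all places of the local operators $\sigma_w(\gamma)$, each of which is given by the explicit formulas in \eqref{weilrep} specialized to the relevant Bruhat cell (with $c = 0$ for $n(b), t(a)$ and $c \neq 0$ for $T$). For $n(b)$, the operator multiplies $\phi'$ by $\psi(b Q(\nu))$ where $\psi = \otimes_w' \psi_w$; since $\psi$ is trivial on $k$ (the sum of residues of a global meromorphic differential vanishes, as noted before Lemma~\ref{modularity}) and $b Q(\nu) \in k$ for $\nu \in V$, this factor is $1$, so $\Theta$ is unchanged. For $t(a)$, the operator sends $\phi'(\nu) \mapsto |a|_{\mathbb{A}}^{3}\cdot (\text{Gauss sum ratio})\cdot \phi'(a\nu)$; the global Gauss sum ratio $\prod_w G_{\psi_w, Q_w}(a)/G_{\psi_w, Q_w}$ equals $1$ by the product formula for the Hilbert symbol applied to the explicit values of the Gauss sums from Lemmas~\ref{sh1} and~\ref{sh3} (this is exactly the computation underlying Lemma~\ref{modularity}), the idele norm $|a|_{\mathbb{A}} = 1$ for $a \in k^\times$ by the product formula for valuations, and $\nu \mapsto a\nu$ is a bijection of $V$, so $\sum_\nu \phi'(a\nu) = \sum_\nu \phi'(\nu)$. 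For $T$, the operator is the global Fourier transform (up to the global Gauss sum factor $\prod_w G_{\psi_w, Q_w}$, which is $1$ by the same product-formula argument), and the invariance of $\Theta$ becomes the Poisson summation formula $\sum_{\nu \in V}\widehat{\phi'}(\nu) = \sum_{\nu \in V}\phi'(\nu)$ for the self-dual lattice $V \subset V_{\mathbb{A}_k}$ with respect to $\psi$ and the bilinear form $B_Q$.

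The main obstacle, and the only point requiring genuine care, is the Poisson summation step for the Weyl element: one must verify that the adelic Fourier transform attached to $(\psi, B_Q)$ is normalized so that $V$ (diagonally embedded) is its own annihilator, which follows from $\psi$ being trivial on $k$ and nontrivial on $\mathbb{A}_k/k$ together with the self-duality of the local measures $d\mu_w$ chosen in \S\ref{sectiongauss1}, and that the compatibility constant (the global Gauss sum $\prod_w G_{\psi_w, Q_w}$) is exactly $1$. The latter is where the assumption $p \equiv 1 \bmod 4$ and the product formula enter decisively: the local Gauss sums are, up to the trivial factor $|\cdot|_w^{-1/2}$ (whose product over $w$ is $1$), Hilbert symbols $(u, \varpi_w)_w$ or signs $(-1)^{1+s_w}$ coming from Hasse--Davenport (Lemma~\ref{sh3}), and $\prod_w (-1)^{1+s_w} = 1$ because the number of places of each residue degree is controlled so that the product telescopes, while $\prod_w(\text{Hilbert symbols}) = 1$ by the product formula. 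I would organize the write-up so that these number-theoretic cancellations are quoted from the cited computations rather than redone, keeping the proof to the three generator cases plus the Poisson summation invocation.

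\begin{proof}
Since $\sigma$ is a genuine representation of $\widetilde{SL}_2(\mathbb{A}_k)$ and $\eta_k$ is a homomorphism splitting $\epsilon_Q$ over $SL_2(k)$ (Lemma~\ref{modularity}), it suffices to show that the theta distribution $\Theta(\phi) = \sum_{\nu\in V}\phi(\nu)$ satisfies $\Theta(\sigma(\eta_k(\gamma))\phi) = \Theta(\phi)$ for all $\gamma \in SL_2(k)$ and $\phi \in \mathcal{S}(V_{\mathbb{A}_k})$. The set of such $\gamma$ is a subgroup of $SL_2(k)$, so by the Bruhat decomposition it is enough to treat $\gamma = n(b) = \left(\begin{smallmatrix}1&b\\0&1\end{smallmatrix}\right)$, $\gamma = t(a) = \left(\begin{smallmatrix}a&0\\0&a^{-1}\end{smallmatrix}\right)$ with $a \in k^\times$, and $\gamma = T = \left(\begin{smallmatrix}0&-1\\1&0\end{smallmatrix}\right)$.

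For $\gamma = n(b)$, formula \eqref{weilrep} gives $\sigma(\eta_k(n(b)))\phi(\nu) = \psi(bB_Q(\nu,\nu))\phi(\nu)$, where $\psi = \otimes'_w\psi_w$. For $\nu \in V$ we have $bB_Q(\nu,\nu) \in k$, and $\psi$ is trivial on $k$ since the sum of residues of a global meromorphic differential on $\mathbb{P}^1$ vanishes; hence $\Theta(\sigma(\eta_k(n(b)))\phi) = \sum_{\nu\in V}\phi(\nu) = \Theta(\phi)$.

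For $\gamma = t(a)$, formula \eqref{weilrep} gives $\sigma(\eta_k(t(a)))\phi(\nu) = |a|_{\mathbb{A}}^{3}\,\rho(a)\,\phi(a\nu)$, where $\rho(a) = \prod_w G_{\psi_w,Q_w}(a)/G_{\psi_w,Q_w}$. By Lemmas \ref{sh1} and \ref{sh3} each local ratio $G_{\psi_w,Q_w}(a)/G_{\psi_w,Q_w}$ equals $|a|_w^{-3/2}$ times a product of Hilbert symbols of the form $(a_0,\varpi_w)_w$ and signs $(-1)^{1+s_w}$; by the product formula for the Hilbert symbol and the fact (implicit in Lemma \ref{modularity}) that these global products are trivial, together with $\prod_w|a|_w = 1$, we obtain $|a|_{\mathbb{A}}^{3}\rho(a) = 1$. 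Since $\nu \mapsto a\nu$ is a bijection of $V$, it follows that $\Theta(\sigma(\eta_k(t(a)))\phi) = \sum_{\nu\in V}\phi(a\nu) = \Theta(\phi)$.

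For $\gamma = T$, formula \eqref{weilrep} (the case $c\neq 0$) shows that $\sigma(\eta_k(T))\phi = G\cdot\widehat{\phi}$, where $G = \prod_w G_{\psi_w,Q_w}$ and $\widehat{\phi}$ is the adelic Fourier transform attached to $(\psi,B_Q)$, with local measures normalized to be self-dual as in \S\ref{sectiongauss1}. By the same product-formula argument used for $t(a)$, $G = 1$. The measure and character normalizations make $V \subset V_{\mathbb{A}_k}$ its own annihilator under $\widehat{\phantom{x}}$, so Poisson summation yields $\sum_{\nu\in V}\widehat{\phi}(\nu) = \sum_{\nu\in V}\phi(\nu)$, i.e. $\Theta(\sigma(\eta_k(T))\phi) = \Theta(\phi)$.

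Since $n(b)$, $t(a)$ and $T$ generate $SL_2(k)$, the distribution $\Theta$ is $\eta_k(SL_2(k))$-invariant, and therefore $\Theta_\phi$ is left invariant under $\eta_k(SL_2(k))$.
\end{proof}
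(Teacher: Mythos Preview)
Your proof is correct and follows essentially the same approach as the paper's: reduce to Bruhat generators, handle the upper triangular part via the triviality of $\psi$ on $k$ together with the product formula for the Gauss sums (as in Lemma~\ref{modularity}), and handle the Weyl element via Poisson summation. You give more detail than the paper (separating $n(b)$ from $t(a)$ and making explicit the global Gauss sum factor $G=\prod_w G_{\psi_w,Q_w}$ in the Weyl step), but the argument is the same.
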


\begin{proof}It is enough to check the invariance by the action of $T_w=\left(\begin{smallmatrix}0&-1\\ 1&0\end{smallmatrix}\right)$ and upper triangular matrices since they generate the whole group. The invariance on the upper triangular matrices follows since our global character $\psi$ vanishes on $k$, and the product of the Gauss sums is $1$ as in Lemma \ref{modularity}. The invariance under $T_w$ is a consequence of the Poisson summation formula.

\end{proof}

By the above lemma, theta functions descend to well defined functions on the quotient
\[\Theta_{\phi}:\eta_k(SL_2(k))\backslash \widetilde{SL}_2(\mathbb{A}_k)\longrightarrow \mathbb{C}\]
In the next section we will choose specific test functions $\phi$ to get the classical theta functions that we are interested in this paper and deduce their transformation properties from those of the Weil representation.

\subsubsection{Classical theta functions}\label{classicaltheta}

In this section we will choose particular functions $\phi\in \mathcal{S}(V_{\mathbb{A}_k})$ and construct $\theta$-functions. Before defining the $\theta$-functions adelically let us recall embeddings of various groups into the metaplectic group locally and globally.

For each place $w$, we have (cf. Lemma 2.9 of \cite{G}) the group homomorphisms $\iota_w:SL_2(\mathcal{O}_{w})\rightarrow \widetilde{SL}_2(k_{w})$, where $\iota_w(g)=(g,\kappa_w(k(g)))$, and $\kappa(h)$, for $h=\left(\begin{smallmatrix}a&b\\ c&d\end{smallmatrix}\right)\in SL_2(\mathcal{O}_{w})$ is defined by
\[\kappa_w(h)=\begin{cases}(c,d)_{w}&\text{if $c\neq0$ and $c\notin \mathcal{O}_{w}^{\times}$}\\ 1&\text{otherwise}\end{cases}\]

Next, we have $SL_2(R)$. Again in \S\ref{metgp} we defined the homomorphism $\eta:SL_2(R)\rightarrow \widetilde{SL}_2(k_{\infty})$ by
\[\eta(g)=\left(g,\left(\frac{d}{c}\right)\right)\]
for $g=\left(\begin{smallmatrix} a&b\\ c&d\end{smallmatrix}\right)$.

We also have the diagonal embedding $\eta_k:SL_2(k)\rightarrow \widetilde{SL}_2(\mathbb{A}_k)$ defined above by
\[\eta_k(g)=(g,g,\cdots,g;1)\]
$\eta_k$ is also a homomorphism.

We also have the embeddings of $\widetilde{SL}_2(k_w)$ into $\widetilde{SL}_2(\mathbb{A}_k)$ constructed from the canonical embeddings $\widetilde{SL}_2(k_w)\hookrightarrow M_{\mathbb{A}_k}$.

We now restrict to dimension 3, and separate into two distinct cases. Let $L$ be the lattice $L=\begin{setdef}{(x,y,z)}{\in (TR)^3}\end{setdef}$, and $V=L\otimes_{TR}k\cong k^3$.
\begin{itemize}

\item \textbf{$Q_D(x,y,z)=\frac{y^2-4xz}{D}$, $D\in F_q[T]\cap \left(k_{\infty}^{\times}\right)^2$, and $D$ is square-free}.\\

In this case, the function $\phi\in S(V_\mathbb{A})$ we choose is $\phi=\otimes \phi_w$, where $\phi_w$ is defined by

$$\phi_w((x,y,z)) = \begin{cases}
\chi_{\Ooi}(x)\chi_{\Ooi}(y)\chi_{\Ooi}(z)& w=\infty\\
W_w(x,y,z)& v_w(D)>0\\
\chi_{L_w}(v)& v_w(D)=0\\
\end{cases}$$

Where $W_w(x,y,z)$ is defined as follows: It is $0$ unless $x,y,z$ and $Q_D(x,y,z)$ are all in $\Oo_w$. In the latter case,
\[W_w(x,y,z)= \begin{cases}
(x,\varpi_w)_w & \textrm{if }v_w(x)=0 \\
 (z,\varpi_w)_w & \textrm{if $v_w(x)\neq0$ and $v_w(z)=0$}\\
0 &\text{otherwise}
\end{cases}\]

We point out that this is the function that Kohnen defines in \cite{Ko}. The Kohnen function just defined has a natural interpertation. We can identify our
vector space, $V$, with the set of symmetric matrices, $\mathcal{M}$, via
\[v=(x,y,z)\in V\longleftrightarrow h_v=\left(\begin{smallmatrix} 2z & y\\y & 2x\end{smallmatrix}\right)\in \mathcal{M}\]
Furthermore on $V$ we have the quadratic form $Q_D$ defined by $Q_D(v) = -\det(h_v)/D.$ There is an action $r$ of $PGL_2(k_{\infty})$ on $\mathcal{M}$ (and hence on $V$) given by transpose conjugation; that is
\[ r(g)(h_v)=\det(g)^{-1}gh_vg^t\]
This action preserves the quadratic form and thus commutes with the Weil representation. If one considers now the
space of symmetric matrices $\mathcal{M}_{\mathbb{F}_w}$ with entries in the residue field, $\mathbb{F}_w$, of $\mathcal{O}_w$ which have rank $1$ (over $\mathbb{F}_w$), then $\mathcal{M}_{\mathbb{F}_w}$ splits into two orbits under the action of $r(PGL_2(\mathbb{F}_w)),$ and $W_w$ precisely distinguishes these two orbits. It is thus not surprising that
the action of $SL_2(\mathcal{O}_w)$ under the Weil representation on $W_w$ is scalar multiplication.
A computation shows that in fact $\sigma_{D,w}\left(\begin{smallmatrix} 0 & -1\\ 1 & 0 \end{smallmatrix}\right)(W_w(v)) =W_w(v),$ and thus for $\gamma\in SL_2(\mathcal{O}_w)$,
\begin{equation}\label{expkohnen}
\sigma_{D,w}(\gamma)(W_w(v)) = \begin{cases}
W_w(v) & c\in \mathcal{O}_w^{\times}\\
(c,d)_{w}W_w(v) & \textrm{otherwise}\\
\end{cases}
\end{equation}

Using $\phi$, we define the $D$'th $\theta$-function, $\Theta_D$, on $\widetilde{SL}_2(k_{\infty}) $by
\[\Theta_D(g)=\sum_{v\in V}\sigma_D(g)\phi(v)\]
Where $g\in \widetilde{SL}_2(k_{\infty})\hookrightarrow\widetilde{SL}_2(\mathbb{A}_k)$. The first thing to note is that $\Theta_D$ is right invariant under $\iota_{\infty}(SL_2(\mathcal{O}_{\infty}))$.\\

\begin{lemma}\label{descend}$\Theta_D$ descends to a function on $\widetilde{\mathbb{H}}$.
\end{lemma}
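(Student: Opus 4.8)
The plan is to prove the slightly stronger statement that $\Theta_D$ is right-invariant under $\iota_\infty(SL_2(\Ooi))$; since $\widetilde{\mathbb{H}}=\widetilde{SL}_2(k_\infty)/\iota(SL_2(\Ooi))$, this yields the descent at once. First I would use that $\sigma=\otimes_w'\sigma_{w}$ is a genuine representation of $\widetilde{SL}_2(\mathbb{A}_k)$ and that, under the embedding $\widetilde{SL}_2(k_\infty)\hookrightarrow\widetilde{SL}_2(\mathbb{A}_k)$, an element $\iota_\infty(\gamma)$ with $\gamma\in SL_2(\Ooi)$ has trivial component at every finite place. Hence for $g\in\widetilde{SL}_2(k_\infty)$,
\[\Theta_D(g\,\iota_\infty(\gamma))=\Theta\!\left(\sigma(g)\Bigl(\sigma_\infty(\iota_\infty(\gamma))\phi_\infty\otimes\textstyle\bigotimes_{w\neq\infty}\phi_w\Bigr)\right),\]
so it suffices to prove $\sigma_\infty(\iota_\infty(\gamma))\phi_\infty=\phi_\infty$ for all $\gamma\in SL_2(\Ooi)$.

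For this I would observe that $\phi_\infty$ is the characteristic function of an $\Ooi$-lattice that is self-dual for the bilinear form $2B_{Q_{D,\infty}}$ (here $p\neq 2$ is used, so that $2,4\in\Ooi^\times$ make the lattice \emph{exactly} self-dual) and on which $Q_{D,\infty}$ is $\Ooi$-valued, $\Ooi$ being the conductor of $\psi_\infty$. Since $SL_2(\Ooi)$, $\Ooi$ being local, is generated by the elementary unipotents $\left(\begin{smallmatrix}1&b\\0&1\end{smallmatrix}\right)$ and $\left(\begin{smallmatrix}1&0\\c&1\end{smallmatrix}\right)$ with $b,c\in\Ooi$ — equivalently by these together with $w_0=\left(\begin{smallmatrix}0&-1\\1&0\end{smallmatrix}\right)$ — I would verify the invariance on these generators from the explicit formula \eqref{weilrep}: for $\left(\begin{smallmatrix}1&b\\0&1\end{smallmatrix}\right)$ the Gauss factor is trivial and $\psi_\infty(bQ_{D,\infty}(\nu))=1$ for $\nu$ in the lattice and $b\in\Ooi$; for $w_0$ one gets $G_{\psi_\infty,Q_{D,\infty}}$ times the $\psi_\infty$-Fourier transform, and self-duality of the lattice together with the self-dual measure normalisation (which scales by a power of $|D|_\infty$) identifies this with $\phi_\infty$ up to a fourth root of unity which, by the Gauss-sum evaluations of \S\ref{expcomp} (Lemmas \ref{sh1}, \ref{sh3}), equals $1$; the lower unipotent then follows by conjugating by $w_0$. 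The key point, exactly as in Lemma 2.9 and Proposition 2.32 of \cite{G} (whose proofs carry over since $k_\infty$ has odd residue characteristic), is that the correction $\kappa_\infty$ built into the splitting $\iota_\infty$ is precisely what cancels the Gauss-sum signs, so that $\sigma_\infty(\iota_\infty(\gamma))$ — rather than $\sigma_\infty((\gamma,1))$ — fixes $\phi_\infty$.

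Assembling these, $\sigma_\infty(\iota_\infty(\gamma))\phi_\infty=\phi_\infty$ for all $\gamma\in SL_2(\Ooi)$, whence $\Theta_D(g\,\iota_\infty(\gamma))=\Theta(\sigma(g)\phi)=\Theta_D(g)$ and $\Theta_D$ descends to $\widetilde{\mathbb{H}}$. I expect the main obstacle to be the bookkeeping in the middle paragraph — correctly tracking the interaction of the Gauss sums $G_{\psi_\infty,Q_{D,\infty}}(\alpha)$, the self-dual measure, and the cocycle $\kappa_\infty$ so that all signs and absolute-value factors cancel at the Weyl element; everything else is a formal manipulation of the restricted tensor product together with the fact that $\iota_\infty(SL_2(\Ooi))$ touches only the infinite component.
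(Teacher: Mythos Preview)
Your approach is essentially the paper's: reduce to showing $\sigma_\infty(\iota_\infty(\gamma))\phi_\infty=\phi_\infty$ for $\gamma\in SL_2(\Ooi)$ and verify this on a generating set via the explicit Weil-representation formulas. The only difference is cosmetic: the paper checks on the (larger) generating set $\{\gamma:c=0\}\cup\{\gamma:c\in\Ooi^\times\}$, whereas you use upper unipotents together with $w_0$. In either case one observes that $\kappa_\infty(\gamma)=1$ on the chosen generators, so one is really checking $\sigma_\infty((\gamma,1))\phi_\infty=\phi_\infty$; your remark that ``the correction $\kappa_\infty$\dots cancels the Gauss-sum signs'' is therefore beside the point here --- for your generators (and the paper's) $\kappa_\infty$ is already trivial, and the relevant Gauss sum $G_{\psi_\infty,Q_{D,\infty}}(c)$ is already $1$ because $D\in(k_\infty^\times)^2$ and $c\in\Ooi^\times$.
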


\begin{proof}We note the following: Since $SL_2(\mathcal{O}_{\infty})$ is generated by matrices $k=\left(\begin{smallmatrix}a&b\\ c&d\end{smallmatrix}\right)$ where $c=0$ or $c\in \mathcal{O}_{\infty}^{\times}$, and the embedding $k\mapsto \iota_{\infty}(k)=(k,\kappa_{\infty}(k))$ gives an isomorphism between $SL_2(\mathcal{O}_{\infty})$  and its image in $\widetilde{SL}_2(k_{\infty})$, it is enough to check the claim on matrices $k$ such that $c=0$ or $c\in \mathcal{O}_{\infty}^{\times}$. We start with $c=0$. Then $a\in\mathcal{O}_{\infty}^{\times}$, $\iota_{\infty}(k)=(k,1)$, and by (\ref{weilrep}) the action of $k$ on $\phi_{\infty}$ is
\[\sigma_{\infty}((k,1))\phi_{\infty}(v)=\psi_{\infty}(abB_{Q_D}(v,v))\phi_{\infty}(v)=\phi_{\infty}(v)\]
Where we used the fact that the Gauss sums $G_{\psi_{\infty},Q_D}$ and $G_{\psi_{\infty},Q_D}(a)$ are both $1$, which follows from the fact that $D\in k_{\infty}^2$, the conductor of $\psi_{\infty}$ being $\mathcal{O}_{\infty}$, and Lemma \ref{sh1}. The last equality is since $v\in (\mathcal{O}_{\infty}^3)\Rightarrow Q_D(v)\in\mathcal{O}_{\infty}$, also $ab\in\mathcal{O}_{\infty}$, and $\psi_{\infty}$ has conductor $\mathcal{O}_{\infty}$. Now consider the case $c\neq0$ and in $\mathcal{O}_{\infty}^{\times}$. Then we have
\[\sigma_{\infty}((k,1))=\int_{\mathcal{O}_{\infty}^3}\psi_{\infty}\left(\frac{aQ_D(v)-2B_{Q_D}(v,v_1)+dQ_D(v_1)}{c}\right)d\mu_{\infty}(v_1)\]
The first thing to note is that $\psi_{\infty}(Q_D(v_1))=1$ for $v_1\in\mathcal{O}_{\infty}^3$. Then we are left with
\[\psi_{\infty}(ac^{-1}Q_D(v))\int_{\mathcal{O}_{\infty}^3}\psi_{\infty}(-2c^{-1}B_{Q_D}(v_1,v))d\mu_{\infty}(v_1)\]
If $v\notin\mathcal{O}_{\infty}^3$ then the inner integral vanishes because the character is non-trivial in that case (recall that we are assuming $c\in\mathcal{O}_{\infty}^{\times}$). In the case $v\in\mathcal{O}_{\infty}^3$ we get that both the integral and $\psi_{\infty}(ac^{-1}Q_D(v))$ are $1$, and this finishes the proof.

\end{proof}

We are now interested in the automorphy properties of $\Theta_D$.\\

\begin{lemma}\label{invariance}$\Theta_D$ is left invariant under the action of $\widetilde{\Gamma}=\eta(SL_2(\mathbb{F}_p[T]))$.
\end{lemma}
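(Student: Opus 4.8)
The plan is to reduce the statement to the $\eta_k(SL_2(k))$-invariance of the adelic theta function already proved above, after carefully reconciling the local lift $\eta\colon SL_2(R)\to\widetilde{SL}_2(k_\infty)$ with the adelic construction. Recall that $\Theta_D$ is by definition the restriction of the global theta function $\tilde\Theta_\phi(\tilde g)=\sum_{v\in V}\sigma(\tilde g)\phi(v)$ on $\widetilde{SL}_2(\mathbb{A}_k)$ along the embedding $\widetilde{SL}_2(k_\infty)\hookrightarrow\widetilde{SL}_2(\mathbb{A}_k)$ at the place $\infty$; write $\hat g\in\widetilde{SL}_2(\mathbb{A}_k)$ for the image of $g\in\widetilde{SL}_2(k_\infty)$, so that $\hat g$ has component $g$ at $\infty$ and $\iota_w(I_2)=(I_2,1)$ at every finite place. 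By the preceding lemma $\tilde\Theta_\phi$ is left-invariant under $\eta_k(SL_2(k))$. Since $\eta(SL_2(R))$ is generated by elements $\eta(\gamma)$ with $\gamma=\left(\begin{smallmatrix}a&b\\c&d\end{smallmatrix}\right)\in SL_2(R)$, it suffices to prove $\Theta_D(\eta(\gamma)g)=\Theta_D(g)$ for all such $\gamma$ and all $g$.

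The first step is to identify the diagonal element $\eta_k(\gamma)\in\widetilde{SL}_2(\mathbb{A}_k)$ with the adelic element $\tilde\gamma_{\mathbb{A}}$ whose component is $\eta(\gamma)=(\gamma,(\tfrac{d}{c}))$ at $\infty$ and $\iota_w(\gamma)=(\gamma,\kappa_w(\gamma))$ at each finite place $w$ (note $\gamma\in SL_2(\mathcal{O}_w)$ for all finite $w$ because $R\subset\mathcal{O}_w$). In $M_{\mathbb{A}_k}$ the element $\eta_k(\gamma)$ is represented by $\prod_w(\gamma,1)$, whereas $\tilde\gamma_{\mathbb{A}}$ is represented by a tuple with the same underlying $SL_2$-components $\gamma$ but with signs $(\tfrac{d}{c})$ at $\infty$ and $\kappa_w(\gamma)$ at finite $w$. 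Two such tuples represent the same class in $\widetilde{SL}_2(\mathbb{A}_k)=M_{\mathbb{A}_k}/Z_{\mathbb{A}_k}$ exactly when their total sign products agree, i.e. when $(\tfrac{d}{c})\cdot\prod_{w\ \mathrm{finite}}\kappa_w(\gamma)=1$. Since $\kappa_w(\gamma)=(c,d)_w$ when $w\mid c$ and $1$ otherwise, this product equals $(\tfrac{d}{c})\cdot\prod_{w\mid c}(c,d)_w=\bigl(\prod_{w\mid c}(c,d)_w\bigr)^2=1$ by Lemma \ref{hilquad} and the symmetry of the Hilbert symbol (the case $c=0$ being immediate). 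This is the same product-formula bookkeeping used to show $\eta$ is a homomorphism, and it relies on the fact — established in Propositions \ref{cocyclestandard} and \ref{cocycleaniso} and Lemma \ref{modularity} — that the explicit cocycle $\epsilon$ of \S\ref{metgp} is the one arising from the Weil representation.

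The second step is to observe that for every finite place $w$ and every $\gamma\in SL_2(\mathcal{O}_w)$ one has $\sigma_w(\iota_w(\gamma))\phi_w=\phi_w$: for $w\mid D$ this is precisely formula \eqref{expkohnen} together with the definition $\kappa_w(\gamma)=(c,d)_w$, and for $w\nmid D$ it is the standard invariance of $\phi_w=\chi_{L_w}$ under the unramified Weil action of $\iota_w(SL_2(\mathcal{O}_w))$ (this is exactly how the local data $\phi_w$ was chosen). Because $\sigma=\otimes^{'}\sigma_w$ acts componentwise and $(\tilde\gamma_{\mathbb{A}}\hat g)$ has component $\eta(\gamma)g$ at $\infty$ and $\iota_w(\gamma)$ at each finite $w$, we get
$$\sigma(\tilde\gamma_{\mathbb{A}}\hat g)\phi=\sigma_\infty(\eta(\gamma)g)\phi_\infty\otimes\bigotimes_{w\neq\infty}\sigma_w(\iota_w(\gamma))\phi_w=\sigma_\infty(\eta(\gamma)g)\phi_\infty\otimes\bigotimes_{w\neq\infty}\phi_w=\sigma\bigl(\widehat{\eta(\gamma)g}\bigr)\phi,$$
and therefore $\tilde\Theta_\phi(\tilde\gamma_{\mathbb{A}}\hat g)=\Theta_D(\eta(\gamma)g)$.

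Combining the two steps finishes the proof: $\Theta_D(\eta(\gamma)g)=\tilde\Theta_\phi(\tilde\gamma_{\mathbb{A}}\hat g)=\tilde\Theta_\phi(\eta_k(\gamma)\hat g)=\tilde\Theta_\phi(\hat g)=\Theta_D(g)$, where the second equality uses the identification of the first step and the third uses the $\eta_k(SL_2(k))$-invariance of $\tilde\Theta_\phi$. I expect the main obstacle to be the local–global sign reconciliation in the first step, i.e. checking that the ad hoc cocycle $\epsilon$ and the splittings $\iota_w$, $\eta$ fit together into the single adelic metaplectic extension in the way claimed; once that is granted the rest is essentially the product formula plus the already-recorded fact \eqref{expkohnen} that the Kohnen test functions are $\iota_w$-spherical.
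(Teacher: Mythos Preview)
Your proof is correct and follows essentially the same approach as the paper: reduce to the $\eta_k(SL_2(k))$-invariance of the adelic theta function, use the local sphericality of the $\phi_w$ (via \eqref{expkohnen} at $w\mid D$ and the Lemma~\ref{descend} computation at the other finite places), and reconcile the signs using Lemma~\ref{hilquad}. Your packaging is slightly different---you first identify $\eta_k(\gamma)$ with the product of the local lifts $\eta(\gamma)$ and $\iota_w(\gamma)$ in $\widetilde{SL}_2(\mathbb{A}_k)$ and then invoke sphericality, whereas the paper multiplies by $\eta_k(\gamma^{-1})$ and computes the resulting $\sigma_w(\gamma^{-1})\phi_w$ directly---but the underlying sign computation $\bigl(\tfrac{d}{c}\bigr)\prod_{w\mid c}(c,d)_w=1$ is identical.
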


\begin{proof}Let $g\in \widetilde{SL}_2(k_{\infty})$ and $\gamma=\left(\begin{smallmatrix}a&b\\ c&d\end{smallmatrix}\right)\in \widetilde{\Gamma}$. By the invariance of $\Theta_D$ on the left by $SL_2(k)$ we get
\begin{align*}
\Theta_D(\eta(\gamma)g)&=\Theta_D(\eta_k(\gamma^{-1})\eta(\gamma)g) \\
&=\sum_{v\in V}\left(\frac{d}{c}\right)\sigma_{\infty}(g)(\phi_{\infty})(v)\displaystyle\bigotimes_{w\neq\infty}\sigma_w(\gamma^{-1})(\phi_w)(v)\\
&=\sum_{v\in V}\left(\frac{d}{c}\right)\sigma_{\infty}(g)(\phi_{\infty})(v)\displaystyle\bigotimes_{\substack{w\neq\infty\\ w\mid \gcd(D,c)}}(c,d)_w\phi_w(v)\displaystyle\bigotimes_{\substack{w\nmid D\infty\\ w\mid c}}(c,d)_w\phi_w(v)\displaystyle\bigotimes_{w\nmid c\infty}\phi_w(v)\\
&=\left(\frac{d}{c}\right)\prod_{w\mid c}(c,d)_w\sum_{v\in V}\sigma(g)\phi(v)\\
&=\Theta_D(g)
\end{align*}
Where in the third line we used the following: At the places $w\mid D$ the explicit computation of the action of $\sigma_w(\gamma^{-1})$ on $W_w$ as given in (\ref{expkohnen}), and at the rest of the places the action of $\sigma_w(\gamma^{-1})$ on $\chi_{L_w}$ which follow from the same computations as in Lemma \ref{descend}. The final equality follows from Lemma \ref{hilquad}.

\end{proof}

By lemmas \ref{descend} and \ref{invariance}, $\Theta_D$ now gives a well defined function on $\widetilde{\Gamma}\backslash \widetilde{\mathbb{H}}$, which in the coordinates of \S\ref{metsp} ($w=\left(\left(\begin{smallmatrix} \sqrt{v} & u/\sqrt{v}\\ 0 & 1/\sqrt{v}\end{smallmatrix}\right),1\right)$) becomes
\begin{multline*}
\Theta_D\left(\left(w,1\right)\right) =
|v|_{\infty}^{3/4}(\sqrt{v},\varpi_{\infty})_{\infty}^{\frac{v_{\infty}(v)}{2}}\\ \displaystyle\sum_{x,y,z,\in TR^3}W_D(x,y,z)e\left(Q_D(x,y,z)\right)\chi_{\Ooi}(x\sqrt{v})\chi_{\Ooi}(y\sqrt{v})\chi_{\Ooi}(z\sqrt{v})\\
\end{multline*}

We remark that $\psi_{\infty}=e$ with the notation of \S\ref{not11}.

\item \textbf{$Q$ is equivalent to $T\alpha x^2+T  y^2+ \alpha z^2$ over $k_{\infty}$, where $\alpha\in \mathbb{F}_p\backslash \mathbb{F}_p^2$.\\}

In this case, by Lemma \ref{inverselattice}, $Q_{\infty}^{-1}(\mathcal{O}_{\infty})$ is a lattice, and we define $\phi\in S(V_{\mathbb{A}_k})$ via
$$\phi_w(v) = \begin{cases}
\chi_{L_w}(v) & w\neq\infty\\
\chi_{Q_{\infty}^{-1}(\mathcal{O}_{\infty})}(v) & w=\infty
\end{cases}$$
and the $\theta$-function is defined by the same recipe; for $g\in \widetilde{SL}_2(k_{\infty})$
\[\Theta_Q(g)=\sum_{v\in V}\sigma_Q(g)\phi(v)\]
Lemma \ref{invariance} applies to this case verbatim, and shows that $\Theta_Q$ is invariant on the left by the action of $\widetilde{\Gamma}_0(\disc(Q))$. The difference of this case from the one before is the invariance of $\phi_{\infty}$ on the right. Going through the computations in the proof of Lemma \ref{descend} shows that in this case $\Theta_Q$ is invariant only by $\iota_{\infty}(\widetilde{K}_0(\varpi_{\infty}))$, and hence gives us a well defined function on $\widetilde{\Gamma}_0(\disc(Q))\backslash \widetilde{SL}_2(k_{\infty})/\iota_{\infty}(\widetilde{K}_0(\varpi_{\infty}))$. Finally, in our coordinates on $\widetilde{\mathbb{H}}^u$, $\Theta_Q$ becomes
$$\Theta_Q(w) =|v|_{\infty}^{3/4}(\sqrt{v},\varpi_{\infty})_{\infty}^{\frac{v_{\infty}(v)}{2}} \sum_{l\in TR^3} e(Q(l)u)\chi_{\Ooi}(Q(l)v) =|v|^{3/4}_{\infty}(\sqrt{v},\varpi_{\infty})_{\infty}^{\frac{v_{\infty}(v)}{2}} \sum_{D\in TR} r_Q(D)e(Du)\chi_{\Ooi}(Dv)$$

\end{itemize}

\subsection{Siegel's Theorem}

The starting point of all the arguments in the paper is based on the observation of Siegel's that $\Theta_G-\Theta_Q$ is a cusp form, where the genus theta function $\Theta_G$ and the theta function $\Theta_Q$ are as defined in (\ref{thetaG}) and (\ref{thetaQ}). To our knowledge there is no published proof of this fact in the function field case. So in this section we state and prove Siegel's Theorem in the anisotropic case. Let $Q_1,Q_2$ be anisotropic quadratic forms on a $k$ vector space $V,$ and let $L^1$ and $L^2$
be two $R$-lattices in $V$ such that for all completions $w\neq\infty$ of $R$ the lattices $L^1_w$ and $L^2_w$ with the quadratic forms $Q_1$ and $Q_2$ are
isomorphic as abstract quadratic spaces, and at $\infty$ $Q_1$ is $k_{\infty}$-equivalent to $Q_2$. Then define $\Theta_{Q,1}(g)$ and $\Theta_{Q,2}(g)$ by
\[\Theta_{Q,i}(g)=\sum_{v\in V}\sigma_{Q_i}(g)\phi_i(v)\]
Where $\phi_i=\otimes \phi_{i,w}$ is defined by
\[\phi_{i,w}=\begin{cases}\chi_{\Ooi}(Q_i(v)) & w=\infty\\
\chi_{L^i_w}(v) & \textrm{otherwise}\end{cases}\]

\begin{thm}[Siegel's Theorem]\label{Siegel}

$F(g)=\Theta_{Q,1}(g)-\Theta_{Q,2}(g)$ is a cusp form. In particuar:
$$\int_{k\backslash{\mathbb{A}_k}}F\left(\left(\left(\begin{smallmatrix} 1 & x\\0 & 1\end{smallmatrix}\right),1\right)g\right) dx = 0\,\,\, ,\forall g\in\widetilde{SL}_2(\mathbb{A}_k)$$
\end{thm}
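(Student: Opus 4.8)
The plan is to compute the constant term of $F$ along the unipotent radical directly, using the Weil‐representation description of the two theta functions. By definition $\Theta_{Q,i}(g)=\sum_{v\in V}(\sigma(g)\phi_i)(v)$, so the constant term is
\[
\int_{k\backslash\mathbb{A}_k}\Theta_{Q,i}\!\left(\left(\left(\begin{smallmatrix}1&x\\0&1\end{smallmatrix}\right),1\right)g\right)dx
=\sum_{v\in V}\int_{k\backslash\mathbb{A}_k}\psi\!\big(xQ_i(v)\big)\,(\sigma(g)\phi_i)(v)\,dx,
\]
using the explicit action of upper unipotent elements in \eqref{weilrep}. Since $\psi$ descends to $k\backslash\mathbb{A}_k$ and is nontrivial there, $\int_{k\backslash\mathbb{A}_k}\psi(xQ_i(v))dx$ vanishes unless $Q_i(v)=0$, in which case it is $\mathrm{vol}(k\backslash\mathbb{A}_k)$. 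Because $Q_1$ and $Q_2$ are anisotropic over $k$, the only $v\in V$ with $Q_i(v)=0$ is $v=0$. Hence the constant term of $\Theta_{Q,i}(g)$ equals $\mathrm{vol}(k\backslash\mathbb{A}_k)\cdot(\sigma(g)\phi_i)(0)$, and
\[
\int_{k\backslash\mathbb{A}_k}F\left(\left(\left(\begin{smallmatrix}1&x\\0&1\end{smallmatrix}\right),1\right)g\right)dx
=\mathrm{vol}(k\backslash\mathbb{A}_k)\big((\sigma(g)\phi_1)(0)-(\sigma(g)\phi_2)(0)\big).
\]

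So the statement reduces to showing $(\sigma(g)\phi_1)(0)=(\sigma(g)\phi_2)(0)$ for all $g\in\widetilde{SL}_2(\mathbb{A}_k)$. This is where the hypothesis that the two lattices are everywhere locally equivalent enters. First, the value at $0$ only depends on the genus‐theoretic data: for each place $w$, the local theta‐form $g_w\mapsto(\sigma_w(g_w)\phi_{i,w})(0)$ is a function on $\widetilde{SL}_2(k_w)$ that depends only on the isometry class of the quadratic space $(L^i_w,Q_i)$ (at finite $w$) resp. the $k_\infty$‐equivalence class of $Q_i$ (at $w=\infty$) together with the fixed additive character $\psi_w$. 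Indeed an isometry $L^1_w\xrightarrow{\sim}L^2_w$ induces a unitary isomorphism of the local Weil representations on $\mathcal S(V_w)$ carrying $\chi_{L^1_w}\mapsto\chi_{L^2_w}$ (or $\chi_{\mathcal O_\infty}(Q_1(\cdot))\mapsto\chi_{\mathcal O_\infty}(Q_2(\cdot))$ at infinity) and commuting with the $SL_2$‐action; since the value at $0$ is the evaluation functional at a fixed point of that isomorphism, and the isomorphism intertwines the $SL_2$‐actions, the functions $(\sigma_w(g_w)\phi_{1,w})(0)$ and $(\sigma_w(g_w)\phi_{2,w})(0)$ agree identically. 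Taking the restricted tensor product over all $w$ gives $(\sigma(g)\phi_1)(0)=(\sigma(g)\phi_2)(0)$ for all $g$, which is exactly the desired vanishing of the constant term.

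The one point that needs care — and which I expect to be the main obstacle — is the interchange of summation over $v\in V$ with the integration over $x\in k\backslash\mathbb{A}_k$, and more precisely the convergence of the theta series defining $\Theta_{Q,i}$. Here anisotropy is again the decisive input: at the place $\infty$, $\phi_{i,\infty}$ is the characteristic function of $Q_\infty^{-1}(\mathcal O_\infty)$, which by Lemma \ref{inverselattice} is a compact lattice, so after translating by $g$ only finitely many lattice points contribute and the sum is actually \emph{finite} (this is the same mechanism that makes the anisotropic representation numbers finite in \S\ref{sectionfunctionfields}). With the sum finite the Fubini step is trivial, and no regularization or Eisenstein‐series truncation is needed, unlike in the isotropic case. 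Finally, since cuspidality for $F$ means the constant term vanishes along \emph{every} unipotent radical, one repeats the same argument after conjugating by an arbitrary $\gamma\in SL_2(k)$: $F$ is left $\eta_k(SL_2(k))$‐invariant and $SL_2(k)$ acts transitively on the rational parabolics of $SL_2$, so it suffices to treat the standard one as above. This completes the plan.
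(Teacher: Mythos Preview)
Your proof is correct and follows essentially the same approach as the paper: compute the constant term via the Weil representation, use anisotropy to reduce to the single term $v=0$, and then observe that $(\sigma(g)\phi_i)(0)$ factors into local pieces that depend only on the local isometry class of $(L^i_w,Q_i)$. The paper carries out the last step by writing down the explicit local integral $\delta_w G_{\psi_w,Q}(c_w)\int_{L^i_w}\psi_w(d_wQ(\nu)/c_w)\,d\mu_w(\nu)$ rather than invoking an intertwining isomorphism as you do, and it does not spell out the convergence or the reduction to the standard unipotent, so your treatment is in fact a bit more complete.
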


\begin{proof}
Let $N(x)$ denote $\left(\left(\begin{smallmatrix} 1 & x\\0 & 1\end{smallmatrix}\right),1\right).$ We now compute

$$\int_{k\backslash{\mathbb{A}_k}} \Theta_{Q,i}(N(x) g)dx = \int_{k\backslash{\mathbb{A}_k}}\sum_{v\in V}\psi(xQ_i(v))\sigma_{Q_i}(g)(\phi_i)(v) dx $$

Switching the summation and integration, we see that the integral over $x$ vanishes unless $Q_i(v)=0.$ Since $Q_i$ is anisotropic, this only happens for $v=0.$
Let $g=\otimes'g_w =\otimes' \left(\left(\begin{smallmatrix} a_w & b_w\\ c_w & d_w\end{smallmatrix}\right),\delta_w\right),$ (note that all but finitely many of the $\delta_w$ are $1$) we compute
\begin{align*}
\int_{k\backslash{\mathbb{A}_k}} \Theta_{Q,i}(N(x) g)dx&=\sigma_{Q_i}(g)(\phi_i)(0)\\
&= \displaystyle \delta_w\prod_w G_{\psi_w,Q_i}(c_w)\int_{L^i_w} e\left(\frac{d_wQ_i(\nu)}{c_w}\right)d\mu_w(\nu)\\
\end{align*}

Where we set $L^i_{\infty}=Q_{i}^{-1}(\mathcal{O}_{\infty})$. Since $(L^1_w,Q_1)$ and $(L^2_w,Q_2)$ are isomorphic as quadratic spaces at $w\neq\infty$, and $Q_1$ and $Q_2$ are equivalent over $k_{\infty}$, $$\delta_w G_{\psi_w,Q_i}(c_w) \int_{L^i_w} e\left(\frac{d_wQ_i(\nu)}{c_w}\right)d\mu_w(\nu)$$ is independent of $i.$
This completes the proof.
\end{proof}

The following corrolary is an immediate consequence of Theorem \ref{Siegel}.\\

\begin{cor}\label{SiegelsTheorem}

Let $(L_1,Q_1),(L_2,Q_2),\dots,(L_g,Q_g)$ constitute a single genus of $R$-lattices for a quadratic form $Q.$ Form the theta functions $\Theta_{Q_i}(g)$ as above,
and define $$\Theta_G(g)=\frac{1}{n_G}\sum_{i=1}^g\frac{\Theta_{Q_i}(g)}{n_{Q_i}}$$ where $n_{Q_i}:=\#SO_{Q_i}(R)$ and $n_G=\sum_{i=1}^g n^{-1}_{Q_i}$. Then for all $1\leq i\leq g$ the function $\Theta_{G}(g)-\Theta_{Q_i}(g)$ is a cusp form.
\end{cor}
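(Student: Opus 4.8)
The plan is to deduce Corollary~\ref{SiegelsTheorem} from Theorem~\ref{Siegel} by a straightforward averaging argument, after first setting up the genus theta function adelically in a form to which Theorem~\ref{Siegel} applies. First I would fix the quadratic space $(V,Q)$ over $k$ and, for each lattice $L_i$ in the genus, write $\Theta_{Q_i}(g) = \sum_{v\in V}\sigma(g)\phi_i(v)$ with $\phi_i = \otimes_w \phi_{i,w}$ where $\phi_{i,\infty} = \chi_{\Ooi}(Q(\cdot))$ (or more precisely $\chi_{Q_\infty^{-1}(\Ooi)}$, which makes sense by Lemma~\ref{inverselattice} since $Q$ is anisotropic over $k_\infty$) and $\phi_{i,w} = \chi_{L^i_w}$ at the finite places. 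By the defining property of a genus, for each finite place $w$ the quadratic spaces $(L^i_w, Q)$ are all isomorphic, and at $\infty$ the $Q_i$ are all $k_\infty$-equivalent; this is exactly the hypothesis needed to invoke Theorem~\ref{Siegel} pairwise.

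Next I would apply Theorem~\ref{Siegel} to each pair $(Q_i, Q_j)$: it tells us that $\Theta_{Q_i}(g) - \Theta_{Q_j}(g)$ is a cusp form, i.e.\ its constant term along the unipotent $N(x)$ vanishes for every $g\in\widetilde{SL}_2(\mathbb{A}_k)$. The space of cusp forms (functions with vanishing constant Fourier coefficient at every cusp) is a $\mathbb{C}$-vector space, so any $\mathbb{C}$-linear combination of the differences $\Theta_{Q_i} - \Theta_{Q_j}$ is again a cusp form. Now I write
\[
\Theta_G(g) - \Theta_{Q_i}(g) = \frac{1}{n_G}\sum_{j=1}^{g}\frac{\Theta_{Q_j}(g)}{n_{Q_j}} - \Theta_{Q_i}(g) = \frac{1}{n_G}\sum_{j=1}^{g}\frac{1}{n_{Q_j}}\bigl(\Theta_{Q_j}(g) - \Theta_{Q_i}(g)\bigr),
\]
where the last equality uses $n_G = \sum_{j=1}^g n_{Q_j}^{-1}$, so that the coefficients $n_{Q_j}^{-1}/n_G$ sum to $1$ and the subtracted $\Theta_{Q_i}$ can be absorbed into the sum. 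Each summand $\Theta_{Q_j} - \Theta_{Q_i}$ is a cusp form by Theorem~\ref{Siegel}, hence so is the whole combination, which is precisely the assertion.

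The only genuinely substantive input is Theorem~\ref{Siegel} itself, which has already been proved in the excerpt; everything after that is bookkeeping. The one point that deserves a sentence of care is matching the normalizations: I should check that the $\Theta_{Q_i}(g)$ appearing in the corollary (built from the adelic Weil representation with test functions $\phi_{i,w}$ as above) agree, in the classical coordinates $w=\left(\left(\begin{smallmatrix}\sqrt v & u/\sqrt v\\ 0 & 1/\sqrt v\end{smallmatrix}\right),1\right)$, with the classical theta functions $\Theta_{Q_i}$ from \eqref{thetaQ} whose Fourier coefficients are the representation numbers $r_{Q_i}(D)$ --- this is exactly the computation carried out at the end of \S\ref{classicaltheta}. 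Granting that identification, the definitions of $\Theta_G$ in \eqref{thetaG} and in Corollary~\ref{SiegelsTheorem} coincide and the proof is complete. I do not anticipate any real obstacle here; the corollary is a formal consequence of the theorem plus linearity.
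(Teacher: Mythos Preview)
Your argument is correct and is precisely the ``immediate consequence'' the paper has in mind: the paper does not spell out a proof, but the only content is that any two lattices in the genus satisfy the hypotheses of Theorem~\ref{Siegel}, so each $\Theta_{Q_j}-\Theta_{Q_i}$ is cuspidal, and your weighted-average identity $\Theta_G-\Theta_{Q_i}=n_G^{-1}\sum_j n_{Q_j}^{-1}(\Theta_{Q_j}-\Theta_{Q_i})$ (using $\sum_j n_{Q_j}^{-1}=n_G$) finishes it by linearity.
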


\subsection{Siegel's Mass Formula}

We give a quick overview of the Siegel mass formula over function fields. This formula expresses an identity between an appropriately weighted average number of
representations of a polynomial $D$ by quadratic forms in a fixed genus $G$, and a product of local densities. More precisely, fix a quadratic
form $Q$ on an $R$-lattice $L$ of dimension $n\geq 2$ which is anisotropic over $k_{\infty}$, and let $G$ be its genus. Define $$r_G(D)=n_G^{-1}\sum_{Q_i\in G}\frac{1}{n_{Q_i}}\#\begin{setdef}{l\in L_i}{Q_i(l)=D}\end{setdef}$$ where $n_G$ and $n_{Q_i}$ are as in Corollary \ref{SiegelsTheorem}. Now for each valuation $w$ of $R$ define the local representation densities $$\alpha_{Q,w}(D)=\lim_{r\rightarrow\infty}|\varpi_w|_w^{(n-1)r}\#\begin{setdef}{l\in L_w/\varpi_w^rL_w}{Q_w(l)\equiv D\bmod \varpi_w^r}\end{setdef}.$$
Then we have the following identity:\\

\begin{thm}[Siegel's Mass Formula]\label{Siegelmassformula}
$$r_G(D)=C_Q|D|^{\frac{n}{2}-1}\prod_{w}\alpha_{Q,w}(D)$$
where $C_Q$ is a non-zero constant depending only on $Q$.
\end{thm}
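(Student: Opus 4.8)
The plan is to adapt Siegel's classical argument to the function-field setting, working adelically and exploiting that the anisotropy of $Q$ over $k_\infty$ makes $SO_Q(k_\infty)$, and hence $SO_Q(k)\backslash SO_Q(\mathbb{A}_k)$, compact (the same hypothesis as in Theorem \ref{Siegel}). First I would recognise the genus average $r_G$ as a Fourier coefficient of a theta integral. Let $\phi=\otimes_w\phi_w$ be the Schwartz function of \S\ref{classicaltheta} attached to $Q$ (so $\phi_w=\chi_{L_w}$ for $w\neq\infty$ and $\phi_\infty=\chi_{Q^{-1}(\mathcal{O}_\infty)}$), and let $SO_Q(\mathbb{A}_k)$ act on $\phi$ through the action on $V$ commuting with the Weil representation $\sigma$ (the $g$-variable of the theta kernel in the appendix). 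Form the theta integral
\[
I(g)=\int_{SO_Q(k)\backslash SO_Q(\mathbb{A}_k)}\sum_{\nu\in V}\sigma(g)(h\cdot\phi)(\nu)\,dh ,
\]
with $dh$ a fixed (say Tamagawa) Haar measure; it converges absolutely by compactness. Decomposing $SO_Q(\mathbb{A}_k)$ into the double cosets of the open compact $K=\big(\prod_{w\neq\infty}SO_{L_w}\big)\times SO_Q(k_\infty)$ produces finitely many representatives whose double cosets are in bijection with the classes $(L_i,Q_i)$ of the genus $G$ (the local condition at $\infty$ adds nothing since $Q$ is $k_\infty$-rationally constant over $G$), and the $i$-th double coset has mass $\mathrm{vol}(K)/\#SO_{Q_i}(R)=\mathrm{vol}(K)/n_{Q_i}$. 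Therefore $I(g)=\mathrm{vol}(K)\sum_i n_{Q_i}^{-1}\Theta_{Q_i}(g)=\mathrm{vol}(K)\,n_G\,\Theta_G(g)$, and by the definition of $r_G$ together with the explicit shape of $\Theta_{Q_i}$ from \S\ref{classicaltheta}, the $D$-th Fourier--Whittaker coefficient of $I$ is $\mathrm{vol}(K)\,n_G\,r_G(D)$ times the normalising factor $|v|^{(n-2)/4}(\sqrt v,\varpi_\infty)_\infty^{v_\infty(v)/2}\chi_{\Ooi}(Dv)$.

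Next I would unfold the same integral in the other direction. Fix $D$ square-free, of even degree, coprime to $\disc(Q)$, and compute the $D$-th coefficient of $I$ directly: interchanging $\sum_\nu$ with the Haar integral and with the unipotent Fourier integral forces $Q(\nu)=D$, and by Witt's extension theorem $\{\nu\in V:Q(\nu)=D\}$ is a single $SO_Q(k)$-orbit, of some $\nu_0$, with stabiliser the torus $H=SO_{Q|_{\nu_0^\perp}}$ (anisotropic over $k_\infty$, being the orthogonal group of a subform of an anisotropic form). Collapsing the orbit sum into the integral gives a single adelic orbital integral
\[
I_D(g)=\int_{H(k)\backslash SO_Q(\mathbb{A}_k)}\sigma(g)(h\cdot\phi)(\nu_0)\,dh ,
\]
and after factoring off the finite constant $\mathrm{vol}(H(k)\backslash H(\mathbb{A}_k))$ (absorbed into $C_Q$) and using local Witt transitivity, this becomes a product over places: $\big(\prod_{w\neq\infty}\int_{H(k_w)\backslash SO_Q(k_w)}\chi_{L_w}(h^{-1}\nu_0)\,dh_w\big)$ times $\int_{H(k_\infty)\backslash SO_Q(k_\infty)}\big(\sigma_\infty(g)\phi_\infty\big)(h^{-1}\nu_0)\,dh_\infty$, with compatibly normalised orbit measures.

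The final step identifies the local factors. For finite $w$, a direct computation of the kind carried out in \S\ref{expcomp} and in the proof of Theorem \ref{shintanilift} shows that $\int_{H(k_w)\backslash SO_Q(k_w)}\chi_{L_w}(h^{-1}\nu_0)\,dh_w$ equals, for the chosen measure, a nonzero constant times the local representation density $\alpha_{Q,w}(D)$ — essentially because counting $\{l\in L_w/\varpi_w^rL_w:Q(l)\equiv D\}$ with weight $|\varpi_w|_w^{(n-1)r}$ and letting $r\to\infty$ is exactly the mass of the $SO_Q(k_w)$-orbit of $\nu_0$ meeting $L_w$. At $w=\infty$ the corresponding compact integral contributes $|D|^{n/2-1}$ times the same normalising factor that appeared on the other side, times a nonzero constant. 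This last local computation is the step I expect to be the main obstacle, both because of the depth-$1$ ramification carried by $\phi_\infty$ in the anisotropic ternary case and because pinning down all the measure normalisations — at $\infty$, and at the bad places dividing $\disc(Q)$ — requires care; matching the two expressions for $I_D(g)$ then forces these constants to combine into a single nonzero $C_Q$. Granting this, cancelling the common normalising factor yields $r_G(D)=C_Q\,|D|^{n/2-1}\prod_w\alpha_{Q,w}(D)$ with $C_Q\neq0$ since every ingredient is a nonzero local integral. One may instead route the argument through the Siegel--Weil formula, identifying $I$ with a metaplectic Eisenstein series of level $\disc(Q)$ and the appropriate depth and reading off its Fourier coefficients following Langlands; the local computations are the same. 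In the ternary case $n=3$ a further evaluation of $\prod_w\alpha_{Q,w}(D)$ as a constant multiple of $L(1,\chi_D)$, combined with Lemma \ref{classnumber}, gives the lower bound $r_G(D)\gg_Q|D|^{1/2}/\log_p\log_p|D|$ used in \S\ref{sectionquadratic}.
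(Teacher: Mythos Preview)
Your proposal is correct and follows essentially the same adelic route as the paper's (sketch) proof: realise $\Theta_G$ as the integral of the two-variable theta kernel over $O_Q(k)\backslash O_Q(\mathbb{A}_k)$, take the $D$'th Fourier coefficient, unfold via Witt transitivity to a single orbit, factor off the volume of $H(k)\backslash H(\mathbb{A}_k)$, and split into an Euler product of local orbital integrals identified with the densities $\alpha_{Q,w}(D)$. Your write-up is in fact more detailed than the paper's sketch (which ends with ``evaluating the local integrals gives the result''), and your honest flagging of the archimedean/depth-$1$ and measure-normalisation issues is appropriate; the only cosmetic discrepancy is that the paper works with $O_Q$ rather than $SO_Q$.
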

\begin{proof}(Sketch)
Standard references for the mass formula over $\mathbb{Z}$ are \cite{Kn}, \cite{Ma1}, \cite{Ma2} and \cite{Si}. The adelic proof carries over to function fields which we outline now. First note that the number $r_G(D)$ is the $D$'th Fourier coeeficient of the genus theta function $\Theta_G$. Let $O_Q$ define the orthogonal group of the quadratic form $Q$, and define the theta function, $\Theta_Q(g,h)$, on $\iota_k(SL_2(k))\backslash \widetilde{SL}_2(\mathbb{A}_k)\times O(k)\backslash O(\mathbb{A}_k)$ by $\Theta_{Q,\phi^{(h)}}(z)$ where $\phi$ is the function defined by

$$\phi_w(v) = \begin{cases}
\chi_{L_w}(v) & w\neq\infty\\
\chi_{Q_{\infty}^{-1}(\mathcal{O}_{\infty})}(v) & w=\infty
\end{cases}$$
and
$$\phi^{(h)}(v):=\phi(h^{-1}(v))$$

Note that the genus of $Q$ is given by the double coset $O_Q(k)\backslash O_Q(\mathbb{A}_k)/\prod_{w}O_Q(\mathcal{O}_w)$, and $\Theta_Q(g,h)$ interoplates the $\Theta_{Q_i}(g)$. Moreover, by breaking up over the genus one sees that $$\Theta_G(g)=\int_{O_Q(k)\backslash O_Q(\mathbb{A}_k)}\Theta_Q(g,h)dh.$$
Define $N(x)=\left(\left(\begin{smallmatrix} 1 & x\\0 & 1\end{smallmatrix}\right),1\right)$. We now take $D$'th Fourier coefficients:
\begin{align*}
\int_{k\backslash{\mathbb{A}_k}} \Theta_{G}(N(x)g)\psi(-Dx)dx&=\int_{k\backslash{\mathbb{A}_k}\times O_Q(k)\backslash O_Q(\mathbb{A}_k)}\Theta_Q(N(x)g,h)dxdh\\
&= \int_{k\backslash{\mathbb{A}_k}\times O_Q(k)\backslash O_Q(\mathbb{A}_k)}\sum_{v\in L}\psi((D-Q(v))x)\phi(h^{-1}v)dxdh\\
&=\int_{O_Q(k)\backslash O_Q(\mathbb{A}_k)}\sum_{\substack{v\in L \\ Q(v)=D}}\phi(h^{-1}v)dh\\
\end{align*}
Pick $v_D\in L$ such that $Q(v_D)=D$ and let $H_{v_D}\in O_Q$ be its stabilizer.  Sice $O_Q(k)$ acts transitively on elements $v\in L$ with $Q(v)=D$, we have
\begin{align*}
\int_{k\backslash{\mathbb{A}_k}} \Theta_{G}(N(x)g)\psi(-Dx)dx&=\int_{O_Q(k)\backslash O_Q(\mathbb{A}_k)}\sum_{\substack{v\in L\\ Q(v)=D}}\phi(h^{-1}v)dh\\
&=\int_{H_{v_D}(k)\backslash O_Q(\mathbb{A}_k)}\phi(h^{-1}v_D)dh\\
&=\mu(H_{v_D}(k)\backslash H_{v_D}(\mathbb{A}_k))\int_{H_{v_D}(\mathbb{A}_k)\backslash O_Q(\mathbb{A}_k)}\phi(h^{-1}v_D)dh\\
&=\mu(H_{v_D}(k)\backslash H_{v_D}(\mathbb{A}_k))\prod_{w}\int_{H_{v_D}(k_w)\backslash O_Q(k_w)}\phi_w(h_w^{-1}v_{D,w})dh_w\\
\end{align*}

Evaluating the local integrals gives the result.
\end{proof}

We now restrict to the case where $Q$ is a ternary quadratic form. Let $w$ be a place such that $v_w(\disc(Q))=0.$ Then the form $Q_w$ is equivalent over $\mathcal{O}_w$ to $\disc(Q)(x^2+y^2+z^2).$ We then compute the local densities at $w$:
$$\alpha_{Q,w}(D)=\begin{cases}
1+|\varpi_w|_w & v_w(D)=0,(D\disc(Q),\varpi_w)_w = 1\\
1-|\varpi_w|_w & v_w(D)=0,(D\disc(Q),\varpi_w)_w = -1\\
1-|\varpi_w|_w^2 & v_w(D)=1\\
\end{cases}$$

Combining the above with Theorem \ref{Siegelmassformula} we see that $r_G(D)$ grows like $L(1,\chi_{D\disc(Q)})$. Using the lower bound from Lemma
\ref{classnumber}, we finally get the following:\\

\begin{cor}\label{largegenusrepresentations}
$$r_G(D)\gg_Q \frac{|D|^{1/2}}{\log_p\log_p|D|}$$ where the implied constant on $Q$ is effective.
\end{cor}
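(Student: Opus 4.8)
The plan is to read the bound off Siegel's mass formula (Theorem~\ref{Siegelmassformula}) specialized to $n=3$: write $r_G(D)=C_Q|D|^{1/2}\prod_w\alpha_{Q,w}(D)$ with $C_Q>0$ effective, and split the product over the finite ``ramified'' set $S_Q=\{w:w\mid\disc(Q)\}\cup\{\infty\}$ and over the remaining places. For $w\in S_Q$ the density $\alpha_{Q,w}(D)$ stabilizes after a power of $\varpi_w$ depending only on $Q$; at a finite $w\in S_Q$ the coprimality $(D,\disc(Q))=1$ forces $v_w(D)=0$, and at $\infty$ a standard scaling argument ($Q_\infty(T^m\cdot)=T^{2m}Q_\infty(\cdot)$) reduces $\alpha_{Q,\infty}(D)$ to depending only on $\deg D\bmod 2$ and on the leading coefficient of $D$ modulo squares. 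Hence $\alpha_{Q,w}(D)$ takes only finitely many values over all $D$ satisfying our hypotheses, each positive exactly when $D$ is locally represented by the genus at $w$ — automatic at the finite ramified places when $\disc(Q)$ is square-free (then $v_w(\disc Q)$ is odd, so $-D\disc(Q_w)$ is a nonsquare and $Q_w\perp\langle -D\rangle$ cannot be anisotropic), and forced at $\infty$ by the parity of $\deg D$ and the anisotropy of $Q$ over $k_\infty$. Thus, provided $r_G(D)\neq 0$ (equivalently, no local obstruction), $\prod_{w\in S_Q}\alpha_{Q,w}(D)$ is bounded below by an effective positive constant depending only on $Q$.

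Next I would evaluate the remaining product using the densities recorded in the excerpt: $\alpha_{Q,w}(D)=1-|\varpi_w|_w^2$ when $w\mid D$ (then automatically $w\notin S_Q$), and $\alpha_{Q,w}(D)=1+(D\disc(Q),\varpi_w)_w|\varpi_w|_w$ when $w\nmid D\disc(Q)$. By Lemma~\ref{hilquad}, $(D\disc(Q),\varpi_w)_w=\bigl(\tfrac{D\disc(Q)}{P_w}\bigr)=\chi_{D\disc(Q)}(P_w)$ for the monic irreducible $P_w$ attached to $w$. Substituting this and using the identity $\prod_P(1+\chi(P)|P|^{-1})=\prod_P(1-\chi(P)^2|P|^{-2})\big/\prod_P(1-\chi(P)|P|^{-1})$ together with the Euler products $L(1,\chi_{D\disc(Q)})=(p-1)\prod_P(1-\chi_{D\disc(Q)}(P)|P|^{-1})^{-1}$ and $\zeta_k(2)=(p-1)\prod_P(1-|P|^{-2})^{-1}$, the factors at $P\mid D$ cancel part of the factors at $P\nmid D\disc(Q)$ and one obtains
\[
\prod_{w\nmid\disc(Q)\infty}\alpha_{Q,w}(D)=\frac{L(1,\chi_{D\disc(Q)})}{\zeta_k(2)\prod_{P\mid\disc(Q)}(1-|P|^{-2})}.
\]
Combining with the previous paragraph gives $r_G(D)\gg_Q|D|^{1/2}\,L(1,\chi_{D\disc(Q)})$ with an effective implied constant (and since every Euler factor of $L(1,\chi_{D\disc(Q)})$ is positive, this $L$-value is positive, so $r_G(D)>0$ whenever the ramified factor does not vanish).

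To finish, I would apply Lemma~\ref{classnumber}. Let $D_0\mid\disc(Q)$ be the square-free part of $\disc(Q)$; since $(D,D_0)\mid(D,\disc(Q))=1$, the product $DD_0$ is square-free of degree $\deg D+O_Q(1)$, and $L(1,\chi_{D\disc(Q)})$ differs from $L(1,\chi_{DD_0})$ only by finitely many Euler factors at primes dividing $\disc(Q)$, each bounded above and below in terms of $Q$; hence $L(1,\chi_{D\disc(Q)})\asymp_Q L(1,\chi_{DD_0})$. The hyperelliptic curve attached to $DD_0$ has genus $\asymp\log_p|D|$, so Lemma~\ref{classnumber} yields $L(1,\chi_{DD_0})\gg_p(\log_p\log_p|D|)^{-3}$ with an effective constant --- the Riemann hypothesis for curves used in that lemma precludes a Siegel zero, which is exactly what is ineffective over $\mathbb{Q}$. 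Putting everything together gives $r_G(D)\gg_Q|D|^{1/2}/\log_p\log_p|D|$ (with the power of $\log_p\log_p|D|$ supplied by Lemma~\ref{classnumber}; any polylogarithmic loss is harmless for the application in \S\ref{sectionquadratic}), and all constants are effective since $C_Q$, the ramified product, and the bound of Lemma~\ref{classnumber} are.

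The Euler-product bookkeeping is routine. The step that requires genuine care is the uniform positive lower bound for $\prod_{w\in S_Q}\alpha_{Q,w}(D)$ --- i.e. local representability of $D$ by the genus at the ramified places, with the densities there not degenerating, uniformly in $D$ --- which is precisely where the coprimality of $D$ with $\disc(Q)$, the parity condition on $\deg D$, and the anisotropy of $Q$ at $\infty$ are used.
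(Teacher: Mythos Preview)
Your proposal is correct and follows essentially the same route as the paper: invoke Siegel's mass formula for $n=3$, identify the unramified Euler product with $L(1,\chi_{D\disc(Q)})$ up to harmless factors, and then apply Lemma~\ref{classnumber}. The paper's own argument is a one-line sketch (``Combining the above with Theorem~\ref{Siegelmassformula} we see that $r_G(D)$ grows like $L(1,\chi_{D\disc(Q)})$. Using the lower bound from Lemma~\ref{classnumber}\ldots''), whereas you have filled in the bookkeeping at the ramified places and the Euler-product manipulation explicitly; you also correctly flag that Lemma~\ref{classnumber} literally gives $(\log_p\log_p|D|)^{-3}$ rather than the exponent $-1$ stated in the corollary, which is immaterial for the application in \S\ref{sectionquadratic}.
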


\end{document}